\documentclass{amsart}
\usepackage{graphicx} 
\usepackage[utf8]{inputenc}

\usepackage{amsmath,mathtools,amssymb,extarrows,mathrsfs,amsthm}
\usepackage{tikz-cd}
\usepackage[T1]{fontenc}

\usepackage[utf8]{inputenc}
\usepackage{hyperref}
\usepackage{enumitem}

\usepackage{color}

\theoremstyle{plain} 
\newtheorem{theorem}{Theorem}
\numberwithin{theorem}{section}
\newtheorem*{theorem*}{Theorem}

\newtheorem{prop}[theorem]{Proposition}
\newtheorem{lemma}[theorem]{Lemma}
\newtheorem{coro}[theorem]{Corollary}

\newtheorem{definition}[theorem]{Definition}
\theoremstyle{definition}

\newtheorem{example}{Example}
\theoremstyle{remark} 
\newtheorem{remark}[theorem]{Remark}
\theoremstyle{definition}

\newtheorem{mainthm}{Theorem}
\newtheorem{maincor}[mainthm]{Corollary}
\newtheorem{mainconj}[mainthm]{Conjecture}

\newcommand{\wt}{\widetilde}

\newcommand{\La}{\Lambda}

\newcommand{\be}{\begin{equation} }
\newcommand{\ee}{\end{equation} }

\newcommand{\A}{\mathbb{A}}

\newcommand{\C}{\mathbb{C}} 
\newcommand{\D}{\mathbb{D}}

\newcommand{\N}{\mathbb{N}}

\renewcommand{\P}{\mathbb{P}}
\newcommand{\Q}{\mathbb{Q}}

\newcommand{\Z}{\mathbb{Z}}

\newcommand{\cD}{\mathcal{D}}
\newcommand{\cE}{\mathcal{E}}
\newcommand{\cF}{\mathcal{F}}
\newcommand{\cG}{\mathcal{G}}
\newcommand{\cH}{\mathcal{H}}
\newcommand{\cI}{\mathcal{I}}

\newcommand{\cK}{\mathcal{K}}
\newcommand{\cL}{\mathcal{L}}
\newcommand{\cM}{\mathcal{M}}
\newcommand{\cN}{\mathcal{N}}
\newcommand{\cO}{\mathscr{O}}
\newcommand{\cP}{\mathcal{P}}
\newcommand{\cQ}{\mathcal{Q}}

\newcommand{\cS}{\mathcal{S}}
\newcommand{\cT}{\mathcal{T}}

\newcommand{\bR}{\mathbf{R}}

\newcommand{\bn}{\mathbf{n}}
\newcommand{\bm}{\mathbf{m}}

\newcommand{\gr}{\textup{gr}}

\newcommand{\sM}{\mathscr{M}}
\newcommand{\sN}{\mathscr{N}}

\newcommand{\sB}{\mathscr{B}}

\newcommand{\sA}{\mathscr{A}}

\newcommand{\sG}{\mathscr{G}}
\newcommand{\sS}{\mathscr{S}}

\newcommand{\shD}{\mathscr{D}}

\newcommand{\Spec}{\textup{Spec }}
\newcommand{\Specan}{\textup{Spec}^{\textup{an}}}

\newcommand{\DR}{\textup{DR}}

\newcommand{\Mod}{\textup{Mod}}
\newcommand{\coh}{\textup{coh}}

\newcommand{\an}{\textup{an}}

\newcommand{\id}{\textup{id}}

\newcommand{\supp}{\textup{supp}}
\newcommand{\codim}{\textup{codim}}

\newcommand{\bs}{{\bf s}}

\newcommand{\Ann}{{\textup{Ann}}}

\newcommand{\Ch}{\textup{Ch}}
\newcommand{\CC}{\textup{CC}}

\newcommand{\Rhom}{\textup{Rhom}}
\newcommand{\cRhom}{\mathcal Rhom}
\newcommand{\Exp}{\textup{Exp}}

\title{Generalized nearby cycles via relative and logarithmic $\mathscr{D}$-modules}
\author{Lei Wu}
\address{Lei Wu, School of Mathematical Sciences, Zhejiang University, Hangzhou, 310058, P.R.China}
\email{leiwu23@zju.edu.cn}

\begin{document}

\subjclass[2020]{14F10; 13N10; 32C38; 32S60; 32S40; 14A21.}
\maketitle
\begin{abstract}
    For a regular map $F$ from a complex smooth affine variety $X$ to $\A^r_\C$, we construct generalized nearby-cycle modules of a regular holonomic $\shD_X$-module $\cM$ along log strata with the log structure induced by the graph of $F$, whose relative supports are infinite unions of translated linear subvarieties of $\C^r$ determined by the zero loci of Bernstein-Sato ideals along monoid ideals. For a fixed log stratum, the nearby-cycle module corresponds to the Sabbah specialization complex of $\DR(\cM)$ under the relative regular Riemann-Hilbert correspondence of Fiorot-Fernandes-Sabbah, which generalizes the classical comparison theorem of Kashiwara-Malgrange for Deligne's nearby cycles. As an application, when $\cM=\cO_X$, we give a topological interpretation of the zero loci of Bernstein-Sato ideals of $F$ along monoid ideals under the exponential map, which answers a question of Budur-Shi-Zuo. 
\end{abstract}

\tableofcontents

\section{Introduction}
Given a holonomic $\shD$-module on a complex manifold, Kashiwara \cite{KasV} and Malgrange \cite{MalV} constructed a filtration of the module along the hypersurface of a given holomorphic function, now standardly known as the Kashiwara-Malgrange $V$-filtration, by employing a $\shD$-module version of the deformation to the normal cone. By passing to the associated graded pieces,
they obtained the nearby and vanishing cycles of holonomic $\shD$-modules along the hypersurface. They further established a comparison theorem between Deligne’s topological nearby and vanishing cycles for perverse sheaves \cite{DelV} and their $\shD$-module counterparts under the classical Riemann–Hilbert correspondence. 
Later, Beilinson and Bernstein \cite{Beiglue,BBJats} gave an alternative construction for both frameworks using algebraic relative $\shD$-modules over one dimensional base spaces and Bernstein-Sato polynomials (see also \cite{BG}). Meanwhile, for a finite union of holomorphic functions, Sabbah \cite{Sab90} introduced Alexander complexes (also termed Sabbah specialization complexes) by considering normal deformations along smooth complete intersections and their toric birational modifications, thereby generalizing Deligne's nearby cycles.  

The purpose of this paper is to construct generalized nearby-cycle modules of regular holonomic $\shD$-modules in the algebraic setting, such that they correspond to the Sabbah specialization complexes under the relative regular Riemann-Hilbert correspondence of Fiorot-Fernandes-Sabbah \cite{FFS23}, drawing inspiration from the work of Sabbah and Beilinson-Bernstein. Our primary technical framework rests on
the theory of relative regular holonomic $\shD$-modules developed in 
\cite{FS13, FTM18, FS17, FFS19, FFS23}, and the theory of logarithmic $\shD$-modules. We emphasize that we work in the algebraic category primarily to simplify notation; the local analytic case can be treated analogously by the same methods.

\subsection{Log structures and Bernstein-Sato ideals}
Let $F=(f_1,f_2,\dots,f_r)\colon X\to \A_\C^r=\Spec \C[\N^r]$ be a morphism with $X=\Spec S$ an irreducible smooth affine algebraic variety over $\C$ and let $\cM$ be a regular holonomic $\shD_X$-module with $\shD_X$ the ring of (algebraic) differential operators on $X$. Then we consider the log structure on $Z=X\times \A_\C^r$ given by the prelog structure 
\[\alpha\colon\N^r\hookrightarrow S[\N^r]\simeq S[t_1,\dots,t_r].\]
We call it the log structure induced by the graph of $F$.

On the topological side, using the log structure, we give an alternative description of Sabbah specialization complexes. More precisely, for $K\subseteq \N^r$ a monoid ideal and for $\bm \in \N^r$, we have $\psi_{K_\bm}(\cF^\bullet)$, the \emph{Sabbah specialization complex} of a constructible complex $\cF^\bullet$ along $Z_{K_\bm}$, where $Z_{K_\bm}\subseteq Z$ is the locally closed subscheme defined by $\alpha(K_\bm)$ and $K_\bm$ is the localization of $K$ at $\bm$ (cf. \cite[I.1.4]{Ogusbook}); see \S\ref{subsect:SabSp} for details. 

On the $\shD$-module side, we consider a log $\shD$-module lattice (see \S\ref{subsect:logDmodule} for the definition) 
\[\cN_0\coloneqq\shD_X[s_1,\dots,s_r]\cH_0\cdot F^\bs\subseteq \cM[1/f,s_1,\dots,s_r]F^\bs,\]
where we write $F^\bs=\prod_{i=1}^rf_i^{s_i}$ and $f=\prod_{i=1}^rf_i$,
and $\cH_0\subseteq\cM[1/f]$ is a finitely generated $S$-module such that $\shD_X[1/f]\cdot \cH_0=\cM[1/f]$. Then, we give a logarithmic interpretation of Bernstein-Sato ideals (or polynomials) by defining 
\[B^{K_\bm}(\cN_0)\coloneqq \Ann_{\C[s_1,\dots,s_r]}\frac{\cN_0(*E_\bm)}{\alpha(K)\cdot\cN_0(*E_\bm) 
}.\]
See \S\ref{subsect:logBSideal} for more details.
Even when $\cM$ is irregular, $B^{K_\bm}(\cN_0)$ always contains a distinguished element in the form of products of linear polynomials (see Lemma \ref{lm:subbahgb}), based on a fundamental result of Sabbah \cite{Sab}. Taking $\cN_F=\shD_X[s_1,\dots,s_r]F^\bs$, we get the \emph{Bernstein-Sato ideals} of $F$ along $K_\bm$, 
\[B^{K_\bm}_F\coloneqq B^{K_\bm}(\cN_F)\subseteq \C[s_1,\dots,s_r].\]
When $\bm=e\coloneqq(0,0,\dots,0)$, the unit in $\N^r$, since $K_e=K$, we have $B^{K_e}_F=B^{K}_F$. In particular, when $K$ is the monoid ideal generated by $(1,1,\dots,1)\in \N^r$, we have $B_F\coloneqq B_F^{(1,1,\dots,1)}$, the classical Bernstein-Sato ideal of $F$. More generally,
when $K\subseteq \N^r$ is generated by $\bm_1,\dots,\bm_p$, we have 
\[B_F^K=B_F^{\bm_1,\dots,\bm_p}\]
where the later is introduced and studied by Budur \cite[\S4]{Budur}. The Bernstein-Sato ideal $B_F$ has been widely studied from both theoretical and computational perspectives in literature, to just list a few \cite{May97, BMMBSI, Bahl, CaLie, Mai, Bath, OkTak, UchaCas, BaOK,DGPS}.

The logarithmic interpretation of Bernstein-Sato ideals enables us to prove that $B^{K_\bm}(\cN_0)\subseteq\C[s_1,\dots,s_r]$ is a non-zero proper ideal under some geometric condition.  In particular, we obtain that if $(f_1=f_2=\cdots=f_r=0)\subseteq X$ is not empty, then $B_F^K$ is a non-zero proper ideal for every monoid ideal $K\subsetneq \N^r$. See Theorem \ref{thm:nozeroproperBSI} and Proposition \ref{prop:localizeBF} for details.

Our first main result is a comparison between the zero loci of Bernstein-Sato ideals $Z(B^{K_\bm}(\cN_0))$ and the relative supports of the Sabbah specialization complexes $\supp\big(\psi_{K_\bm}(\DR(\cM))\big)$, where $\DR(\cM)$ denotes the de Rham complex of $\cM$.
\begin{mainthm}\label{thm:mainA}
    With notation as above, let $\cM$ be a regular holonomic $\shD_X$-module. Then, for $K\subseteq \N^r$ and $\bm\in \N^r$, 
    \[\textup{Exp}\big(Z(B^{K_\bm}(\cN_0))\big)=\supp\big(\psi_{K_\bm}(\DR(\cM))\big)\subseteq(\C^*)^r\]
    is a finite union of translated subtori, where Exp$:\C^r\to (\C^*)^r$ is the universal covering map.
    Furthermore, if $\cM$ underlies a $\Q$-mixed Hodge module, then 
    \[\textup{Exp}\big(Z(B^{K_\bm}(\cN_0))\big)=\supp\big(\psi_{K_\bm}(\DR(\cM))\big)\subseteq(\C^*)^r\]
    is a finite union of torsion translated subtori.
\end{mainthm}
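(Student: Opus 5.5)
The plan is to pass through the relative regular Riemann--Hilbert correspondence of Fiorot--Fernandes--Sabbah and to identify, on both sides, a nearby-cycle object whose relative support can be computed. The first step is to construct the $\shD$-module side object. Set
\[\Psi_{K_\bm}(\cN_0)\coloneqq \frac{\cN_0(*E_\bm)}{\alpha(K)\cdot\cN_0(*E_\bm)}.\]
This is a module over $\shD_X[s_1,\dots,s_r]$, viewed relatively over $\C^r=\Spec\C[s_1,\dots,s_r]$. Using Lemma \ref{lm:subbahgb}, which supplies a product of linear forms inside $B^{K_\bm}(\cN_0)$, I would show that $\Psi_{K_\bm}(\cN_0)$ is relatively coherent and relatively holonomic. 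Its relative support is then exactly $Z(B^{K_\bm}(\cN_0))$, a finite union of linear subvarieties with integral directions. The support equals the zero locus of the annihilator because the module is finitely generated over $\shD_X[\bs]$ and supported over a proper subvariety of the base.

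Next I would pass to a formal or analytic neighbourhood of a point $\alpha\in\C^r$ and compare with the topological side. Sabbah's specialization $\psi_{K_\bm}(\DR(\cM))$ is a constructible complex of modules over $\C[\pi_1((\C^*)^r)]=\C[t_i^{\pm1}]$. Its pullback under $\Exp$, combined with tensoring with the local system $\C[s]$ of $F^\bs$, corresponds under the relative Riemann--Hilbert correspondence to $\Psi_{K_\bm}(\cN_0)$ localized near $\alpha$. To set this up I would show that $\cN_0(*E_\bm)\otimes_{\C[\bs]}\cO_{\C^r,\alpha}$ is a relative regular holonomic module whose relative de Rham complex is the pushforward of $\DR(\cM)\otimes$ the generic rank-one relative local system along the graph, localized at $E_\bm$. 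Quotienting by $\alpha(K)$ then corresponds to Sabbah's construction of the Alexander complex along $Z_{K_\bm}$. The equality $\Exp(Z(B))=\supp\psi$ follows by comparing fibers: for a relatively holonomic module, $\alpha$ lies in the support if and only if the derived fiber at $\alpha$ is nonzero, and on the topological side $\Exp(\alpha)$ lies in the support if and only if the derived specialization at the character $\Exp(\alpha)$ is nonzero.

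The two ``finite union of translated subtori'' claims then follow formally. Because $Z(B)$ is a union of affine subspaces defined by integral linear forms (Lemma \ref{lm:subbahgb} and Sabbah's result), its image under $\Exp$ is a finite union of translated subtori; the same conclusion comes independently from the topological side via Budur--Wang-type structure results for the support of Sabbah complexes. In the mixed Hodge module case, the monodromy eigenvalues are roots of unity, or the results of Budur--Wang/Saito on quasi-unipotence give rationality of the translations. Hence the translations are torsion.

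The main obstacle I expect is the middle step: checking that $\cN_0(*E_\bm)$ with its lattice $\cH_0$ is genuinely relative \emph{regular} holonomic near every $\alpha$, and that the relative de Rham functor commutes with the quotient by $\alpha(K)$. The latter reduces to a Tor-independence or flatness statement of $\alpha(K)$ on $\cN_0(*E_\bm)$ over the log stratum. I would first try to reduce by induction on generators of $K$ to the principal case, using the Koszul-type resolution, and then invoke Beilinson--Bernstein-style arguments for one function, with $\bs$ as parameters.
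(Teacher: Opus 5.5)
There is a genuine gap in your middle step. The single quotient $\cN_0(*E_\bm)/\alpha(K)\cdot\cN_0(*E_\bm)$ is not the relative Riemann--Hilbert partner of the $\Exp$-pullback of $\psi_{K_\bm}(\DR(\cM))$, and your pointwise criterion ``$\alpha\in\supp$ iff $\Exp(\alpha)\in\supp\psi_{K_\bm}$'' is false.

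A counterexample already occurs for $r=1$, $\cM=\cO_X$, $K=\langle 1\rangle$, $\bm=0$ and $f$ non-invertible. The support of $\cN_F/t\cN_F=\shD_X[s]f^s/\shD_X[s]f^{s+1}$ is the finite set of roots of $b_f$. By contrast, $\Exp^{-1}(\supp\psi_f\C_X)$ consists of all their $\Z$-translates. In general $Z(B^K(\cN_0))$ lies in finitely many hyperplanes $L\cdot\bs+c=0$ with $0\neq L\in\N^r$, so it is not $\Z^r$-invariant unless it is empty. Any pullback along $\Exp$, however, is $\sG_r$-equivariant. So the two sides can only match after applying $\Exp$.

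The object that actually corresponds to Sabbah's complex in Theorem \ref{thm:sabbspcmp} is the stabilized limit
$\wt{i^!_{K}}\cN_0(*E)\simeq\varinjlim_k\varprojlim_j\wt{\alpha(\mathbf 1)^{-k}\cN_0}\big/\sum_i\wt{\alpha(v_i)^j\cN_0}$, built from relative minimal extensions. The real work is showing that this is a module and that, over each small $V\subseteq\C^r$, it equals $\wt{\cN_0^{-k}}/\alpha(K)^j\wt{\cN_0^{-k}}$ for $j,k\gg0$, where $\cN_0^{-k}=\alpha(\mathbf 1)^{-k}\cN_0$. This uses the log characteristic cycle formula, Theorem \ref{thm:mainsuppfiberation}, Corollary \ref{cor:anlocmaxminex} and Theorem \ref{thm:artin-reeslog}.

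You are also missing the link from this limit back to $B^K$, which is a shift argument:
\begin{itemize}
\item Since $\alpha(v)$ acts on $\C[\bs]$ by translation by $v$, one has $\prod_i\alpha(v_i)\cdot B^K(\cN_0)\subseteq B^{K^2}(\cN_0)$.
\item Together with $Z(B^K)\subseteq Z(B^{K^2})$, this gives $\Exp(Z(B^K))=\Exp(Z(B^{K^j}))$ for all $j$.
\item Replacing $\cN_0$ by $\alpha(\mathbf 1)^{j}\cN_0$ only translates $Z(B^K)$ by elements of $\Z^r$.
\end{itemize}
Relative regularity of $\cN_0(*E_\bm)$, which you flag as the main obstacle, is already supplied by Fiorot--Fernandes--Sabbah. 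Conversely, for $\bm\neq0$ your quotient is in general not finitely generated over $\shD_X[\bs]$ (compare $\cS_F$). So one first reduces to $\bm=0$, as in Proposition \ref{prop:localizeBF}.

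There is a second gap in the subtorus claim. Lemma \ref{lm:subbahgb} only gives $Z(B^{K_\bm}(\cN_0))\subseteq\bigcup_{(L,c)}\{L\cdot\bs+c=0\}$, and nothing in it forces components of codimension $\ge 2$ to be linear. That $Z(B^{K_\bm}(\cN_0))$ is a finite union of translated linear subvarieties is exactly the Geometric Budur-type Conjecture, which the paper leaves open. So the subtorus statement does not ``follow formally.'' Nor can you import it from the topological side: the Budur--Wang structure results concern $\C_X$ germwise. Extending them to arbitrary regular holonomic $\cM$ and arbitrary $K_\bm$ is part of what is being proved here.

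The paper obtains linearity from the limit object instead:
\begin{itemize}
\item By the comparison theorem, the $\sG_r$-pushforwards of the pure graded pieces of $\wt{i^!_K}\cN_0(*E)$ are dR-algebraic.
\item Hence each component $V$ of the support is algebraic (Corollary \ref{cor:algsupploccoh}), and $\Exp(V)$ is algebraic (Lemma \ref{lm:dralg}).
\item The Ax--Lindemann criterion of Budur--Wang (Lemma \ref{lm:BudurWang}) then forces $V$ to be a translated linear subvariety.
\end{itemize}
In the Hodge case, quasi-unipotence of one-variable monodromy does not by itself put a torsion point on every, possibly higher-codimensional, component. The paper needs an induction along the log strata, using $\Q$-specializability (Theorem \ref{thm:latticemtqsupp}), essential fibers (Proposition \ref{prop:essfibMHM}) and Theorem \ref{thm:equivmainsuppfiberation}, before Lemma \ref{lm:BudurWang}(2) applies.
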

Taking $\cN_0=\cN_F$, we immediately obtain:
\begin{maincor}\label{cor:mainB}
    \[\textup{Exp}\big(Z(B^{K_\bm}_F)\big)=\supp\big(\psi_{K_\bm}(\C_X)\big).\]
\end{maincor}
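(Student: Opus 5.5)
The corollary is the special case $\cM=\cO_X$ of Theorem~\ref{thm:mainA}, so the plan is to verify the hypotheses for $\cM=\cO_X$ and to match the two sides of the identity.

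First, $\cO_X$ is a regular holonomic $\shD_X$-module. Its de Rham complex is $\DR(\cO_X)\simeq\C_X$, up to the shift convention used for $\DR$. The support of $\psi_{K_\bm}$ does not change under a shift, so
\[\supp\big(\psi_{K_\bm}(\DR(\cO_X))\big)=\supp\big(\psi_{K_\bm}(\C_X)\big).\]

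Second, I need to check that $\cN_F$ is one of the admissible lattices $\cN_0$ in Theorem~\ref{thm:mainA}. Take $\cH_0=S\cdot 1\subseteq \cO_X[1/f]=S[1/f]$. This is a finitely generated $S$-module, and $\shD_X[1/f]\cdot 1=S[1/f]=\cM[1/f]$. With this choice,
\[\cN_0=\shD_X[s_1,\dots,s_r]\,\cH_0 F^\bs=\shD_X[s_1,\dots,s_r]F^\bs=\cN_F.\]
By definition $B^{K_\bm}(\cN_F)=B^{K_\bm}_F$.

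Applying Theorem~\ref{thm:mainA} with $\cN_0=\cN_F$ then gives
\[\textup{Exp}\big(Z(B^{K_\bm}_F)\big)=\supp\big(\psi_{K_\bm}(\C_X)\big).\]

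Since $\cO_X$ underlies a $\Q$-mixed Hodge module (the trivial Hodge module), the second part of the theorem also applies. It shows that this common set is a finite union of torsion translated subtori, although the corollary as stated does not ask for this.

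No step is a genuine obstacle. The only point needing care is the normalization of $\DR$ and of $\psi_{K_\bm}$ (shift and perverse conventions), and supports are insensitive to these.
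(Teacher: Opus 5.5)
Your proposal is correct and follows essentially the same route as the paper. The paper likewise obtains the corollary by specializing Theorem~\ref{thm:mainA} (in the body, Theorem~\ref{thm:expofBF}) to $\cM=\cO_X$ with $\cN_0=\cN_F$; you justify this choice via $\cH_0=S\cdot 1$, and you correctly observe that the shift in $\DR(\cO_X)$ does not affect supports.
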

Motivated by a classical theorem of Kashiwara and Malgrange (cf. \cite[Thm.1.4.1(i)]{BVWZ}), Budur \cite{Budur} raised a question regarding how to topologically interpret zero loci of Bernstein-Sato ideals of $F$ (see also \cite[Remark 6.8.(2)]{BSZ25}). While this question was partially addressed by the main results in \cite{BVWZ,BVWZ2},
Corollary \ref{cor:mainB} gives a complete answer. 

In the context of multivariable monodromy zeta functions, Sabbah \cite[0.2]{Sab90} established that the relative support of the Sabbah specialization complex of $\C_X$ is contained in a finite union of translated subtori. Subsequently, Budur and Wang \cite{BuWa} utilized cohomology jumping loci of rank-one local systems to prove that the relative supports of the germs of the Sabbah specialization of $\C_X$ along
$F$ are finite unions of torsion translated subtori.
Their approach, however, relies on some approximation theorem to reduce the problem to the case where all $f_i$ are polynomials defined over $\bar\Q$.
In contrast, Theorem A and its proof offer a distinct approach under more general assumptions. Our new perspective reveals that studying Bernstein–Sato ideals from a log-$\shD$-module standpoint is both natural and essential.

\subsection{Generalized nearby-cycle modules}To prove Theorem \ref{thm:mainA}, we construct in \S\ref{subsect:rmel} the \emph{relative minimal extensions} of regular holonomic $\shD$-modules (and more generally relative regular holonomic $\shD$-modules) along divisors supported in the log boundary by using graph embeddings and the log structure induced by the graphs, which naturally generalize the  
Deligne-Goresky-MacPherson minimal extensions for perverse sheaves (cf. \cite[\S8.2.1]{HTT}) and the $j_!$-extension of Beilinson and Bernstein along a regular function (cf. \cite[\S4.1]{BBJats} and \cite[\S3.8]{Gil}). Leveraging these minimal extension, we further construct $\wt{i^!_{K_\bm}}\cN_0$, the \emph{generalized nearby-cycle module} of the lattice $\cN_0$ along $K_\bm$, which is shown to be a relative regular holonomic $\shD_{X^\an\times \C^r/\C^r}$-module. See \S\ref{subsect:r!pullback} and Definition \ref{def:grnearby} for details. These constructions depend on two main ingredients:
\begin{enumerate}
    \item a logarithmic characteristic cycle formula for lattices of relative regular holonomic $\shD$-modules in Theorem \ref{thm:relGins-Sab},
    \item a logarithmic Artin-Rees theorem developed in \S\ref{subsect:logArtinRees}. 
\end{enumerate}

The logarithmic characteristic cycle formula is a relative generalization of \cite[Thm.A1.2]{Gil1}. Following the approach of Brian\c{c}on-Maisonobe-Merle \cite{BMM}, we derive the characteristic cycle formula as a direct consequence of Sabbah's relative characteristic variety formula (Theorem A.1 in Appendix). This result allows us to control the logarithmic characteristic varieties of lattices, and consequently the relative characteristic varieties of lattices of graph-embedding type (Definition \ref{def:getype}), under specialization along divisorial log boundaries (Theorem \ref{thm:mainsuppfiberation}). Such control is crucial for performing induction along log strata.


Furthermore, while it is well known that the $\shD$-module analogue of the Artin-Rees Lemma from commutative algebra does not hold, we observe that ideals in the ring of logarithmic differential operators generated by
$\alpha(K)$ for $K\subseteq \N^r$ are two-sided (Lemma \ref{lm:twosidedideal}). This observation ensures that the Artin–Rees property remains valid for log $\shD$-modules with respect to such ideals
(Theorem \ref{thm:artin-reeslog}). We apply this logarithmic Artin–Rees theorem to prove that the generalized 
nearby cycles $\wt{i^!_{K_\bm}}\cN_0$ constitute a module rather than merely a complex (Theorem \ref{thm:upper!DI}).

More explicitly, the generalized nearby cycles of $\cN_0$ along $K$ is given by the following projective limit:
\be\label{eq:eqlimprojmain}
\wt{i^!_{K}}\cN_0\simeq \lim_{\stackrel{\longleftarrow}{j} }\frac{\wt{\cN_0}}{\sum_{i=1}^p\wt{\alpha(v_i)^j\cN_0}}
\ee
as $j\to+\infty$, where $\sim$ denotes the analytic sheafification functor (see \S\ref{subsect:relregularhol}), and $v_1,\dots,v_p\in \N^r$ generate $K$. Motivated by the double-limit construction in \cite{Beiglue}, we establish a double limit giving the largest generalized nearby cycles:
\be\label{eq:doublelimtmain}
\Psi_{K}(\cM)\coloneqq \wt{i^!_{K}}\cN_0(*E)\simeq \lim_{\longleftrightarrow}\frac{\wt{\alpha(\bn)^{-k}\cN_0}}{\sum_{i=1}^p\wt{\alpha(v_i)^j\cN_0}}, 
\ee
as $\alpha(\bn)^{-k}\cdot\cN_0\to \cN_0(*E)$ for any $\bn\in \N_{>0}^r$, where $E\subseteq Z$ is the divisor defined by $(t_1\cdots t_r=0)$.
For $\wt{i^!_{K_\bm}}\cN_0$ and $\Psi_{K_\bm}(\cM)\coloneqq \wt{i^!_{K_\bm}}\cN_0(*E)$ in general, one can first localize the log structure at $\bm$, and then get similar limits. See Proposition \ref{prop:doublelimitnearby} and Remark \ref{rmk:localizingupper!}. 

Consequently, under the relative regular Riemann–Hilbert correspondence, we establish a comparison theorem (Theorem \ref{thm:sabbspcmp})
\be\label{eq:compmain}
\pi_*^{\sG_r}\DR_{X^\an\times\C^r/\C^r}\big(\Psi_{K_\bm}(\cM)\big)\simeq \wt{\psi}_{K_\bm}\big(\DR(\cM)\big),
\ee
which generalizes the comparison of Kashiwara \cite{KasV} and Malgrange \cite{MalV} for Deligne nearby cycles. Here, $\sG_r\simeq \Z^r$ denotes the free abelian group generated by $t_1,\dots, t_r$ associated with the log structure, and $\pi_*^{\sG_r}$ is the equivariant direct image functor (cf. \cite[Part I.0.3.]{BLequiv}). It is worth noting that our construction of generalized nearby cycles extends to relative regular holonomic $\shD$-modules of graph-embedding type in a more general setting (see \S\ref{subsect:rmel} and \S\ref{subsect:r!pullback}).

The double-limit in Eq.\eqref{eq:doublelimtmain} indicates that algebraic relative  $\shD$-modules can be utilized to locally approximate the analytic relative $\shD$-module $\Psi_K(\cM)$. Furthermore, upon taking the equivariant direct image, the comparison in
Eq.\eqref{eq:compmain} ensures that
the $\shD_{X^{\an}\times (\C^*)^r/(\C^*)^r}$-module $\pi_*^{\sG_r}\big(\Psi_K(\cM)\big)$ is $dR$-algebraic (Definition \ref{def:dralgebraic}). This algebraicity allows us to invoke an Ax–Lindemann-type criterion developed by Budur and Wang 
(Lemma \ref{lm:BudurWang}). Combining these ingredients, we arrive at our second main result:
\begin{mainthm}\label{thm:mainC}
With notation as above, let $\cM$ be a regular holonomic $\shD_X$-module. Then, for $K\subseteq \N^r$ and $\bm\in \N^r$ we have:
    \begin{enumerate}[label=$(\arabic*)$]
        \item the $\sG_r$-invariant subset $\supp(\wt{i^!_{K_\bm}}\cN_0(*E))\subseteq\C^r$ is an infinite (countable) union of translated linear subvarieties such that 
        \[\supp(\psi_{K_\bm}(\DR(\cM)))=\textup{Exp}\big(\supp(\wt{i^!_{K_\bm}}\cN_0(*E))\big)\subseteq (\C^*)^r\]
        is a finite union of translated subtori,  
        \item for every $\bn\in \N^r$, $\supp(\wt{i^!_{K_\bm}}\cN_0(*E_\bn))\subseteq\C^r$ is an infinite (countable) union of translated linear subvarieties,
        \item if $\sN$ is a coherent $\shD_{X^\an\times \C^r/\C^r}$-submodule of $\wt{i^!_{K_\bm}}\cN_0(*E_\bn)$ for $\bn\in \N^r$, then  $\supp(\sN)\subseteq \C^r$ is a countable union of translated linear subvarieties.
    \end{enumerate}
\end{mainthm}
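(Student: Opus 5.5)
We deduce all three parts from the comparison theorem \eqref{eq:compmain} (Theorem \ref{thm:sabbspcmp}), the $dR$-algebraicity of $\pi_*^{\sG_r}(\Psi_{K_\bm}(\cM))$, and the Ax--Lindemann-type criterion of Budur--Wang (Lemma \ref{lm:BudurWang}).

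\emph{Part (1).} After localizing the log structure at $\bm$ (Remark \ref{rmk:localizingupper!}) we may assume $\bm=e$. The module $\Psi_K(\cM)=\wt{i^!_K}\cN_0(*E)$ is a relative regular holonomic $\shD_{X^\an\times\C^r/\C^r}$-module. It is $\sG_r$-equivariant, since multiplication by $t_i$ shifts $s_i$ by one and $\cN_0(*E)$ is stable under $t_i^{\pm1}$. Hence its support is $\sG_r$-invariant, and
\[
\supp \pi_*^{\sG_r}\Psi_K(\cM)=\Exp\bigl(\supp\Psi_K(\cM)\bigr).
\]
Relative $\DR$ preserves relative supports, because relative Riemann--Hilbert is an equivalence compatible with restriction to fibers over points of the base. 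Combined with \eqref{eq:compmain}, this gives
\[
\supp\psi_K(\DR(\cM))=\Exp\bigl(\supp\Psi_K(\cM)\bigr).
\]
The support of $\psi_K$ is constructible on $(\C^*)^r$; this is Sabbah's finiteness, or it follows because $\psi_K(\DR\cM)$ is a constructible complex of $\C[\sG_r]$-modules. The $dR$-algebraicity of $\pi_*^{\sG_r}\Psi_K(\cM)$ says that this Zariski-closed set in $(\C^*)^r$ also has an analytic preimage in $\C^r$ cut out by the coherent module $\Psi_K(\cM)$ over a base with algebraic structure. Lemma \ref{lm:BudurWang} then shows that the set is a finite union of translated subtori. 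Its $\Exp$-preimage is a countable union of translated linear subvarieties, namely the translates by $2\pi i\,\Z^r$ of the corresponding linear subspaces. The main delicate point is checking the hypotheses of Lemma \ref{lm:BudurWang}. I would try to identify $\supp$ analytically with a locus of the form $\{\lambda: \text{some coherent algebraic object over }(\C^*)^r\text{ is nonzero at }\Exp\lambda\}$ via the $\sG_r$-descent.

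\emph{Part (2).} We have $\wt{i^!_K}\cN_0(*E_\bn)\subseteq \Psi_K(\cM)$, since localization along fewer components embeds in the full localization. This inclusion is compatible with the double limit \eqref{eq:doublelimtmain} restricted to $\alpha(\bn)^{-k}$. By Lemma \ref{lm:subbahgb} and the filtration by $\alpha(\bn)^{-k}\cN_0$, the quotient $\Psi_K(\cM)/\wt{i^!_K}\cN_0(*E_\bn)$ is a union of pieces annihilated by products of shifted linear forms coming from the Bernstein--Sato-type element. Hence the support of $\wt{i^!_K}\cN_0(*E_\bn)$ is the support of $\Psi_K(\cM)$ intersected or modified by countably many hyperplane conditions. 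Concretely, I would write the support as the union over $k$ of the supports of the graded pieces $\wt{i^!_K}(\alpha(\bn)^{-k}\cN_0)/\wt{i^!_K}(\alpha(\bn)^{-k+1}\cN_0)$. Each piece is a coherent relative module whose support is a $\sG$-shift of a single one, so it suffices to show that one piece has support equal to a finite union of translated linear subvarieties. I expect this to follow from its support being contained in $\supp \Psi_K$ and being an analytic subset cut by finitely many linear factors of the Bernstein--Sato element.

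\emph{Part (3).} A coherent submodule $\sN$ has closed analytic support contained in $\supp(\wt{i^!_K}\cN_0(*E_\bn))$, which is a locally finite countable union of translated linear subvarieties $L_j$. The relative support of a coherent relative holonomic module is an analytic subset. By relative holonomicity, $\sN$ is locally a successive extension whose supports are irreducible components of analytic subsets of $\bigcup L_j$. Every irreducible analytic subset of a locally finite union of linear spaces that arises as the support of a submodule must be a union of some of the $L_j$ or their pieces. Controlling this requires the components of the support of a subquotient of a relative regular holonomic module to be supports of subquotients of the ambient one, which are again linear. The simplest route is to argue fiberwise via Riemann--Hilbert: supports correspond to supports of subobjects of $\sG$-equivariant local systems, and these subobjects are in turn governed by Lemma \ref{lm:BudurWang}.
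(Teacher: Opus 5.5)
There is a genuine gap in Part (1), at the step you yourself flag as delicate. Lemma \ref{lm:BudurWang} needs an \emph{algebraic} irreducible $V\subseteq\C^r$ whose image $\Exp(V)$ is also algebraic. Your argument only yields $\supp\Psi_K(\cM)=\Exp^{-1}(W)$ with $W=\supp\psi_K(\DR(\cM))$ Zariski closed in $(\C^*)^r$. That holds for every algebraic $W$, and the components of $\Exp^{-1}(W)$ are in general transcendental (take $W=\{\chi_1=\chi_2+1\}\subseteq(\C^*)^2$). So recasting the support as a locus where some algebraic object over $(\C^*)^r$ is nonzero at $\Exp\lambda$ only restates downstairs algebraicity. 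It cannot supply the bi-algebraicity Budur--Wang requires.

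The missing input is that the irreducible components of the support \emph{upstairs} in $\C^r$ are algebraic. The paper gets this by local algebraic approximation:
\begin{enumerate}
\item By Proposition \ref{prop:!dual}, the dual of $\wt{i^!_K}\cN_0(*E)$ is $R\Gamma_{[\wt Z_K]}\wt{\cN_0'(!E)}$.
\item Its cohomology agrees locally over $\C^r$ with analytifications of finitely generated algebraic Koszul complexes (Proposition \ref{prop:doublelimitrgamma}), so each component of its support is algebraic (Corollary \ref{cor:algsupploccoh}).
\item Lemma \ref{lm:dualalPF} and purity transfer this to the components of $\supp(T_p/T_{p+1})$, where $T_\bullet$ is the $\sG_r$-equivariant pure filtration.
\item Separately, each $\pi_*^{\sG_r}(T_p/T_{p+1})$ is shown to be dR-algebraic, so Lemma \ref{lm:dralg} makes $\Exp(V)$ algebraic. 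Only then does Lemma \ref{lm:BudurWang} apply.
\end{enumerate}
(Also, with $\Exp(\alpha)=e^{-2\pi i\alpha}$ the deck group is $\Z^r$, not $2\pi i\,\Z^r$.)

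Part (3) has a second gap. A coherent submodule $\sN$ can have lower-dimensional support components lying inside some $L_j$, and nothing in your argument forces these to be linear. Your fiberwise Riemann--Hilbert route does not help, because $\sN$ is not $\sG_r$-equivariant and has no dR-algebraic descent to feed into Lemma \ref{lm:BudurWang}. The missing idea is purity: $\sN\cap T_\bullet$ is the pure filtration of $\sN$. Each $(\sN\cap T_p)/(\sN\cap T_{p+1})$ is then a $p$-pure submodule of $T_p/T_{p+1}$, so its support is a union of components of $\supp(T_p/T_{p+1})$. This is why Part (1) must be proved for the components of every $\supp(T_p/T_{p+1})$, not just for the total support.

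Part (2) should then follow from (3) via the inclusion $\wt{i^!_K}\cN_0(*E_\bn)\hookrightarrow\wt{i^!_K}\cN_0(*E)$ of Corollary \ref{cor:latticesinc}. Your graded-piece decomposition does not work, for two reasons:
\begin{enumerate}
\item It omits the term to which $\wt{i^!_K}(\alpha(\bn)^k\cN_0)$ stabilizes locally as $k\to+\infty$. This term does not vanish, since $\wt{\alpha(\bn)^k\cN_0}$ stabilizes locally to the minimal extension (Corollary \ref{cor:anlocmaxminex}).
\item Lying in a union of hyperplanes coming from a $b$-function does not make a support linear.
\end{enumerate}
Infiniteness needs its own argument. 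If $\supp\wt{i^!_K}\cN_0$ were a finite union, almost all $\sG_r$-translates of a component of $\supp\wt{i^!_K}\cN_0(*E)$ would lie in $\supp\wt{i^!_K}(\cN_0(*E)/\cN_0)$. By Theorem \ref{thm:upper!DI} and Sabbah's $b$-functions, that set lies in $\bigcup_{L,\gamma}\bigcup_{p\geq0}(L\cdot s+\gamma-p=0)$ with finitely many nonzero $L\in\N^r$. This fails for the translates by $\prod_i t_i^q$ with $q\gg0$.
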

The third part of the aforementioned theorem asserts that every irreducible components of the relative support of $\wt{i^!_{K_\bm}}\cN_0(*E_\bn)$, including any ``embedded" component, is a translated linear subvariety of $\C^r$. Theorem \ref{thm:mainC} thus implies that the relative supports of generalized nearby cycles exhibit desirable geometric linearity, echoing the behavior of jumping indices in the $\C$-indexed Kashiwara-Malgrange $V$-filtrations. In cases where $\cM$ underlies a $\Q$-mixed Hodge modules (for instance $\cM=\cO_X$), Theorem \ref{thm:mainC} can be further refined as follows:


\begin{mainthm}\label{thm:mainD}
If $\cM$ is a regular holonomic $\shD_X$-module underlying a $\Q$-mixed Hodge module, then for $K\subseteq \N^r$ and $\bm\in \N^r$ we have:
    \begin{enumerate}[label=$(\arabic*)$]
        \item the $\sG_r$-invariant subset $\supp(\wt{i^!_{K_\bm}}\cN_0(*E))\subseteq\C^r$ is an infinite (countable) union of $\Q$-translated linear subvarieties such that 
        \[\supp(\psi_{K_\bm}(\DR(\cM)))=\textup{Exp}\big(\supp(\wt{i^!_{K_\bm}}\cN_0(*E))\big)\subseteq (\C^*)^r\]
        is a finite union of torsion translated subtori,  
        \item for every $\bn\in \N^r$, $\supp(\wt{i^!_{K_\bm}}\cN_0(*E_\bn))\subseteq\C^r$ is an infinite (countable) union of $\Q$-translated linear subvarieties,
        \item if $\sN$ is a coherent $\shD_{X^\an\times \C^r/\C^r}$-submodule of $\wt{i^!_{K_\bm}}\cN_0(*E_\bn)$ for $\bn\in \N^r$, then $\supp(\sN)\subseteq \C^r$ is a countable union of $\Q$-translated linear subvarieties.
    \end{enumerate}
\end{mainthm}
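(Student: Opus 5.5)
The plan is to deduce Theorem \ref{thm:mainD} from Theorem \ref{thm:mainC}. The only new content is rationality: each translated linear subvariety appearing in the supports should be a $\Q$-translate, i.e. of the form $L+\beta$ with $L$ a rational linear subspace and $\beta\in\Q^r$.

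\emph{Step 1: the support in $(\C^*)^r$.} Theorem \ref{thm:mainC}(1) already identifies the support as a finite union of translated subtori, so it remains to show the translates are torsion when $\cM$ underlies a $\Q$-mixed Hodge module. Via the comparison \eqref{eq:compmain}, this support is the relative support of the Sabbah specialization complex $\psi_{K_\bm}(\DR(\cM))$, where $\DR(\cM)$ is now a perverse sheaf defined over $\Q$.
\begin{itemize}
\item Quasi-unipotence is available: by Saito's theory, every local monodromy of the nearby cycles of $\cM$ along a single function is quasi-unipotent.
\item Sabbah's description of $\psi_{K_\bm}$ through a toric modification reduces to the normal crossing case.
\item In that case the support is cut out by the multivariable monodromies around the components. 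These eigenvalues are monodromies of mixed Hodge modules along single functions (composite functions $F^{\bm'}$), hence roots of unity.
\end{itemize}
A Zariski-closed set of this shape, namely a finite union of translated subtori whose one-parameter restrictions along the relevant directions all have root-of-unity eigenvalues, must be a finite union of torsion translated subtori. This is the same argument as in the Hodge-module part of Theorem \ref{thm:mainA}.

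\emph{Step 2: lifting to $\C^r$.} Let $T=u\cdot T_0$ be a torsion translated subtorus with $u$ torsion. Then $\textup{Exp}^{-1}(T)$ is a countable union $\bigcup_{g}(L+\beta+g)$, where $L$ is the rational subspace with $\textup{Exp}(L)=T_0$ and $\beta\in\Q^r$ satisfies $\textup{Exp}(\beta)=u$. By Theorem \ref{thm:mainC}(1), $\supp(\wt{i^!_{K_\bm}}\cN_0(*E))$ is a $\sG_r$-invariant countable union of translated linear subvarieties, and its image under $\textup{Exp}$ is a union of such $T$. So every component $L'+c$ satisfies $\textup{Exp}(L'+c)\subseteq T$ for some torsion $T$.

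\emph{Step 3: rationality of the components.} This is the main obstacle. From Step 2 we get $L'\subseteq L$, but we need to show $L'$ itself is rational and that $c$ can be chosen in $\Q^r$.
\begin{itemize}
\item The approach is to rerun the Ax--Lindemann argument of Lemma \ref{lm:BudurWang}, which produced the linearity in Theorem \ref{thm:mainC}.
\item Dimension and $\sG_r$-invariance force each component's image to be a full subtorus among the finitely many.
\item Then $L'$ is the maximal linear subspace of $\textup{Exp}^{-1}(\textup{Exp}(L'+c))$ through $c$. This subspace equals $L$ up to translation by $\Z^r$, so it is rational.
\item Finally $c\in\beta+L+\Z^r\subseteq\Q^r+L$, which gives part (1).
\end{itemize}

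\emph{Parts (2) and (3).} For (2) and (3), with $\bn\in\N^r$ general and $\sN$ a coherent submodule, I follow the proof of Theorem \ref{thm:mainC}(2),(3).
\begin{itemize}
\item Those statements show that every component, including embedded ones, is a translated linear subvariety contained in the support for $*E$ (localization only enlarges the support).
\item The same Exp-algebraicity argument applies to the support of $\pi^{\sG_r}_*\sN$.
\item The rationality argument of Step 3 then applies verbatim inside the torsion subtori from Step 1.
\end{itemize}
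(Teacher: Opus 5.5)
\paragraph{Gap 1: Step 1 does not prove that the subtori are torsion-translated.}
Steps 2 and 3 are essentially automatic. The support is $\sG_r$-invariant, so it equals $\textup{Exp}^{-1}$ of its image. The preimage of a torsion-translated subtorus is a union of sets $\beta+L+g$ with $\beta\in\Q^r$, $g\in\Z^r$ and $L$ rational; rational directions are already built into ``translated linear subvariety''. So part (1) is exactly the torsion statement in $(\C^*)^r$, and that rests entirely on Step 1, which does not prove it. There are three problems.

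First, quasi-unipotence along single functions $F^{\bm'}$ at best controls points of the support lying on the one-parameter subgroups $\lambda\mapsto(\lambda^{m'_1},\dots,\lambda^{m'_r})$. A zero-dimensional component $\{(c_1,c_2)\}\subset(\C^*)^2$ with $c_1,c_2$ multiplicatively independent lies on none of them.

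Second, the reduction to normal crossings via a modification only gives a \emph{containment} of the support in a finite union of torsion-translated subtori, because cancellation can occur in the direct image. Containment is not enough: $\{c\}\times\C^*\subset(\C^*)^2$ is not torsion-translated when $c$ is not a root of unity.

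Third, appealing to the Hodge-module part of Theorem \ref{thm:mainA} is circular. The paper derives that part from the present theorem, via Corollary \ref{cor:expofBF}.

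The missing idea is a mechanism that puts a torsion point on every component, after which Lemma \ref{lm:BudurWang}(2) applies. The paper gets this by induction along the strata $K^i=\langle e_1,\dots,e_i\rangle$; general $K$ then follows from localization triangles. The induction carries two hypotheses:
\begin{enumerate}
\item codimension-$q$ components of the supports of $\pi_*^{\sG}$ of the modules $R^q\Gamma$ are torsion-translated;
\item essential fibres at general torsion points are iterated extensions of subquotients of $\Q$-mixed Hodge modules.
\end{enumerate}
In the inductive step, dR-algebraicity and Lemma \ref{lm:BudurWang}(1) make the new components translated subtori. Theorem \ref{thm:equivmainsuppfiberation}(2),(3), together with Lemma \ref{lm:sqstQsp}, Theorem \ref{thm:latticemtqsupp} and Proposition \ref{prop:essfibMHM}, shows that these components fibre over a torsion-translated base. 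Their fibres over general torsion points consist of roots of unity, so each component contains a torsion point.

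\paragraph{Gap 2: parts (2) and (3).}
Knowing that the total support of $\wt{i^!_{K_\bm}}\cN_0(*E)$ is a union of $\Q$-translated linear subvarieties says nothing about lower-dimensional components of $\supp(\sN)$ or of $\supp(\wt{i^!_{K_\bm}}\cN_0(*E_\bn))$. For example, the line $\{s_1=\sqrt2\}\subset\C^2$ is a translated linear subvariety with irrational offset inside a $\Q$-translated one. These modules are also not $\sG_r$-equivariant, so $\pi_*^{\sG_r}\sN$ is not defined and your Step 3 cannot be rerun.

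What is needed is the torsion statement for the support of every pure graded piece $T_p/T_{p+1}$ of $\wt{i^!_K}\cN_0(*E)$; these pieces are $\sG_r$-equivariant. By duality and Lemma \ref{lm:dualalPF}, this is equivalent to the statement for every codimension-$q$ component of $\supp R^q\Gamma_{[\wt Z_K]}\wt{\cN_0'(!E)}$. This is why the paper's induction tracks every $q$ rather than just the total support.

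Then, since $\sN\cap T_\bullet$ is the pure filtration of $\sN$ and pure relative holonomic modules have equidimensional support, each component of $\supp(\sN)$ is a component of some $\supp(T_p/T_{p+1})$. Part (2) is the case $\sN=\wt{i^!_{K_\bm}}\cN_0(*E_\bn)$, which embeds in $\wt{i^!_{K_\bm}}\cN_0(*E)$ by Corollary \ref{cor:latticesinc}; the ``infinite union'' claim comes from Theorem \ref{thm:mainC}.
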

The strategy for proving the above theorem relies on finding $\Q$-points within each irreducible component of the relative support of the nearby-cycle module, a task facilitated by Theorem \ref{thm:mainC}. We first use the $\Q$-specializability of $\Q$-mixed Hodge modules (see \S\ref{sect:Qspecial}) to generate such $\Q$-points in the case where the log boundary is a divisor. Subsequently, we utilize the fact that the property of being $\Q$-mixed Hodge modules is stable under the nearby cycles functor along a hypersurfaces, allowing for induction across different log strata.
This inductive process depends on two technical results, Theorem \ref{thm:mainsuppfiberation} and Theorem \ref{thm:equivmainsuppfiberation}, both of which are derived from the log characteristic cycle formula in Theorem \ref{thm:relGins-Sab}.   
\subsection{Geometric Budur-type conjecture}
Based on computational examples, Budur conjectured that for $\bm_1,\dots,\bm_p\in\N^r$ the ideal $B_F^{\bm_1,\dots,\bm_p}$ is generated by products of linear polynomials of type $c_1s_1+\cdots+c_rs_r+c_0$ with $c_i\in \N$ and $c_0>0$ (cf. \cite{Budur} and \cite[Remark.6.8.(1)]{BSZ25}). 
Given a monoid ideal $K\subseteq\N^r$, Eq.\eqref{eq:eqlimprojmain} yields a surjection
\[\wt{i^!_{K}}\cN_0\twoheadrightarrow \frac{\wt\cN_0}{\wt{K\cdot\cN_0}}\]
and specifically, $\wt{i^!_{K}}\cN_F\twoheadrightarrow \frac{\wt\cN_F}{\wt{K\cdot\cN_F}}.$ Since $\sim$-functor is faithfully flat, following Eq.\eqref{eq:suppbideal=} we get
\[Z(B^K(\cN_0))\subseteq \supp(\wt{i^!_{K}}\cN_0).\]
We have shown that $\supp(\wt{i^!_{K}}\cN_0)$ is a union of translated linear subvarieties (or a union of $\Q$-translated linear subvarieties if $\cM$ underlies a $\Q$-mixed Hodge module).
In particular, $Z(B^K_F)$ is contained in a union of $\Q$-translated linear subvarieties, thereby providing geometric evidence of Burdur's conjecture. In our setting, it is then natural to ask whether a Budur-type conjecture holds for $\cN_0$ more generally, at least in a geometric sense.
\begin{mainconj}[Geometric Budur-type Conjecture]
    With notation as above, let $\cM$ be a regular holonomic $\shD_X$-module. For $K\subseteq \N^r$ and $\bm\in\N^r$, 
    $$Z(B^{K_\bm}(\cN_0))\subseteq \C^r$$
    is a finite union of translated linear subvarieties. Moreover, if $\cM$ underlies a $\Q$-mixed Hodge module, then   $Z(B^{K_\bm}(\cN_0))\subseteq \C^r$ is a finite union of $\Q$-translated linear subvarieties.
\end{mainconj}

Lastly, we outline the structure of the paper. Section \ref{sect:logdmodule} provides the necessary preliminaries on the theory of log $\shD$-modules in the normal crossing setting. In Section \ref{sect:GAGArel}, we discuss algebraic and analytic relative constructible sheaves and relative $\shD$-modules. Section \ref{sect:Qspecial} is devoted to the $\Q$-specializability of $\shD$-modules. Section \ref{sect:rmegnc} constitutes the core of the paper, where we construct relative minimal extensions and generalized nearby cycles modules. In Section \ref{subsect:logBSideal}, we provide a logarithmic interpretation of Bernstein–Sato ideals and establish our main results. Finally, Appendix I, written by Sabbah, establishes a formula for the relative characteristic variety, while Appendix II applies Sabbah’s formula to derive the logarithmic characteristic cycle formula.




\noindent{\bf Acknowledgement.}
The author would like to thank Nero Budur and Botong Wang for many helpful discussions and for answering various questions. A large part of this paper was prepared during the author’s visit to Harvard University; he is grateful to his former advisor, Mihnea Popa, for the invitation, his hospitality, and insightful discussions. Furthermore, the author deeply thanks Claude Sabbah for answering questions.

The author was supported by a start-up grant from Zhejiang University.

\section{Logarithmic $\shD$-modules in the normal crossing case}\label{sect:logdmodule}
\subsection{Basic set-up}\label{subsect:logDmodule}
We recall some terminology from the theory of logarithmic $\shD$-modules for the log structure associated with a normal crossing divisor; see \cite{WZ}, and also \cite{WuHohl} for the general theory.

The standard smooth log structure on
\[
\A_\C^r=\Spec \C[\N^r]\simeq \Spec \C[t_1,\dots,t_r]
\]
is given by the pre-log structure
\[
\N^r\hookrightarrow \C[\N^r]\simeq \C[t_1,\dots,t_r],
\]
where $\N=\{0,1,2,\dots\}$. Pulling this standard log structure back to
$Z=X\times \A_\C^r$, where $X=\Spec S$ is an irreducible smooth affine complex variety of dimension $n$, we obtain a log structure on
$Z=\Spec S[\N^r]$ given by
\[
\alpha\colon \N^r\hookrightarrow S[\N^r]\simeq S[t_1,\dots,t_r].
\]
Let $K\subseteq \N^r$ be a monoid ideal. Then $K$ defines an idealized log subscheme
\[
Z_K=\Spec \frac{S[\N^r]}{\alpha(K)\cdot S[\N^r]}\subseteq Z
\]
see \cite[III.1.3]{Ogusbook}.

Let $E=(\prod_{i=1}^r t_i=0)$ be the normal crossing divisor on $Z$. The above log structure on $Z$ is precisely the log structure associated with $E$. We denote by $\shD_{Z,E}$ the ring of logarithmic differential operators with respect to this log structure; explicitly,
\[
\shD_{Z,E}
=
\shD_X[t_1,\dots,t_r]\langle t_1\partial_{1},\dots,t_r\partial_r\rangle,
\]
with defining relations
\[
[t_i\partial_i,t_j]=\delta_{i,j}\cdot t_i
\]
cf. \cite[\S2]{WZ}. Thus $\shD_{Z,E}$ is a $\C$-subalgebra of $\shD_Z$, the ring of differential operators on $Z$.

The rank of the characteristic sheaf of the log structure determines the log stratification
\[
Z=\bigsqcup_k E^k,
\]
where the rank of the characteristic sheaf on $E^k$ is exactly $k$; cf. \cite{Ogusbook}. Let $Y$ be the closure of an irreducible component of $E^k$ for some $k\geq 0$. After relabeling the indices, $Y$ is the idealized log subvariety defined by the submonoid generated by $e_{r-k+1},\dots,e_r$, where $e_i$ denotes the $i$-th standard basis vector of $\N^r$. Since the monoid ideal defining $Y$ is the complement of a face of $\N^r$ \cite[Def.1.4.1]{Ogusbook}, $Y$ inherits a smooth log structure. More explicitly, we have a log closed embedding
\[
i_Y\colon Y=X\times \A_\C^l\hookrightarrow Z=X\times \A_\C^r,
\quad
(x,t_1,\dots,t_l)\mapsto (x,t_1,\dots,t_l,0,\dots,0),
\]
where $l=r-k$, and the induced normal crossing divisor on $Y$ is
\[
D=\left(\prod_{i=1}^l t_i=0\right).
\]
Notice that when $k=r$, the variety $Y$ is simply $X$ with empty log boundary.

We write
\[
R^{<l>}_k=\C[s_{l+1},\dots,s_{k+l}]
\quad \text{and} \quad
R_r=\C[s_1,\dots,s_r].
\]
We then set
\[
\sA^k_Y\coloneqq \shD_Y\otimes_\C R^{<l>}_k
\simeq \shD_Y\otimes_\C R_k,
\quad
\sA_X^r\coloneqq \shD_X\otimes_\C R_r,
\quad \textup{and} \quad
\sA^k_{Y,D}\coloneqq \shD_{Y,D}\otimes_\C R^{<l>}_k,
\]
where $\shD_{Y,D}$ denotes the ring of logarithmic differential operators with respect to the induced log structure associated with the normal crossing divisor $D\subseteq Y$. More generally, throughout the paper we let $R$ be a regular commutative finite-type $\C$-algebra domain and set
\[
\sA^R_Y\coloneqq \shD_Y\otimes_\C R,
\]
together with its subring
\[
\sA^R_{Y,D}\coloneqq \shD_{Y,D}\otimes_\C R\subseteq \sA^R_Y.
\]

The decomposition $Z=X\times \A_\C^l\times \A_\C^k$
gives $\C$-algebra inclusions
\[
\shD_X\hookrightarrow \shD_{Y,D}\hookrightarrow \shD_{Z,E},
\]
and hence induced inclusions of $\C$-algebras
\begin{equation}\label{eq:uncanincl}
    \sA_X^r\hookrightarrow \sA_{Y,D}^k\hookrightarrow \shD_{Z,E},
\end{equation}
where $s_i$ is identified with $-t_i\partial_{t_i}-1$ for $i=1,\dots,r$. More generally, analogously to \eqref{eq:uncanincl}, we also have
\begin{equation}\label{eq:uncaningen}
\sA^{R\otimes_\C R_l}_X
=
\sA_X^{R[s_1,\dots,s_l]}
\hookrightarrow
\sA^R_{Y,D},
\quad
s_i\mapsto -t_i\partial_i-1,
\quad
i=1,\dots,l.
\end{equation}
The inclusion of centers
\[
R\hookrightarrow R\otimes_\C R_l
\]
induces the smooth projection
\[
p_{\log}\colon \Spec R\otimes_\C R_l\longrightarrow \Spec R.
\]

Considering the associated analytic space of $\Spec R_l$, which we identify with $\C^l$, we have the universal covering map
\[
\operatorname{Exp}\colon \C^l\to (\C^*)^l,\quad
(\alpha_1,\dots,\alpha_l)\mapsto
(e^{-2\pi i\alpha_1},\dots,e^{-2\pi i\alpha_l}),
\]
where $(\C^*)^l$ is the affine complex torus. Geometrically, $\C^l$ is naturally the complex cotangent space to the fibers of the Kato--Nakayama space $Z^{\log}$ over $E^k$; cf. \cite[V.1.2]{Ogusbook}.

We will also use the algebraic structure $\Spec T_l$ on $(\C^*)^l$, where $T_l$ is the Laurent polynomial ring
\[
T_l=\C[\chi_1^{\pm1},\chi_2^{\pm1},\dots,\chi_l^{\pm1}].
\]
An irreducible algebraic subvariety $V$ of $\Spec R_l$ is called a \emph{linear subvariety} if $\operatorname{Exp}(V)$ is a subtorus of $(\C^*)^l$. Since ideals of subtori are generated by binomials in $T_l$, one sees that linear subvarieties are integer translates of linear subspaces of $\C^l$ defined over $\Q$. In particular, linear subvarieties are defined over $\Q$.

An algebraic subvariety $W\subseteq \C^l$ is called a \emph{translated linear subvariety} if $W=V+a$ for some linear subvariety $V$ and some $a\in \C^l$. Moreover, $W$ is called a \emph{$\Q$-translated linear subvariety} if, in addition, $a\in \Q^l$. Thus the $\operatorname{Exp}$-image of a translated linear subvariety is a translated subtorus, and the $\operatorname{Exp}$-image of a $\Q$-translated linear subvariety is a torsion-translated subtorus.

We denote by $F_\bullet\shD_Y$ the order filtration on $\shD_Y$. It induces the order filtration on $\shD_{Y,D}$:
\[
F^{\log}_\bullet\shD_{Y,D}
=
F_\bullet\shD_Y\cap \shD_{Y,D}.
\]
Then we have the induced relative filtrations on both $\sA^R_Y$ and $\sA^R_{Y,D}$:
\[
F_\bullet\sA_Y^R
=
F_\bullet\shD_Y\otimes R
\quad \textup{and} \quad
F^\dagger_\bullet\sA_{Y,D}^R
=
F^{\log}_\bullet\shD_{Y,D}\otimes R,
\]
so that
\[
\gr_\bullet\sA_Y^R
=
\gr_\bullet\shD_Y\otimes R
\quad \textup{and} \quad
\gr^\dagger_\bullet\sA_{Y,D}^R
=
\gr^{\log}_\bullet\shD_{Y,D}\otimes R.
\]
It follows that $\sA_Y^R$ and $\sA^R_{Y,D}$ are noetherian rings; cf. \cite[A.III 1.27]{BJ}. As in the classical theory of $\shD$-modules, for a finitely generated $\shD_{Y,D}$-module $\cM$, a good filtration with respect to $F^{\log}_\bullet\shD_{Y,D}$ gives the logarithmic characteristic variety
\[
\Ch^{\log}(\cM)\subseteq T^*(Y,D);
\]
cf. \cite[\S 2]{WZ}.

Similarly, one can consider good filtrations with respect to $F_\bullet\sA_Y^R$ and $F^\dagger_\bullet\sA_{Y,D}^R$, thereby obtaining characteristic varieties for $\sA^R_Y$-modules and $\sA^R_{Y,D}$-modules, respectively. For a finitely generated left or right $\sA_Y^R$-module, respectively $\sA_{Y,D}^R$-module, $\cP$, we use
\[
\Ch(\cP)
=
\Ch_{\sA_Y^R}(\cP)
\subseteq T^*Y\times \Spec R
\]
and
\[
\Ch^\dagger(\cP)
=
\Ch^\dagger_{\sA_{Y,D}^R}(\cP)
\subseteq T^*(Y,D)\times \Spec R
\]
to denote the characteristic variety, with respect to $F_\bullet\sA_Y^R$ and $F^\dagger_\bullet\sA_{Y,D}^R$ respectively. We denote the corresponding characteristic cycles by replacing $\Ch$ with $\CC$.

Let $\cM$ be a left or right $\sA^R_Y$-module, and let $\cN\subseteq \cM$ be a nonzero finitely generated $\sA^R_{Y,D}$-submodule. We say that $\cN$ is an $\sA^R_{Y,D}$-\emph{lattice} of $\cM$ if
\[
\cM\simeq \cN(*D)
\coloneqq
\cN\otimes_{\cO_Y}\cO_Y(*D),
\]
where $\cO_Y$ is the coordinate ring of $Y$ and $\cO_Y(*D)$ denotes its localization along $D$. Equivalently, $\cN$ is an $\sA^R_{Y,D}$-lattice if $\cN$ has no nonzero $\sA^R_{Y,D}$-submodules supported on $D$.

Moreover, if $H$ is a divisor supported on $D$, then its fractional ideal $\cO_Y(H)\subseteq \cO_Y(*D)$ is a left $\shD_{Y,D}$-module. Hence, by \cite[Lem.2.1]{WZ},
\[
\cN(H)\coloneqq \cN\otimes_{\cO_Y}\cO_Y(H)
\]
is a $\shD_{Y,D}$-module, and therefore also an $\sA^R_{Y,D}$-module, since $R$ lies in the center of $\sA^R_{Y,D}$. Consequently, $\cN(H)$ is also an $\sA^R_{Y,D}$-\emph{lattice} whenever $\cN$ is one. Moreover, if $H$ is effective, then we have a direct system
\[
\cN(pH)\hookrightarrow \cN((p+1)H),
\quad p\in \Z,
\]
such that, if $\supp(H)=D$, then
\[
\varinjlim_{p\to\infty}\cN(pH)=\cM.
\]
In particular, if $\cN\subseteq \cM$ is a lattice, then
\[
t_i^p\cdot \cN=\cN(-pD_i)\subseteq \cM
\]
is also a lattice for $i=1,\dots,l$ and $p\in \Z$.

\begin{lemma}\label{lm:lattictwist}
Let $\cN$ be an $\sA_{Y,D}^k$-lattice. Let
\[
H_1=\sum_{i=1}^l a_iD_i
\quad \text{and} \quad
H_2=\sum_{i=1}^l b_iD_i
\]
be two divisors supported on $D$, with $H_1$ effective and $D_i=(t_i=0)$. Then
\[
\frac{\cN}{\cN(-H_1)}(H_2)
\simeq
\frac{\prod_{i=1}^l t_i^{-b_i}\cdot \cN}
{\prod_{i=1}^l t_i^{a_i-b_i}\cdot \cN}.
\]
\end{lemma}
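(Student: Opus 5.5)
The isomorphism will come from a single identification: $\cO_Y(H_2)$ is an invertible $\cO_Y$-module, and inside $\cM=\cN(*D)$ both $\cN(-H_1)$ and the twist $\cN(H)$ can be written as $\prod t_i^{\bullet}\cdot\cN$.

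\emph{Step 1: the twists as submodules of $\cM$.} For any divisor $H=\sum_i c_iD_i$ supported on $D$, the fractional ideal is principal: $\cO_Y(H)=\prod_{i=1}^l t_i^{-c_i}\cdot\cO_Y\subseteq\cO_Y(*D)$. Since $\cO_Y(*D)$ is flat over $\cO_Y$ and $\cN\subseteq\cM=\cN\otimes\cO_Y(*D)$ (as $\cN$ is a lattice), the natural map $\cN\otimes_{\cO_Y}\cO_Y(H)\to\cM$ is injective. Its image is $\prod_i t_i^{-c_i}\cdot\cN$. This image is an $\sA^k_{Y,D}$-submodule, by \cite[Lem.2.1]{WZ} and the paper's remark that $t_i^p\cN$ is a lattice. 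Applying this with $H=-H_1$ gives
\[
\cN(-H_1)=\prod_i t_i^{a_i}\cdot\cN\subseteq\cN,
\]
where the inclusion uses that $H_1$ is effective, i.e.\ $a_i\ge 0$.

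\emph{Step 2: twisting the short exact sequence.} Tensor the sequence
\[
0\to\cN(-H_1)\to\cN\to\cN/\cN(-H_1)\to 0
\]
with the locally free rank-one module $\cO_Y(H_2)$. Exactness is preserved, so $\frac{\cN}{\cN(-H_1)}(H_2)\simeq \cN(H_2)/\cN(-H_1)(H_2)$. Using $\cO_Y(-H_1)\otimes\cO_Y(H_2)\simeq\cO_Y(H_2-H_1)$ and Step 1, this becomes
\[
\frac{\prod_i t_i^{-b_i}\cdot\cN}{\prod_i t_i^{a_i-b_i}\cdot\cN}.
\]

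\emph{Step 3: compatibility with the operator action.} The isomorphisms must be $\sA^k_{Y,D}$-linear, not only $\cO_Y$-linear. The $\shD_{Y,D}$-action on $\cN\otimes\cO_Y(H)$ is the tensor-product Leibniz action. Under the embedding into $\cM$, this agrees with the restriction of the $\shD_{Y,D}$-action on $\cM$: one checks that $t_j\partial_j$ applied to $m\otimes t^{-c}$ gives $(t_j\partial_j m)\,t^{-c}-c_j\,m\,t^{-c}$ on both sides. The $s_i$ are central and act compatibly, so the maps in Step 2 are $\sA^k_{Y,D}$-linear.

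This compatibility check is the only point requiring care; the rest is formal.
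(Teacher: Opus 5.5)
Your proposal is correct and is the paper's argument: twist $0\to\cN(-H_1)\to\cN\to\cN/\cN(-H_1)\to 0$ by the invertible sheaf $\cO_Y(H_2)$, and use the lattice property to identify $\cN(H_2)$ and $\cN(H_2-H_1)$ with $\prod_i t_i^{-b_i}\cdot\cN$ and $\prod_i t_i^{a_i-b_i}\cdot\cN$ inside $\cN(*D)$. Your Step 3 only makes explicit the $\sA^k_{Y,D}$-linearity that the paper leaves implicit. One small correction: in Step 1, injectivity of $\cN(H)\to\cN(*D)$ does not come from flatness of $\cO_Y(*D)$, but from $\cN$ having no $t$-torsion (the lattice condition) together with $\cO_Y(H)=\prod_i t_i^{-c_i}\cO_Y$.
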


\begin{proof}
We have a short exact sequence
\[
0
\to
\cN(H_2-H_1)
\longrightarrow
\cN(H_2)
\longrightarrow
\frac{\cN}{\cN(-H_1)}(H_2)
\to
0.
\]
Since $\cN$ is a lattice, we have
\[
\cN(H_2)
\simeq
\prod_{i=1}^l t_i^{-b_i}\cdot \cN
\quad \textup{and} \quad
\cN(H_2-H_1)
\simeq
\prod_{i=1}^l t_i^{a_i-b_i}\cdot \cN.
\]
The assertion follows.
\end{proof}

\begin{lemma}\label{lm:twosidedideal}
If $K\subseteq \N^r$ is a monoid ideal, then
$\shD_{Z,E}\cdot \alpha(K)\subseteq \shD_{Z,E}$ is a two-sided ideal.
\end{lemma}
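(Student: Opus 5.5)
To prove Lemma \ref{lm:twosidedideal}, we will show that the left ideal $I\coloneqq\shD_{Z,E}\cdot \alpha(K)$ is also a right ideal. Because $\shD_{Z,E}$ is generated as a $\C$-algebra by $\shD_X$, $t_1,\dots,t_r$ and $t_1\partial_1,\dots,t_r\partial_r$, it is enough to prove $t^{\bm}\cdot g\in I$ for every $\bm\in K$ and every generator $g$. Here $t^\bm=\alpha(\bm)=\prod t_i^{m_i}$. Once this holds for generators, induction on the length of a monomial in the generators gives $t^\bm P\in I$ for all $P\in\shD_{Z,E}$. Indeed, if $t^\bm P\in I$, write $t^\bm P=\sum_j Q_j t^{\bm_j}$ with $\bm_j\in K$; then $t^\bm P g=\sum_j Q_j t^{\bm_j} g$, and each $t^{\bm_j}g$ lies in $I$ by the generator case. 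Consequently $I\cdot \shD_{Z,E}\subseteq I$.

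The generator check goes as follows. Elements of $\shD_X$ commute with every $t_i$, since $Z=X\times\A^r$ and the $t$-coordinates are independent of $X$, so $t^\bm P=Pt^\bm\in I$ for $P\in\shD_X$. Multiplication by $t_j$ gives $t^\bm t_j=t^{\bm+e_j}$, and $\bm+e_j\in K$ because $K$ is a monoid ideal, i.e. $K+\N^r\subseteq K$. For the log vector fields, the relation $[t_i\partial_i,t_j]=\delta_{ij}t_i$ gives $[t_i\partial_i, t^\bm]=m_it^\bm$. Hence
\[
t^\bm\cdot t_i\partial_i=(t_i\partial_i-m_i)\,t^\bm\in I .
\]
Therefore each $t^\bm$ normalizes $\shD_{Z,E}$ in the sense that $t^\bm\shD_{Z,E}\subseteq \shD_{Z,E}t^\bm$. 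Combined with the fact that $K$ is closed under adding $\N^r$, this is what makes the ideal two-sided.

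There is no real obstacle here. The one point that needs care is the final bookkeeping: $I$ is the left ideal generated by all $t^\bm$ with $\bm\in K$, not by a single element. That is why the induction has to run over elements of the form $\sum_j Q_j t^{\bm_j}$ rather than a single term $Qt^\bm$. It also uses the normalizing identity in the general form $t^\bm P=\sigma_\bm(P)\,t^\bm$, where $\sigma_\bm$ is the automorphism of $\shD_{Z,E}$ determined by $\sigma_\bm(t_i\partial_i)=t_i\partial_i-m_i$ and fixing $\shD_X$ and each $t_j$. With this identity, $I\cdot P\subseteq I$ follows directly, without the inductive argument.
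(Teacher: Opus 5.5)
Your proof is correct and is essentially the paper's argument: the paper just notes that each monomial $\alpha(\gamma)$ normalizes $\shD_{Z,E}$, recording the identity $P\cdot\alpha(\gamma)=\alpha(\gamma)\cdot Q$. Your automorphism $\sigma_{\bm}$ makes this explicit in the direction $t^{\bm}\shD_{Z,E}\subseteq\shD_{Z,E}\,t^{\bm}$, which is the one actually needed for the left ideal to be closed under right multiplication; as a minor remark, the monoid-ideal hypothesis is not needed (for instance $t^{\bm}t_j=t_jt^{\bm}\in I$ directly), so the left ideal generated by any set of monomials is two-sided.
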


\begin{proof}
For $\gamma\in K$, $\alpha(\gamma)\in S[t_1,\dots,t_r]$ is a monomial in the $t_i$. By definition of $\shD_{Z,E}$, for every $P\in \shD_{Z,E}$ there exists $Q\in \shD_{Z,E}$ such that
\[
P\cdot \alpha(\gamma)=\alpha(\gamma)\cdot Q.
\]
Thus, $\shD_{Z,E}\cdot \alpha(K)$ is a two-sided ideal.
\end{proof}

\begin{lemma}\label{lm:logsp}
We have a natural isomorphism of $\C$-algebras
\[
i_Y^*(\shD_{Z,E})
=
\frac{\shD_{Z,E}}{\shD_{Z,E}\cdot(t_{l+1},\dots,t_r)}
\simeq
\sA^k_{Y,D},
\]
such that the inclusion $\sA^k_{Y,D}\hookrightarrow \shD_{Z,E}$ in \eqref{eq:uncanincl} is a section of the natural $\C$-algebra morphism
\[
\shD_{Z,E}\longrightarrow i_Y^*(\shD_{Z,E}).
\]
Moreover, we have the induced filtration on $\sA^k_{Y,D}$:
\[
F^\sharp_\bullet\sA^k_{Y,D}
\simeq
i_Y^*(F^{\log}_\bullet\shD_{Z,E}),
\]
so that
\[
\gr^\sharp_\bullet\sA^k_{Y,D}
\simeq
\gr^{\log}_\bullet\shD_{Y,D}\otimes_\C \C[s_{l+1},\dots,s_r].
\]
In particular, when $k=r$, we have
\[
i_X^*(\shD_{Z,E})\simeq \sA_X^r
\quad \textup{and} \quad
F^\sharp_p\sA_X^r
=
\sum_{q+|\mathbf{i}|=p}F_q\shD_X\cdot \mathbf{s}^{\mathbf{i}},
\]
where
\[
i_X\colon X\hookrightarrow Z,\quad x\mapsto (x,0,\dots,0)
\]
is the closed embedding.
\end{lemma}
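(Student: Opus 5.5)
The plan is to compute the quotient directly from a PBW-type normal form for $\shD_{Z,E}$. In local coordinates $x_1,\dots,x_n$ on $X$ (or globally, using that $\shD_X$ is free over $S$ after choosing étale coordinates), every element of $\shD_{Z,E}$ can be written uniquely as a finite sum
\[
\sum P_{\mathbf{a},\mathbf{b}}\, t^{\mathbf a}\,(t\partial_t)^{\mathbf b},\qquad P_{\mathbf a,\mathbf b}\in\shD_X,\ \mathbf a,\mathbf b\in\N^r .
\]
The first step is to record this decomposition of $\shD_{Z,E}$ as a free left $\shD_X$-module with basis $t^{\mathbf a}(t\partial_t)^{\mathbf b}$. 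It follows from the relations $[t_i\partial_i,t_j]=\delta_{ij}t_i$ and the fact that $\shD_{Z,E}\subseteq\shD_Z$, where $\shD_Z$ has its usual PBW basis.

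Next, by Lemma \ref{lm:twosidedideal} applied to the monoid ideal generated by $e_{l+1},\dots,e_r$, the left ideal $\shD_{Z,E}\cdot(t_{l+1},\dots,t_r)$ is two-sided. Hence the quotient $i_Y^*(\shD_{Z,E})$ is a $\C$-algebra and the projection is an algebra map. Using the commutation rule $(t_j\partial_j)t_j=t_j(t_j\partial_j+1)$, I would show that this ideal is spanned over $\shD_X$ by the basis elements with $a_j>0$ for some $j>l$. The quotient is therefore spanned by the elements $t_1^{a_1}\cdots t_l^{a_l}(t\partial_t)^{\mathbf b}$, and these form a basis.

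Now compose the inclusion $\sA^k_{Y,D}\hookrightarrow\shD_{Z,E}$ from \eqref{eq:uncanincl}, where $s_j=-t_j\partial_j-1$, with the projection. This sends the PBW basis of $\sA^k_{Y,D}=\shD_{Y,D}\otimes\C[s_{l+1},\dots,s_r]$ bijectively onto the basis above, up to triangular changes of variables between $s_j$ and $t_j\partial_j$. The composite is therefore an algebra isomorphism, and the inclusion is a section of the projection.

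For the filtrations, I would define $F^\sharp_p$ as the image of $F^{\log}_p\shD_{Z,E}$. The latter is spanned by $P\,t^{\mathbf a}(t\partial_t)^{\mathbf b}$ with $\mathrm{ord}P+|\mathbf b|\le p$, since each $t_j\partial_j$ has order one. So its image is $\sum_{q+|\mathbf i|=p}F_q\shD_{Y,D}\cdot\mathbf s^{\mathbf i}$, because $s_j$ and $-t_j\partial_j$ differ by a constant of order zero.

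Taking graded pieces, the $s_j$ become commuting central symbols of degree one. This gives $\gr^\sharp\simeq\gr^{\log}\shD_{Y,D}\otimes\C[s_{l+1},\dots,s_r]$. For this step I would check that $\gr$ of the quotient filtration is the quotient of the graded module, which holds because the ideal is spanned by a subset of the basis compatible with the filtration. The case $k=r$ is the specialization $l=0$.

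The main obstacle is the basis claim for the ideal, namely showing that it has no elements beyond the obvious span. Two-sidedness reduces this to right multiplication by $\shD_{Z,E}$. Using $t_j\,Q=Q'\,t_j$, everything can be moved to the normal form with $t_j$ on the left of the $(t\partial_t)$ factors, and the required span follows.
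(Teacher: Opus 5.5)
Your proposal is correct and follows the same route as the paper: identify $s_i$ with $-t_i\partial_{t_i}-1$ through \eqref{eq:uncanincl}, use Lemma \ref{lm:twosidedideal} to see that $\shD_{Z,E}\cdot(t_{l+1},\dots,t_r)$ is two-sided so the quotient is a ring, and read off the induced filtration. The paper leaves the bijectivity and filtration bookkeeping to ``the construction,'' and your normal form $\sum P_{\mathbf a,\mathbf b}\,t^{\mathbf a}(t\partial_t)^{\mathbf b}$ supplies those details explicitly; it holds globally because $\shD_{Z,E}\simeq\shD_X\otimes_\C\C[t_1,\dots,t_r]\langle t_1\partial_{t_1},\dots,t_r\partial_{t_r}\rangle$, so \'etale coordinates are not needed.
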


\begin{proof}
The first isomorphism is obtained by identifying $s_i$ with
$-t_i\partial_{t_i}-1$ for $i=l+1,\dots,r$, following the convention in \eqref{eq:uncanincl}. The quotient $i_Y^*(\shD_{Z,E})$ is a ring because the ideal
\[
\shD_{Z,E}\cdot(t_{l+1},\dots,t_r)
\]
is two-sided by Lemma \ref{lm:twosidedideal}. The statements concerning the induced $\sharp$-filtration on $\sA^k_{Y,D}$ follow similarly from the construction.
\end{proof}

By the above lemma, if $\cP$ is a finitely generated $\sA^k_{Y,D}$-module, then taking a good filtration of $\cP$ over $F^\sharp_\bullet\sA^k_{Y,D}$ defines its $\sharp$-characteristic variety
\[
\Ch^\sharp(\cP)
\subseteq
T^*(Y,D)\times \A_\C^k
=
T^*(Z,E)|_Y.
\]
When $k=0$ and $D$ is empty, $\Ch^\sharp$, $\Ch^\dagger$, and $\Ch^{\log}$ all reduce to the usual characteristic varieties of $\shD_Y$-modules.

Let us remark that, for $\sA_X^{R\otimes_\C R_l}$-modules $\cN$, we can consider two types of good filtrations:
\[
F^\dagger_\bullet \sA_X^{R\otimes_\C R_l}
=
F^\dagger_\bullet\sA^R_{Y,D}\cap \sA_X^{R\otimes_\C R_l},
\quad
F_\bullet \sA_X^{R\otimes_\C R_l}
=
F_\bullet\shD_X\otimes_\C (R\otimes_\C R_l),
\]
so that
\[
\gr^\dagger_\bullet(\sA_X^{R\otimes_\C R_l})
\simeq
\gr^\sharp_\bullet(\sA_X^l)\otimes_\C R,
\quad
\gr_\bullet(\sA_X^{R\otimes_\C R_l})
\simeq
\gr_\bullet(\shD_X)\otimes_\C (R\otimes_\C R_l).
\]
Thus, we obtain two characteristic varieties,
\[
\Ch^\dagger_{\sA_X^{R\otimes_\C R_l}}(\cN)
\quad \textup{and} \quad
\Ch_{\sA_X^{R\otimes_\C R_l}}(\cN),
\]
both of which will be used later.

Recall that if $\cM$ is a nonzero finitely generated $\sA^R_{Y,D}$-module, then the grade of $\cM$ is
\[
j(\cM)
\coloneqq
\min\left\{
q \mid
\operatorname{Ext}^q_{\sA^R_{Y,D}}(\cM,\sA^R_{Y,D})\neq 0
\right\}.
\]
If $\cM=0$, then we define $j(\cM)=+\infty$ by convention.

\begin{lemma}\label{lm:ausdual}
Let $\cM$ be a nonzero finitely generated $\sA^R_{Y,D}$-module. Then:
\begin{enumerate}[label=$(\arabic*)$]
    \item $j(\cM)+\dim\Ch^\dagger(\cM)=2(n+l)+\dim R$;
    \item $j\big(\operatorname{Ext}^q_{\sA^R_{Y,D}}(\cM,\sA^R_{Y,D})\big)\ge q$ for all $q$.
\end{enumerate}
Moreover, if $\cM$ is a nonzero finitely generated $\sA^k_{Y,D}$-module, then
\[
j(\cM)+\dim\Ch^\sharp(\cM)=2(n+l)+k.
\]
\end{lemma}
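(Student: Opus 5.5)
The plan is to reduce the lemma to the general theory of Auslander-regular filtered rings whose associated graded ring is commutative, regular and equidimensional, in the form of Björk \cite{BJ}. For such a filtered ring $(A,F_\bullet)$ and a finitely generated $A$-module $\cM$ with a good filtration, one has
\[
j(\cM)=j_{\gr A}(\gr\cM)=\dim \gr A-\dim\supp(\gr\cM),
\]
together with the Auslander condition $j(\operatorname{Ext}^q_A(\cM,A))\ge q$. Both statements then follow once the hypotheses are checked for the filtered rings at hand.

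\textbf{Step 1: the filtration $F^\dagger_\bullet$.} By the set-up above, $\gr^\dagger_\bullet\sA^R_{Y,D}=\gr^{\log}_\bullet\shD_{Y,D}\otimes_\C R$. Since $D$ is a normal crossing divisor, $\gr^{\log}\shD_{Y,D}$ is the ring of functions on the log cotangent bundle $T^*(Y,D)$, by \cite[\S2]{WZ}. This is a vector bundle of rank $n+l$ over the smooth affine variety $Y$ of dimension $n+l$, and its coordinate ring is a polynomial ring over $\cO_Y$ in the log symbols $\xi_x$ and $t_i\partial_i$. Hence $\gr^\dagger\sA^R_{Y,D}$ is the coordinate ring of $T^*(Y,D)\times\Spec R$. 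Because $R$ is a regular finite-type domain, this ring is commutative, noetherian, regular, and of pure dimension $2(n+l)+\dim R$.

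\textbf{Step 2: apply Björk.} The filtration $F^\dagger$ is exhaustive, positive, and satisfies $F^\dagger_0=\cO_Y\otimes R$. Therefore \cite[A.III, A.IV]{BJ} applies: a filtered ring with regular commutative graded ring is Auslander regular, and for a good filtration one has $\operatorname{Ext}^q_{\gr A}(\gr\cM,\gr A)$ as a subquotient controlling the $\operatorname{Ext}^q_A(\cM,A)$.
\begin{itemize}
\item[(1)] This is the Cohen--Macaulay equality for the regular equidimensional ring $\gr A$: the grade of a finitely generated module equals $\dim\gr A-\dim\supp$. It transfers to $\cM$ through the spectral sequence argument of \cite{BJ}, which gives $j(\cM)=j(\gr\cM)$.
\item[(2)] This is the Auslander condition, which is inherited from $\gr A$, since commutative regular rings are Auslander regular.
\end{itemize}

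\textbf{Step 3: the $\sharp$-statement.} For the final claim, replace $F^\dagger$ by $F^\sharp$ from Lemma \ref{lm:logsp}. There $\gr^\sharp\sA^k_{Y,D}\simeq\gr^{\log}\shD_{Y,D}\otimes\C[s_{l+1},\dots,s_r]$. The only change is that the $s_i$ now sit in filtration degree $1$ rather than $0$. The graded ring is still a polynomial ring over $\gr^{\log}\shD_{Y,D}$, of dimension $2(n+l)+k$, and it is the coordinate ring of $T^*(Z,E)|_Y$. The same argument therefore gives $j(\cM)+\dim\Ch^\sharp(\cM)=2(n+l)+k$. A consistency check: since $j(\cM)$ does not depend on the filtration, this yields $\dim\Ch^\sharp=\dim\Ch^\dagger$ for $R=R_k$, as expected.

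\textbf{Main obstacle.} The only nonformal point is verifying that $\gr^{\log}\shD_{Y,D}$ really is regular and equidimensional of dimension $2(n+l)$. Equivalently, $T^*(Y,D)$ must be a genuine vector bundle, which holds for a normal crossing $D$ but would fail for a general divisor. I would cite \cite[\S2]{WZ} for this rather than re-derive it. With that in hand, everything else is the standard Björk package.
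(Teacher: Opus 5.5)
Your proposal is correct and follows essentially the paper's route. The paper notes that $\gr^\dagger_\bullet\sA^R_{Y,D}=\gr^{\log}_\bullet\shD_{Y,D}\otimes_\C R$ is commutative and regular, then invokes \cite[A.IV 4.15]{BJ} for Auslander regularity (part (2)) and \cite[A.IV 4.10 and 3.5]{BJ} for the grade--dimension identity (part (1)); it states that the same argument works for $F^\sharp$. Your write-up additionally makes explicit what the paper leaves implicit: the identification of the graded rings with the coordinate rings of $T^*(Y,D)\times\Spec R$ and $T^*(Z,E)|_Y$, and the resulting dimensions $2(n+l)+\dim R$ and $2(n+l)+k$.
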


\begin{proof}
Since
\[
\gr^\dagger_\bullet(\sA^R_{Y,D})
=
\gr^{\log}_\bullet(\shD_{Y,D})\otimes_\C R
\]
is a regular commutative ring, \cite[A.IV 4.15]{BJ} implies that $\sA^R_{Y,D}$ is Auslander regular. Hence part $(2)$ follows. Part $(1)$ follows from \cite[A.IV 4.10 and 3.5]{BJ}, and the same argument applies to $\Ch^\sharp$.
\end{proof}

\begin{lemma}\label{lm:twistCHsharp}
If $\cP$ is a finitely generated $\sA^R_{Y,D}$-module and $H$ is a divisor supported on $D$, then
\[
\Ch^\dagger(\cP)
=
\Ch^\dagger\big(\cP(H)\big).
\]
In particular, if $\cN_1$ and $\cN_2$ are two $\sA^R_{Y,D}$-lattices such that
$\cN_1(*D)\simeq \cN_2(*D),$
then
\[
\Ch^\dagger(\cN_1)=\Ch^\dagger(\cN_2).
\]
If, moreover, $\cP$ is a finitely generated $\sA^k_{Y,D}$-module, then we also have
\[
\Ch^\sharp(\cP)
=
\Ch^\sharp\big(\cP(H)\big).
\]
\end{lemma}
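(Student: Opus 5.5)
The idea is to transport a good filtration of $\cP$ to a good filtration of $\cP(H)$ with isomorphic associated graded module. Twisting by a divisor supported on the log boundary is an exact, filtration-compatible operation on $\shD_{Y,D}$-modules.

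Write $H=\sum_i b_iD_i$ and set $g=\prod_{i=1}^l t_i^{-b_i}$, a unit in $\cO_Y(*D)$. Then $\cO_Y(H)=g\cdot\cO_Y$ is a free rank-one $\cO_Y$-module. Hence
\[\cP(H)=\cP\otimes_{\cO_Y}\cO_Y(H)=\{u\otimes g\mid u\in\cP\},\]
and $u\mapsto u\otimes g$ is an $\cO_Y\otimes R$-linear bijection $\cP\to\cP(H)$. It is not $\shD_{Y,D}$-linear, and the key computation is how $t_i\partial_{t_i}$ acts. By \cite[Lem.2.1]{WZ}, $\shD_{Y,D}$ acts on the tensor product by the Leibniz rule, so
\[t_i\partial_{t_i}(u\otimes g)=(t_i\partial_{t_i}u)\otimes g-b_i\,u\otimes g.\]
Likewise $\partial_{x_j}(u\otimes g)=(\partial_{x_j}u)\otimes g$, and $R$ acts unchanged. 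Thus transport along the bijection identifies $\cP(H)$ with $\cP$ carrying the action twisted by the $\C$-algebra automorphism $\sigma$ of $\sA^R_{Y,D}$ given by $t_i\partial_{t_i}\mapsto t_i\partial_{t_i}-b_i$, fixing $\cO_Y$, the $\partial_{x_j}$ and $R$.

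Next check that $\sigma$ preserves $F^\dagger_\bullet$ and induces the identity on $\gr^\dagger$. The automorphism $\sigma$ changes the order-one generators $t_i\partial_{t_i}$ only by constants of order zero. So $\sigma(F^\dagger_p)\subseteq F^\dagger_p$, the same holds for $\sigma^{-1}$, and $\gr^\dagger\sigma=\id$ on $\gr^{\log}\shD_{Y,D}\otimes R$.

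Let $F_\bullet\cP$ be a good filtration of $\cP$ and put $F_p\cP(H)\coloneqq F_p\cP\otimes g$. This is a filtration compatible with the $\sigma$-twisted action, hence a good filtration of $\cP(H)$. Its associated graded module is $\gr^F\cP$, with $\gr^\dagger\sA^R_{Y,D}$ acting through $\gr^\dagger\sigma=\id$, i.e.\ by the same action as on $\gr^F\cP$. So the annihilators, and hence the supports, coincide, giving $\Ch^\dagger(\cP)=\Ch^\dagger(\cP(H))$.

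For the lattice statement, let $\cN_1(*D)\simeq\cN_2(*D)$. Both $\cN_1$ and $\cN_2$ are finitely generated $\cO_Y\otimes R$-generated submodules of the common localization $\cM$. Hence there is an effective divisor $H$ supported on $D$ with
\[\cN_1(-H)\subseteq\cN_2\subseteq\cN_1(H).\]
Here one uses $\cN_1(pH)\to\cM$ exhausting, and finite generation of $\cN_2$ over the noetherian ring $\sA^R_{Y,D}$. For a short exact sequence of finitely generated modules, $\Ch^\dagger$ of the middle term is the union of $\Ch^\dagger$ of the ends. Applying this to
\[0\to\cN_2\to\cN_1(H)\to\cN_1(H)/\cN_2\to 0\quad\text{and}\quad 0\to\cN_1(-H)\to\cN_2\to\cN_2/\cN_1(-H)\to 0\]
gives
\[\Ch^\dagger(\cN_1)=\Ch^\dagger(\cN_1(-H))\subseteq\Ch^\dagger(\cN_2)\subseteq\Ch^\dagger(\cN_1(H))=\Ch^\dagger(\cN_1),\]
where the two outer equalities are the first part of the lemma. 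Hence $\Ch^\dagger(\cN_1)=\Ch^\dagger(\cN_2)$.

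For the $\Ch^\sharp$ statement with $\cP$ an $\sA^k_{Y,D}$-module, the same argument applies with $F^\sharp$ in place of $F^\dagger$. Here the $s_{l+1},\dots,s_r$ have order one, but $\sigma$ fixes them, since $H$ involves only $D_1,\dots,D_l$. So $\sigma$ is again filtered with $\gr^\sharp\sigma=\id$.

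The main point requiring care is the twisted-action computation and the verification that $\gr\sigma=\id$. The sign and constant conventions from \cite[Lem.2.1]{WZ} must be checked, but only the fact that the shift is a constant of order zero matters.
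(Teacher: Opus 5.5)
Your proof is correct and follows essentially the same route as the paper's. The paper also tensors a good filtration of $\cP$ with the invertible sheaf $\cO_Y(H)$, obtaining a good filtration of $\cP(H)$ with $\gr^\dagger_\bullet(\cP(H))\simeq\gr^\dagger_\bullet(\cP)\otimes_{\cO_Y}\cO_Y(H)$; your automorphism $\sigma$ (conjugation by $g$) just makes explicit why this is compatible with the filtration. It then sandwiches $\cN_1(-mD)\subseteq\cN_2\subseteq\cN_1(mD)$, exactly as you do, except that for the exhaustion step you should take $H$ with full support $D$ (e.g.\ $H=mD$) and use finite generation of $\cN_1$ for the lower inclusion.
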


\begin{proof}
Since $\cO_Y(H)$ is locally free, if $F^\dagger_\bullet\cP$ is a good filtration of a finitely generated $\sA^R_{Y,D}$-module $\cP$, then
\[
F^\dagger_\bullet\cP\otimes_{\cO_Y}\cO_Y(H)
\]
is a good filtration of $\cP(H)$, and we have
\[
\gr^\dagger_\bullet\big(\cP(H)\big)
=
\gr^\dagger_\bullet(\cP)\otimes_{\cO_Y}\cO_Y(H).
\]
Thus,
\[
\Ch^\dagger(\cP)
=
\Ch^\dagger\big(\cP(H)\big).
\]
The same argument applied to $F^\sharp_\bullet$ gives the corresponding statement for $\Ch^\sharp$.

To prove
\[
\Ch^\dagger(\cN_1)=\Ch^\dagger(\cN_2),
\]
it suffices to observe that, for some $m\gg 0$,
\[
\cN_1(-mD)\subseteq \cN_2\subseteq \cN_1(mD).
\]
The assertion then follows from the first part of the lemma.
\end{proof}

We need the following key example of log $\shD$-modules.

\begin{example}\label{ex:maingrebex}
Let
\[
F=(f_1,f_2,\dots,f_r)\colon X\to \A_\C^r
\]
be a morphism. Write
\[
f=\prod_{i=1}^r f_i
\quad \textup{and} \quad
F^{\bs}=\prod_{i=1}^r f_i^{s_i}.
\]
Then we have a left $\sA_X^r$-module
\[
\cS_F
\coloneqq
\cO_X[1/f,s_1,\dots,s_r]\cdot F^{\bs}
\]
and submodules
\[
\cN_F
\coloneqq
\shD_X[s_1,\dots,s_r]\cdot \prod_{i=1}^r f_i^{s_i}
\quad \textup{and} \quad
\cN_F^v
\coloneqq
\shD_X[s_1,\dots,s_r]\cdot \prod_{i=1}^r f_i^{s_i+v_i}
\]
for $v=(v_1,\dots,v_r)\in \Z^r$. Clearly,
\[
\cN_F^{v'}\subseteq \cN_F^v
\]
if $v_i\le v_i'$ for all $i$. We also have the graph embedding
\[
\iota_F\colon X\rightarrow Z=X\times \A_\C^r,
\quad
x\mapsto (x,f_1(x),\dots,f_r(x)).
\]
Following \cite{MalV}; see also \cite[Prop.5.4]{ChDMu}, we have a $\shD_Z$-module isomorphism
\begin{equation}\label{be:graphemD}
\iota_{F,*}
\big(
\cO_X[1/f,s_1,\dots,s_r]\cdot F^{\bs}
\big)
\simeq
\iota_{F,+}\big(\cO_X[1/f]\big),
\end{equation}
under which $s_i$ is identified with $-\partial_{t_i}t_i$.\footnote{Some authors take $s_i=-t_i\partial_{t_i}$, for instance, \cite[Prop.A3.1]{Gil1}. This makes no substantial difference.}
Here, $\iota_{F,+}$ denotes the $\shD$-module pushforward functor, while $\iota_{F,*}$ denotes the sheaf pushforward functor. Since $\iota_F$ is a closed embedding, we will, to simplify notation, not distinguish between $\iota_{F,*}(\star)$ and $\star$ when $\star$ is a sheaf defined on $X$; the same convention will be used for other closed embeddings hereafter. Consequently,
\[
\cN_F^v
=
\prod_i t_i^{v_i}\cdot \cN_F
\]
is a $\shD_{Z,E}$-lattice of $\cS_F$.
\end{example}
It is worth pointing out that, when we want to emphasize the use of the closed embedding
$i_Y\colon Y\hookrightarrow Z$, modules over $i_Y^*(\shD_{Z,E})$ and modules over
$\sA^k_{Y,D}$ should be conceptually distinguished, although we have a ring
isomorphism
\[
i_Y^*(\shD_{Z,E})\simeq \sA^k_{Y,D}
\]
by Lemma \ref{lm:logsp}. For instance, $\cN_F$ is naturally an $\sA_X^r$-module,
but not an $i_X^*(\shD_{Z,E})$-module, since it is supported on the graph of $F$,
rather than on the image of $i_X$. This motivates the following definition.

\begin{definition}\label{def:getype}
Let $\cM$ be an $\sA^R_{Y,D}$-module, and let
\[
G=(g_1,\dots,g_l)\colon X\rightarrow \A_\C^l
\]
be a regular map. We say that $\cM$ is of \emph{graph-embedding type} with
respect to $G$ (or of \emph{GE-type} for short) if $\cM$ is supported on
$\iota_G(X)$, the graph of $G$, where
\[
\iota_G\colon X\hookrightarrow Y=X\times \A_\C^l
\]
is the graph embedding.
\end{definition}

If $\cM$ is an $\sA^R_{Y,D}$-module of graph-embedding type, then $\cM(*D)$ is
an $\sA_Y^R$-module supported on the graph of $G$. Since $\sA_Y^R$-modules are,
in particular, $\shD_Y$-modules, Kashiwara's equivalence
\cite[Thm.1.6.1]{HTT} (see also \cite[Thm.1.5]{FS17} for the relative
version), gives, similarly to \eqref{be:graphemD}, $\sA_Y^R$-module isomorphisms
\begin{equation}\label{eq:getypeD}
\cM(*D)
\simeq
\iota_{G,+}\cS
\simeq
\cS\left[1/\prod_{i=1}^l g_i,s_1,\dots,s_l\right]\cdot
\prod_{i=1}^l g_i^{s_i},
\end{equation}
where
\[
\cS
=
\bigcap_{i=1}^l
\ker\left(
\cM(*D)\xrightarrow{\,t_i-g_i\,}\cM(*D)
\right).
\]
Here $\cS$ is an $\sA_X^R$-module satisfying
\[
\cS=\cS\left[1/\prod_{i=1}^l g_i\right].
\]
Since $s_i=-\partial_{t_i}t_i$, the isomorphism
\[
\cM(*D)
\simeq
\cS\left[1/\prod_{i=1}^l g_i,s_1,\dots,s_l\right]\cdot
\prod_{i=1}^l g_i^{s_i}
\]
is also an $\sA_X^{R\otimes_\C R_l}$-module isomorphism via the inclusion
\[
\sA_X^{R\otimes_\C R_l}\hookrightarrow \sA^R_{Y,D}.
\]

\begin{lemma}\label{lm:GE-type}
Let $\cN$ be an $\sA^R_{Y,D}$-lattice of GE-type with respect to $G$.
Then $\cN$ is finitely generated over $\sA_X^{R\otimes_\C R_l}$. Moreover, under
the embedding $d\iota_G$, we have
\[
d\iota_G\left(
\Ch^\dagger_{\sA_X^{R\otimes_\C R_l}}(\cN)
\right)
=
\Ch^\dagger_{\sA^R_{Y,D}}(\cN).
\]
\end{lemma}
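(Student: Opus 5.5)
The approach is to compare the two rings $\sA_X^{R\otimes_\C R_l}\subseteq \sA^R_{Y,D}$ locally along the graph, using the coordinate change $u_i=t_i-g_i$ that straightens $\iota_G(X)$. Since $\cN$ is finitely generated over $\sA^R_{Y,D}$ and supported on $\iota_G(X)$, for each generator $m$ some power of the ideal $(t_1-g_1,\dots,t_l-g_l)$ kills $m$. In fact $\cN\subseteq \cN(*D)\simeq \cS[1/\prod g_i,s_1,\dots,s_l]\prod g_i^{s_i}$ by \eqref{eq:getypeD}, and on that module each $t_i-g_i$ acts by zero. Indeed, $t_i$ acts as the shift $s_i\mapsto s_i+1$, which is multiplication by $g_i$ on $\prod g_i^{s_i}$. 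So $t_i\cdot n=g_i\cdot n$ for all $n\in\cN$.

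\textbf{Finite generation.} The first step is to write every element of $\shD_{Y,D}\otimes R$ as a sum of terms (polynomial in $t_i$)$\cdot$(monomials in $t_i\partial_{t_i}$)$\cdot$(operators in $\shD_X$ with coefficients in $S$), using the commutation relations. By the identity above, $t_i$ acts on $\cN$ as $g_i\in S$. By \eqref{eq:uncaningen}, $t_i\partial_{t_i}$ acts as $-s_i-1$. Vector fields on $X$ act on $\cN$ through $\sA_X^{R\otimes R_l}$: under \eqref{eq:getypeD}, the $\shD_X$ action inside $\shD_{Y,D}$ corresponds to the action on $\cS[\dots]\prod g_i^{s_i}$. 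One has to check that the vector field $\partial_{x}$ of $Y$ corresponds to $\partial_x$ acting on $g^{s}$, including the $s_i\partial_x g_i/g_i$ terms. This is the standard graph-embedding compatibility. Hence $\sA^R_{Y,D}\cdot m=\sA_X^{R\otimes R_l}\cdot m$ for every $m\in\cN$. Finitely many $\sA^R_{Y,D}$-generators of $\cN$ therefore generate it over $\sA_X^{R\otimes R_l}$.

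\textbf{Characteristic varieties.} Next I would compare filtrations. Take a good $F^\dagger$-filtration on $\cN$ over $\sA_X^{R\otimes R_l}$, namely $F^\dagger_p\sA_X^{R\otimes R_l}\cdot\{m_j\}$. By the computation above, $F^\dagger_p\sA^R_{Y,D}\cdot m_j=F^\dagger_p\sA_X^{R\otimes R_l}\cdot m_j$, provided the rewriting respects log order. Here $t_i\mapsto g_i$ has order $0$, and $t_i\partial_{t_i}\mapsto -s_i-1$ has order $1$ on both sides. So the same filtration is good for both rings. Then $\gr^\dagger\cN$ is a module over $\gr^{\log}\shD_{Y,D}\otimes R$, and its structure as a module over the subring $\gr^\sharp(\sA_X^l)\otimes R$ is obtained by restriction. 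On the level of symbols, $t_i$ acts as $g_i$, the log symbol $\tau_i$ of $t_i\partial_{t_i}$ acts as $-\sigma(s_i)$, and the $\xi$-symbols of $\partial_x$ on $Y$ act as $\xi$ on $X$ plus terms coming from $s_i\,dg_i/g_i$. This describes the map $d\iota_G$ at the level of coordinate rings. Consequently the support of $\gr^\dagger\cN$ in $T^*(Y,D)\times\Spec R$ is the image of its support over $T^*X\times\C^l\times\Spec R$.

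\textbf{Main obstacle.} The delicate step is the symbol-level identification in the previous paragraph. The $\partial_x$-action involves $s_i\,\partial_x(g_i)/g_i$, and because $g_i$ may vanish this is not obviously a log-order-preserving operation inside $\sA_X^{R\otimes R_l}$. I would handle it by working in the log coordinates $(x,t,\tau)$ on $T^*(Y,D)$, where $\tau_i\leftrightarrow -s_i$, and checking that the inclusion $\gr^\dagger\sA_X^{R\otimes R_l}\to\gr^\dagger\sA^R_{Y,D}/\mathrm{ann}$ is a finite surjection defining the closed embedding $d\iota_G$. The set-theoretic equality of supports then follows from the fact that a finite morphism maps support onto support.
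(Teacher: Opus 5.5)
\textbf{The identity everything rests on is false.} Your argument depends on $t_i\cdot n=g_i\cdot n$ for all $n\in\cN$, and this does not hold. Under \eqref{eq:getypeD}, $t_i$ also shifts $s_i\mapsto s_i+1$ inside the coefficient: $t_i\cdot\big(c(\bs)\prod_k g_k^{s_k}\big)=g_i\,c(\bs+\epsilon_i)\prod_k g_k^{s_k}$, where $\epsilon_i$ is the $i$-th unit vector. So $t_i$ agrees with multiplication by $g_i$ only when $c$ is independent of $s_i$. In general $t_i-g_i$ is merely locally nilpotent, and the joint kernel of the $t_i-g_i$ is exactly $\cS$.

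\textbf{A concrete counterexample.} Take $\cN_F$ from Example \ref{ex:maingrebex} and $m=s_1F^{\bs}$. Then $t_1m=(s_1+1)f_1F^{\bs}\neq f_1m$. In fact $t_1m\notin\sA^r_X\cdot m=s_1\shD_X[\bs]F^{\bs}$: reduce modulo $s_1$ in the $\C[\bs]$-free module $\cO_X[1/f,\bs]F^{\bs}$, and the left side survives while the right side becomes $0$.

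\textbf{What breaks as a result.} The equality $\sA^R_{Y,D}\cdot m=\sA^{R\otimes_\C R_l}_X\cdot m$ fails in general. With it, both your finite-generation step and your claim that $\sum_jF^\dagger_p\sA^{R\otimes_\C R_l}_X\cdot m_j$ is a good filtration over $\sA^R_{Y,D}$ for arbitrary generators $m_j$ collapse.

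\textbf{The obstacle you flagged is not real.} The inclusion $\shD_X\hookrightarrow\shD_{Y,D}$ is the product one. So $\partial_x$ on $X$ goes to $\partial_x$ on $Y$, and on symbols $\xi\mapsto\xi$. This is exactly $d\iota_G(x,\xi,\bs,a)=(x,\xi,G(x),-\bs,a)$. The $s_i\,\partial_xg_i/g_i$ terms appear only if you rewrite the action through the intrinsic $\shD_X$-structure on $\cS$. The real difficulty is controlling $t_i$.

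\textbf{How the paper handles it.} It takes generators $e_1,\dots,e_m\in\cS$ of $\cN|_{Y\setminus D}$, for which $t_ie_j=g_ie_j$ genuinely holds. Using $t_iP(\bs)=P(\bs+\epsilon_i)t_i$, your argument then works verbatim for the lattice $\cN_1$ these generate. The filtration $\sum_jF^\dagger_p\sA^{R\otimes_\C R_l}_Xe_j=\sum_jF^\dagger_p\sA^R_{Y,D}e_j$ is good for both rings, and $t_i$ acts as $g_i$ on $\gr$. To pass from $\cN_1$ to $\cN$, it uses:
\begin{enumerate}
\item the inclusion $\cN\subseteq\cN_1(kD)$ for $k\gg0$;
\item the fact that $\cN_1(kD)$ is generated over $\sA^{R\otimes_\C R_l}_X$ by the $t^{-k}e_j$, via $t_is_i=(s_i+1)t_i$;
\item noetherianity, to get finite generation of $\cN$;
\item Lemma \ref{lm:twistCHsharp}, to get the characteristic variety.
\end{enumerate}

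\textbf{Alternatively, repair your own route.} Use the nilpotency you noted at the start. If $(t_i-g_i)^Nm=0$ for all $i$, then for every $p$
\[F^\dagger_p\sA^R_{Y,D}\cdot m=\sum_{\alpha\in\{0,\dots,N-1\}^l}F^\dagger_p\sA^{R\otimes_\C R_l}_X\cdot t^\alpha m.\]
This holds because $F^\dagger_p\sA^R_{Y,D}=\sum_\alpha F^\dagger_p\sA^{R\otimes_\C R_l}_X\,t^\alpha$ and $g_i$ has order $0$. It gives finite generation and a filtration good for both rings. On $\gr\cN$, $t_i-g_i$ then lies only in the radical of the annihilator, not in the annihilator itself. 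That is still enough for your finite-map comparison of supports to yield $d\iota_G\big(\Ch^\dagger_{\sA^{R\otimes_\C R_l}_X}(\cN)\big)=\Ch^\dagger_{\sA^R_{Y,D}}(\cN)$.
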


\begin{proof}
Using \eqref{eq:getypeD}, choose elements $e_1,\dots,e_m\in \cS$ that generate
$\cN|_{Y\setminus D}$ over $\sA^R_{Y\setminus D}$. These elements generate
another $\sA^R_{Y,D}$-lattice, say $\cN_1$. Since the action of $t_i$ on $\cS$
can be replaced by the action of $g_i$, the elements $e_1,\dots,e_m$ also
generate $\cN_1$ over $\sA_X^{R\otimes_\C R_l}$.

Define
\[
F_p^\dagger\cN_1
=
\sum_i F_p^\dagger\sA_X^{R\otimes_\C R_l}\cdot e_i
=
\sum_i F_p^\dagger\sA_{Y,D}^R\cdot e_i.
\]
This gives a good filtration both over
$F_\bullet^\dagger\sA_X^{R\otimes_\C R_l}$ and over
$F_\bullet^\dagger\sA_{Y,D}^R$.

The graph embedding $\iota_G$ induces the map
\[
d\iota_G\colon
T^*X\times \A_s^l\times \Spec R
\hookrightarrow
T^*(Y,D)\times \Spec R
\simeq
T^*X\times \A_t^l\times \A_{\tilde\xi}^l\times \Spec R
\]
given by
\[
(x,\xi,s,a)\mapsto (x,\xi,G(x),-s,a),
\]
where $\tilde\xi$ denotes the fiber coordinates of $T^*(Y,D)$ corresponding to
the symbols of $t_i\partial_{t_i}$. By construction, we therefore have
\[
d\iota_G\left(
\Ch^\dagger_{\sA_X^{R\otimes_\C R_l}}(\cN_1)
\right)
=
\Ch^\dagger_{\sA^R_{Y,D}}(\cN_1).
\]

Since $\cN$ and $\cN_1$ are both $\sA^R_{Y,D}$-lattices in $\cM$, there exists
$k\gg 0$ such that
$\cN\subseteq \cN_1(kD).$
Using the relation
$t_is_i=(s_i+1)t_i,$
we see that $\cN_1(kD)$ is finitely generated over
$\sA_X^{R\otimes_\C R_l}$. Since $\sA_X^{R\otimes_\C R_l}$ is noetherian, it
follows that $\cN$ is finitely generated over $\sA_X^{R\otimes_\C R_l}$.
Finally, the equality of characteristic varieties follows from
Lemma \ref{lm:twistCHsharp}.
\end{proof}

\subsection{Pure filtrations of Gabber--Kashiwara}

Let $\cM$ be a finitely generated $\sA^R_{Y,D}$-module. Since
$\sA^R_{Y,D}$ is Auslander regular, \cite[A.IV 2.11]{BJ} implies that there
exists a unique finite filtration
\[
\cM=T_0\cM\supseteq T_1\cM\supseteq \cdots
\supseteq T_p\cM\supseteq T_{p+1}\cM=0
\]
such that, whenever $T_i\cM/T_{i+1}\cM\neq 0$, the quotient
$T_i\cM/T_{i+1}\cM$ is $i$-pure. This filtration is called the
\emph{pure filtration of Gabber--Kashiwara}, or simply the \emph{pure filtration}.

\begin{lemma}\label{lm:purfilcodim}
With notation as above, we have
\[
T_i\cM
=
\left\{
m\in \cM
\ \middle|\
\codim\big(\Ch^\dagger(\sA^R_{Y,D}\cdot m)\big)\ge i
\right\}.
\]
\end{lemma}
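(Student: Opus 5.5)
The plan is to combine the standard description of the pure filtration over an Auslander regular ring with the dimension formula in Lemma \ref{lm:ausdual}(1). By \cite[A.IV 2.11]{BJ} (the construction of the Gabber--Kashiwara filtration), $T_i\cM$ is the largest submodule $\cN\subseteq\cM$ with $j(\cN)\ge i$. Concretely, it is the set of $m\in\cM$ with $j(\sA^R_{Y,D}\cdot m)\ge i$, and this set is a submodule. The latter is the key input: for $m,m'\in\cM$, the cyclic module $\sA^R_{Y,D}(m+m')$ is a quotient of a submodule of $\sA^R_{Y,D}m+\sA^R_{Y,D}m'$, and this sum is a quotient of $\sA^R_{Y,D}m\oplus\sA^R_{Y,D}m'$. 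The grade is stable under submodules and quotients in an Auslander regular ring (this uses Lemma \ref{lm:ausdual}(2) together with the Auslander condition), and $j(\cN_1\oplus\cN_2)=\min(j(\cN_1),j(\cN_2))$. I would record this characterization first, as a direct citation of \cite[A.IV 2.11 and its proof]{BJ}, or derive it as follows. Since $T_i\cM/T_{i+1}\cM$ is $i$-pure, $j(T_i\cM)\ge i$, so every cyclic submodule of $T_i\cM$ has grade $\ge i$. Conversely, suppose $m$ has $j(\sA^R_{Y,D}m)\ge i$ but $m\notin T_i\cM$. Take the largest $q<i$ with $m\in T_q\cM$. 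The image of $\sA^R_{Y,D}m$ in the $q$-pure module $T_q\cM/T_{q+1}\cM$ is then a nonzero quotient of a module of grade $\ge i>q$, so it has grade $>q$. This contradicts purity, since every nonzero submodule of a $q$-pure module has grade exactly $q$.

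Next, translate grade into codimension of the characteristic variety. By Lemma \ref{lm:ausdual}(1), for any nonzero finitely generated $\sA^R_{Y,D}$-module $\cN$,
\[
j(\cN)=2(n+l)+\dim R-\dim\Ch^\dagger(\cN)=\dim\big(T^*(Y,D)\times\Spec R\big)-\dim\Ch^\dagger(\cN)=\codim\Ch^\dagger(\cN).
\]
Here the ambient space $T^*(Y,D)\times\Spec R$ has dimension $2(n+l)+\dim R$, because $Y=X\times\A^l_\C$ has dimension $n+l$ and the log cotangent bundle has rank $n+l$. Apply this to $\cN=\sA^R_{Y,D}\cdot m$. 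For $m=0$ both sides are $+\infty$ under the conventions $j(0)=+\infty$ and $\codim\emptyset=+\infty$, so $0$ lies in every $T_i\cM$, which is consistent. Combining with the first step gives exactly the claimed equality of sets.

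The main obstacle is the first step: being careful that the ring $\sA^R_{Y,D}$ satisfies the hypotheses of \cite{BJ}, namely that it is Auslander regular with a Zariski-type filtration whose graded ring is commutative regular. This was already checked in the proof of Lemma \ref{lm:ausdual} via $\gr^\dagger_\bullet\sA^R_{Y,D}=\gr^{\log}_\bullet\shD_{Y,D}\otimes_\C R$. The remaining point is making sure that the codimension in Lemma \ref{lm:ausdual}(1) is taken in the correct ambient space, which is equidimensional since it is smooth and irreducible. The rest is formal.
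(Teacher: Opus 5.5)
Your proposal is correct and is essentially the paper's argument. The paper just says to run Kashiwara's proof of \cite[Thm.2.19]{Kasbook}, which identifies $T_i\cM$ with the set of $m$ whose cyclic module has grade $\ge i$, and then convert grade into codimension via Lemma \ref{lm:ausdual}(1); that is exactly what you spell out. (One small precision: $j(T_i\cM)\ge i$ follows because $T_i\cM$ is an iterated extension of the pieces $T_k\cM/T_{k+1}\cM$ with $k\ge i$, each zero or of grade $k\ge i$, not from the $i$-th piece alone.)
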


\begin{proof}
This is the logarithmic analogue of \cite[Thm.2.19]{Kasbook}. With the help of
Lemma \ref{lm:ausdual}, the same argument gives the required statement.
\end{proof}

Since characteristic varieties are compatible with localization, the preceding
lemma implies that taking the pure filtration of $\cM$ commutes with localization
as $\cO_Y$-modules. Using Lemma \ref{lm:purfilcodim}, we also see that $\cM$ is
$j$-pure if and only if
\[
T_i\cM=\cM \quad \text{for } i\le j,
\qquad
T_i\cM=0 \quad \text{for } i>j.
\]
We immediately obtain the following consequence.

\begin{coro}\label{cor:sesk-pure}
Let
\[
0\to \cM_1\to \cM\to \cM_2\to 0
\]
be a short exact sequence of finitely generated $\sA_{Y,D}^R$-modules. If
$\cM_1$ and $\cM_2$ are $k$-pure, then $\cM$ is also $k$-pure.
\end{coro}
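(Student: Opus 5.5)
The plan is to use the description of the pure filtration in Lemma \ref{lm:purfilcodim}: it is enough to show that $T_{k+1}\cM=0$ and $T_k\cM=\cM$. Recall that $k$-purity means every nonzero submodule $\cN\subseteq\cM$ has grade $j(\cN)=k$. Equivalently, by Lemma \ref{lm:ausdual}(1), $\Ch^\dagger(\cN)$ is purely of codimension $k$ in $T^*(Y,D)\times\Spec R$.

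First I show $T_k\cM=\cM$. Take $m\in\cM$ and put $\cN=\sA^R_{Y,D}\cdot m$. The image of $\cN$ in $\cM_2$ is a submodule of a $k$-pure module, so its characteristic variety has codimension at least $k$ (or it is zero). The intersection $\cN\cap\cM_1$ is a submodule of $\cM_1$, so the same holds for it. We have the short exact sequence $0\to \cN\cap\cM_1\to\cN\to \mathrm{im}(\cN)\to 0$, and characteristic varieties are additive on short exact sequences for good filtrations: $\Ch^\dagger(\cN)=\Ch^\dagger(\cN\cap\cM_1)\cup\Ch^\dagger(\mathrm{im}\,\cN)$. Hence $\codim\Ch^\dagger(\cN)\ge k$, and so $m\in T_k\cM$.

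Next I show $T_{k+1}\cM=0$. Let $m\in T_{k+1}\cM$ and $\cN=\sA^R_{Y,D}\cdot m$, so $\codim\Ch^\dagger(\cN)\ge k+1$. The image of $\cN$ in $\cM_2$ is a quotient of $\cN$. Its characteristic variety lies in $\Ch^\dagger(\cN)$, so it also has codimension at least $k+1$. By $k$-purity of $\cM_2$, every nonzero submodule has codimension exactly $k$, equivalently $T_{k+1}\cM_2=0$. So the image is zero, that is, $m\in\cM_1$. Then $\cN\subseteq\cM_1$, and $k$-purity of $\cM_1$ forces $\cN=0$, so $m=0$.

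The main point to check is the additivity of $\Ch^\dagger$ on short exact sequences. This is standard for filtered modules over a ring whose graded ring is commutative noetherian: induce good filtrations on the sub and the quotient, and the graded sequence is exact. So I expect no real obstacle. The only care needed is that Lemma \ref{lm:purfilcodim} measures codimension in the same ambient space $T^*(Y,D)\times\Spec R$ for all three modules, which it does.
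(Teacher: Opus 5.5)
Your argument is correct and follows the paper's route: the paper deduces the corollary ``immediately'' from Lemma \ref{lm:purfilcodim}, via the criterion that $\cM$ is $k$-pure if and only if $T_k\cM=\cM$ and $T_{k+1}\cM=0$, and you have written out that verification using additivity of $\Ch^\dagger$ on short exact sequences. One minor imprecision: Lemma \ref{lm:ausdual}(1) only gives $\codim\Ch^\dagger(\cN)=k$ for nonzero submodules, not that $\Ch^\dagger(\cN)$ is equidimensional, but your argument never uses the stronger ``purely'' claim, so nothing is affected.
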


Similarly to \cite[Thm.2.23]{KSbook}, we have the following alternative
description of the pure filtration, obtained by the same argument.

\begin{lemma}\label{lm:dualalPF}
We have
\[
T_i\cM
=
\operatorname{Ext}^{0}_{\sA^R_{Y,D}}
\left(
\tau^{\ge i}
\Rhom_{\sA^R_{Y,D}}(\cM,\sA^R_{Y,D}),
\sA^R_{Y,D}
\right).
\]
\end{lemma}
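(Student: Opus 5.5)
We follow the standard argument for Auslander regular rings, as in \cite[Thm.2.23]{KSbook}, \cite[A.IV]{BJ}, written for $\sA:=\sA^R_{Y,D}$. Put $\cD^\bullet:=\Rhom_{\sA}(\cM,\sA)$; it is a complex of right modules with $H^q(\cD^\bullet)=\operatorname{Ext}^q_{\sA}(\cM,\sA)$, concentrated in degrees $0\le q\le \mu:=2(n+l)+\dim R$, since this is the global dimension of $\sA$ (because $\gr^\dagger\sA$ is regular). Denote the right-hand side of the claimed identity by $T'_i\cM$. The triangle $\tau^{<i}\cD^\bullet\to\cD^\bullet\to\tau^{\ge i}\cD^\bullet\xrightarrow{+1}$ gives a map
\[
\operatorname{Ext}^0(\tau^{\ge i}\cD^\bullet,\sA)\to \operatorname{Ext}^0(\cD^\bullet,\sA)\simeq\cM,
\]
by biduality $\cM\simeq \Rhom(\Rhom(\cM,\sA),\sA)$, which holds because $\cM$ has a finite free resolution. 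The plan has two parts: show this map is injective, and identify its image with $T_i\cM$ using the characterization in Lemma \ref{lm:purfilcodim}.

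\emph{Injectivity.} The kernel of the map is a quotient of $\operatorname{Ext}^{-1}(\tau^{<i}\cD^\bullet,\sA)$. Filter $\tau^{<i}\cD^\bullet$ by its cohomology objects $\operatorname{Ext}^q(\cM,\sA)[-q]$ with $q<i$. The terms that contribute are then $\operatorname{Ext}^{q-1}(\operatorname{Ext}^q(\cM,\sA),\sA)$. By Lemma \ref{lm:ausdual}(2) the grade of $\operatorname{Ext}^q(\cM,\sA)$ is at least $q$, so each of these groups vanishes. Hence $\operatorname{Ext}^{-1}(\tau^{<i}\cD^\bullet,\sA)=0$ and the map is injective.

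\emph{The image lies in $T_i\cM$.} Filtering $\tau^{\ge i}\cD^\bullet$ by cohomology, $T'_i\cM$ is built through the spectral sequence from subquotients of $\operatorname{Ext}^q(\operatorname{Ext}^q(\cM,\sA),\sA)$ with $q\ge i$. The key input is that $\operatorname{Ext}^q(\cN,\sA)$ has grade $\ge q$ for every $\cN$, so each of these pieces has grade $\ge i$. Grade $\ge i$ passes to submodules, quotients and extensions, by the long exact Ext sequence together with Auslander's condition. Therefore $j(T'_i\cM)\ge i$. By Lemma \ref{lm:ausdual}(1), every $m\in T'_i\cM$ then satisfies $\codim\Ch^\dagger(\sA m)\ge i$, and Lemma \ref{lm:purfilcodim} gives $m\in T_i\cM$.

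\emph{$T_i\cM$ lies in the image.} Let $N:=T_i\cM$, so $j(N)\ge i$ and $\operatorname{Ext}^q(N,\sA)=0$ for $q<i$. This means $\Rhom(N,\sA)=\tau^{\ge i}\Rhom(N,\sA)$. Applying $\tau^{\ge i}$ to the morphism $\Rhom(\cM,\sA)\to\Rhom(N,\sA)$ induced by $N\hookrightarrow\cM$, and then dualizing, we get a commutative square comparing $N\simeq\operatorname{Ext}^0(\Rhom(N,\sA),\sA)$ with $\operatorname{Ext}^0(\tau^{\ge i}\cD^\bullet,\sA)\to\cM$. Consequently the inclusion $N\to\cM$ factors through $T'_i\cM$, and we conclude $T_i\cM=T'_i\cM$.

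The main obstacle is the grade bookkeeping in the spectral sequence in the second part, together with checking the naturality of the square in the third. This is exactly where Auslander regularity (Lemma \ref{lm:ausdual}) is used, and the argument goes through verbatim as in \cite[A.IV 2]{BJ}.
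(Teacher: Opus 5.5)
Your proof is correct and is essentially the paper's argument: the paper just invokes the standard Auslander-regular proof of Kashiwara's theorem. Your three steps spell that argument out:
\begin{enumerate}
\item injectivity, via the vanishing of $\operatorname{Ext}^{q-1}\big(\operatorname{Ext}^q(\cM,\sA^R_{Y,D}),\sA^R_{Y,D}\big)$;
\item the grade bound on the image, combined with Lemma \ref{lm:purfilcodim};
\item the factorization of $T_i\cM\hookrightarrow\cM$ through the truncated dual, by biduality.
\end{enumerate}
One harmless inaccuracy: $2(n+l)+\dim R$ is only an upper bound for the global dimension of $\sA^R_{Y,D}$, not necessarily equal to it, but you only need boundedness.
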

\subsection{Maximal tame pure extensions of lattices}\label{subsect:mtamepureex}

Let $\cN$ be a pure $\sA_{Y,D}^R$-lattice with grade number $j(\cN)=p$.
For $\bm\in\N^l$, we denote by $D_\bm$ the divisor defined by
$\alpha(\bm)=0$. We consider the family of all finitely generated
$\sA_{Y,D}^R$-submodules $\cN'$ of $\cN(*D_\bm)$ such that
\[
\cN'\supseteq \cN
\quad \text{and} \quad
j(\cN'/\cN)\ge p+2.
\]
By \cite[A.IV 2.10]{BJ}, this family has a unique maximal element
$\cL=\cL_\bm$, called the \emph{maximal tame pure extension} of $\cN$
inside $\cN(*D_\bm)$. The following lemma is the logarithmic generalization
of \cite[Prop.15]{Mai}; see also \cite[Lem.4.4.1]{BVWZ2}.

\begin{lemma}\label{lm:tamepurechv}
Let $H$ be a divisor supported on $D$. For nonzero $\bm\in\N^l$, the module
$\cL(H)$ is the maximal tame pure extension of $\cN(H)$ inside
$\cN(H)(*D_\bm)$. Moreover, if
$j\big(\cN/\alpha(\bm)\cdot \cN\big)=p+1$,
then 
$\cL(H)/\alpha(\bm)\cdot \cL(H)$
is $(p+1)$-pure, and the top-dimensional part of $\Ch^\dagger\big(\cN/\alpha(\bm)\cdot \cN\big)$
is
$\Ch^\dagger\big(\cL/\alpha(\bm)\cdot \cL\big)$.
\end{lemma}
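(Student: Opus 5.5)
The plan has three parts: twisting by $\cO_Y(H)$, purity of $\cL(H)/\alpha(\bm)\cL(H)$ by a divisibility trick, and comparison of characteristic varieties through the grade of $\cL/\cN$.

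\emph{Twisting.} Tensoring with the invertible fractional ideal $\cO_Y(H)$ is an exact autoequivalence of $\sA^R_{Y,D}$-modules. It preserves finite generation, sends submodules of $\cN(*D_\bm)$ containing $\cN$ to submodules of $\cN(H)(*D_\bm)$ containing $\cN(H)$, and satisfies $(\cN'/\cN)(H)\simeq \cN'(H)/\cN(H)$. By Lemma \ref{lm:twistCHsharp} and Lemma \ref{lm:ausdual}(1), grades are unchanged by the twist, since $\Ch^\dagger$ is. Purity is also preserved: $\cN(H)$ is $p$-pure because any submodule of $\cN(H)$ is the twist of a submodule of $\cN$ with the same $\Ch^\dagger$, and we may use Lemma \ref{lm:purfilcodim}. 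So the families of tame pure extensions correspond bijectively, and the maximal elements match. This reduces everything to $H=0$.

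\emph{Purity.} Assume $j(\cN/\alpha(\bm)\cN)=p+1$. First, $\cL$ is $p$-pure by Corollary \ref{cor:sesk-pure}-type reasoning: an extension of $\cN$ by a module of grade $\ge p+2$, sitting inside the pure module $\cN(*D_\bm)$, has no nonzero submodule of grade $>p$. Since $\alpha(\bm)$ acts injectively on the lattice $\cN(*D_\bm)$, we get $j(\cL/\alpha(\bm)\cL)\ge p+1$. To show $(p+1)$-purity, let $T=T_{p+2}(\cL/\alpha(\bm)\cL)$ and let $\cL'\subseteq\cL$ be its preimage, so that $\alpha(\bm)\cL\subseteq\cL'$. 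Consider
\[\cL''\coloneqq \alpha(\bm)^{-1}\cL'\subseteq\cN(*D_\bm).\]
This is an $\sA^R_{Y,D}$-module because $\shD_{Y,D}\alpha(\bm)=\alpha(\bm)\shD_{Y,D}$, by the logic of Lemma \ref{lm:twosidedideal}. Moreover $\cL''\supseteq\cL$, and
\[\cL''/\cL\simeq \cL'/\alpha(\bm)\cL = T\]
via multiplication by $\alpha(\bm)$. Multiplication by $\alpha(\bm)$ twists the module structure by the automorphism $P\mapsto \alpha(\bm)^{-1}P\alpha(\bm)$, which preserves the filtration and $\gr$, so grades are preserved. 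Hence $j(\cL''/\cN)\ge p+2$ by an extension argument: $\cL/\cN$ and $\cL''/\cL$ both have grade $\ge p+2$, and grade is the minimum over an extension because $\dim\Ch^\dagger$ is the maximum. Maximality of $\cL$ then forces $T=0$.

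\emph{Characteristic varieties.} Consider the diagram $\cN/\alpha(\bm)\cN\to\cL/\alpha(\bm)\cL$. Its kernel and cokernel are subquotients of $\cL/\cN$ and of $\alpha(\bm)\cL/\alpha(\bm)\cN\simeq\cL/\cN$, up to the twisting above. Concretely, the snake lemma applied to multiplication by $\alpha(\bm)$ on $0\to\cN\to\cL\to\cL/\cN\to0$ gives an exact sequence
\[\ker(\alpha(\bm)\,|\,\cL/\cN)\to\cN/\alpha(\bm)\cN\to\cL/\alpha(\bm)\cL\to(\cL/\cN)/\alpha(\bm)(\cL/\cN)\to0.\]
The two outer terms have grade $\ge p+2$, so their $\Ch^\dagger$ have dimension strictly below the top dimension $2(n+l)+\dim R-p-1$. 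Additivity of $\Ch^\dagger$ on the top-dimensional part, in the form of characteristic cycles in that degree, then shows that the top-dimensional parts of $\Ch^\dagger(\cN/\alpha(\bm)\cN)$ and $\Ch^\dagger(\cL/\alpha(\bm)\cL)$ coincide. Since the latter is pure, it equals its own top-dimensional part, by Lemma \ref{lm:purfilcodim} and purity, whose characteristic variety is equidimensional.

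I expect the main obstacle to be the purity step, specifically justifying that $\alpha(\bm)^{-1}\cL'$ is a finitely generated $\sA^R_{Y,D}$-submodule and that grade is invariant under the conjugation twist. Both come down to $\alpha(\bm)$ normalizing $\shD_{Y,D}$ compatibly with $F^\dagger$.
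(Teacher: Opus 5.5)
Your proposal is correct and follows essentially the paper's route. The steps match: the twist by $\cO_Y(H)$ reduces everything to $H=0$; purity comes from showing that $\alpha(\bm)^{-1}$ of the preimage of $T_{p+2}(\cL/\alpha(\bm)\cL)$ would be a larger member of the family defining $\cL$; and the top-dimensional characteristic cycles are compared through a four-term sequence whose outer terms are controlled by $\cL/\cN$. Your snake-lemma sequence is exactly the paper's pair of short exact sequences through $\cN/(\alpha(\bm)\cL\cap\cN)$ and $\cQ$; you just bound the kernel via a twist of a submodule of $\cL/\cN$, where the paper uses that $\alpha(\bm)\cL$ is the tame pure extension of $\alpha(\bm)\cN$.

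One small correction: equidimensionality of $\Ch^\dagger$ of a pure module does not follow from Lemma \ref{lm:purfilcodim}. It comes from the existence of a good filtration with pure graded module, \cite[A.IV 4.11 and 3.7]{BJ}, which is what the paper invokes.
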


\begin{proof}
It is clear that $\cN(H)$ is also $p$-pure. Let $\cL'$ denote the maximal
tame pure extension of $\cN(H)$ inside $\cN(H)(*D_\bm)$. By
Lemma \ref{lm:lattictwist} and Lemma \ref{lm:twistCHsharp}, we have
\[
\Ch^\dagger\big(\cL'(-H)/\cN\big)
=
\Ch^\dagger\big(\cL'/\cN(H)\big).
\]
Then Lemma \ref{lm:ausdual}(1) gives
\[
j\big(\cL'(-H)/\cN\big)\ge p+2.
\]
By maximality, we obtain $\cL'(-H)\subseteq \cL$. By symmetry, we also have
$\cL(H)\subseteq \cL'$, and hence $\cL'=\cL(H)$. In particular,
\[
j\big(\cL(H)/\cN(H)\big)\ge p+2.
\]

We next prove purity modulo $\alpha(\bm)$. Suppose that
$\cL/\alpha(\bm)\cdot\cL$ is not $(p+1)$-pure. Then there exists an
intermediate submodule
$\alpha(\bm)\cdot\cL
\subsetneq
\cL''
\subsetneq
\cL$
such that
\[
j\big(\cL''/\alpha(\bm)\cdot\cL\big)\ge p+2.
\]
By \cite[A.IV 2.3]{BJ}, we then have
\[
j\big(\cL''/\alpha(\bm)\cdot\cN\big)\ge p+2.
\]
Thus $\alpha(\bm)^{-1}\cL''$ belongs to the family defining $\cL$ and
strictly contains $\cL$, contradicting the maximality of $\cL$. Therefore
$\cL/\alpha(\bm)\cdot\cL$ is $(p+1)$-pure. Twisting by $H$, we conclude that
$\cL(H)/\alpha(\bm)\cdot\cL(H)$
is also $(p+1)$-pure.

It remains to prove that the top-dimensional part of
$\Ch^\dagger\big(\cN/\alpha(\bm)\cdot\cN\big)$
is
\[
\Ch^\dagger\big(\cL/\alpha(\bm)\cdot\cL\big).
\]
To simplify notation, assume that $\bm=e_i$, so that $\alpha(\bm)=t_i$.
The general case is similar.

Consider the short exact sequence
\[
0
\to
\frac{t_i\cL\cap \cN}{t_i\cN}
\longrightarrow
\frac{\cN}{t_i\cN}
\longrightarrow
\frac{\cN}{t_i\cL\cap \cN}
\to
0.
\]
Since $t_i\cL$ is the tame pure extension of $t_i\cN$, we have
\[
j\left(\frac{t_i\cL\cap \cN}{t_i\cN}\right)\ge p+2.
\]
By Lemma \ref{lm:ausdual}, the above short exact sequence implies that the
top-dimensional part of $\Ch^\dagger(\cN/t_i\cN)$ is the same as that of
$\Ch^\dagger\left(\frac{\cN}{t_i\cL\cap \cN}\right).$

Now consider the diagram
\[
\begin{tikzcd}
0\arrow[r] &
t_i\cN\arrow[r]\arrow[d] &
t_i\cL \arrow[r]\arrow[d] &
t_i\cL/t_i\cN\arrow[r]\arrow[d] &
0\\
0\arrow[r] &
\cN\arrow[r] &
\cL \arrow[r] &
\cL/\cN\arrow[r] &
0 .
\end{tikzcd}
\]
It induces a short exact sequence
\[
0
\to
\frac{\cN}{t_i\cL\cap\cN}
\longrightarrow
\frac{\cL}{t_i\cL}
\longrightarrow
\cQ
\to
0.
\]
By the snake lemma, we also have an exact sequence
\[
t_i\cL/t_i\cN
\longrightarrow
\cL/\cN
\longrightarrow
\cQ
\to
0.
\]
Since $j(\cQ)\ge p+2$, Lemma \ref{lm:ausdual} again implies that the
top-dimensional part of $\Ch^\dagger(\cL/t_i\cL)$ is the same as that of
\[
\Ch^\dagger\left(\frac{\cN}{t_i\cL\cap\cN}\right).
\]
Since $\cL/t_i\cL$ is $(p+1)$-pure, the assertion follows from
\cite[A.IV 4.11 and 3.7]{BJ}.
\end{proof}

\section{Analytic and algebraic relative regular holonomic $\shD$-modules}
\label{sect:GAGArel}

\subsection{Relative $\C$-constructible sheaves}
\label{subsect:relconstrsh}

We keep the notation of \S \ref{subsect:logDmodule}. We use
$D^b_{\C-c}(X^\an,R)$ to denote the derived category of
$\C$-constructible complexes of $R$-modules on the underlying complex
manifold $X^\an$ of $X$; cf. \cite[Def.8.5.6]{KSbook}. A complex
$F\in D^b_{\C-c}(X^\an,R)$ is called \emph{algebraic} if its singular
support
\[
\operatorname{SS}(F)\subseteq T^*X
\]
is an algebraic conic Lagrangian subvariety. We denote by $D^b_c(X_R)$ the
full triangulated subcategory consisting of algebraic $\C$-constructible
complexes of $R$-modules.

For $F\in D^b_{\C-c}(X^\an,R)$, we use
\[
\supp(F)=\supp_X^R(F)\subseteq \Spec R
\]
to denote its $R$-module support. By
\cite[Thm.8.5.5]{KSbook}, this support is locally algebraic, that is, for a relatively compact
neighborhood $U\subseteq X^\an$, the support
\[
\supp(F|_U)\subseteq \Spec R
\]
is an algebraic subvariety. If $F\in D^b_c(X_R)$, then
$\supp(F)\subseteq \Spec R$ is globally algebraic by definition.

For $F\in D^b_{\C-c}(X^\an,R)$, we use $\wt F$ to denote its analytic
sheafification; cf. \cite[\S 2.2]{WuRHA}. By construction and
\cite[Properties 2.20(5)]{FS13}, the complex $\wt F$ is
$\Specan R$-$\C$-constructible.

\subsection{Analytic relative regular holonomic $\shD$-modules}
\label{subsect:relregularhol}

Let $\cM$ be an $\sA_X^R$-module. We use $\widetilde\cM$ to denote the
associated analytic sheaf of $\cM$. More precisely, since $\cM$ is, in
particular, an $\cO_X\otimes_\C R$-module, we first regard it as the
associated algebraic sheaf on
\[
X\times_{\Spec \C}\Spec R
\]
as in \cite[Ch. II, Prop. 5.1]{Hart}; then $\widetilde\cM$ is the analytic
sheaf associated with this algebraic sheaf by GAGA. See also
\cite[\S 2.3]{WuRHA}. For instance,
\[
\widetilde{\sA_X^r}\simeq \shD_{X\times \C^r/\C^r},
\]
where $\shD_{X\times \C^r/\C^r}$ is the sheaf of analytic relative
differential operators over $\C^r$; cf. \cite{FS17}. Thus
$\widetilde\cM$ is a
\[
\widetilde{\sA_X^R}
\simeq
\shD_{X\times \Specan R/\Specan R}
\]
module, where $\Specan R$ and $X\times \Specan R$ denote the analytic
spaces associated with $\Spec R$ and $X\times_{\Spec\C}\Spec R$,
respectively.

Let $\Omega=\Spec R_\Omega\subseteq \Spec R$ be an affine open subset.
The \emph{algebraic relative localization} of $\cM$ over $\Omega$ is
\[
\cM|_\Omega
\coloneqq
\cM\otimes_R R_\Omega,
\]
which is an $\sA_X^{R_\Omega}$-module. On the other hand, if
$U\subseteq \Specan R$ and $U_X\subseteq X^\an$ are open subsets in the
Euclidean topology, then the \emph{analytic relative localization} of
$\cM$ over $U$ is the $\shD_{U_X\times U/U}$-module
\[
\widetilde\cM|_{U_X\times U}.
\]

The distinction between these two localization procedures is important in
this paper. For instance, $\cS_F$ from Example \ref{ex:maingrebex}, as well
as its algebraic relative localization over any affine open neighborhood
$\Omega$, is not finitely generated if $f$ is not invertible. However,
$\widetilde{\cS_F}$ and all of its analytic relative localizations are
coherent; see \cite[Remark 3.4]{WuRHA}. Motivated by this example, we say
that a coherent $\wt{\sA_X^R}$-module $\sM$ is \emph{algebraic} if
$\sM\simeq \wt{\cN}$
for some finitely generated $\sA_X^R$-module $\cN$.
Furthermore, we say that an $\sA_X^R$-module $\cM$ is
\emph{analytic relative regular holonomic} if $\widetilde\cM$ is a regular
holonomic $\widetilde{\sA_X^R}$-module; cf. \cite[\S 3.a]{FFS23}. The
subcategory of relative regular holonomic $\sA_X^R$-modules is a thick
abelian subcategory by \cite[Prop.3.2(iii)]{FFS23}. By definition, a
relative regular holonomic $\sA_X^R$-module $\cM$ is not necessarily
finitely generated over $\sA_X^R$; equivalently, $\wt{\cM}$ is not
necessarily \emph{algebraic}. Therefore, relative regular holonomic
$\sA_X^R$-modules are not necessarily algebraic relative holonomic
$\sA_X^R$-modules in the sense of \cite[Def.3.2.3]{BVWZ}.

We also introduce the following weak form of algebraicity for relative
regular holonomic modules, using the relative regular Riemann--Hilbert
correspondence of \cite{FFS23}.

\begin{definition}\label{def:dralgebraic}
Let $\sM$ be a relative regular holonomic $\wt{\sA_X^R}$-module. We say that
$\sM$ is \emph{dR-algebraic}, respectively \emph{weakly dR-algebraic}, if
the relative de Rham complex $\DR(\sM)$ satisfies
\[
\DR(\sM)\simeq \wt F
\]
for some $F\in D^b_c(X_R)$, respectively for some
$F\in D^b_{\C-c}(X^\an,R)$.
\end{definition}

Recall the duality functor from \cite{KSbook}:
\[
\mathbf D_R(F)
\coloneqq
\cRhom_R(F,R[2n]),
\]
for $F\in D^b_{\C-c}(X^\an,R)$. We also have the duality functor
$\mathbf D$ for $(\Spec R)$-$\C$-constructible complexes; cf.
\cite[\S 2.6]{FS13}. By construction, we have the following.

\begin{lemma}\label{lm:twistdual}
For $F\in D^b_{\C-c}(X^\an,R)$, there is a natural isomorphism
\[
\mathbf D(\wt F)\simeq \wt{\mathbf D_R(F)}.
\]
\end{lemma}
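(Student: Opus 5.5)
The isomorphism $\mathbf D(\wt F)\simeq \wt{\mathbf D_R(F)}$ is a base-change statement: analytic sheafification commutes with $\cRhom$ into the dualizing object. Recall $\wt F = p^{-1}F\otimes_{p^{-1}R}\cO_{\Specan R}$ on $X^\an\times\Specan R$, via the projection $p$ to $X^\an$ (more precisely $p^{-1}F\otimes_{p^{-1}R_{X^\an}}\cO_{X^\an\times \Specan R}$ restricted to the relative setting, with $\cO_{\Specan R}$ pulled back along the projection to $\Specan R$). The duality of \cite[\S2.6]{FS13} is $\mathbf D(G)=\cRhom_{\cO_{\Specan R}}(G,\cO_{\Specan R})[2n]$, with $\cO_{\Specan R}$ pulled back to $X^\an\times \Specan R$. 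The plan is to construct a natural comparison morphism and then check that it is an isomorphism locally, by reduction to the case of constant sheaves on a stratum.

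First, the morphism. There is a natural map
\[
p^{-1}\cRhom_R(F,R)\otimes_{R}\cO_{\Specan R}\longrightarrow \cRhom_{\cO_{\Specan R}}\big(p^{-1}F\otimes_R\cO_{\Specan R},\,\cO_{\Specan R}\big),
\]
given by adjunction: a local homomorphism $\phi\colon F\to R$ together with a function $g$ is sent to $u\otimes h\mapsto \phi(u)gh$. Shifting by $[2n]$ gives $\wt{\mathbf D_R(F)}\to \mathbf D(\wt F)$. To make this work at the derived level, I would replace $F$ by a bounded complex of sheaves of $R$-modules that are locally free of finite rank over $R$ on the strata, using the $\C$-constructible analogue of \cite[Prop.8.5.?]{KSbook}. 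This is possible because $R$ is regular, so it has finite global dimension.

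Second, the isomorphism is local on $X^\an$, and both sides are triangulated functors of $F$. Using a stratification adapted to $F$ and the triangles $j_!j^{-1}F\to F\to i_*i^{-1}F$, I reduce to the case $F=j_!L$, with $L$ a local system of finitely generated $R$-modules on a locally closed smooth stratum. Replacing $L$ by a finite free resolution (again by regularity of $R$), I reduce further to $L$ a free $R$-local system. Locally it is then $R_S$ on a stratum $S$.

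In that case, $\mathbf D_R(j_!R_S)=Rj_*\omega_S\otimes_\C R$ is computed by $\C$-duality tensored with $R$, since $\mathbf D_R(G\otimes_\C R)\simeq \mathbf D_\C(G)\otimes_\C R$ for a $\C$-constructible $G$ with finite-dimensional stalks. Similarly, $\wt{j_!R_S}=j_!\C_S\boxtimes\cO_{\Specan R}$, and $\mathbf D$ of it is $\mathbf D_\C(j_!\C_S)\boxtimes\cO_{\Specan R}$, by the relative Künneth-type compatibility in \cite[\S2.6]{FS13}. The comparison map is the identity under these identifications.

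I expect the main obstacle to be this last compatibility, namely that $\cO_{\Specan R}$ is flat over $R$ and that $\cRhom$ commutes with the non-finite base change $-\otimes_R\cO_{\Specan R}$. I would handle it by noting that $\mathbf D_R(F)$ has $R$-finite stalks in each degree and that $F$ is locally perfect over $R$. With these two facts, $\cRhom_R(F,R)\otimes_R\cO=\cRhom_R(F,\cO)=\cRhom_{\cO}(F\otimes_R\cO,\cO)$ holds stalkwise, using flatness of $\cO_{\Specan R,a}$ over $R$ (GAGA).
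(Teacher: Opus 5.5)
The paper gives no argument beyond ``by construction'', i.e.\ it treats the lemma as immediate from the definitions. Those definitions give $\mathbf D(\wt F)\simeq\cRhom_R(p_1^{-1}F,p_2^{-1}\cO_{\Specan R})[2n]$ by extension of scalars, followed by the standard compatibility of relative duality with $p_1^{-1}(\cdot)\otimes p_2^{-1}\cO_{\Specan R}$. Your plan to verify this through a natural transformation of triangulated functors and d\'evissage is a reasonable, more explicit route. Two small corrections: the comparison map needs no resolution of $F$, since it is that adjunction composed with the canonical morphism $\cRhom_R(A,R)\otimes^L_R N\to\cRhom_R(A,N)$; and the sheaf entering $\wt F$ is $p_2^{-1}\cO_{\Specan R}$, not $\cO_{X^\an\times\Specan R}$.

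However, the reduction has a genuine gap at ``Locally it is then $R_S$ on a stratum $S$''. Locality on $X^\an$ trivializes $L$ only near points of $S$ itself. The points that matter are $x\in\overline S\setminus S$. There $S\cap B$ need not be simply connected (e.g.\ $S=\C^*\subset\C$, $x=0$), so $L|_{S\cap B}$ can have monodromy, and $\big(\mathbf D_R(j_!L)\big)_x=R\Gamma\big(S\cap B;\cRhom_R(L,j^!R_X[2n])\big)$ depends on it. For example, take $R=\C[s]$ and monodromy $\begin{pmatrix}0&1\\1&s\end{pmatrix}$, whose trace is not constant. Then $L$ is not of the form $G\otimes_\C R$ near $x$, so your computation for $j_!R_S$ never applies there.

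The fallback in your last paragraph does not close this gap. $\cRhom$ cannot be checked ``stalkwise'': the stalk of $\cRhom_R(p_1^{-1}F,p_2^{-1}\cO_{\Specan R})$ at $(x,a)$ is not computed from $F_x$ unless $F$ is locally constant near $x$. Flatness of $\cO_{\Specan R,a}$ over $R$ and finiteness of stalks say nothing at such boundary points.

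To repair this, run the d\'evissage over a relatively compact open set with respect to a subanalytic triangulation adapted to $F$. The strata are then open simplices $\sigma$, and the pieces are $j_{\sigma!}M_\sigma$ with $M$ finitely generated. Any monodromy is carried by the connecting morphisms, which the natural transformation handles automatically. A finite free resolution of $M$ reduces you to $F=G\otimes_\C R$ with $G=j_{\sigma!}\C_\sigma$; this $G$ is only $\R$-constructible, which is harmless here. Then $\mathbf D_RF\simeq\mathbf D_\C G\otimes_\C R$ and $\wt F\simeq p_1^{-1}G\otimes_\C p_2^{-1}\cO_{\Specan R}$. It remains to show
$\mathbf D(p_1^{-1}G\otimes_\C p_2^{-1}\cO_{\Specan R})\simeq p_1^{-1}\mathbf D_\C G\otimes_\C p_2^{-1}\cO_{\Specan R}$.

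You can either take this from \cite{FS13}, or compute both sides using $\cRhom(j_{\sigma!}\C_\sigma,H)\simeq Rj_{\sigma*}j_\sigma^!H$. For a cofinal system of neighborhoods $B$ of $x$ (shrunken open stars), $\sigma\cap B$ is empty or contractible. Hence $R\Gamma\big((\sigma\cap B)\times V;p_2^{-1}\cO_V\big)\simeq\cO(V)$ for Stein $V$, which matches $R\Gamma(\sigma\cap B;\C)\otimes_\C\cO(V)$. The essential input at boundary points is this topological finiteness coming from constructibility, not flatness.
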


If $\sM$ is a coherent $\wt{\sA_X^R}$-module, then we denote by
\[
\Ch(\sM)
=
\Ch_{\wt{\sA_X^R}}(\sM)
\subseteq
T^*X^\an\times \Specan R
\]
its analytic relative characteristic variety, and by
\[
\supp(\sM)
=
\supp_{\Specan R}(\sM)
\subseteq
\Specan R
\]
its analytic relative support; cf. \cite[Prop.-Def.5.7]{WuRHA}. If,
moreover, $\sM$ is relative holonomic over $\wt{\sA_X^R}$, then
\[
\supp(\sM)
=
\operatorname{pr}\big(\Ch(\sM)\big)
\subseteq
\Specan R
\]
is locally an analytic subvariety by \cite[Cor.5.10]{WuRHA}.

\begin{remark}\label{rmk:analyticpurefil}
Lemma \ref{lm:purfilcodim} and Lemma \ref{lm:dualalPF} also hold in the
analytic setting; that is, they hold for coherent $\wt{\sA_{Y,D}^R}$-modules
and coherent $\wt{\sA_X^R}$-modules.
\end{remark}

\begin{lemma}\label{lm:dralg}
If $\sM$ is relative regular holonomic over $\wt{\sA_X^R}$ and
dR-algebraic, then $\supp(\sM)\subseteq \Specan R$ is an algebraic
subvariety. If $\sM$ is only weakly dR-algebraic, then $\supp(\sM)$ is
locally algebraic over $X^\an$.
\end{lemma}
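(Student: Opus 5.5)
Via the relative regular Riemann--Hilbert correspondence of \cite{FFS23}, I will turn the relative support of $\sM$ into the $R$-module support of a constructible complex and then invoke the algebraicity statement recalled in \S\ref{subsect:relconstrsh}. The first step is to show that
\[
\supp(\sM)=\supp\big(\DR(\sM)\big),
\]
where the right side is the $\Specan R$-support of the relative de Rham complex as a $\Specan R$-$\C$-constructible complex. For a point $a\in\Specan R$ with maximal ideal $\mathfrak m_a$, I will use the compatibility of $\DR$ with derived restriction to fibers, $Li_a^*\DR(\sM)\simeq \DR(Li_a^*\sM)$ (\cite{FS13,FFS23}). I will also use Nakayama's lemma for coherent relative modules: $a\notin\supp(\sM)$ if and only if $Li_a^*\sM=0$ near $X^\an\times\{a\}$, at least locally over $X^\an$. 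Finally, I will use that a regular holonomic $\shD_{X^\an}$-complex vanishes exactly when its de Rham complex vanishes, by the classical Riemann--Hilbert correspondence. Together these give the equality of supports. Since $\DR$ is an equivalence between relative regular holonomic modules and relative perverse complexes, I expect this step to be the main obstacle. The delicate points are the care needed with derived fibers and with the coherence hypotheses in the relative Nakayama lemma; I would try to cite these directly from \cite{FFS23} or \cite[Prop.-Def.5.7]{WuRHA} rather than reprove them.

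Next, suppose $\DR(\sM)\simeq\wt F$. I will show $\supp(\wt F)=\supp_X^R(F)$, again fiberwise. The analytic sheafification satisfies $Li_a^*\wt F\simeq F\otimes^{\bf L}_R R/\mathfrak m_a$ by construction (\cite[\S2.2]{WuRHA}). The stalks of $F$ are perfect complexes of $R$-modules, since $F$ is $\C$-constructible with $R$ regular noetherian. Hence the derived Nakayama lemma gives $F_x\otimes^{\bf L}_R R/\mathfrak m_a=0$ if and only if $a\notin\supp(F_x)$. Taking the union over $x$ yields the equality.

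To finish, in the dR-algebraic case $F\in D^b_c(X_R)$, so $\supp_X^R(F)$ is globally algebraic by the remark in \S\ref{subsect:relconstrsh}: it is the union of supports of the finitely many stalk complexes over an algebraic stratification. In the weakly dR-algebraic case, \cite[Thm.8.5.5]{KSbook} gives only that $\supp(F|_U)$ is algebraic for relatively compact $U\subseteq X^\an$. Restricting the previous equalities to $U$ then yields that $\supp(\sM|_{U\times\Specan R})$ is algebraic, which is the asserted local algebraicity over $X^\an$.
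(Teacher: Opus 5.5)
Your proposal is correct and follows the paper's proof. Both arguments identify $\supp(\sM)$ with the $\Specan R$-support of $\DR(\sM)\simeq\wt F$, then identify that with $\supp(F)$, and conclude by (local) algebraicity of constructible supports; you simply unpack the paper's two citations (the relative Riemann--Hilbert correspondence of \cite{FFS23}, and faithful flatness of analytification) into fiberwise derived-restriction and derived-Nakayama arguments. One small precision: the relative Nakayama lemma you invoke is false for arbitrary coherent $\wt{\sA_X^R}$-modules and needs relative holonomicity, as in \cite[Prop.~1.9(2)]{FS17}, which $\sM$ has here.
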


\begin{proof}
Assume that
\[
\DR(\sM)\simeq \wt F
\]
for some $F\in D^b_{\C-c}(X^\an,R)$. By the relative regular
Riemann--Hilbert correspondence \cite{FFS23}, we have
\[
\supp(\sM)
=
\supp(\wt F)
\coloneqq
\left\{
m\in \Specan R
\,\middle|\,
\wt F_{(x,m)}\neq 0
\text{ for some } x\in X^\an
\right\}.
\]
Since the analytification functor is faithfully flat by \cite[Cor.1]{Mai}
and \cite[Lem.5.6]{WuRHA}, we have
$\supp(\wt F)=\supp(F).$
The claim follows because $\supp(F)$ is locally algebraic by
constructibility, and globally algebraic if $F\in D^b_c(X_R)$.
\end{proof}

Let
\[
G=(g_1,\dots,g_l)\colon X\to \A_\C^l
\]
be a regular map. We denote by
\[
\wt{\iota_G}\colon
X\times \Specan R
\hookrightarrow
Y\times \Specan R,
\quad
(x,a)\mapsto (x,G(x),a),
\]
the associated analytic map. Let $\sN$ be a coherent
$\wt{\sA_{Y,D}^R}$-module such that $\sN(*\wt D)$ is a coherent
$\wt{\sA_Y^R}$-module, where
\[
\wt D=D\times \Specan R
\subseteq
Y\times \Specan R
\]
is the associated relative divisor. We say that $\sN$ is a
$\wt{\sA_{Y,D}^R}$-\emph{lattice} if $\sN$ is a submodule of
$\sN(*\wt D)$.

Similarly to Definition \ref{def:getype}, we say that $\sN$ is of GE-type
with respect to $G$ if it is supported on the image of $\wt{\iota_G}$. By
relative Kashiwara equivalence \cite[Thm.1.5]{FS17}, and similarly to
\eqref{eq:getypeD}, we have
\[
\sN(*\wt D)
\simeq
\wt{\iota_G}_+\sS
\simeq
\wt{\iota_G}_*
\left(
\sS[s_1,\dots,s_l]\cdot \prod_{i=1}^l g_i^{s_i}
\right),
\]
where
\[
\sS
=
\bigcap_{i=1}^l
\ker
\left(
\sN(*\wt D)
\xrightarrow{\,t_i-g_i\,}
\sN(*\wt D)
\right).
\]
Here $\sS$ is a $\wt{\sA_X^R}$-module satisfying
$\sS=\sS\left[1/\prod_{i=1}^l g_i\right].$
Since $s_i=-\partial_{t_i}t_i$ and $\wt{\iota_G}$ is a closed embedding, the
isomorphism
\begin{equation}\label{eq:relgetype}
\sN(*\wt D)
\simeq
\sS\left[
1/\prod_{i=1}^l g_i,
s_1,\dots,s_l
\right]\cdot
\prod_{i=1}^l g_i^{s_i}
\end{equation}
is also an $\wt{\sA_X^R}\otimes_\C R_l$-module isomorphism.

\begin{theorem}[\cite{FFS23}, Example 4.36]\label{thm:mainFFS}
Let
\[
F=(f_1,\dots,f_r)\colon X\to \A_\C^r
\]
be a regular map as in Example \ref{ex:maingrebex}. If $\cG$ is a regular
holonomic $\shD_X$-module, then
$\cG\otimes_{\cO_X}\cS_F$
is a relative regular holonomic $\sA_X^r$-module. Consequently, as a
submodule of $\cG\otimes_{\cO_X}\cS_F$,
\[
\cG\otimes_{\cO_X}\cN_F
\]
is also relative regular holonomic.
\end{theorem}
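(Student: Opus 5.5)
The plan is to reduce the first assertion to the graph-embedding description of $\cS_F$ together with the fact that relative regular holonomicity is preserved by external product with the constant family and by direct image under closed embeddings. The second assertion will then follow from the thickness of the category.

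\textbf{Step 1: the localized module.} Since $\cG$ is regular holonomic, so is $\cG[1/f]=\cG\otimes\cO_X(*\{f=0\})$, because localization along a divisor preserves regular holonomicity. As in \eqref{be:graphemD}, there is an isomorphism
\[
\iota_{F,*}\big(\cG\otimes_{\cO_X}\cS_F\big)\simeq \iota_{F,+}\big(\cG[1/f]\big)\otimes_{\cO_Z}\cO_Z\big(*E\big)
\]
of $\shD_{Z}$-modules, under which $s_i=-\partial_{t_i}t_i$. Hence $\cG\otimes\cS_F$ is the restriction of $\iota_{F,+}\cG[1/f]$ to $X\times(\C^*)^r$, viewed as a module over the relative ring through the Mellin-type identification.

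\textbf{Step 2: relative regularity.} The cleanest route is the relative Riemann--Hilbert correspondence of \cite{FFS23}. Let $L$ be the rank-one local system of $R_r$-modules on $(\C^*)^r$ given by the universal family of rank-one local systems, with monodromy $t_i\mapsto \exp(-2\pi i s_i)$. Then one expects
\[
\DR\big(\wt{\cG\otimes\cS_F}\big)\simeq \DR(\cG[1/f])\otimes F^{-1}\wt L,
\]
extended by $Rj_*$ from $U=X\setminus\{f=0\}$. This complex is relatively $\C$-constructible, since it is a tensor product of a constructible complex with a pulled-back relative local system on $U$ whose monodromy is regular, followed by the direct image along $j$.

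One then checks that the holonomic module realizing this complex coincides with $\wt{\cG\otimes\cS_F}$. On $U$ the module is $\cG|_U\boxtimes$ the relative flat line bundle with regular singularities, which is regular by \cite[Prop.3.2]{FFS23}, being the relative tensor product of a regular holonomic module with a relative regular flat bundle. Across $\{f=0\}$, one uses that $\cG\otimes\cS_F$ equals its own localization along $f$ and that relative regular holonomic modules are stable under relative localization $(*\{f=0\})$.

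\textbf{Step 3: the submodule.} The module $\cG\otimes\cN_F$ is a submodule of $\cG\otimes\cS_F$. After analytification it is a coherent $\wt{\sA_X^r}$-submodule, with coherence ensured since it is generated by $\cG\otimes F^\bs$ locally finitely. The thickness of the relative regular holonomic subcategory \cite[Prop.3.2(iii)]{FFS23}, which includes closure under coherent submodules, then gives the conclusion.

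\textbf{Main obstacle.} The hardest point is the behaviour across $\{f=0\}$, namely showing that the localized module is regular there rather than merely relative holonomic. I would first try to deduce this from the relative Kashiwara equivalence \cite[Thm.1.5]{FS17} together with the stability of relative regularity under proper pushforward $\iota_{F,+}$ and localization along $E$. This reduces the question to the module $\cG[1/f]\boxtimes\cO_{(\C^*)^r}\,t^{-\bs}$ on $X\times(\C^*)^r$, and regularity of that module is elementary.
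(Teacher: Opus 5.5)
Your Step 3 is how the paper obtains the second assertion: it uses that relative regular holonomic modules form a thick subcategory \cite[Prop.3.2(iii)]{FFS23}. The paper does not prove the first assertion; it quotes it from \cite[Example 4.36]{FFS23}. Read as a proof of that first assertion, your Steps 1--2 have a genuine gap, exactly along $\{f=0\}$.

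The argument ``$\cG\otimes\cS_F$ is its own localization along $f$, and relative regular holonomic modules are stable under $(*\{f=0\})$'' is circular. Stability under localization applies to $\mathcal{P}(*\{f=0\})$ for some $\mathcal{P}$ already known to be relative regular holonomic on all of $X^{\mathrm{an}}\times\C^r$. The only candidate you have for $\mathcal{P}$ is $\wt{\cG\otimes\cS_F}$ itself.

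Regularity on $U\times\C^r$ together with being a $(*\{f=0\})$-module proves nothing. For example, $\big(\mathcal{O}_X(*\{f=0\})\,e^{1/f}\big)\boxtimes\mathcal{O}_{\C^r}$ has both properties and is irregular. For the same reason, two of your claims are consequences of the theorem rather than inputs to it:
\begin{itemize}
\item the formula $\DR(\wt{\cG\otimes\cS_F})\simeq Rj_*\big(\DR(\cG|_U)\otimes F^{-1}\wt L\big)$;
\item the identification of $\wt{\cG\otimes\cS_F}$ with the regular holonomic module realizing this complex.
\end{itemize}
Both the commutation of $\DR$ with $Rj_*$ and the uniqueness of the regular meromorphic extension presuppose that $\wt{\cG\otimes\cS_F}$ is regular. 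You also never show that $\wt{\cG\otimes\cS_F}$ is coherent, let alone relative holonomic, over $\shD_{X^{\mathrm{an}}\times\C^r/\C^r}$. This is not formal, since $\cG\otimes\cS_F$ is not finitely generated over $\sA_X^r$. It needs Sabbah's multivariable $b$-functions (compare the paper's appeal to \cite[Remark 3.4]{WuRHA} for $\cS_F$) plus a bound on the relative characteristic variety.

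The repair you propose under ``Main obstacle'' does not close the gap, because it conflates the coordinates $t$ on $\A^r$ with the parameters $s$. The $\sA_X^r$-structure on $\cG\otimes\cS_F$ comes from the absolute $\shD_Z$-module $\iota_{F,+}(\cG[1/f])$ by restriction of scalars along $s_i\mapsto-\partial_{t_i}t_i$. That operation is neither a direct image nor a localization relative to the base $\C^r_s$. So relative Kashiwara equivalence and stability under $\iota_{F,+}$ cannot transport regularity. Likewise, a module on $X\times(\C^*)^r_t$ is not a module relative to $\C^r_s$.

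What is missing is a source of regularity that is global on $X\times\C^r$. Two ways to get one:
\begin{itemize}
\item Note that $\wt{\cS_F}$ is the inverse image under $F\times\mathrm{id}_{\C^r}$ of $\mathcal{O}_{\C^r_t\times\C^r_s}(*E)\,t^{\bs}$ (underived, by flatness). The latter is a relative Deligne-type connection with logarithmic poles along the normal crossing divisor $E$, hence relative regular holonomic. You then need stability of relative regular holonomicity under inverse image and under tensor product with $\cG\boxtimes\mathcal{O}_{\C^r}$; this tensor product is underived since $\cS_F$ is $\mathcal{O}_X$-flat.
\item First establish relative holonomicity. Then $\wt{\cG\otimes\cS_F}$ is flat over $\mathcal{O}_{\C^r}$, and its restriction to each $\alpha\in\C^r$ is $\cG[1/f]\otimes\mathcal{O}_X[1/f]F^{\alpha}$, which is regular holonomic. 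Regularity then follows from the pointwise characterization of relative regularity.
\end{itemize}
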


For $\bn\in \N^r$, denote by $E_\bn$ the divisor defined by
$\alpha(\bn)=0$. If $\cN$ is a $\shD_{Z,E}$-module, then
$\cN(*E_\bn)$ is naturally a $\shD_{Z,E-\supp(E_\bn)}$-module. We call this
the \emph{partial localization} of $\cN$ along $\bn$. For instance,
\[
\cN_F\left[1/\prod_{i=1}^l f_i\right]
\]
is a partial localization.

\begin{coro}\label{cor:factorthroughgraph}
With notation as in Theorem \ref{thm:mainFFS}, let $\cG$ be a regular
holonomic $\shD_X$-module. Then
\[
\cG\otimes_{\cO_X}\cS_F
\quad \text{and} \quad
\cG\otimes_{\cO_X}
\cN_F\left[1/\prod_{i=1}^l f_i\right]
\]
are relative regular holonomic $\sA_Y^k$-modules for every $0\le k\le r$,
where $l=r-k$.
\end{coro}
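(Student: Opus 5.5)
We interpret the statement as saying that, via the inclusion $\sA^k_Y=\shD_Y\otimes_\C R_k\hookrightarrow \shD_Y\otimes R_k$ with $Y=X\times\A^l_\C$, the two modules, pushed forward along the partial graph embedding $\iota_{G}\colon X\hookrightarrow Y$ with $G=(f_1,\dots,f_l)$, are relative regular holonomic over $\sA^k_Y$. Here the variables $s_{l+1},\dots,s_r$ stay as parameters and $s_1,\dots,s_l$ are absorbed as $-\partial_{t_i}t_i$. The plan is to reduce to Theorem \ref{thm:mainFFS} applied with $F$ replaced by $F''=(f_{l+1},\dots,f_r)$ and $\cG$ replaced by a suitable regular holonomic $\shD_X$-module, and then to push forward along the graph embedding of $G$.

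\textbf{Step 1 (the module $\cG\otimes\cS_F$).} Write $F^{\bs}=G^{\bs'}\cdot F''^{\bs''}$ with $\bs'=(s_1,\dots,s_l)$ and $\bs''=(s_{l+1},\dots,s_r)$. Then
\[
\cS_F=\cO_X[1/f,\bs]F^\bs\simeq \cS_G\otimes_{\cO_X[1/f]}\cS_{F''}
\]
after inverting $f$. By \eqref{be:graphemD}, $\iota_{G,*}(\cO_X[1/g,\bs']G^{\bs'})\simeq\iota_{G,+}\cO_X[1/g]$, with $g=\prod_{i\le l}f_i$. This yields an isomorphism of $\sA^k_Y$-modules
\[
\iota_{G,*}(\cG\otimes\cS_F)\simeq \iota_{G,+}\big(\cG[1/f]\otimes_{\cO_X}\cS_{F''}\big).
\]
The module $\cG[1/f]$ is regular holonomic, since localization preserves regular holonomicity. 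Theorem \ref{thm:mainFFS} then shows that $\cG[1/f]\otimes\cS_{F''}$ is relative regular holonomic over $\sA^k_X$. Finally, the relative direct image under a closed embedding, or more generally a proper map, preserves relative regular holonomicity by \cite{FFS23}, combined with the relative Kashiwara equivalence \cite[Thm.1.5]{FS17}. This settles the first module.

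\textbf{Step 2 (the partially localized module).} For $\cG\otimes\cN_F[1/g]$, note first that $\cN_F[1/g]$ is a sub-$\shD_X[\bs]$-module of $\cS_F$. Under the identification of Step 1, it corresponds to the $\sA^k_Y$-submodule of $\iota_{G,*}(\cG\otimes \cS_F)$ generated by $\cG\otimes\cN_F$ together with $t_1^{-1},\dots,t_l^{-1}$. Indeed, $t_i$ acts as $f_i$ on the graph, so localizing in $g$ amounts to localizing in $t_1\cdots t_l$ on the graph, which is an $\shD_Y$-operation. Since the category of relative regular holonomic modules is a thick abelian subcategory \cite[Prop.3.2(iii)]{FFS23}, it suffices to show that this submodule is a coherent $\wt{\sA^k_Y}$-submodule after analytification. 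Alternatively, one can show directly that $\iota_{G,*}(\cG\otimes\cN_F[1/g])$ is of the form $\iota_{G,+}(\cdot)$ of a module to which Theorem \ref{thm:mainFFS} applies. The cleanest route is this: $\cN_F[1/g]$, viewed on $Y$, is the $\shD_Y[\bs'']$-module generated by $\cN_{F''}$-type sections twisted by $G^{\bs'}$ with $g$ inverted. By Kashiwara's equivalence, it equals $\iota_{G,+}$ of the $\shD_X[\bs'']$-module $\cG\otimes\shD_X[\bs''][1/g]\cdot F''^{\bs''}$, which is $\cG[1/g]\otimes \cN_{F''}$ up to the $\cO_X[1/g]$-saturation. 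Theorem \ref{thm:mainFFS}, applied to $\cG[1/g]$ and $F''$, then gives relative regular holonomicity, and pushing forward as in Step 1 concludes.

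\textbf{Main obstacle.} The main difficulty is the identification in Step 2, namely verifying that the $\shD_Y$-module generated by $\cG\otimes\cN_F[1/g]$ really is $\iota_{G,+}$ of an explicit submodule of $\cG[1/g]\otimes\cS_{F''}$, which requires checking compatibility of the $\partial_{t_i}$-action with the $s_i$-action. If this becomes messy, I would instead argue abstractly. Set $\cP=\iota_{G,*}(\cG\otimes\cN_F[1/g])$. It is a $\shD_Y\otimes R_k$-submodule of the relative regular holonomic module from Step 1, because $\partial_{t_i}=-t_i^{-1}(s_i+1)$ once $t_i$ is invertible. By \cite[Lem.5.6]{WuRHA} and noetherianity of the analytic relative ring, its analytification is then coherent locally, and thickness gives the claim.
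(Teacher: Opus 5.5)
Your proof is correct, but it is organized differently from the paper's.

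\textbf{First module.} The paper never uses a relative direct image. It forms $\cH=\iota_{G,+}\big(\cG[1/\prod_{i\le l}f_i]\big)$, which is an ordinary regular holonomic $\shD_Y$-module by classical results. It then applies Theorem~\ref{thm:mainFFS} on $Y$ itself to $\cH$ and $G_2=(\tilde f_{l+1},\dots,\tilde f_r)$, the pullbacks of $f_{l+1},\dots,f_r$ to $Y$, via $\cG\otimes_{\cO_X}\cS_F\simeq\cH\otimes_{\cO_Y}\cS_{G_2}$. You instead apply Theorem~\ref{thm:mainFFS} on $X$ to $\cG[1/f]$ and $F''$, and push forward along $\iota_G$ afterwards. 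That costs an extra input: relative direct images along closed embeddings preserve relative regular holonomicity. This must be applied to analytifications, since $\cG[1/f]\otimes\cS_{F''}$ is not finitely generated over $\sA^k_X$. Doing the absolute pushforward first and adding the parameters $s_{l+1},\dots,s_r$ last is what lets the paper rely on Theorem~\ref{thm:mainFFS} alone.

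\textbf{Partially localized module.} The paper handles it in one line, as a submodule of the first module. Your explicit identification $\cG\otimes\cN_F[1/g]\simeq\iota_{G,+}\big(\cG[1/g]\otimes_{\cO_X}\cN_{F''}\big)$ is right. No saturation is needed, and one checks $\shD_X[\bs][1/g]F^{\bs}=\big(\shD_X[\bs''][1/g]F''^{\bs''}\big)[\bs']G^{\bs'}$ by induction on the order, using the Leibniz rule and invertibility of $g$. But it gains nothing over your fallback: the $\cN$-part of Theorem~\ref{thm:mainFFS} that you invoke is itself justified by the same submodule argument. Your fallback is exactly the paper's proof.

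\textbf{Two small corrections.} First, $t_i$ acts on $\cS_F$ as $f_i$ composed with the shift $s_i\mapsto s_i+1$, not as $f_i$ alone. Your conclusion survives, because $(t_1\cdots t_l)^{-m}\cN_F=\shD_X[\bs]g^{-m}F^{\bs}$. Second, coherence of the analytification of such a submodule comes from local stationarity of increasing unions of coherent submodules of a coherent $\wt{\sA^k_Y}$-module. It does not come from the faithful flatness statement \cite[Lem.5.6]{WuRHA}.
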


\begin{proof}
Write
\[
G_1=(f_1,\dots,f_l)
\quad \text{and} \quad
G_2=(\tilde f_{l+1},\dots,\tilde f_r),
\]
where $\tilde f_i$ is the pullback of $f_i$ under the natural projection
\[
Y=X\times \A_\C^l\to X,
\]
with $l=r-k$. We have the graph embedding
$\iota_{G_1}\colon X\longrightarrow Y.$
Then, set
\[
\cH
=
\iota_{G_1,+}
\left(
\cG\left[1/\prod_{i=1}^l f_i\right]
\right).
\]
Equivalently, under the graph-embedding description,
\[
\cH
\simeq
\cG\left[
1/\prod_{i=1}^l f_i,
s_1,\dots,s_l
\right]\cdot
\prod_{i=1}^l f_i^{s_i}.
\]
Then we have the decomposition
\[
\cG\otimes_{\cO_X}\cS_F
\simeq
\cH\otimes_{\cO_Y}\cS_{G_2}.
\]
Applying Theorem \ref{thm:mainFFS} to $\cH$ and $G_2$ gives the result for
$\cG\otimes_{\cO_X}\cS_F$. Now, the statement for
\[
\cG\otimes_{\cO_X}
\cN_F\left[1/\prod_{i=1}^l f_i\right]
\] 
follows because it is a submodule of the corresponding relative regular
holonomic module.
\end{proof}

\subsection{Essential fibers of relative holonomic $\shD$-modules}

Let $\cM$ be a relative holonomic $\sA_X^R$-module such that
$j(\cM)=n+l$ for some $0\le l\le \dim R$. We denote by
\[
\supp(\cM)=\supp_X(\cM)=\supp_X^R(\cM)\subseteq \Spec R
\]
its support as an $R$-module. Recall that if $\cN$ is an $\sA_X^R$-module, then
\[
B_\cN\coloneqq \Ann_R(\cN)\subseteq R
\]
is called the \emph{Bernstein--Sato ideal} of $\cN$. By
\cite[Lem.3.4.1]{BVWZ}, we have
\begin{equation}\label{eq:suppbideal=}
    Z(B_\cM)=\supp(\cM)=p_2\big(\Ch_{\sA_X^R}(\cM)\big).
\end{equation}
By relative holonomicity and Lemma \ref{lm:ausdual}, we have
\[
\dim \supp(\cM)=\dim R-l.
\]

\begin{lemma}\label{lm:genericCMness}
With notation and assumptions above, there exists a closed subvariety
$V\subsetneq \supp(\cM)$
such that $\cM$ is $(n+l)$-Cohen--Macaulay away from $V$. In other words,
$\cM$ is generically Cohen--Macaulay. Moreover, if $\cM$ is $(n+l)$-pure, then
\[
\dim V\le \dim \supp(\cM)-2.
\]
\end{lemma}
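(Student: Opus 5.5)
The plan is to use the Ext-characterization of Cohen--Macaulayness over the Auslander regular ring $\sA_X^R$. Recall that $\cM$ is $(n+l)$-Cohen--Macaulay exactly when $\operatorname{Ext}^q_{\sA_X^R}(\cM,\sA_X^R)=0$ for all $q\neq n+l$. Since $j(\cM)=n+l$, these Ext modules already vanish for $q<n+l$. For $q>n+l$, put $\cE^q\coloneqq\operatorname{Ext}^q_{\sA_X^R}(\cM,\sA_X^R)$; each is a finitely generated (right) $\sA_X^R$-module. Lemma \ref{lm:ausdual}(2), in its non-log version for $\sA_X^R$ (it holds because $\gr\sA_X^R$ is regular), gives $j(\cE^q)\ge q\ge n+l+1$. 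Then Lemma \ref{lm:ausdual}(1) gives $\dim\Ch(\cE^q)\le 2n+\dim R-q<2n+\dim R-n-l$.

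To get a dimension bound in $\Spec R$, I use that $\cE^q$ is relative holonomic. This holds because duals of relative holonomic modules are relative holonomic: $\Ch(\cE^q)\subseteq \Ch(\cM)\subseteq \Lambda\times\Spec R$ with $\Lambda$ Lagrangian, by the standard inclusion of characteristic varieties of Ext modules. Taking a good filtration, each fiber of $\Ch(\cE^q)$ over $\Spec R$ sits inside some $\Lambda$, which has dimension $n$. Hence $\dim p_2(\Ch(\cE^q))\ge \dim\Ch(\cE^q)-n$. More to the point, relative holonomicity makes $\Ch(\cE^q)$ a union of $\Lambda_i\times S_i$, so $\dim\supp(\cE^q)=\dim\Ch(\cE^q)-n\le n+\dim R-q$. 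For $q\ge n+l+1$ this gives $\dim\supp(\cE^q)\le\dim R-l-1=\dim\supp(\cM)-1$.

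Set $V=\bigcup_{q>n+l}\supp(\cE^q)\cap\supp(\cM)$; this is a finite union, since $q\le 2n+\dim R$. By \eqref{eq:suppbideal=} each $\supp(\cE^q)=Z(B_{\cE^q})$ is closed, and the bound above shows $V\subsetneq\supp(\cM)$. Ext commutes with the flat localization $R\to R_\Omega$ for affine open $\Omega\subseteq\Spec R\setminus V$. Hence $\cM|_\Omega$ has Ext nonvanishing only in degree $n+l$, which is the desired generic Cohen--Macaulayness.

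For the pure case, I use the purity criterion (BJ A.IV): $\cM$ is $(n+l)$-pure iff $j(\cE^q)\ge q+1$ for all $q>n+l$. This is the analogue of Lemma \ref{lm:dualalPF}, since $T_{n+l+1}\cM=\operatorname{Ext}^0(\tau^{\ge n+l+1}\Rhom(\cM,\sA),\sA)$ vanishes iff the higher Exts have grade at least $q+1$. Feeding $j(\cE^q)\ge q+1\ge n+l+2$ into the same dimension count gives $\dim\supp(\cE^q)\le\dim R-l-2$, which is the bound on $\dim V$. The step I expect to be the main obstacle is justifying $\dim\supp(\cE^q)=\dim\Ch(\cE^q)-n$, that is, the relative holonomicity of the Ext modules together with the exact product structure of their characteristic varieties. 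If that is delicate, I would instead cite \cite[Lem.3.4.1]{BVWZ} and the relative holonomic duality results there.
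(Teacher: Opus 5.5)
Your proposal is correct and follows the paper's proof essentially step for step. The shared steps are the grade bound $j(\operatorname{Ext}^q_{\sA_X^R}(\cM,\sA_X^R))\ge q$, improved to $\ge q+1$ for $q>n+l$ in the pure case via \cite[A.IV 2.6]{BJ}; the dimension count from Lemma \ref{lm:ausdual}; the passage from $\Ch$ to $\supp$ via relative holonomicity and \cite[Lem.3.4.1]{BVWZ}; and the choice $V=\bigcup_{q>n+l}\supp(\operatorname{Ext}^q)$ together with flat localization over $R$.

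One small correction: the inequality you actually need is $\dim\supp(\operatorname{Ext}^q)\le\dim\Ch(\operatorname{Ext}^q)-n$. This follows from the product decomposition $\bigcup_i\Lambda_i\times S_i$, as in your ``more to the point'' sentence. The fiber bound you state first, $\dim p_2(\Ch)\ge\dim\Ch-n$, goes in the opposite direction and should be dropped.
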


\begin{proof}
Set $d=\dim R$. By Auslander regularity, we have
\[
j\big(\operatorname{Ext}^q_{\sA_X^R}(\cM,\sA_X^R)\big)\ge q
\]
for every $q$. By Lemma \ref{lm:ausdual}, it follows that
\[
\dim \Ch\big(\operatorname{Ext}^q_{\sA_X^R}(\cM,\sA_X^R)\big)
\le 2n+d-q.
\]
By \cite[Lem.3.4.1]{BVWZ}, we therefore have
\[
\dim \supp\big(\operatorname{Ext}^q_{\sA_X^R}(\cM,\sA_X^R)\big)
\le n+d-q.
\]
We then take
\[
V=
\bigcup_{q>n+l}
\supp\big(\operatorname{Ext}^q_{\sA_X^R}(\cM,\sA_X^R)\big)
\subseteq \supp(\cM).
\]
By the above dimension estimate, $V$ is a proper closed subset of
$\supp(\cM)$. Hence, if
\[
\Omega=\Spec R_\Omega\subseteq \Spec R\setminus V,
\]
then $\cM|_\Omega$ is Cohen--Macaulay; see \cite[Def.3.3.1]{BVWZ}.

If $\cM$ is $(n+l)$-pure, then by \cite[A.IV 2.6]{BJ} we have
\[
j\big(\operatorname{Ext}^q_{\sA_X^R}(\cM,\sA_X^R)\big)\ge q+1
\]
for $q>n+l$. Thus,
\[
\dim V\le \dim \supp(\cM)-2.
\]
\end{proof}

\begin{prop}\label{prop:essfib}
Let $p\in \supp(\cM)\setminus V$ be a closed point, and let
$W\subseteq \Spec R$
be an $l$-dimensional smooth closed subvariety containing $p$ and intersecting
$\supp(\cM)$ properly. Then the algebraic relative localization
\[
\left(\frac{\cM}{\cI_W\cdot\cM}\right)\bigg|_{\Omega}
=
\frac{\cM}{\cI_W\cdot\cM}\otimes_R R_\Omega
\]
is a holonomic $\shD_X$-module, where $\cI_W$ is the ideal of $W$ and
$\Omega=\Spec R_\Omega$ is a sufficiently small affine Zariski open
neighborhood of $p\in \Spec R$ such that
\[
W\cap \supp(\cM)\cap \Omega=\{p\}.
\]
We call
\[
\left(\frac{\cM}{\cI_W\cdot\cM}\right)\bigg|_{\Omega}
\]
the \emph{essential fiber} of $\cM$ at $p$ along $W$.
\end{prop}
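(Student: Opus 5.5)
Since $p\notin V$, we first shrink $\Omega$ so that $\Omega\cap V=\emptyset$. By Lemma~\ref{lm:genericCMness}, $\cM_\Omega\coloneqq \cM|_\Omega$ is then $(n+l)$-Cohen--Macaulay over $\sA_X^{R_\Omega}$: we have $\operatorname{Ext}^q_{\sA_X^{R_\Omega}}(\cM_\Omega,\sA_X^{R_\Omega})=0$ for $q\neq n+l$. After shrinking $\Omega$ further, the smooth subvariety $W$ is cut out in $\Omega$ by a regular sequence $h_1,\dots,h_{d-l}\in R_\Omega$, where $d=\dim R$. Because $W$ meets $\supp(\cM)$ properly and $\dim\supp(\cM)=d-l$, we may also assume $W\cap\supp(\cM)\cap\Omega=\{p\}$. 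The plan is then to show that $\cM_\Omega/(h_1,\dots,h_{d-l})\cM_\Omega$ is finitely generated over $\shD_X$ and has grade $n$ over $\shD_X$ (equivalently, characteristic variety of dimension $n$). This is holonomicity.

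\emph{Step 1: the $h_i$ are regular on $\cM_\Omega$ and Cohen--Macaulayness propagates.} We cut by one $h_i$ at a time. Write $\cM_i=\cM_\Omega/(h_1,\dots,h_i)\cM_\Omega$, which is supported on $Z(h_1,\dots,h_i)\cap\supp(\cM)$. Properness of the intersection says this support has dimension $d-l-i$. Applying $\Rhom(-,\sA)$ to $\cM_{i-1}\xrightarrow{h_i}\cM_{i-1}$ and using that $\cM_{i-1}$ is $(n+l+i-1)$-CM, the kernel of $h_i$ would be a submodule whose characteristic variety lies over $Z(h_i)\cap\supp(\cM_{i-1})$. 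By \eqref{eq:suppbideal=} and Lemma~\ref{lm:ausdual}, such a submodule has grade $\ge n+l+i$. Since a CM module is pure, it has no submodule of larger grade, so $h_i$ is injective on $\cM_{i-1}$. The long exact Ext sequence for $0\to\cM_{i-1}\xrightarrow{h_i}\cM_{i-1}\to\cM_i\to0$ then shows that $\cM_i$ has Ext concentrated in degree $n+l+i$. In the end $\cM_{d-l}$ is $(n+d)$-CM over $\sA_X^{R_\Omega}$ and is supported at the single point $p$.

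\emph{Step 2: from $\sA_X^{R_\Omega}$ to $\shD_X$.} Since $\supp(\cM_{d-l})=\{p\}$, the module $\cM_{d-l}$ is annihilated by a power $\mathfrak m_p^N$ of the maximal ideal. It therefore has a finite filtration by $\sA_X^{R_\Omega}$-submodules $\mathfrak m_p^j\cM_{d-l}$, whose graded pieces are finitely generated $\shD_X\otimes R_\Omega/\mathfrak m_p=\shD_X$-modules. Hence $\cM_{d-l}$ is finitely generated over $\shD_X$. For the dimension, we compare characteristic varieties: the relative characteristic variety $\Ch_{\sA_X^{R_\Omega}}(\cM_{d-l})\subseteq T^*X\times\{p\}$ has dimension $2n+d-(n+d)=n$ by Lemma~\ref{lm:ausdual}. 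Using a good filtration compatible with the $\mathfrak m_p$-adic filtration, this variety coincides with the $\shD_X$-characteristic variety of $\cM_{d-l}$ (alternatively, the characteristic variety of each graded piece is contained in it). Therefore $\dim\Ch_{\shD_X}\le n$, and Bernstein's inequality forces equality on each nonzero piece, which gives holonomicity.

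\emph{Main obstacle.} The delicate step is Step 1, specifically proving that each $h_i$ is a nonzerodivisor. This requires identifying the grade of $\ker(h_i)$ from its $R$-support via \eqref{eq:suppbideal=} together with Lemma~\ref{lm:ausdual}: the support dimension bound $\le d-l-i$ must translate into characteristic variety dimension $\le 2n+d-(n+l+i)$. That translation relies on relative holonomicity of the submodule, which holds because relative holonomic modules form a thick subcategory. If this route proves awkward, the fallback is an argument via the Ext spectral sequence for $\cM\otimes^L_R R/\cI_W$ combined with \cite[Def.3.3.1]{BVWZ}-type Cohen--Macaulay base change.
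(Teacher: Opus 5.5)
Your proposal is correct and follows essentially the paper's route. You shrink $\Omega$ to avoid $V$ so that $\cM|_\Omega$ is Cohen--Macaulay, cut by a regular sequence generating $\cI_W$ while preserving Cohen--Macaulayness, and then forget the $R_\Omega$-structure of the resulting module supported at $p$. The paper simply invokes \cite[Lem.3.4.2]{BVWZ} and the argument of \cite[Prop.3.4.3]{BVWZ} for the cutting step, whereas you prove the nonzerodivisor property directly from purity and the grade bound coming from \eqref{eq:suppbideal=}.

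One point should be made explicit. In the long exact Ext sequence, exactness alone does not give $\operatorname{Ext}^{n+l+i-1}(\cM_i,\sA_X^{R_\Omega})=0$. That vanishing follows from the same support estimate you already use, which gives $j(\cM_i)\ge n+l+i$.
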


\begin{proof}
Set $d=\dim R$. After shrinking $\Omega$ if necessary, choose a regular
sequence
\[
(x_1,\dots,x_{d-l})
\]
generating $\cI_W|_\Omega$. We then apply
\cite[Lem.3.4.2]{BVWZ}. Since $(x_1,\dots,x_{d-l})$ is a regular sequence,
the arguments in the proof of \cite[Prop.3.4.3]{BVWZ} show that
\[
\frac{\cM|_\Omega}{(x_1,\dots,x_i)\cdot \cM|_\Omega}
\]
is Cohen--Macaulay for every $i\le d-l$. In particular,
\[
\left(\frac{\cM}{\cI_W\cdot\cM}\right)\bigg|_{\Omega}
\simeq
\frac{\cM|_\Omega}{(x_1,\dots,x_{d-l})\cdot \cM|_\Omega}
\]
is $(n+d)$-Cohen--Macaulay and relative holonomic over
$\sA_X^{R_\Omega}$. Since its support over $\Spec R_\Omega$ is the closed
point $p$, forgetting the $R_\Omega$-module structure yields a holonomic
$\shD_X$-module.
\end{proof}

\begin{remark}
\begin{enumerate}[label=$(\arabic*)$]
    \item If $\sM$ is a relative holonomic $\wt{\sA_X^R}$-module, then locally
    on $X^\an\times \Specan R$, the support $\supp(\sM)$ is a locally closed
    analytic subvariety of $\Specan R$. Thus, after replacing $V$ by a locally
    closed analytic subvariety, Lemma \ref{lm:genericCMness} still holds for
    $\sM$. Moreover, after replacing algebraic relative localization by
    analytic relative localization, Proposition \ref{prop:essfib} also holds
    for $\sM$. Hence one can define the essential fibers of $\sM$ similarly,
    and these essential fibers are holonomic $\shD_{X^\an}$-modules.

    \item The essential fiber at $p$ may depend on the choice of $W$, but its
    characteristic cycle does not.
\end{enumerate}
\end{remark}

\section{$\Q$-specializable $\shD$-modules}\label{sect:Qspecial}

Let $f\in \cO_X$ be a regular function, and let
$\iota_f\colon X\rightarrow X\times \A_\C^1$
be the graph embedding. For a $\shD_X$-module $\cM$, we use
$V_\bullet\cM_f$ to denote the $\Z$-indexed increasing
Kashiwara--Malgrange filtration of
$\cM_f\coloneqq \iota_{f,+}\cM$
along $X\times\{0\}$; see, for instance, \cite[\S 4.1]{Wuch} for the
construction. We say that $\cM$ is \emph{specializable} along $f$ if
$V_\bullet\cM_f$ exists. We say that $\cM$ is \emph{$\Q$-specializable}
along $f$ if, moreover, there exists a $b$-function
$b_f(s)\in \C[s]$
whose roots all lie in $\Q$ and such that
$b_f(t\partial_t+k)$
annihilates
\[
\gr^V_k\cM_f=V_k\cM_f/V_{k-1}\cM_f
\]
for every $k\in \Z$. If $\cM$ is $\Q$-specializable along $f$, then for
$\alpha\in \Q$ we use
$\gr^V_\alpha \cM_f$
to denote the generalized $\alpha$-eigenspace, with respect to the
$t\partial_t$-operation, of
\[
V_k\cM_f/V_{k-1}\cM_f,
\qquad
k=\lfloor \alpha\rfloor+1.
\]

More generally, let
$L\in \Z_{\ge 0}^l$
be a slope with respect to the smooth complete intersection
\[
X\times\{0\}\subseteq Y=X\times \A_\C^l.
\]
For a $\shD_Y$-module $\cP$, we use ${}^L V_\bullet\cP$ to denote the
$\Z$-indexed increasing Kashiwara--Malgrange filtration along the slope $L$,
following Sabbah \cite{Sab}; see also \cite[Def.4.14]{Wuch}. We say that
$\cP$ is \emph{$\Q$-$L$-specializable} if ${}^L V_\bullet\cP$ exists and if
there exists a $b$-function
$b_L(s)\in \C[s]$
whose roots all lie in $\Q$ and such that
$b_L(v_L+k)$
annihilates
\[
\gr^{{}^LV}_k\cP={}^L V_k\cP/{}^L V_{k-1}\cP
\]
for every $k\in \Z$, where $v_L$ is the Euler vector field along $L$; cf.
\cite[Remark 4.13]{Wuch}.

Let
$G=(g_1,\dots,g_l)\colon X\rightarrow \A_\C^l$
be a regular map, and let
$\iota_G\colon X\to X\times \A_\C^l$
be the graph embedding. For a $\shD_X$-module $\cM$, we say that $\cM$ is
\emph{$\Q$-$L$-specializable} along $G$ if
$\cM_G\coloneqq \iota_{G,+}\cM$
is $\Q$-$L$-specializable. Finally, we say that $\cM$ is
\emph{$\Q$-specializable} if it is $\Q$-$L$-specializable along every $G$ and
for every slope $L$.

\begin{lemma}\label{lm:sqstQsp}
We have:
\begin{enumerate}[label=$(\arabic*)$]
    \item subquotients of $\Q$-specializable $\shD_X$-modules are
    $\Q$-specializable;
    \item extensions of $\Q$-specializable holonomic $\shD_X$-modules are
    $\Q$-specializable;
    \item $\shD_X$-modules underlying $\Q$-mixed Hodge modules are
    $\Q$-specializable.
\end{enumerate}
\end{lemma}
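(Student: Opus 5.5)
We treat the three parts in turn. For each, the basic approach is to reduce to statements about the $V$-filtration along a single graph embedding with a fixed slope. We then combine two standard facts: graph embeddings are compatible with subquotients, and the multi-slope $V$-filtration of Sabbah is characterized by the existence of good $b$-functions (\cite{Sab}, \cite[\S4]{Wuch}).

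For (1), fix $G$ and $L$. Since $\iota_{G,+}$ is exact, a subquotient $\cM'$ of $\cM$ gives a subquotient $\cM'_G$ of $\cM_G$. Given a submodule $\cP'\subseteq\cP=\cM_G$, we would set ${}^LV_k\cP'\coloneqq{}^LV_k\cP\cap\cP'$. For a quotient $\cP''=\cP/\cP'$, we would take the image filtration. The Artin--Rees type statement for the $V_0\shD$-action, available in \cite{Sab} for specializable modules, shows that these are good $V$-filtrations. Moreover, $\gr^{{}^LV}_k$ of the sub (resp. quotient) injects into (resp. is a quotient of) $\gr^{{}^LV}_k\cP$. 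Hence the same $b_L$ with rational roots annihilates it. By uniqueness of the Kashiwara--Malgrange filtration, which holds once one normalizes the roots of $b_L(v_L+k)$ to lie in a fixed half-open interval such as $[0,1)$ after shifting, these induced filtrations are the canonical ones. Rationality is preserved because the roots for the subquotient form a subset of those for $\cM$.

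For (2), let $0\to\cM'\to\cM\to\cM''\to0$ with $\cM',\cM''$ holonomic and $\Q$-specializable. Holonomicity guarantees that $\cM_G$ is $L$-specializable for every $L$, by Sabbah's theorem. So the canonical ${}^LV_\bullet\cM_G$ exists, and the only issue is rationality of the $b$-function. The $V$-filtration is strictly compatible with the exact sequence, since by uniqueness it restricts and projects to the $V$-filtrations of the two ends. This gives exact sequences
\[
0\to\gr^{{}^LV}_k\cM'_G\to\gr^{{}^LV}_k\cM_G\to\gr^{{}^LV}_k\cM''_G\to0 .
\]
Hence $b'_L(v_L+k)\,b''_L(v_L+k)$ annihilates the middle term, and its roots are rational. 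This step, strict exactness of the $L$-$V$-filtration, is the main technical point. I would deduce it from the characterization of ${}^LV$ as the unique good filtration with $b$-function roots in a fixed fundamental domain: the filtration induced on $\cM'_G$ and the quotient filtration on $\cM''_G$ both have this property, hence coincide with the canonical ones.

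For (3), let $\cM$ underlie a mixed Hodge module. Then $\cM_G=\iota_{G,+}\cM$ underlies a mixed Hodge module on $Y$. By Saito's theory, the $\Q$-indexed $V$-filtration along each hypersurface $t_i=0$ exists with rational indices, since the quasi-unipotency of the monodromy of the underlying $\Q$-perverse sheaf forces the eigenvalues of $t\partial_t$ on graded pieces to be rational. For a general slope $L$, I would reduce to the one-hypersurface case: perform the weighted blow-up, or equivalently the graph embedding, of the monomial $\prod t_i^{L_i}$, and use that the $L$-$V$-filtration is governed by the $V$-filtration along this single function (\cite{Sab}, \cite[Remark 4.13]{Wuch}). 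Eigenvalues of the $L$-Euler field are then controlled by monodromies of nearby cycles of mixed Hodge modules, which are quasi-unipotent. Hence the roots of $b_L$ are rational.
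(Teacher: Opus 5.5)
Your arguments for (1) and (2) are the paper's. Uniqueness of the Kashiwara--Malgrange filtration along $(G,L)$ forces the induced (resp.\ image) filtration on a submodule (resp.\ quotient) to be the canonical one. The graded pieces are then subquotients of, resp.\ fit into short exact sequences with, those of $\cM_G$, and one divides, resp.\ multiplies, the $b$-functions.

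The gap is in (3), at the one step that actually needs an argument: passing from a single hypersurface to an arbitrary slope $L$. The graph embedding of the monomial $h=\prod_i t_i^{L_i}$ is not equivalent to a weighted blow-up. The $V$-filtration along $h=0$ does not govern ${}^LV_\bullet$: the former is a filtration along a hypersurface, the latter along the codimension-$l$ locus $t_1=\cdots=t_l=0$ with Euler field $v_L$. Concretely, take $X=\C^2$, $G=(x_1,x_2)$, $\cM=\cO_X$ and $L=(1,2)$. In the coordinates $(x-t,t)$, ${}^LV_\bullet\cM_G$ is the weighted $(t_1,t_2)$-adic filtration, so $v_L=t_1\partial_{t_1}+2t_2\partial_{t_2}$ has only integer eigenvalues (weighted degrees) on the graded pieces. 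By contrast, $b_{x_1x_2^2}(s)=(s+1)^2(s+\tfrac12)$ has the non-integral root $-\tfrac12$. So the eigenvalues of $v_L$ cannot be read off from $b_h$, and rationality of the roots of $b_h$ says nothing about $b_L$. As written, your argument does not establish (3) for slopes with two or more nonzero entries.

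A reduction that does work is the weighted deformation to the normal cone
$\Phi\colon X\times\C^l_{t'}\times\C_u\to Y\times\C_u$, $\Phi(x,t',u)=(x,u^{L_1}t'_1,\dots,u^{L_l}t'_l,u)$. It is an isomorphism over $u\neq0$. For $m$ a section of $\cM_G$ one has $u\partial_u\,\Phi^*(m\otimes1)=\Phi^*(v_Lm\otimes1)$ with $v_L=\sum_iL_it_i\partial_{t_i}$; other normalizations shift by an integer. One checks that $\sum_{p}u^{p}\,\Phi^*({}^LV_{k+p}\cM_G\otimes 1)$ is the $V$-filtration along the smooth hypersurface $u=0$ of $j_*\Phi^*(\cM_G\boxtimes\cO_{\C^*_u})$, with the same $b$-function. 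This module underlies a mixed Hodge module, so Saito's $\Q$-indexing of $V$-filtrations along $u$ gives rational roots of $b_L$. The weighted blow-up you mention first is a variant of this, with its exceptional coordinate playing the role of $u$. Alternatively, argue as the paper does: invoke directly that graph embeddings and nearby cycles along a slope $L$ preserve $\Q$-mixed Hodge modules \cite[Lem.~4.16]{Wuch}, so rationality comes from the quasi-unipotency built into Saito's definition.
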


\begin{proof}
By uniqueness of $V$-filtrations along $G$ and for a slope $L$, if $\cN$ is a submodule, respectively a
quotient module, of a $\Q$-specializable $\shD_X$-module $\cM$, then the
${}^LV$-filtration of $\cM$ induces the ${}^LV$-filtration of $\cN$. Moreover,
$\gr^{{}^LV}_\bullet\cN_G$ is respectively a submodule or a quotient module of
$\gr^{{}^LV}_\bullet\cM_G$. Part $(1)$ follows.

To prove part $(2)$, consider a short exact sequence of holonomic
$\shD_X$-modules
\[
0\to \cM'\to \cM\to \cM''\to 0
\]
such that $\cM'$ and $\cM''$ are $\Q$-specializable. Since $\cM$ is
holonomic, the Kashiwara--Malgrange filtration of $\cM$ along any slope $L$
exists. By uniqueness, this filtration induces the Kashiwara--Malgrange
filtrations of $\cM'$ and $\cM''$ along $L$, and we obtain short exact sequences
\[
0\to
\frac{{}^L V_k\cM'_G}{{}^L V_{k-1}\cM'_G}
\to
\frac{{}^L V_k\cM_G}{{}^L V_{k-1}\cM_G}
\to
\frac{{}^L V_k\cM_G''}{{}^L V_{k-1}\cM''_G}
\to 0
\]
for all $k$. Taking a common multiple of the corresponding $b$-functions for
$\cM'$ and $\cM''$, we see that $\cM$ is also $\Q$-specializable.

Finally, graph embeddings and nearby cycles (along $L$) preserve $\Q$-mixed Hodge
modules. Hence part $(3)$ follows from the definition of $\Q$-mixed Hodge
modules; cf. \cite{MHM90} and \cite[Lem.4.16]{Wuch}.
\end{proof}

\begin{theorem}\label{thm:latticemtqsupp}
Let $\cM$ be a $($respectively
$\Q$-specializable$)$ regular holonomic $\shD_Y$-module, supported on the graph of
$G=(g_1,\dots,g_l)$
and satisfying $\cM=\cM(*D)$. Let $\cN\subseteq \cM$ be a
$\shD_{Y,D}$-lattice, and let $\cL=\cL_{\bm}$ be the maximal tame pure extension
of $\cN$ inside $\cN(*D_{\bm})$ for some nonzero $\bm\in \N^l$. Then:
\begin{enumerate}[label=$(\arabic*)$]
    \item $\cN$ and $\cL$ are relative regular holonomic $\sA_X^l$-modules.

    \item The top-dimensional part of
    \[
    \supp_X\big(\cN/\alpha(\bm)\cdot\cN\big)
    \subseteq
    \Spec R_l\simeq \C^l
    \]
    is a nonempty finite union of translated, respectively $\Q$-translated,
    linear subvarieties of codimension one, each of the form
    \[
    \left(\sum_{i=1}^l a_i s_i+b=0\right),
    \]
    for some nonzero primitive slope vector
    $(a_1,\dots,a_l)\in \Z_{\ge 0}^l$
    such that
    \[\bm\cdot (a_1,\dots,a_l)\neq 0.\]
    \item The support
    \[
    \supp_X\big(\cL/\alpha(\bm)\cdot\cL\big)
    \subseteq
    \Spec R_l
    \]
    is a nonempty finite union of translated, respectively $\Q$-translated,
    linear subvarieties of codimension one.
\end{enumerate}
\end{theorem}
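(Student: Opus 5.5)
The plan is to reduce everything to the graph-embedding description \eqref{eq:getypeD}, and then to combine Sabbah's multi-slope $b$-functions with the purity result of Lemma \ref{lm:tamepurechv}.

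\textbf{Part $(1)$.} Since $\cM$ is supported on the graph of $G$ and $\cM=\cM(*D)$, \eqref{eq:getypeD} gives an isomorphism of $\sA_X^l$-modules
\[
\cM\simeq \cS\left[1/\prod_{i=1}^l g_i,s_1,\dots,s_l\right]\cdot \prod_{i=1}^l g_i^{s_i},
\]
where $\cS$ is a regular holonomic $\shD_X$-module with $\cS=\cS[1/\prod g_i]$. That $\cS$ is regular holonomic follows from Kashiwara's equivalence, because $\cS\simeq\cH^0\iota_G^!\cM$.
\begin{itemize}
\item Theorem \ref{thm:mainFFS}, applied with $\cG=\cS$ and $F=G$, shows that $\cM\simeq\cS\otimes_{\cO_X}\cS_G$ is relative regular holonomic over $\sA_X^l$.
\item By Lemma \ref{lm:GE-type}, $\cN$ is finitely generated over $\sA_X^l$.
\item $\cL$ is a finitely generated $\sA_{Y,D}$-submodule of $\cN(*D_\bm)\subseteq\cM$. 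It is again a lattice of GE-type, so Lemma \ref{lm:GE-type} applies to it as well.
\end{itemize}
Both $\cN$ and $\cL$ are therefore finitely generated submodules of a relative regular holonomic module. Since this category is thick \cite[Prop.3.2(iii)]{FFS23}, both are relative regular holonomic.

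\textbf{Part $(2)$.} The argument has four steps.
\begin{itemize}
\item \emph{Grade.} $\alpha(\bm)$ acts injectively on the lattice $\cN$ and $\cN$ is $(n+0)$-pure. Hence $j(\cN/\alpha(\bm)\cN)=j(\cN)+1$; I would deduce this from Lemma \ref{lm:ausdual} together with a standard hypersurface-section argument for $\Ch^\dagger$. By \eqref{eq:suppbideal=} and Lemma \ref{lm:GE-type}, the support $\supp_X(\cN/\alpha(\bm)\cN)$ then has dimension $l-1$.
\item \emph{Nonemptiness.} If $\alpha(\bm)\cN=\cN$, then $\cN=\cN(*D_\bm)$. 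This is impossible for a finitely generated lattice of a nonzero module once $D_\bm$ meets the support, and that is the case since $\bm\neq0$ and we work on the graph.
\item \emph{Linearity.} I would invoke Sabbah's existence theorem for multi-slope $b$-functions \cite{Sab}, i.e. the result behind Lemma \ref{lm:subbahgb}. It yields an annihilator of $\cN/\alpha(\bm)\cN$ that is a product of linear forms $\sum_i a_is_i+b$. Here each $(a_1,\dots,a_l)\in\Z^l_{\ge0}$ is a primitive slope coming from a fan adapted to $\cN$, and $\bm\cdot(a_1,\dots,a_l)\neq0$. The support lies in the zero locus of this product, so every top-dimensional, i.e. codimension-one, component is one of these hyperplanes.
\item \emph{Rationality.} In the $\Q$-specializable case the constants $b$ come from eigenvalues of $v_L$ on $\gr^{{}^LV}\cM_G$. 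These are rational, so one can choose Sabbah's $b$-function with rational constants, and the hyperplanes are $\Q$-translated.
\end{itemize}

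\textbf{Part $(3)$.} Lemma \ref{lm:tamepurechv} shows that $\cL/\alpha(\bm)\cL$ is $(p+1)$-pure and that $\Ch^\dagger(\cL/\alpha(\bm)\cL)$ is the top-dimensional part of $\Ch^\dagger(\cN/\alpha(\bm)\cN)$. By Lemma \ref{lm:purfilcodim}, purity means every component of $\Ch^\dagger$ has the same dimension. Projecting via Lemma \ref{lm:GE-type} and \eqref{eq:suppbideal=}, the support of $\cL/\alpha(\bm)\cL$ is the union of the codimension-one components identified in $(2)$. It is nonempty because the top-dimensional part is nonempty.

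\textbf{Main obstacle.} The delicate step is the one in $(2)$ that matches Sabbah's $b$-functions, whose slopes come from a fan, with the precise form $(\sum a_is_i+b=0)$ and the condition $\bm\cdot a\neq0$. The same step must also control rationality of $b$ via $\Q$-$L$-specializability. I would first try to derive this by localizing at a generic point of each top-dimensional component, where only one slope $L$ is relevant. There I would compare the quotient with $\gr^{{}^LV}$ of $\cM_G$, using the fact that $v_L$ acts on it through $\sum a_is_i$.
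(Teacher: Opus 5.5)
\textbf{Overall.} Your parts $(1)$ and $(3)$ are the paper's argument, and your rationality step rests on the same Sabbah-type input the paper cites. In $(2)$, using the element of $B^{\bm}(\cN)$ from Lemma \ref{lm:subbahgb} to trap $\supp_X(\cN/\alpha(\bm)\cN)$ inside finitely many hyperplanes $L\cdot s+\alpha=0$, $L\in\Z^l_{\ge0}$, is a legitimate and more direct replacement for the paper's route. The paper instead appeals to Wu's earlier results on $\cN/t\cN$ and then uses the twisting decomposition of Lemma \ref{lm:lattictwist}.

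\textbf{Main gap: the condition $\bm\cdot(a_1,\dots,a_l)\neq0$.} You present this as part of the output of Sabbah's theorem, but you never establish it.
\begin{enumerate}
\item Lemma \ref{lm:subbahgb} only produces \emph{some} element of the ideal whose factors have slopes in $\Z^l_{\ge0}$. It says nothing about $L\cdot\bm$.
\item The support is the zero locus of the whole ideal, so one element only bounds it from above. Nothing prevents a factor with $L\cdot\bm=0$ from cutting out an actual component.
\item That Sabbah's fan construction only needs rays with $L\cdot\bm\neq0$ is a refinement you would have to prove. Your fallback (compare with $\gr^{{}^LV}\cM_G$ at a generic point) does not by itself determine which slopes occur.
\end{enumerate}
The missing idea is the paper's localization argument. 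Put $I=\{i\mid m_i\neq0\}$ and let $\bar I$ be its complement. By Lemma \ref{lm:tamepurechv} it suffices to treat components of $\supp_X(\cL/\alpha(\bm)\cL)$. Suppose $a_i=0$ for all $i\in I$, and invert the $g_j$ with $j\in\bar I$. Then $\cN(*D_{\bar I})/\alpha(\bm)\cN(*D_{\bar I})$ is supported on the graph of $G_{\bar I}$. Kashiwara's equivalence writes it as $\cT\otimes_\C\C[s_i]_{i\notin I}\prod_{i\notin I}g_i^{s_i}$, whose support is $\supp_{R_I}(\cT)\times\Spec\C[s_i]_{i\notin I}$. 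A hyperplane involving only the $s_i$ with $i\notin I$ is never a component of such a product. So the corresponding characteristic components would have to lie over $\iota_G^{-1}(D_{\bar I})$, and the paper excludes this using that $\cN/\alpha(\bm)\cN$ lives over $\iota_G^{-1}(D_I)$.

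\textbf{Secondary point: the grade step is not a routine hypersurface section.} Injectivity of $\alpha(\bm)$ plus additivity of multiplicities gives only $j(\cN/\alpha(\bm)\cN)\ge j(\cN)+1$. You need equality in two places: for the top-dimensional part to have codimension exactly one, and for the hypothesis of Lemma \ref{lm:tamepurechv} in $(3)$.
\begin{enumerate}
\item Equality requires that no component of $\Ch^\dagger(\cN)$ lies in $D_\bm$. This is the logarithmic characteristic cycle formula (Theorem \ref{thm:relGins-Sab}, used in Corollary \ref{cor:puregradequot}).
\item That formula genuinely needs regularity. Take $X=\C$, $g=x$ and the irregular module generated by $e^{1/x}$. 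The lattice $\cN=\shD_X[s]\,e^{1/x}x^s$ contains $x^{s-1}e^{1/x}=(s+1)x^se^{1/x}-\partial_x(x^{s+1}e^{1/x})$, hence $\cN=t\cN$.
\end{enumerate}
As written, your argument uses regularity only in $(1)$. Likewise, nonemptiness needs the support of $\cM$ to meet $D_\bm$. This is an implicit hypothesis; it does not follow from $\cM$ living on the graph.
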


\begin{proof}
Since
$\cN|_{Y\setminus D}=\cM|_{Y\setminus D}$
is a holonomic $\shD_{Y\setminus D}$-module, and since submodules of holonomic
modules are holonomic, $\cN|_{Y\setminus D}$ is $(n+l)$-pure over $\shD_{Y\setminus D}$.
Since taking pure filtrations commutes with localization, $T_j(\cN)\subseteq
\cN$ is supported on $D$ for $j>n+l$. Since $\cN$ is a lattice, however, we have
\[
T_j(\cN)=0
\qquad
\text{for } j>n+l.
\]
Thus $\cN$ is $(n+l)$-pure over $\shD_{Y,D}$, so its maximal tame pure extension
$\cL$ is well-defined.

By Kashiwara equivalence, we have
\[
\cM
\simeq
\cG[s_1,\dots,s_l]\prod_{i=1}^l g_i^{s_i},
\]
where
$\cG=\cG\left[1/\prod_{i=1}^l g_i\right]$
is a regular holonomic $\shD_X$-module. By Theorem \ref{thm:mainFFS},
$\cM$
is a relative regular holonomic $\sA_X^l$-module. Since $\cL$ and $\cN$ are
finitely generated $\sA_X^l$-submodules of this module, they are also relative
regular holonomic. This proves $(1)$.

Write
$t=\prod_{i=1}^l t_i.$
By \cite[Thm.3.1 and Cor.3.15]{WuRHA}, the codimension-one part of
$\supp_X(\cN/t\cN)$
is a finite union of translated linear subvarieties. Since $\cN/t_i\cN$ is a
quotient of $\cN/t\cN$, the codimension-one part of
$\supp_X(\cN/t_i\cN)$
is also a union of translated linear subvarieties. By Lemma
\ref{lm:lattictwist}, the quotient
$\cN/\alpha(\bm)\cdot\cN$
can be inductively decomposed into extensions of modules of the form
$\cN(-H)/t_i\cN(-H),$
for effective divisors $H$ supported on $D$. Hence the codimension-one part of
\[
\supp_X\big(\cN/\alpha(\bm)\cdot\cN\big)
\]
is also a union of translated linear subvarieties.

By Lemma \ref{lm:purfilcodim},
$\cL/\alpha(\bm)\cdot\cL$
is $(n+1)$-pure over $\sA_X^l$. Therefore, by Lemma \ref{lm:tamepurechv},
\[
\supp_X\big(\cL/\alpha(\bm)\cdot\cL\big)
\]
is a union of translated linear subvarieties of codimension one. In the
$\Q$-specializable case,
$\supp_X(\cL/t\cL)$
is a union of $\Q$-translated linear subvarieties; cf. \cite{Sab} and
\cite[\S 4.9]{Wuch}. Hence
\[
\supp_X\big(\cL/\alpha(\bm)\cdot\cL\big)
\]
is also a union of $\Q$-translated linear subvarieties. This proves $(3)$ and
the translated-linearity assertion in $(2)$.

It remains to prove the stated condition on the slope vector in $(2)$. By
Lemma \ref{lm:tamepurechv}, it is enough to prove this condition for the
corresponding codimension-one components of
\[
\supp_X\big(\cL/\alpha(\bm)\cdot\cL\big).
\]
Suppose, for contradiction, that
\[
\left(\sum_{i=1}^l a_i s_i+b=0\right)
\]
is such a component and that
$\bm\cdot(a_1,\dots,a_l)=0.$
Let
$I=\{i\mid \bm_i\neq 0\}\subseteq \{1,\dots,l\}$ and
let $\bar I$ denote the complement of $I$ in $\{1,\dots,l\}$. We write $D_I$
for the divisor defined by
$\prod_{i\in I}t_i=0,$
and similarly for $D_{\bar I}$. Since $a_i\ge 0$ for all $i$, the equality
$\bm\cdot(a_1,\dots,a_l)=0$
implies that
$a_i=0$ for every  $i\in I$.

Since characteristic varieties are compatible with localization, we have
\[
\Ch_{\sA_X^{R_l}}
\big(\cN/\alpha(\bm)\cdot\cN\big)
\big|_{X\setminus \iota_G^{-1}(D_{\bar I})}
=
\Ch_{\sA_{X\setminus \iota_G^{-1}(D_{\bar I})}^{R_l}}
\left(
\cN(*D_{\bar I})/
\alpha(\bm)\cdot\cN(*D_{\bar I})
\right).
\]
Moreover,
\[
\cN(*D_{\bar I})/
\alpha(\bm)\cdot\cN(*D_{\bar I})
\]
is a $\shD_{Y_I}$-module supported on the graph of
$G_{\bar I}=(g_i)_{i\notin I}.$
By Kashiwara equivalence, we obtain an isomorphism
\[
\cN(*D_{\bar I})/
\alpha(\bm)\cdot\cN(*D_{\bar I})
\simeq
\cT\otimes_\C \C[s_i]_{i\notin I}
\prod_{i\notin I}g_i^{s_i}
\]
for some
$\sA^{R_I}_{X\setminus \iota_G^{-1}(D_{\bar I})}
\text{-module } \cT.$
Consequently,
\[
\supp_{\C[s_1,\dots,s_l]}
\left(
\cN(*D_{\bar I})/
\alpha(\bm)\cdot\cN(*D_{\bar I})
\right)
=
\supp_{R_I}(\cT)\times \Spec \C[s_i]_{i\notin I}.
\]
Since $a_i=0$ for all $i\in I$, the hyperplane
\[
\left(\sum_{i=1}^l a_i s_i+b=0\right)
\]
imposes a nontrivial condition only on the variables $s_i$ with $i\notin I$.
Thus, by relative holonomicity and \eqref{eq:suppbideal=}, the components of
\[
\Ch_{\sA_X^{R_l}}\big(\cN/\alpha(\bm)\cdot\cN\big)
\]
lying over this hyperplane can only be supported over
$X\cap \iota_G^{-1}(D_{\bar I}\setminus D_I).$
But
$\cN/\alpha(\bm)\cdot\cN$
is supported on
$X\cap \iota_G^{-1}(D_I)$
by definition. Hence
\[
\Ch_{\sA_X^{R_l}}\big(\cN/\alpha(\bm)\cdot\cN\big)
\]
has no component over
$X\cap \iota_G^{-1}(D_{\bar I}\setminus D_I),$
which is a contradiction. Therefore
\[
\bm\cdot(a_1,\dots,a_l)\neq 0.
\]
The theorem follows.
\end{proof}


\section{Relative minimal extensions and generalized nearby cycles}
\label{sect:rmegnc}

In this section, we first construct relative minimal extensions of lattices of GE-type. We then use these minimal extensions to construct generalized nearby cycles.

\subsection{Geometry of relative supports}

The following is a relative generalization of Ginsburg's result \cite[Thm.A1.2]{Gil1}; see also \cite[Thm.1.6]{Wuch}. Its proof is given in Appendix II, using a relative characteristic variety formula of Sabbah from Appendix I.

\begin{theorem}\label{thm:relGins-Sab}
Let $\cN$ be an $\sA^R_{Y,D}$-lattice such that $\cN(*D)$ is a relative regular holonomic $\sA_Y^R$-module. Then
\[
\CC^{\dagger}(\cN)=\overline{\CC(\cN|_{U})}^{\log},
\]
where $U=Y\setminus D$, and $\overline{\CC(\cN|_{U})}^{\log}$ denotes the closure of
\[
{\CC}(\cN)|_{U}\subseteq T^*U\times \Spec R
\]
inside
$T^*(Y,D)\times \Spec R.$
\end{theorem}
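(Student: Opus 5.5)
Following Brian\c{c}on--Maisonobe--Merle, we reduce the log statement to Sabbah's relative characteristic variety formula (Theorem A.1 in Appendix I) by a graph-type deformation in the log directions. The formula concerns the characteristic cycle of a lattice that is itself a module over a larger ring of relative operators. It expresses this cycle as the closure of the cycle over the open part, under the hypothesis that the ambient localized module is relative regular holonomic.

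The key construction is to adjoin the Euler operators $t_i\partial_{t_i}$ ($i=1,\dots,l$) as new central-like parameters. Concretely, we use the inclusion \eqref{eq:uncaningen}, $\sA_X^{R\otimes_\C R_l}\hookrightarrow \sA^R_{Y,D}$. Locally, after the coordinate change $(x,t)\mapsto (x,t,\log t)$, the filtration $F^\dagger_\bullet$ on $\sA^R_{Y,D}$ identifies with the order filtration of a relative $\shD$-ring whose symbols of $t_i\partial_{t_i}$ become the fiber coordinates $\tilde\xi_i$ of $T^*(Y,D)$.

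Step 1. We reduce to the case where $\cN$ is generated by finitely many sections. Lemma \ref{lm:twistCHsharp} shows that $\CC^\dagger$ is independent of the chosen lattice in $\cN(*D)$ up to twists, and the formula is insensitive to twisting. Both sides are also additive in short exact sequences, by Lemma \ref{lm:ausdual} and purity, so we may work with a convenient lattice. We may further assume that $\cN$ has no submodule supported on $D$, which holds since it is a lattice.

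Step 2. We verify the hypotheses of Sabbah's theorem. These are that $\cN(*D)$ is relative regular holonomic over $\sA^R_Y$, which is our assumption, and that the specialization of $\cN$ along $D$ is controlled. The latter is where regularity enters: Sabbah's formula requires the module to be regular along $D$ (a multi-$V$-filtration, or "Bernstein-type" condition). This should follow from relative regularity via \cite{FFS23}, together with the existence of relative $b$-functions for lattices, as in Lemma \ref{lm:subbahgb}.

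Step 3. We apply Theorem A.1 to get an equality of cycles $\CC^\dagger(\cN)=\overline{\CC(\cN|_U)}$, with the closure taken in $T^*(Y,D)\times\Spec R$. The closure is a priori of the correct dimension, since $T^*U$ sits in $T^*(Y,D)$ as the open piece over $U$. The inclusion $\supseteq$ of supports is automatic because characteristic cycles are compatible with localization to $U$. The real content is the equality of multiplicities and the absence of extra components over $D$. For this we use Lemma \ref{lm:ausdual}: both cycles have pure dimension $2(n+l)+\dim R-j(\cN)$. Then $\cN$ is pure by the argument at the start of the proof of Theorem \ref{thm:latticemtqsupp}, so $\CC^\dagger(\cN)$ has no lower-dimensional components. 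Consequently, any component of $\Ch^\dagger(\cN)$ lying over $D$ would be top-dimensional, and Sabbah's formula rules these out.

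The main obstacle will be Step 2 together with the multiplicity part of Step 3. This means matching the log filtration $F^\dagger$ precisely with the filtration in Sabbah's relative formula, and proving that relative regular holonomicity yields the needed regularity along every component of $D$ uniformly in the parameters $\Spec R$. I would try first to handle this by reducing to the case $l=1$ by induction on the components $D_i$, using partial localizations $\cN(*D_{\bar I})$, and then using the graph-embedding description \eqref{eq:getypeD} together with Lemma \ref{lm:GE-type}.
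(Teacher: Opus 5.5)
Your outline names the right ingredients (Sabbah's relative formula, a Brian\c{c}on--Maisonobe--Merle type change of variables, purity), but it only asserts the step that carries the whole content. In Step~3 you say that a top-dimensional component of $\operatorname{Ch}^\dagger(\mathcal N)$ lying over $D$ is ``ruled out by Sabbah's formula''. You give no mechanism connecting the two. Theorem~\ref{thmApp:Sabrelch} says nothing about lattices or the log filtration $F^\dagger_\bullet$. It describes the characteristic variety of a coherent submodule, over operators \emph{relative to a projection} $q$ onto a base, that generates a relative regular holonomic module; its components are relative conormal spaces $T^*_{q|_Z}\times\Sigma$. It has no multi-$V$-filtration or Bernstein-type hypothesis, so your Step~2 aims at the wrong target. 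The reduction you propose for the ``main obstacle'' via \eqref{eq:getypeD} and Lemma~\ref{lm:GE-type} is also unavailable, because the theorem does not assume $\mathcal N$ is of GE-type. In Step~1, Lemma~\ref{lm:twistCHsharp} gives lattice-independence of $\operatorname{Ch}^\dagger$ only as a set; independence of $\operatorname{CC}^\dagger$ is a consequence of the theorem, not an input. Purity alone cannot exclude components over $D$. Take the irregular lattice $\mathscr D_{Y,D}\,e^{1/t}=\mathcal O_Y(*D)e^{1/t}$ on $Y=\mathbb C$, with $R=\mathbb C$. There $\operatorname{Ch}^\dagger\subseteq\{t\xi=0\}$ and the module is not $\mathcal O_Y$-coherent, so the whole fiber over $t=0$ is a top-dimensional component over $D$. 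Regularity has to enter through a dimension drop, and your proposal does not produce one.

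The missing idea is the concrete dictionary the paper uses, which your ``$(x,t)\mapsto(x,t,\log t)$'' heuristic would need to become:
\begin{enumerate}
\item Pull back along the smooth map $\phi\colon Y\times\mathbb C^l_y\to Y$, $(x,t,y)\mapsto(x,e^{y_1}t_1,\dots,e^{y_l}t_l)$. Pulled-back sections are killed by $t_i\partial_{t_i}-\partial_{y_i}$. So $\mathscr N=\widetilde\phi^*\widetilde{\mathcal N}$ is a coherent $\mathscr D_{\widetilde Y_R/\mathbb C^l_{t,R}}$-module generating $\mathscr M=\widetilde\phi^*\widetilde{\mathcal N(*D)}$. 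The $t_i$ become \emph{base parameters}, and $\partial_{y_i}$ plays the role of $t_i\partial_{t_i}$.
\item Hence the log characteristic variety of $\mathscr N$ is the image of $\operatorname{Ch}_{q_R}(\mathscr M)$, for the projection $q$ to $\mathbb C^l_t$, under the closed embedding $\xi^{\log}_{t_i}=\xi_{y_i}$. In particular it depends only on $\mathscr M$.
\item Theorem~\ref{thmApp:Sabrelch} then gives components $T^*_{q|_{\widetilde Z_i}}(\widetilde Y/\mathbb C^l_t)\times\Sigma_{i\lambda}$.
\item The Brian\c{c}on--Maisonobe--Merle observation is that $q(\widetilde Z_i)$ contains an open subset of $\mathbb C^l_t$ if and only if $Z_i\cap U\neq\emptyset$. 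So components coming from $Z_i\subseteq D$ have dimension $<\dim\widetilde Y+k$.
\item Reduce to pure $\mathscr M$ via Lemma~\ref{lemApp:pureloc}. Then $\mathscr N$ is pure, using purity over $U$ together with Theorem~\ref{thmApp:Sabrelch} applied to submodules. So $\operatorname{Ch}(\mathscr N)$ is equidimensional and those lower-dimensional components cannot occur.
\end{enumerate}
The cycle equality follows, since each component is then the closure of its part over $U$, where the multiplicities are computed.
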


\begin{coro}\label{cor:purefillattice}
If $\cN$ is an $\sA_{Y,D}^R$-lattice such that $\cN(*D)$ is a relative regular holonomic $\sA_Y^R$-module, then $T_i\cN$ and $T_i\cN/T_{i+1}\cN$ are $\sA^R_{Y,D}$-lattices for every $i$.
\end{coro}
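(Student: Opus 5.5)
Recall that an $\sA^R_{Y,D}$-module is a lattice exactly when it is a submodule of its localization along $D$, equivalently when it has no nonzero $\sA^R_{Y,D}$-submodule supported on $D$. The plan is to prove two things: first, that each $T_i\cN$ is a lattice, which is immediate; second, that the graded pieces $T_i\cN/T_{i+1}\cN$ have no $D$-torsion, which is where Theorem \ref{thm:relGins-Sab} is needed to control characteristic varieties along $D$.

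\emph{First claim.} Each $T_i\cN$ is a submodule of the lattice $\cN$. Any submodule of $T_i\cN$ supported on $D$ is then a submodule of $\cN$ supported on $D$, so it vanishes. Hence $T_i\cN\subseteq T_i\cN(*D)$, and $T_i\cN$ is a lattice.

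\emph{Second claim.} Suppose $m\in T_i\cN$ has image in $T_i\cN/T_{i+1}\cN$ generating a submodule supported on $D$. Then $\alpha(\bn)^k m\in T_{i+1}\cN$ for some $\bn$ with $\supp(E_\bn)=D$ and some $k\gg0$. The goal is to show $m\in T_{i+1}\cN$, i.e., by Lemma \ref{lm:purfilcodim}, that $\codim\Ch^\dagger(\sA^R_{Y,D}\cdot m)\ge i+1$.

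To do this, note that $\sA^R_{Y,D}\cdot m\subseteq\cN$ is itself an $\sA^R_{Y,D}$-lattice. Its localization $(\sA^R_{Y,D}\cdot m)(*D)$ is a submodule of $\cN(*D)$, hence relative regular holonomic, using that this class is a thick abelian subcategory (cf. \cite[Prop.3.2(iii)]{FFS23}). So Theorem \ref{thm:relGins-Sab} applies and gives
\[
\Ch^\dagger(\sA^R_{Y,D}\cdot m)=\overline{\Ch\big((\sA^R_{Y,D}\cdot m)|_U\big)}^{\log}.
\]
Over $U=Y\setminus D$, the element $m$ and $\alpha(\bn)^k m$ generate the same module, since $\alpha(\bn)$ is invertible there. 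Applying the same formula to the lattice $\sA^R_{Y,D}\cdot\alpha(\bn)^k m\subseteq T_{i+1}\cN$ shows that the two log characteristic varieties coincide. The second one has codimension $\ge i+1$ by Lemma \ref{lm:purfilcodim}. Hence so does the first, and $m\in T_{i+1}\cN$, so the graded piece has no $D$-torsion.

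For the ``for every $i$'' part, this argument covers every nonzero graded piece at once; when $T_i\cN/T_{i+1}\cN=0$ the statement is vacuous. The lattice property for $T_i\cN/T_{i+1}\cN$ is then that it embeds into its localization, which is exactly the torsion-freeness just proved.

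The main obstacle is the step that passes from the equality on $U$ to the equality of log characteristic varieties. One must confirm that Theorem \ref{thm:relGins-Sab} genuinely applies to each cyclic sublattice $\sA^R_{Y,D}\cdot m$, including the relative regular holonomicity of its localization. If the thickness statement of \cite{FFS23} does not directly cover these submodules, I would instead use that the characteristic cycle on $U$ is additive and sits inside that of $\cN|_U$, and take closures there.
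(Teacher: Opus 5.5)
Your proof is correct, but it follows a different route from the paper's.

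The paper does not argue elementwise. It sets $T'_i\mathcal{N}=\mathcal{N}\cap T_i(\mathcal{N}|_{Y\setminus D})$ and notes that $T'_i\mathcal{N}$ and $T'_i\mathcal{N}/T'_{i+1}\mathcal{N}$ embed into $T_i(\mathcal{N}|_{Y\setminus D})$ and its graded piece, so they are lattices. It then uses Theorem~\ref{thm:relGins-Sab} to see that $\mathrm{Ch}^\dagger(T'_i\mathcal{N}/T'_{i+1}\mathcal{N})$ has the same dimension as the characteristic variety of the corresponding graded piece on $Y\setminus D$, so its grade is exactly $i$. It gets $i$-purity because any submodule of larger grade would be supported on $D$, and concludes $T'_\bullet\mathcal{N}=T_\bullet\mathcal{N}$ by uniqueness of the pure filtration.

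You instead use the elementwise description of Lemma~\ref{lm:purfilcodim}, which reduces everything to the equality $\mathrm{Ch}^\dagger(\mathscr{A}^R_{Y,D}\cdot m)=\mathrm{Ch}^\dagger(\mathscr{A}^R_{Y,D}\cdot\alpha(\mathbf{n})^k m)$. Your appeal to Theorem~\ref{thm:relGins-Sab} for this is valid but unnecessary. By Lemma~\ref{lm:twosidedideal}, $\mathscr{A}^R_{Y,D}\cdot\alpha(\mathbf{n})^k m=\alpha(\mathbf{n})^k\,\mathscr{A}^R_{Y,D}\cdot m\simeq(\mathscr{A}^R_{Y,D}\cdot m)(-kD_{\mathbf{n}})$, since $\mathscr{A}^R_{Y,D}\cdot m$ has no $D$-torsion. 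Equivalently, the two cyclic lattices have the same localization. Lemma~\ref{lm:twistCHsharp} then gives the equality directly.

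So the ``main obstacle'' you flag disappears, and your argument in fact shows that the graded pieces of the pure filtration of an arbitrary $\mathscr{A}^R_{Y,D}$-lattice are lattices, with no regularity assumption. In the paper's route, it is Theorem~\ref{thm:relGins-Sab} (hence regularity) that controls the grade of $T'_i\mathcal{N}/T'_{i+1}\mathcal{N}$.

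What the paper's route buys is an explicit description of the pure filtration as the one induced from $Y\setminus D$, namely $T_i\mathcal{N}=\mathcal{N}\cap T_i(\mathcal{N}|_{Y\setminus D})$. Your method recovers this too, because the pure filtration commutes with localization. If $m|_{Y\setminus D}\in T_i(\mathcal{N}|_{Y\setminus D})$, then $\alpha(\mathbf{n})^k m\in T_i\mathcal{N}$ for some $k$, and the same twisting argument gives $m\in T_i\mathcal{N}$.
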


\begin{proof}
Set
$T'_i\cN=\cN\cap T_i(\cN|_{Y\setminus D}).$
Then we have inclusions
\[
T'_i\cN\hookrightarrow T_i(\cN|_{Y\setminus D})
\]
and
\[
T'_i\cN/T'_{i+1}\cN
\hookrightarrow
T_i(\cN|_{Y\setminus D})/T_{i+1}(\cN|_{Y\setminus D}).
\]
Thus $T'_i\cN$ and $T'_i\cN/T'_{i+1}\cN$ are $\sA^R_{Y,D}$-lattices for every $i$. By Theorem \ref{thm:relGins-Sab}, the dimension of
\[
\Ch^\dagger(T'_i\cN/T'_{i+1}\cN)
\]
is the same as that of
\[
\Ch\big(T_i(\cN|_{Y\setminus D})/T_{i+1}(\cN|_{Y\setminus D})\big).
\]
Thus, if $T'_i\cN/T'_{i+1}\cN\neq 0$, then Lemma \ref{lm:purfilcodim} gives
$j(T'_i\cN/T'_{i+1}\cN)=i.$
Any nonzero submodule of $T'_i\cN/T'_{i+1}\cN$ of grade greater than $i$ would be supported on $D$, which is impossible because $T'_i\cN/T'_{i+1}\cN$ is a lattice. Therefore, if $T'_i\cN/T'_{i+1}\cN\neq 0$, then it is $i$-pure. By uniqueness of the pure filtration, we obtain
$T'_i\cN=T_i\cN.$
\end{proof}

\begin{coro}\label{cor:puregradequot}
If $\cN$ is an $\sA_{Y,D}^R$-lattice such that $\cN(*D)$ is a relative regular holonomic $\sA_Y^R$-module, then
\[
\Ch^\dagger(\cN/\alpha(\bm)\cdot\cN)
=
\Ch^\dagger(\cN)|_{(\alpha(\bm)=0)}.
\]
In particular, if the support of $\cN$, as an $\cO_Y$-module, intersects $(\alpha(\bm)=0)$ nontrivially for $\bm\in \N^l$, then
\[
j(\cN/\alpha(\bm)\cdot\cN)=j(\cN)+1.
\]
\end{coro}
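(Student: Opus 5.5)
The plan is to prove the two inclusions between $\Ch^\dagger(\cN/\alpha(\bm)\cdot\cN)$ and $\Ch^\dagger(\cN)|_{(\alpha(\bm)=0)}$ separately. The grade statement then follows from the dimension count in Lemma \ref{lm:ausdual}.

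Write $t^\bm=\alpha(\bm)$, a monomial in $t_1,\dots,t_l$. By Lemma \ref{lm:twosidedideal} it is normal in $\shD_{Y,D}$: for every $P$ there is $Q$ with $Pt^\bm=t^\bm Q$, and $Q$ has the same order and the same principal symbol as $P$, since $[t_i\partial_i,t^\bm]=m_it^\bm$. Hence if $F^\dagger_\bullet\cN$ is a good filtration, then $t^\bm F^\dagger_\bullet\cN$ is a filtration of $t^\bm\cN$ compatible with $F^\dagger_\bullet$. The induced filtration on $\cN/t^\bm\cN$ is good, with
\[
\gr^\dagger(\cN/t^\bm\cN)=\gr^\dagger\cN\big/\big(F^\dagger\cN\cap t^\bm\cN\big).
\]
This yields the easy inclusion $\Ch^\dagger(\cN/t^\bm\cN)\subseteq \Ch^\dagger(\cN)\cap (t^\bm=0)$. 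Indeed, $t^\bm$ (of order $0$) annihilates $\gr^\dagger(\cN/t^\bm\cN)$, which is a quotient of $\gr^\dagger\cN$. Here I use that $t_i$ remains a coordinate function on $T^*(Y,D)$ and that $\alpha(\bm)$ has symbol $t^\bm$.

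The reverse inclusion is the main point, and I expect it to be the main obstacle, since Artin–Rees type compatibility for $\shD$-modules fails in general. I would use Theorem \ref{thm:relGins-Sab} together with Theorem \ref{thm:artin-reeslog}-style Artin–Rees for the two-sided ideal $t^\bm\shD_{Y,D}$. Alternatively, and more concretely, I would use the geometric structure of $\Ch^\dagger(\cN)=\overline{\Ch(\cN|_U)}^{\log}$.

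Each irreducible component $\Lambda$ of $\Ch^\dagger(\cN)$ is the closure of a component over $U$, so it is not contained in $(t^\bm=0)$. Because $t^\bm$ is a non-zero-divisor on $\cN$ (the module is a lattice, so it has no $D$-torsion), the standard commutative algebra argument applies. This argument uses $\gr^\dagger(t^\bm\cN)\simeq\gr^\dagger\cN$ via the shifted filtration, the additivity of characteristic cycles in $0\to t^\bm\cN\to\cN\to\cN/t^\bm\cN\to 0$, and the Samuel-multiplicity/intersection formula. It gives the identity
\[
\CC^\dagger(\cN/t^\bm\cN)=\CC^\dagger(\cN)\cdot \mathrm{div}(t^\bm).
\]
This is a proper intersection, since no component of $\Ch^\dagger(\cN)$ lies in $(t^\bm=0)$. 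In practice I would establish it via the Koszul-type filtration argument. If $\gr^\dagger\cN$ has $t^\bm$-torsion, I would compare $\gr^\dagger\cN$ with $\gr$ of the module, using that the Euler–Poincaré multiplicity of $\gr\cN\xrightarrow{t^\bm}\gr\cN$ is independent of the good filtration. Every component of the intersection then appears with positive multiplicity, so the set-theoretic equality holds.

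For the grade statement, note that $\Ch^\dagger(\cN)$ is equidimensional of dimension $\dim\Ch^\dagger(\cN|_U)$. Purity is given by Corollary \ref{cor:purefillattice}, and I would reduce to pure pieces if needed. If the $\cO_Y$-support of $\cN$ meets $(\alpha(\bm)=0)$, then some component $\Lambda$ has projection meeting this divisor. Then $\Lambda\cap(t^\bm=0)$ is nonempty and of dimension exactly $\dim\Lambda-1$, because it is cut out by one equation not vanishing identically on $\Lambda$. So $\dim\Ch^\dagger(\cN/t^\bm\cN)=\dim\Ch^\dagger(\cN)-1$, and Lemma \ref{lm:ausdual}(1) gives $j(\cN/\alpha(\bm)\cdot\cN)=j(\cN)+1$.
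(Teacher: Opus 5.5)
Your proposal is correct in outline and has the same skeleton as the paper. You reduce to pure $\cN$ via Corollary~\ref{cor:purefillattice}, use Theorem~\ref{thm:relGins-Sab} to see that no component of $\Ch^\dagger(\cN)$ lies in $(\alpha(\bm)=0)$, and read off the grade from Lemma~\ref{lm:ausdual}. The two routes differ at the reverse inclusion.

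The paper chooses a good filtration whose graded module is itself pure (\cite[A.IV 4.11]{BJ}). Properness then makes $t_i$ a nonzerodivisor on $\gr^\dagger\cN$. So the spectral sequence of $t_i\colon F^\dagger_\bullet\cN\to F^\dagger_\bullet\cN$ degenerates at $E^1$, and $\gr^\dagger(\cN/t_i\cN)\simeq\gr^\dagger\cN/t_i\gr^\dagger\cN$ holds on the nose. The cycle formula then follows from the intersection formula for a nonzerodivisor.

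You keep an arbitrary good filtration and use the multiplicity symbol instead. By properness, the $(d-1)$-dimensional Euler characteristic of $[\gr^\dagger\cN\xrightarrow{t^{\bm}}\gr^\dagger\cN]$ equals $[\gr^\dagger\cN]_d\cdot\operatorname{div}(t^{\bm})$. The spectral sequence of the filtered complex $[\cN\xrightarrow{t^{\bm}}\cN]$ carries this to $[\gr^\dagger(\cN/t^{\bm}\cN)]_{d-1}$, because $t^{\bm}$ is injective on $\cN$. Positivity of proper intersection multiplicities, plus your easy inclusion, then gives the set equality.

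Your route avoids the existence theorem for pure good filtrations. The price is some intersection theory, and checking that the pages $E^r$ are $\gr^\dagger$-modules with linear differentials; this holds because conjugation by $t^{\bm}$ preserves orders and principal symbols, as you note. The paper's route gives a stronger, graded-level statement, and the same device is reused in Corollary~\ref{cor:ccresintprop}.

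Some of your phrasing should be tightened.
\begin{enumerate}
\item Additivity of cycles in $0\to t^{\bm}\cN\to\cN\to\cN/t^{\bm}\cN\to0$, together with $\gr^\dagger(t^{\bm}\cN)\simeq\gr^\dagger\cN$, says nothing in dimension $d-1$: the top classes just cancel.
\item What closes the argument is not that the Euler--Poincar\'e multiplicity is independent of the filtration. It is that this multiplicity equals the one computed on $E^\infty=\gr^\dagger(\cN/t^{\bm}\cN)$.
\item The $(d-1)$-cycle only sees top-dimensional components, so the reduction to pure pieces is needed already for the set equality, not just ``if needed'' for the grade. It works because the pure-filtration quotients are lattices, so reduction modulo $t^{\bm}$ stays exact.
\item Your claim that $\Ch^\dagger(\cN)$ is equidimensional uses purity, and the grade clause genuinely needs it. Take a direct sum of a lattice flat over $R$ whose $\cO_Y$-support misses $(\alpha(\bm)=0)$, and a lattice killed by a maximal ideal of $R$ that meets that divisor. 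This sum violates $j(\cN/\alpha(\bm)\cN)=j(\cN)+1$.
\end{enumerate}
In general only $j(\cN/\alpha(\bm)\cN)\ge j(\cN)+1$ holds, with equality when a top-dimensional component of $\Ch^\dagger(\cN)$ meets the divisor. The paper's proof, which also reduces to the pure case, has the same scope.
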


\begin{proof}
For simplicity, take $\bm=e_i$. The general case is similar. By Corollary \ref{cor:purefillattice} and induction on the pure filtration, it is enough to assume that $\cN$ is pure.

Since $\cN$ is a lattice, we have a short exact sequence
\[
0\to \cN\xrightarrow{t_i}\cN\to \cN/t_i\cN\to0.
\]
By \cite[A.IV 4.10 and 4.11]{BJ}, or equivalently \cite[Prop.4.4.1]{BVWZ}, there exists a good filtration $F_\bullet^\dagger\cN$ over $F^\dagger_\bullet\sA_{Y,D}^R$ such that $\gr^{\dagger}_\bullet\cN$ is pure over $\gr^{\dagger}_\bullet\sA_{Y,D}^R$.

We then obtain a filtered morphism
\[
F^\dagger_\bullet\cN\xrightarrow{t_i}F^\dagger_\bullet\cN,
\]
and hence a convergent spectral sequence whose $E^1$-page is given by
\[
\varphi\colon
\gr^{\dagger}_\bullet\cN
\xrightarrow{t_i}
\gr^{\dagger}_\bullet\cN.
\]
By Theorem \ref{thm:relGins-Sab}, $\Ch^\dagger(\cN)$ and $(t_i=0)$ intersect properly. Thus $\varphi$ is injective by purity, which implies that the spectral sequence degenerates at the $E^1$-page. Therefore,
\[
\gr^\dagger_\bullet(\cN/t_i\cN)
\simeq
\frac{\gr^{\dagger}_\bullet\cN}
{t_i\gr^{\dagger}_\bullet\cN}.
\]
By \cite[Prop.1.5.3]{Gil}, we have
\[
\Ch^\dagger(\cN/t_i\cN)
=
\Ch^\dagger(\cN)|_{(t_i=0)}.
\]
If the support of $\cN$, as an $\cO_Y$-module, intersects $(t_i=0)$ nontrivially, then this characteristic variety has dimension
\[
2(n+l)+\dim R-j(\cN)-1,
\]
since the $\cO_Y$-support of $\cN$ is $\pi(\Ch^\dagger(\cN))$, where
$\pi\colon T^*(Y,D)\times \Spec R\to Y$
is the natural projection. Lemma \ref{lm:ausdual} then gives
\[
j(\cN/t_i\cN)=j(\cN)+1.
\]
\end{proof}

\begin{coro}\label{cor:ccresintprop}
Let $\cN$ be a $j$-pure $\sA_{Y,D}^R$-lattice such that $\cN(*D)$ is a relative regular holonomic $\sA^R_Y$-module, and let $b\in R$ be a noninvertible polynomial. If the hypersurface $(b=0)\subseteq \Spec R$ intersects
\[
\supp^R_{Y\setminus D}(\cN|_{Y\setminus D})\subseteq \Spec R
\]
properly, then both maps
\[
\cN\xrightarrow{b}\cN
\quad \text{and} \quad
\cN(*D)\xrightarrow{b}\cN(*D)
\]
are injective. Moreover,
\[
\CC^\dagger(\cN/b\cdot\cN)
=
\CC^\dagger(\cN)|_{b=0}.
\]
\end{coro}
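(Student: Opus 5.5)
The plan is to imitate the proof of Corollary \ref{cor:puregradequot}, with multiplication by $t_i$ replaced by multiplication by the central element $b\in R$. The log characteristic cycle formula of Theorem \ref{thm:relGins-Sab} supplies the needed properness.

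First, the geometry. By Theorem \ref{thm:relGins-Sab}, $\Ch^\dagger(\cN)$ is the closure in $T^*(Y,D)\times\Spec R$ of $\Ch(\cN|_U)$, where $U=Y\setminus D$. By \eqref{eq:suppbideal=}, the projection of $\Ch(\cN|_U)$ to $\Spec R$ is $\supp^R_{U}(\cN|_U)$.

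Since $(b=0)$ meets this support properly, the pullback $p_2^*(b)$ does not vanish identically on any irreducible component of $\Ch(\cN|_U)$. Taking closures, it does not vanish identically on any component of $\Ch^\dagger(\cN)$ either. Hence $\Ch^\dagger(\cN)\cap(b=0)$ has pure codimension one in $\Ch^\dagger(\cN)$; here $\Ch^\dagger(\cN)$ is equidimensional because $\cN$ is $j$-pure (cf. \cite[A.IV 4.11]{BJ}).

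Next, the algebra. Choose a good filtration $F^\dagger_\bullet\cN$ over $F^\dagger_\bullet\sA^R_{Y,D}$ such that $\gr^\dagger_\bullet\cN$ is pure over the commutative regular ring $\gr^\dagger_\bullet\sA^R_{Y,D}$. This is possible by \cite[A.IV 4.10, 4.11]{BJ}, as in the proof of Corollary \ref{cor:puregradequot}. Since $b$ lies in $R$, it has $F^\dagger$-order $0$ and is central, so multiplication by $b$ is a filtered endomorphism and its symbol is multiplication by $b$ on $\gr^\dagger_\bullet\cN$.

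A pure module over a commutative ring has no embedded associated primes. All its associated primes are minimal primes of $\Ch^\dagger(\cN)$, and $b$ avoids each of them by the previous step. Therefore $b$ is a non-zero-divisor on $\gr^\dagger_\bullet\cN$.

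Injectivity on the graded module lifts to injectivity of $\cN\xrightarrow{b}\cN$, because the filtration is exhaustive and separated. Moreover, the induced filtration on $\cN/b\cN$ satisfies $\gr^\dagger_\bullet(\cN/b\cN)\simeq\gr^\dagger_\bullet\cN/b\,\gr^\dagger_\bullet\cN$; equivalently, the associated spectral sequence degenerates at $E^1$.

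For $\cN(*D)$, note that localization along $D$ is flat over $\cO_Y$ and commutes with the $R$-action. Then $\cN(*D)=\varinjlim \cN(kD)$, and each $\cN(kD)\simeq\alpha(\bn)^{-k}\cN$ is isomorphic to $\cN$ as an $R$-module, so $b$ is injective on each term and hence on the direct limit. Alternatively, one can apply the same argument to $\cN(kD)$ using Lemma \ref{lm:twistCHsharp}.

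Finally, the cycle identity. Given the isomorphism of graded modules above, the equality
\[
\CC^\dagger(\cN/b\cN)=\CC^\dagger(\cN)|_{b=0}
\]
is the standard formula: for a non-zero-divisor $b$ on a finitely generated module over a regular commutative ring, the cycle of the quotient is the intersection-theoretic restriction of the cycle of the module. One can cite \cite[Prop.1.5.3]{Gil} for the underlying sets, and the multiplicity statement follows from additivity of lengths at the generic points of $\Ch^\dagger(\cN)\cap(b=0)$, using the Koszul complex on $b$.

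The main obstacle is the first step. Theorem \ref{thm:relGins-Sab} controls the components of $\Ch^\dagger(\cN)$ as closures of the components over $U$. One must check that no component of $\Ch^\dagger(\cN)$ lies entirely over the boundary $D$ with an $R$-projection contained in $(b=0)$, and this is exactly what the closure description guarantees.
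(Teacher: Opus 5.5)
Your proof is correct. For the cycle identity it follows the paper's argument. Theorem \ref{thm:relGins-Sab} makes $\CC^\dagger(\cN)=\sum_p\overline{\Lambda}^{\log}_p\times Z_p$ meet $(b=0)$ properly. One then takes a good filtration with $j$-pure $\gr^\dagger_\bullet\cN$ on which $b$ is injective: the paper cites \cite[Prop.4.4.1]{BVWZ} for this, while you derive it from the absence of embedded primes. Both conclude with \cite[Prop.1.5.3]{Gil}.

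Where you differ is injectivity.
\begin{itemize}
\item The paper first gets injectivity of $b$ on $\cN|_{Y\setminus D}$ from \cite[Lem.3.4.2]{BVWZ}. It then notes that the kernel of $b$ on $\cN$, or on $\cN(*D)$, is a submodule supported on $D$, hence zero because $\cN$ is a lattice. This step uses neither a filtration nor Theorem \ref{thm:relGins-Sab}.
\item You instead lift injectivity from $\gr^\dagger_\bullet\cN$. This makes injectivity depend on the log characteristic cycle formula. In exchange, a single argument gives both injectivity and the strictness $\gr^\dagger_\bullet(\cN/b\cN)\simeq\gr^\dagger_\bullet\cN/b\,\gr^\dagger_\bullet\cN$ that you need afterwards.
\end{itemize}

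One point of order in your first step. Properness of $(b=0)\cap\supp^R_{Y\setminus D}(\cN|_{Y\setminus D})$ does not by itself imply that $b$ vanishes identically on no component of $\Ch(\cN|_{Y\setminus D})$. A component $\Lambda\times\Sigma$, with $\Sigma$ strictly contained in another component of the $R$-support, could lie inside $(b=0)$. You need purity before that sentence, not after it. Purity forces every $Z_p$ in $\CC(\cN|_{Y\setminus D})=\sum_p\Lambda_p\times Z_p$ to have the same dimension $n+l+\dim R-j$, so each $Z_p$ is an irreducible component of the support. The paper phrases the step exactly this way. With that reordering your argument goes through.
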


\begin{proof}
By \cite[Lem. 3.4.2]{BVWZ}, the map
\[
\cN|_{Y\setminus D}\xrightarrow{b}\cN|_{Y\setminus D}
\]
is injective. Since neither $\cN$ nor $\cN(*D)$ has nonzero torsion submodules supported on $D$, the maps
\[
\cN\xrightarrow{b}\cN
\quad \text{and} \quad
\cN(*D)\xrightarrow{b}\cN(*D)
\]
are injective.

By relative holonomicity and purity, we can write
\[
\CC_{Y\setminus D}(\cN|_{Y\setminus D})
=
\sum_p \La_p\times Z_p,
\]
where the $\La_p$ are conic Lagrangian cycles in $T^*(Y\setminus D)$ and the $Z_p\subseteq \Spec R$ are pure-dimensional of dimension
$n+l+\dim R-j.$
By \eqref{eq:suppbideal=}, each $Z_p$ intersects $(b=0)$ properly. By Theorem \ref{thm:relGins-Sab}, we have
\[
\CC^\dagger(\cN)
=
\sum_p \overline{\La}^{\log}_p\times Z_p,
\]
and hence $\CC^\dagger(\cN)$ intersects $(b=0)$ properly.

By \cite[Prop.4.4.1]{BVWZ}, there exists a good filtration $F^\dagger_\bullet\cN$ such that $\gr^\dagger_\bullet\cN$ is $j$-pure and such that
\[
\gr^\dagger_\bullet\cN
\xrightarrow{b}
\gr^\dagger_\bullet\cN
\]
is injective. Thus
\[
\gr^\dagger_\bullet(\cN/b\cN)
\simeq
\frac{\gr^\dagger_\bullet\cN}
{b\cdot\gr^\dagger_\bullet\cN}.
\]
Since the $\gr^\dagger_\bullet\sA^R_{Y,D}$-support of $\gr^\dagger_\bullet\cN$ is $\CC^\dagger(\cN)$ by construction, \cite[Prop.1.5.3]{Gil} gives
\[
\CC^\dagger(\cN/b\cdot\cN)
=
\CC^\dagger(\cN)|_{b=0}
=
\sum_p \overline{\La}^{\log}_p\times (Z_p|_{b=0}).
\]
\end{proof}

\begin{lemma}\label{lm:injimplyproint}
Let $\cN$ be a relative holonomic $\sA_X^R$-module, and let $b\in R$ be an irreducible polynomial. If multiplication by $b$,
\[
\cN\xrightarrow{b}\cN,
\]
is injective, then both $\supp(\cN)$ and $\Ch(\cN)$ intersect the divisor $(b=0)$ properly.
\end{lemma}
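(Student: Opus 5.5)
We prove the statement for $\supp(\cN)$ first and deduce the statement for $\Ch(\cN)$ from it. Recall that by \eqref{eq:suppbideal=} we have $\supp(\cN)=p_2(\Ch(\cN))$. Since $\cN$ is relative holonomic, every irreducible component of $\Ch(\cN)$ has the form $\La\times Z$, where $\La\subseteq T^*X$ is conic Lagrangian and $Z\subseteq\Spec R$ is irreducible. This product decomposition holds at least set-theoretically on components; it is the standard description of relative holonomic characteristic varieties used in the proof of Corollary \ref{cor:ccresintprop}. Consequently $\Ch(\cN)$ meets $(b=0)$ properly if and only if each $Z$ does, i.e.\ if and only if no irreducible component of $\supp(\cN)=\bigcup Z$ is contained in $(b=0)$. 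This uses that $b$ is irreducible, so that $(b=0)$ is an irreducible hypersurface. So it suffices to show that no irreducible component $Z$ of $\supp(\cN)$ lies in $(b=0)$.

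Suppose, for contradiction, that some component $Z\subseteq(b=0)$. Let $\mathfrak p\subseteq R$ be the prime ideal of $Z$; then $b\in\mathfrak p$. Localize at $\mathfrak p$, i.e.\ pass to $\cN_{\mathfrak p}=\cN\otimes_R R_{\mathfrak p}$, which is a finitely generated module over $\shD_X\otimes_\C R_{\mathfrak p}$. Localization is flat, so multiplication by $b$ on $\cN_{\mathfrak p}$ is still injective. Moreover $\cN_{\mathfrak p}\neq0$, since $\mathfrak p$ is a minimal prime of $\supp(\cN)=Z(\Ann_R\cN)$. Because $\mathfrak p$ is minimal, the support of $\cN_{\mathfrak p}$ over $R_{\mathfrak p}$ is the closed point alone, so $\Ann_{R_{\mathfrak p}}(\cN_{\mathfrak p})$ is $\mathfrak pR_{\mathfrak p}$-primary. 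To make this rigorous one should argue with the finitely generated module directly: $\Ann_R(\cN)$ has radical whose minimal primes are exactly the generic points of the components, which is what \eqref{eq:suppbideal=} gives. Hence $b^N\cN_{\mathfrak p}=0$ for some $N\ge1$. Combined with the injectivity of $b$, this forces $\cN_{\mathfrak p}=0$, a contradiction.

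The main point to check is that the Bernstein–Sato ideal $B_\cN=\Ann_R\cN$ genuinely cuts out $\supp(\cN)$ with the correct minimal primes, so that $b\in\sqrt{\Ann_{R_\mathfrak p}\cN_\mathfrak p}$. This is exactly \cite[Lem.3.4.1]{BVWZ}, i.e.\ \eqref{eq:suppbideal=}, applied to $\cN$; by finite generation it is compatible with localization at $\mathfrak p$. A second subtlety is the component-wise product structure of $\Ch(\cN)$. If one prefers to avoid it, one can argue directly: a component $C$ of $\Ch(\cN)$ lying inside $(b=0)$ would have image $p_2(C)$ contained in $(b=0)$. We then pick a component $Z\supseteq p_2(C)$ of $\supp(\cN)$ and use relative holonomicity together with the dimension count in Lemma \ref{lm:ausdual}. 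The relevant facts are that all components have dimension related by the grade, and that $\Ch$ is equidimensional along fibers over $\supp$ at generic points. From this we obtain $Z\subseteq(b=0)$ at a generic point, and we reduce to the support case treated above.
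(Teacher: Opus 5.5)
\textbf{There is a gap in the reduction from $\Ch(\cN)$ to $\supp(\cN)$.} Your localization argument for the support is correct, and it is the paper's argument. The paper shrinks to an affine neighbourhood of a general point of a component of $\supp(\cN)$ lying in $(b=0)$, gets $b^q\in B_\cN$ from \eqref{eq:suppbideal=}, and contradicts injectivity. Localizing at the generic point $\mathfrak p$, as you do, is the same thing.

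It is true that a component $\La\times Z$ of $\Ch(\cN)$ meets $T^*X\times(b=0)$ properly iff $Z\not\subseteq(b=0)$. But the ``i.e.'' that follows is false. The sets $Z$ occurring in $\Ch(\cN)$ need not be irreducible components of $\supp(\cN)=\bigcup Z$, because one may lie strictly inside another. For instance, take $\dim X\ge 1$ and $\cN=(\cO_X\otimes_\C R)\oplus(\cQ\otimes_\C R/bR)$ with $\cQ$ a nonzero $\shD_X$-module supported at a point $x_0$. Then $\supp(\cN)=\Spec R$ meets $(b=0)$ properly, yet $T^*_{x_0}X\times(b=0)$ is a component of $\Ch(\cN)$. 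Here $b$ is not injective, and that is the point: injectivity of $b$ must be used a second time to exclude such embedded pieces.

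Your fallback in the last paragraph breaks at the same place. From $p_2(C)\subseteq(b=0)$ and $p_2(C)\subseteq Z$, nothing forces $Z\subseteq(b=0)$. Generic equidimensionality over $\supp(\cN)$ does not see components lying over a proper closed subset of $Z$.

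\textbf{How to close it.} Pass to the Gabber--Kashiwara filtration $T_\bullet\cN$ (Lemma \ref{lm:purfilcodim} with $D=\emptyset$).
\begin{enumerate}
\item Since $b$ is central and injective on $\cN$, the map $Pm\mapsto Pbm$ is an isomorphism $\sA_X^R m\simeq\sA_X^R bm$. Hence $\Ch(\sA_X^R bm)=\Ch(\sA_X^R m)$, so $bm\in T_{i+1}\cN$ forces $m\in T_{i+1}\cN$. Thus $b$ stays injective on every $T_i\cN/T_{i+1}\cN$.
\item Each nonzero graded piece is $i$-pure, so its characteristic variety is equidimensional. All $Z$ occurring in it therefore have the same dimension, and each is an irreducible component of its $R$-support. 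Your localization argument then shows that no such $Z$ lies in $(b=0)$.
\item Every irreducible component of $\Ch(\cN)$ is a component of $\Ch(T_i\cN/T_{i+1}\cN)$ for some $i$. This gives the statement for $\Ch(\cN)$, and hence for $\supp(\cN)$.
\end{enumerate}

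The paper makes the same passage from $\Ch$ to $\supp$ in a single sentence (``by relative holonomicity and \eqref{eq:suppbideal=}''), so this step is implicit there too. It is harmless where the lemma is used in Theorem \ref{thm:mainsuppfiberation}, because the module $\cL/t_i\cL$ there is already pure.
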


\begin{proof}
By relative holonomicity and \eqref{eq:suppbideal=}, it is enough to prove that $\supp(\cN)$ intersects the divisor $(b=0)\subseteq \Spec R$ properly. Suppose that an irreducible component of $\supp(\cN)$ does not intersect $(b=0)$ properly. Then this irreducible component is contained in $(b=0)$. Localizing in an affine open neighborhood of a general point of this component, we may assume that $\supp(\cN)$ is irreducible and contained in $(b=0)$.

By \eqref{eq:suppbideal=} again, we have $b^q\in B_\cN$ for some $q\gg 0$. Hence there exists $e\in \cN$ such that
\[
b^m\cdot e=0
\quad \text{but} \quad
b^{m-1}\cdot e\neq 0
\]
for some $m\ge 1$, contradicting the injectivity of multiplication by $b$ on $\cN$.
\end{proof}

Suppose that $\cQ$ is a finitely generated $\sA^R_{Y,D}$-module which is also relative holonomic over $\sA_X^{R\otimes_\C R_l}$, via the inclusion
\[
\sA_X^{R\otimes_\C R_l}\subseteq \sA_{Y,D}^R.
\]
If
$H=\sum_{i=1}^l a_iD_i$
is a divisor supported on $D$, then $\cQ(H)$ is again an $\sA^R_{Y,D}$-module. Since $s_i=-\partial_{t_i}t_i$, we have
\[
s_i\cdot(q\otimes e)
=
(s_i\cdot q)\otimes e+q\otimes(s_i\cdot e)
=
((s_i+a_i-1)q)\otimes e
\]
for $q\in \cQ$, and
$P\cdot(q\otimes e)=(P\cdot q)\otimes e$
for $P\in \shD_X$, where
$e=\prod_{i=1}^l t_i^{-a_i}$
is the generator of $\cO_Y(H)$. Hence $\cQ(H)$ is also relative holonomic over $\sA_X^{R\otimes_\C R_l}$.

Moreover, since
$t_is_i=(s_i+1)t_i,$
we have an induced $t_i$-action on both $R_l$ and $R\otimes_\C R_l$ given by
\[
t_i\cdot s_j=s_j+\delta_{ij},
\qquad
t_i\cdot b=b
\]
for $i,j=1,\dots,l$ and $b\in R$, where $\delta_{ij}$ is the Kronecker delta. Geometrically, $t_i$ thus acts on
\[
\Spec(R\otimes_\C R_l).
\]
If we identify $t_i$ with a loop around the origin in the $i$-th component of $(\C^*)^l$, then this action agrees with the $\sG_l=\pi_1((\C^*)^l)$-action on the covering
\[
\pi\colon
\Specan R\times \C^l
\longrightarrow
\Specan R\times(\C^*)^l,
\quad
(b,a)\mapsto (b,\textup{Exp}(a)),
\]
by deck transformations. Consequently,
\begin{equation}\label{eq:actionBsupp}
t_i\cdot B_\cQ=B_{\cQ(-D_i)}
\quad \textup{and} \quad
t_i\cdot\supp(\cQ)=\supp\big(\cQ(-D_i)\big).
\end{equation}

For a nonempty subset $I\subseteq\{1,\dots,l\}$, we write
\[
t_I=\prod_{i\in I}t_i;
\]
for example,
$t_i=t_{\{i\}}$ and 
$t=t_{\{1,\dots,l\}}.$
We denote by $D_I$ the effective divisor defined by $(t_I=0)$, and by
\[
\sG_I\simeq \sG_{|I|}\simeq \Z^{|I|}
\]
the free abelian group generated by the $t_i$ for $i\in I$. More intrinsically, $\sG_k$ is naturally isomorphic to the fundamental group of the fibers of the Kato--Nakayama space $Y_{\log}$ over the log strata $E^k$; see \cite[V.1]{Ogusbook} for the construction of $Y_{\log}$.

\begin{theorem}\label{thm:mainsuppfiberation}
Let $\cN$ be a $j$-pure $\sA^R_{Y,D}$-lattice of GE-type with respect to
$G=(g_1,\dots,g_l)$
such that $\cN(*D)$ is a relative regular holonomic $\sA^R_Y$-module. Then $\cN$ is a relative holonomic $\sA_X^{R\otimes_\C R_l}$-module, and $\cN(*D)$ is a relative regular holonomic $\sA_X^{R\otimes_\C R_l}$-module.

Moreover, for nonzero $\bm\in\N^l$, the projection
\[
p_{\log}\colon \Spec(R\otimes_\C R_l)\to \Spec R
\]
induces a surjective morphism $\bar p_{\log}
=
p_{\log}|_{\supp^h_X(\cN/\alpha(\bm)\cdot\cN)}$,
\[
\bar p_{\log}
\colon
\supp^h_X(\cN/\alpha(\bm)\cdot\cN)
\longrightarrow
\supp_Y(\cN(*D))
=
\supp^R_{Y\setminus D}(\cN|_{Y\setminus D})
\]
such that, for every closed point
$a\in \supp_Y(\cN(*D)),$
the fiber
\[
\bar p_{\log}^{-1}(a)\subseteq \Spec R_l
\]
is a finite union of codimension-one translated linear subvarieties of the form
\[
\left(\sum_{i=1}^l a_i s_i+b=0\right),
\]
where $(a_1,\dots,a_l)\in \Z_{\ge 0}^l$ is a nonzero primitive vector satisfying
$\bm\cdot(a_1,\dots,a_l)\neq 0.$
Here
$\supp^h_X(\cN/\alpha(\bm)\cdot\cN)$
denotes the top-dimensional part of
\[
\supp_X^{R\otimes_\C R_l}(\cN/\alpha(\bm)\cdot\cN).
\]
\end{theorem}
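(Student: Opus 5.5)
The first assertion follows the pattern of Lemma \ref{lm:GE-type} and Theorem \ref{thm:latticemtqsupp}(1). By Lemma \ref{lm:GE-type}, $\cN$ is finitely generated over $\sA_X^{R\otimes_\C R_l}$. By \eqref{eq:getypeD} we have $\cN(*D)\simeq\cS[1/\prod g_i,s_1,\dots,s_l]\prod g_i^{s_i}$, where $\cS$ is an $\sA_X^R$-module. Since $\cN(*D)$ is relative regular holonomic over $\sA_Y^R$, relative Kashiwara equivalence \cite[Thm.1.5]{FS17} shows that $\cS$ is relative regular holonomic over $\sA_X^R$. The relative version of Theorem \ref{thm:mainFFS} (\cite[Ex.4.36]{FFS23}, with parameters $R$) then makes $\cS\otimes\cS_G$ relative regular holonomic over $\sA_X^{R\otimes R_l}$. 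The lattice $\cN$ is a finitely generated submodule of it, so it is relative holonomic over $\sA_X^{R\otimes R_l}$.

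For the fibration statement, I would first compute $\Ch^\dagger(\cN/\alpha(\bm)\cN)$. Corollary \ref{cor:puregradequot} gives $\Ch^\dagger(\cN/\alpha(\bm)\cN)=\Ch^\dagger(\cN)|_{(\alpha(\bm)=0)}$. Theorem \ref{thm:relGins-Sab} expresses $\CC^\dagger(\cN)$ as the log closure of $\CC(\cN|_U)=\sum_p\Lambda_p\times Z_p$, with $Z_p\subseteq\Spec R$ the components of $\supp_Y(\cN(*D))$. By Lemma \ref{lm:GE-type}, the same cycle, transported along $d\iota_G$, describes $\Ch^\dagger_{\sA_X^{R\otimes R_l}}(\cN)$. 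Projecting to $\Spec(R\otimes R_l)$ via \eqref{eq:suppbideal=}, every top-dimensional component of $\supp_X(\cN/\alpha(\bm)\cN)$ has the form $\bigcup$ (log-closure fiber in $s$) $\times Z_p$. So $p_{\log}$ maps $\supp^h_X$ into $\bigcup Z_p$. Surjectivity comes from the fact that each $\overline{\Lambda_p}^{\log}$ meets $(\alpha(\bm)=0)$ over the graph: each $\Lambda_p$ is supported over $\iota_G(X)$, and the grade jumps by exactly one by Corollary \ref{cor:puregradequot}, provided the $\cO_Y$-support meets $\alpha(\bm)=0$. If it does not, both sides are treated as trivial.

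To identify the fibers, I would reduce to a point $a\in\supp_Y(\cN(*D))$ and specialize. Choose an $l'$-dimensional smooth $W\ni a$ cutting $\supp$ properly at a generic point $a$, where $\cN$ is Cohen--Macaulay by Lemma \ref{lm:genericCMness}. Corollary \ref{cor:ccresintprop}, applied successively to the generators of $\cI_W$, keeps the lattice pure and injective, and gives $\CC^\dagger(\cN/\cI_W\cN)=\CC^\dagger(\cN)|_W$. Locally at $a$, this yields the essential fiber (Proposition \ref{prop:essfib}): a $\shD_{Y,D}$-lattice of a regular holonomic $\shD_Y$-module supported on the graph of $G$. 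Theorem \ref{thm:latticemtqsupp}(2) then says that the top-dimensional part of $\supp_X$ of its quotient by $\alpha(\bm)$ is a finite union of hyperplanes $(\sum a_is_i+b=0)$ with $(a_i)\in\Z^l_{\ge0}$ primitive and $\bm\cdot(a_i)\neq0$. Compatibility of characteristic cycles with restriction to $W$ (Corollary \ref{cor:ccresintprop} again, now with $b\in R\subseteq R\otimes R_l$) identifies this set with $\bar p_{\log}^{-1}(a)$.

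The main obstacle is passing from generic points $a$ to every closed point. Here I would use the product structure $\overline{\Lambda_p}^{\log}\times Z_p$ from Theorem \ref{thm:relGins-Sab}. The $s$-direction fiber is governed by $\overline{\Lambda_p}^{\log}|_{\alpha(\bm)=0}$ alone, which is independent of the point of $Z_p$. The fiber over any $a$ is therefore the union over the $p$ with $a\in Z_p$ of the fibers computed at generic points of $Z_p$. This gives finiteness and linearity for all closed points. I would check carefully that the log closure is compatible with the pushforward $d\iota_G$, and that the $s$-coordinates ($\tilde\xi=-s$) do not mix with $R$.
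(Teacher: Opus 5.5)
Your first paragraph is essentially the paper's argument. The fibration part, however, has a genuine gap. You read off $\supp_X^{R\otimes_\C R_l}(\cN/\alpha(\bm)\cN)$ from $\Ch^\dagger$, using Theorem \ref{thm:relGins-Sab} and Corollaries \ref{cor:puregradequot} and \ref{cor:ccresintprop}. But in $F^\dagger_\bullet$ the variables $s_i=-t_i\partial_{t_i}-1$ have order one, and under $d\iota_G$ they become the conic fibre coordinates $\tilde\xi_i=-s_i$. The support in \eqref{eq:suppbideal=} is instead computed from $\Ch_{\sA_X^{R\otimes_\C R_l}}$, where $s_i$ has order zero.

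So the $s$-projection of $\Ch^\dagger$ is a cone. It sees the linear forms $\sum a_is_i$ but never the constants $b$, and these constants really do move with $a\in Z_p$. Here is an example:
\begin{itemize}
\item Take $R=\C[s_0]$, $l=1$, $X=\C$, $g=x$.
\item Take the lattice $\cN=\shD_X[s_0,s_1]x^{s_0+s_1}\simeq\shD_X[s_0,s_1]/(x\partial_x-s_0-s_1)$ of $\iota_{g,+}(\cO_X[1/x][s_0]x^{s_0})$.
\item Then $\supp_X(\cN/t\cN)=\{s_0+s_1+1=0\}$, so the fibre over $s_0=a$ is $\{s_1=-a-1\}$.
\item Yet $\Ch^\dagger(\cN/t\cN)=\{x=t=0,\ \tilde\xi=0\}\times\C_{s_0}$, because the $\dagger$-symbol of $x\partial_x-s_0-s_1$ is $x\xi-s_1$.
\end{itemize}
Hence top-dimensional components are not of the form (log-closure fibre in $s$)$\times Z_p$. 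The fibre over $a$ is not determined by $\overline{\Lambda_p}^{\log}$ uniformly along $Z_p$, so your passage from generic to arbitrary closed points collapses.

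The same confusion affects your generic-point step. Corollary \ref{cor:ccresintprop} only gives a $\dagger$-cycle identity. Moreover, $\cN/\cI_W\cN$ need not be a lattice (it can acquire $D$-torsion), nor pure. Nothing shows that the genuine support of $(\cN/\cI_W\cN)/\alpha(\bm)$ equals $\supp^h_X(\cN/\alpha(\bm)\cN)\cap p_{\log}^{-1}(a)$. That equality requires the slicing equations to act injectively on the $\alpha(\bm)$-quotient.

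The missing idea is the paper's pointwise slicing, which works at every closed point $a$ directly:
\begin{itemize}
\item Replace $\cN$ by its maximal tame pure extension $\cL$. Then $\cL/t_i\cL$ is pure and $\supp_X(\cL/t_i\cL)=\supp^h_X(\cN/t_i\cN)$.
\item For a general $b\in R$ vanishing at $a$, prove that $\cL/b\cL\hookrightarrow\cN(*D)/b\cN(*D)$. The kernel of $b$ on $\cN(*D)/\cL$ stabilizes to a submodule of the $(j+1)$-pure module $t^m\cL/\cL$. By the multiplicity count from Theorem \ref{thm:relGins-Sab} it has grade $>j+1$, hence vanishes.
\item The snake lemma then gives $0\to\cL/t_i\cL\xrightarrow{b}\cL/t_i\cL\to\cN_1'/t_i\cN_1'\to0$, with $\cN_1'\coloneqq\cL/b\cL$ a lattice. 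So the order-zero supports genuinely restrict to $(b=0)$.
\item Pass to the pure piece $T_{j+1}\cN_1'/T_{j+2}\cN_1'$ without changing this, and iterate until the $R$-support is $\{a\}$.
\end{itemize}
Only at that stage does Theorem \ref{thm:latticemtqsupp}, i.e.\ the absolute input from \cite{WuRHA}, produce the affine hyperplanes. The log characteristic cycle formula serves only for grade and dimension bookkeeping.
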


\begin{proof}
Since $\cN(*D)$ need not be finitely generated over $\sA_Y^R$, 
a priori, we do not know if $\supp_Y(\cN(*D))$ is algebraic. However, as $R$-modules, $\cN|_{Y\setminus D}$ and $\cN(*D)$ have the same support. Hence
\begin{equation}\label{eq:extensionsupplongD}
\supp^R_Y(\cN(*D))
=
\supp^R_{Y\setminus D}(\cN|_{Y\setminus D}).
\end{equation}
In particular,
\[
\supp_Y(\cN(*D))\subseteq \Spec R
\]
is an algebraic subvariety by \eqref{eq:suppbideal=}. Notice also that $B_{\cN|_{Y\setminus D}}$ annihilates both $\cN$ and $\cN(*D)$, since $\cN$ is a lattice.

Since $\cN$ is of GE-type, so is $\cN(*D)$. By \eqref{eq:getypeD}, we have
\[
\cN(*D)
\simeq
\cS\left[
1/\prod_{i=1}^l g_i,
s_1,\dots,s_l
\right]\cdot
\prod_{i=1}^l g_i^{s_i}.
\]
Since $\cN(*D)$ is a relative regular holonomic $\sA_Y^R$-module, $\cS$ is a relative regular holonomic $\sA_X^R$-module by \cite[Thm.1]{FFS23}. Thus, by Theorem \ref{thm:mainFFS} and \cite[Prop.4.34]{FFS23}, $\cN(*D)$ is relative regular holonomic over $\sA_X^{R\otimes_\C R_l}$. Together with \eqref{eq:extensionsupplongD}, this gives
\begin{equation}\label{eq:pullbacksupp*D}
\supp_X(\cN(*D))
=
p_{\log}^{-1}\big(\supp_Y(\cN(*D))\big)
\subseteq
\Spec(R\otimes_\C R_l).
\end{equation}

Moreover, since $\cN$ is a finitely generated $\sA_X^{R\otimes_\C R_l}$-submodule of $\cN(*D)$ by Lemma \ref{lm:GE-type}, it is relative regular holonomic over $\sA_X^{R\otimes_\C R_l}$, because the category of relative regular holonomic modules is abelian. Similarly, the maximal tame pure extension $\cL$ of $\cN$ inside $\cN(*D)$ is also relative regular holonomic over $\sA_X^{R\otimes_\C R_l}$. Since $\cN$ and $\cL$ are finitely generated over $\sA_X^{R\otimes_\C R_l}$, they are algebraic relative holonomic over $\sA_X^{R\otimes_\C R_l}$. Thus $\supp_X(\cN)$ and $\supp_X(\cN/t_i\cN)$ are algebraic subvarieties of $\Spec(R\otimes_\C R_l)$, and the same holds for $\cL$.

By Corollary \ref{cor:puregradequot}, $\cL/t_i\cL$ is pure over $\sA_{Y,D}^R$. Since $\cL$ and $\cN$ are of GE-type, Lemma \ref{lm:GE-type} and Lemma \ref{lm:tamepurechv} imply that $\cN$, $\cL$, and $\cL/t_i\cL$ are pure over $\sA_X^{R\otimes_\C R_l}$. Moreover, $\Ch_X(\cL/t_i\cL)$ is the top-dimensional part of $\Ch_X(\cN/t_i\cN)$. Hence, by relative holonomicity,
\[
\supp_X(\cL/t_i\cL)
=
\supp^h_X(\cN/t_i\cN).
\]

Now fix an arbitrary point
$a\in \supp^R_Y(\cN(*D))$.
Choose a general smooth hypersurface passing through $a$ and intersecting $\supp_Y(\cN(*D))$ properly in a neighborhood of $a$. Suppose this hypersurface is defined by $b=0$ for some degree-one polynomial $b\in R$, which we also regard as an element of $R\otimes_\C R_l$. For $m\in \Z_{<0}$, consider the commutative diagram
\[
\begin{tikzcd}
0\arrow[r]&
\cL\arrow[r]\arrow[d,"b"]&
t^{m}\cL\arrow[r]\arrow[d,"b"]&
{t^{m}\cL}/{\cL}\arrow[r]\arrow[d,"b"]&
0\\
0\arrow[r]&
\cL\arrow[r]&
t^{m}\cL\arrow[r]&
{t^{m}\cL}/{\cL}\arrow[r]&
0.
\end{tikzcd}
\]
Let $\cK_m$ denote the kernel of the third vertical map. By Corollary \ref{cor:ccresintprop}, the first and second vertical maps are injective, and
\begin{equation}\label{eq:restrictb=0}
\CC^\dagger_{\sA_{Y,D}^R}(\cL/b\cL)
=
\CC^\dagger_{\sA_{Y,D}^R}(\cL)|_{b=0}
=
\overline{\CC(\cN|_{Y\setminus D})|_{b=0}}^{\log}.
\end{equation}
By the snake lemma,
$\cK_m\subseteq \cL/b\cL.$

Letting $m\to -\infty$, we obtain a direct system whose limit is
\[
\begin{tikzcd}
0\arrow[r]&
\cL\arrow[r]\arrow[d,"b"]&
\cN(*D)\arrow[r]\arrow[d,"b"]&
{\cN(*D)}/{\cL}\arrow[r]\arrow[d,"b"]&
0\\
0\arrow[r]&
\cL\arrow[r]&
\cN(*D)\arrow[r]&
{\cN(*D)}/{\cL}\arrow[r]&
0.
\end{tikzcd}
\]
Let $\cK$ denote the kernel of the third vertical map. We obtain an ascending chain inside $\cL/b\cL$:
\[
\cK_{-1}\subseteq \cK_{-2}\subseteq \cdots \subseteq \cK_m\subseteq \cdots.
\]
Since the direct limit functor is exact, the limit of this chain is $\cK$. Since $\cL/b\cL$ is finitely generated over the noetherian ring $\sA^R_{Y,D}$, the ascending chain stabilizes; hence
$\cK=\cK_m$
for $m\ll 0$. Thus we have an exact sequence
\[
0\to
\cK
\to
\cL/b\cL
\to
\cN(*D)/b\cN(*D).
\]
Let $\cN'_1$ denote the image of
\[
\cL/b\cL\to \cN(*D)/b\cN(*D).
\]
Since $\cN(*D)/\cL$ is supported on $D$, the module $\cN'_1$ is a lattice of $\cN(*D)/b\cN(*D)$, and
\[
(\cL/b\cL)|_{Y\setminus D}
=
\cN'_1|_{Y\setminus D}.
\]
By Theorem \ref{thm:relGins-Sab} and \eqref{eq:restrictb=0}, we have
\[
\CC^\dagger(\cL/b\cL)=\CC^\dagger(\cN'_1).
\]
Now consider the short exact sequence
\[
0\to
\cK
\to
\cL/b\cL
\to
\cN'_1
\to0.
\]
Counting multiplicities gives
\[
\dim\Ch^\dagger_{\sA_{Y,D}^R}(\cK)
<
\dim\Ch^\dagger_{\sA_{Y,D}^R}(\cL/b\cL).
\]
By Lemma \ref{lm:ausdual}, we obtain
$j(\cK)>j+1.$

On the other hand, applying Corollary \ref{cor:puregradequot}, Lemma \ref{lm:tamepurechv}, and Corollary \ref{cor:sesk-pure} repeatedly, we see that $t^m\cL/\cL$ is $(j+1)$-pure. Since $\cK=\cK_m$ is a submodule of a $(j+1)$-pure module and satisfies $j(\cK)>j+1$, we conclude that
\[
\cK=0
\quad \text{and} \quad
\cL/b\cL=\cN'_1.
\]
Similarly,
\[
t_i\cL/b(t_i\cL)=t_i\cN'_1.
\]
Consider the diagram
\[
\begin{tikzcd}
0\arrow[r]&
t_i\cL\arrow[r]\arrow[d,"b"]&
\cL\arrow[r]\arrow[d,"b"]&
{\cL}/{t_i\cL}\arrow[r]\arrow[d,"b"]&
0\\
0\arrow[r]&
t_i\cL\arrow[r]&
\cL\arrow[r]&
{\cL}/{t_i\cL}\arrow[r]&
0.
\end{tikzcd}
\]
It yields a short exact sequence of $\sA_X^{R\otimes_\C R_l}$-modules
\[
0\to
{\cL}/{t_i\cL}
\xrightarrow{b}
{\cL}/{t_i\cL}
\to
\cN'_1/t_i\cN'_1
\to0.
\]
By Lemma \ref{lm:injimplyproint}, the hypersurface
\[
(b=0)\subseteq \Spec(R\otimes_\C R_l)
\]
intersects $\supp_X(\cL/t_i\cL)$ properly. Applying Corollary \ref{cor:ccresintprop} with $Y=X$ and $D=\emptyset$, we obtain
\[
\Ch_{\sA_X^{R\otimes_\C R_l}}(\cN'_1/t_i\cN'_1)
=
\Ch_{\sA_X^{R\otimes_\C R_l}}(\cL/t_i\cL)|_{b=0},
\]
which is equidimensional.

Since $\cN(*D)$ is relative regular holonomic over $\sA_Y^R$, so is
\[
\cN'_1(*D)=\cN(*D)/b\cN(*D).
\]
By Corollary \ref{cor:purefillattice}, the modules $T_m(\cN'_1)$ and
$T_m(\cN'_1)/T_{m+1}(\cN'_1)$
are lattices for all $m$. Write
$\cN_1=T_{j+1}(\cN'_1)/T_{j+2}(\cN'_1).$
Since $j(\cN'_1)=j+1$, we obtain a short exact sequence
\[
0\to
\frac{T_{j+2}(\cN'_1)}{t_iT_{j+2}(\cN'_1)}
\longrightarrow
\frac{\cN'_1}{t_i\cN'_1}
\longrightarrow
\frac{\cN_1}{t_i\cN_1}
\to0.
\]
Since $T_{j+2}(\cN'_1)$ and $\cN'_1$ are lattices with
\[
j(T_{j+2}(\cN'_1))\ge j+2=j(\cN'_1)+1,
\]
Corollary \ref{cor:puregradequot} gives
\[
j\left(
\frac{T_{j+2}(\cN'_1)}{t_iT_{j+2}(\cN'_1)}
\right)
\ge
j\left(
\frac{\cN'_1}{t_i\cN'_1}
\right)+1.
\]
By Lemma \ref{lm:ausdual}, applied with $Y=X$ and $D=\emptyset$, we get
\[
\dim\Ch_{\sA_X^{R\otimes_\C R_l}}
\left(
\frac{T_{j+2}(\cN'_1)}{t_iT_{j+2}(\cN'_1)}
\right)
<
\dim\Ch_{\sA_X^{R\otimes_\C R_l}}
\left(
\frac{\cN'_1}{t_i\cN'_1}
\right).
\]
Therefore,
\[
\Ch_{\sA_X^{R\otimes_\C R_l}}(\cN_1/t_i\cN_1)
=
\Ch_{\sA_X^{R\otimes_\C R_l}}(\cN'_1/t_i\cN'_1)
=
\Ch_{\sA_X^{R\otimes_\C R_l}}(\cL/t_i\cL)|_{b=0}.
\]

In summary, $\cN_1$ is a $(j+1)$-pure $\sA^R_{Y,D}$-lattice of GE-type such that:
\begin{enumerate}[label=$(\arabic*)$]
    \item $\cN_1(*D)$ is relative regular holonomic over $\sA^R_Y$;
    \item
    $\supp_{Y\setminus D}(\cN_1|_{Y\setminus D})
    =
    \supp_{Y\setminus D}(\cN|_{Y\setminus D})|_{b=0};$
    \item $\cN_1$ is relative holonomic over $\sA_X^{R\otimes_\C R_l}$;
    \item
    $a\in \supp_{Y\setminus D}(\cN_1|_{Y\setminus D})$;

    \item locally over an affine open neighborhood of $a$,
    \[
    \supp_X^{R\otimes_\C R_l}(\cN_1/t_i\cN_1)
    =
    \supp^h_X(\cN/t_i\cN)|_{b=0}.
    \]
\end{enumerate}

Now replace $\cN$ by $\cN_1$, and choose another general hyperplane $(b_1=0)$ passing through $a$ and intersecting
\[
\supp_Y(\cN_1(*D))
=
\supp_{Y\setminus D}(\cN_1|_{Y\setminus D})
\]
properly. Repeating the above argument inductively, we obtain modules $\cN_p$ for each $p\ge 1$. We stop the induction when
\[
\supp_Y(\cN_m(*D))=\{a\}.
\]
During the induction, we may replace $\Spec R$ by a sufficiently small Zariski open neighborhood of $a$ if necessary. At the end, we obtain an $\sA^R_{Y,D}$-lattice $\cN_m$ of GE-type such that:
\begin{enumerate}[label=$(\arabic*)$]
    \item $\cN_m(*D)$ is relative regular holonomic over $\sA^R_Y$;
    \item
    $
    \supp_{Y\setminus D}(\cN_m|_{Y\setminus D})
    =
    \supp_{Y\setminus D}(\cN|_{Y\setminus D})|_{b=0,b_1=0,\dots,b_{m-1}=0}
    =
    \{a\}$;
    \item $\cN_m$ is relative holonomic over $\sA_X^{R\otimes_\C R_l}$;
    \item
    $
    \supp_X^{R\otimes_\C R_l}(\cN_m/t_i\cN_m)
    =
    \supp^h_X(\cN/t_i\cN)|_{b=0,b_1=0,\dots,b_{m-1}=0}.
    $
\end{enumerate}

Properties $(1)$ and $(2)$ imply that $\cN_m(*D)$ is holonomic over $\shD_Y$ after forgetting the $R$-module structure. Thus $\cN_m$ is a $\shD_{Y,D}$-lattice of GE-type, and hence also a relative holonomic $\sA_X^l$-module. Applying Theorem \ref{thm:latticemtqsupp}, we conclude that
\[
\supp_X(\cN_m/t_i\cN_m)
\subseteq
\Spec R_l\simeq p_{\log}^{-1}(a)
\]
is a finite union of codimension-one translated linear subvarieties, since
\[
\Ch_{\sA_X^{R\otimes_\C R_l}}(\cN_m/t_i\cN_m)
\simeq
\Ch_{\sA_X^l}(\cN_m/t_i\cN_m),
\]
where the isomorphism is induced by
\[
\Spec R_l\simeq p_{\log}^{-1}(a)\hookrightarrow \Spec(R\otimes_\C R_l).
\]
Meanwhile, by \eqref{eq:extensionsupplongD}, we have
\[
p_{\log}\big(\supp^h_X(\cN/t_i\cN)\big)
\subseteq
\supp_{Y\setminus D}(\cN|_{Y\setminus D}).
\]
Therefore, the restriction $\bar p_{\log}$ is surjective, and its fibers are finite unions of codimension-one translated linear subvarieties. This proves the case $\bm=e_i$. Replacing $t_i$ by $\alpha(\bm)$ gives the general case.
\end{proof}

\begin{coro}\label{cor:anlocmaxminex}
Let $\cN$ be an $\sA^R_{Y,D}$-lattice of GE-type with respect to
$G=(g_1,\dots,g_l)$
such that $\cN(*D)$ is a relative regular holonomic $\sA^R_Y$-module, and let $\bm\in \N^l$ be nonzero. Then, for every
\[
\beta\in \Specan(R\otimes_\C R_l),
\]
there exist a small analytic open neighborhood $U_\beta$ of $\beta$ and an integer $m_0\in \Z_{>0}$ such that
\[
\widetilde{\alpha(\bm)^{-m}\cN}|_{X\times U_\beta}
=
\widetilde{\cN(*D_\bm)}|_{X\times U_\beta}
\]
and
\[
\widetilde{\alpha(\bm)^m\cN}|_{X\times U_\beta}
=
\widetilde{\alpha(\bm)^{m+1}\cN}|_{X\times U_\beta}
\]
as
$
\widetilde{\sA_X^{R\otimes_\C R_l}}|_{X\times U_\beta}
=
\shD_{X\times U_\beta/U_\beta}
$
modules for all $m\ge m_0$.
\end{coro}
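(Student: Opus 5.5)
The plan is to show that the successive quotients of the $\alpha(\bm)$-adic chain of lattices have relative supports that are $\sG_l$-translates of a single closed set, and that only finitely many of these translates meet a small neighborhood of $\beta$. Vanishing of the analytic quotients outside a finite range of indices then gives both equalities.

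\emph{Reduction to a pure lattice.} By Corollary \ref{cor:purefillattice}, the pieces $T_i\cN$ and $T_i\cN/T_{i+1}\cN$ are lattices. The statement is stable under extensions: the chains $\alpha(\bm)^k(\cdot)$ are compatible with the pure filtration, and the functor $\sim$ is exact. I would therefore first reduce to the case where $\cN$ is $j$-pure, so that Theorem \ref{thm:mainsuppfiberation} applies. If this reduction is awkward, the alternative is to run the argument below directly with the maximal tame pure extensions of the graded pieces.

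\emph{Replacing $\cN$ by $\cL$.} Let $\cL=\cL_\bm$ be the maximal tame pure extension of $\cN$ inside $\cN(*D_\bm)$. Since $\cL$ is finitely generated and $\cN$ is a lattice, there is $p\ge 0$ with
\[
\alpha(\bm)^p\cL\subseteq\cN\subseteq\cL .
\]
Hence the chains $\{\alpha(\bm)^k\cN\}$ and $\{\alpha(\bm)^k\cL\}$ are interleaved with a bounded shift. It therefore suffices to prove both stabilization statements for $\cL$. The point of this replacement is that Lemma \ref{lm:tamepurechv} (via Corollary \ref{cor:puregradequot}) shows $\cL/\alpha(\bm)\cL$ is pure. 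Consequently
\[
S\coloneqq\supp_X(\cL/\alpha(\bm)\cL)=\supp^h_X(\cN/\alpha(\bm)\cN)\subseteq \Spec(R\otimes_\C R_l)
\]
has no lower-dimensional pieces. This avoids the otherwise circular control of embedded components of $\cN/\alpha(\bm)\cN$.

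\emph{Translating the support.} By Lemma \ref{lm:lattictwist},
\[
\alpha(\bm)^{k}\cL/\alpha(\bm)^{k+1}\cL\simeq (\cL/\alpha(\bm)\cL)(-kD_\bm).
\]
By \eqref{eq:actionBsupp}, its support is the translate $\alpha(\bm)^k\cdot S$, that is, $S$ shifted by $s\mapsto s+k\bm$ in the $R_l$-coordinates. By Theorem \ref{thm:mainsuppfiberation}, over each point $a$ of the base the fiber of $S$ is a finite union of hyperplanes
\[
\Big(\sum_i a_is_i+b=0\Big),\qquad \bm\cdot(a_1,\dots,a_l)\neq 0 .
\]
The shifted hyperplane becomes
\[
\sum_i a_is_i+b+k\,(\bm\cdot a)=0 .
\]

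\emph{Local finiteness (main obstacle).} The hard step is to show that only finitely many $k\in\Z$ have $\alpha(\bm)^k\cdot S$ meeting a small neighborhood $U_\beta$. The slope vectors $a$ range over a finite set, since $S$ is algebraic with finitely many irreducible components. So the issue is uniform local boundedness of the constants $b$ as the base point varies near $p_{\log}(\beta)$. I would argue this componentwise: on each irreducible component $S'$ of $S$, the fiber description forces $S'$ to lie in the zero set of $\sum_ia_is_i+b(x)$ with a fixed $a$. Here $b$ is either an algebraic function on $p_{\log}(S')$ or $S'$ is the preimage of a hypersurface; in the latter case the translates $\alpha(\bm)^k S'$ are the zero sets $\sum a_is_i + c(\cdot)+k(\bm\cdot a)=0$ for a regular function $c$ on $\Spec(R\otimes R_l)$. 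Since $c$ is bounded on the relatively compact set $U_\beta$ and $\bm\cdot a\neq0$, only finitely many $k$ satisfy $|k(\bm\cdot a)|\le\sup_{U_\beta}|c|+C$. This gives an $m_0$ such that
\[
\widetilde{\alpha(\bm)^{k}\cL/\alpha(\bm)^{k+1}\cL}\,\big|_{X\times U_\beta}=0\qquad\text{for } |k|\ge m_0,
\]
using that the supports of the analytifications equal the algebraic supports by faithful flatness.

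\emph{Conclusion.} For $m\ge m_0$, exactness of $\sim$ gives $\widetilde{\alpha(\bm)^m\cL}=\widetilde{\alpha(\bm)^{m+1}\cL}$ on $X\times U_\beta$. In the negative direction, the increasing chain $\widetilde{\alpha(\bm)^{-m}\cL}$ is stationary for $m\ge m_0$, and its union is $\widetilde{\cL(*D_\bm)}=\widetilde{\cN(*D_\bm)}$, since $\sim$ commutes with direct limits. Transferring back to $\cN$ through the interleaving $\alpha(\bm)^p\cL\subseteq\cN\subseteq\cL$, and enlarging $m_0$ by $p$, gives both equalities in the statement.
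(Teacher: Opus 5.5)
Your reductions match the paper's: passing to a pure lattice, replacing $\cN$ by the maximal tame pure extension $\cL$, and identifying the support of $\alpha(\bm)^k\cL/\alpha(\bm)^{k+1}\cL$ with a translate of $S$ via Lemma~\ref{lm:lattictwist} and \eqref{eq:actionBsupp}. However, the step you yourself flag as the main obstacle is not proved. It also cannot be derived from the fiberwise description in Theorem~\ref{thm:mainsuppfiberation} alone. That theorem controls each fiber $\bar p_{\log}^{-1}(a)$ separately. It says nothing about how the constants $b$ behave as the base point moves toward $p_{\log}(\beta)$.

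For a concrete counterexample, take $\Spec R=\A^1_x$, $l=1$, $\bm=1$, and $S=\{xs=1\}\cup\{s=0\}$. Over every $x$ the fiber is a nonempty finite set of points with slope $1$, so $S$ satisfies every conclusion of Theorem~\ref{thm:mainsuppfiberation}. Yet the translates $S+k$ contain the points $(1/(s_0-k),s_0)$, which accumulate at $(0,s_0)$. So no neighborhood of $(0,s_0)$ meets only finitely many translates.

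Your componentwise approach does not escape this either. The irreducible set $\{xs^2+s+x=0\}$ surjects onto the base with nonempty finite fibers, but it has a branch $s\approx -1/x$ and behaves the same way. Your assertion that each component is cut out by $\sum_i a_is_i+c$ with $c$ regular, hence bounded on $U_\beta$, is exactly the statement that needs proof. Your dichotomy leaves the case where $b$ is an algebraic function with unbounded branches untreated, and that is precisely where local finiteness fails.

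The missing ingredient is control of $S$ at infinity over $a=p_{\log}(\beta)$. The paper supplies it in three steps.
\begin{enumerate}
\item It compactifies $\Spec R_l$ to $\P^l$. It then extends a pure graded module $\gr^F(\cL/\alpha(\bm)\cL)$ to a pure coherent sheaf on $T^*X\times\Spec R\times\P^l$, whose support is the closure of the characteristic variety.
\item It reruns the hypersurface-cutting induction from the proof of Theorem~\ref{thm:mainsuppfiberation}. This shows that the fiber over $a$ of the closure $\overline{S}$ equals the closure of $\bar p_{\log}^{-1}(a)$; in other words, no component of $S$ escapes to infinity as the base point tends to $a$.
\item Each component of that fiber is a hyperplane $\sum_ia_is_i+b=0$ with $\bm\cdot(a_1,\dots,a_l)\neq 0$. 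Its closure meets the hyperplane at infinity only in $\{\sum_ia_is_i=0\}$, which misses the direction $[\bm]$. Hence the point at infinity of the orbit line $\{\beta+k\bm\}$ lies outside the closed set $\overline{S}$. A neighborhood of that point misses $\overline{S}$, and this yields a single $U_\beta$ valid for all $|k|\ge m_0$.
\end{enumerate}
Without an argument of this kind, or some other input such as local boundedness of the roots of a relative $b$-function, your proof of local finiteness, and hence of the corollary, is incomplete.
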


\begin{proof}
Since the analytification functor is flat, by Corollary \ref{cor:purefillattice} and induction on the pure filtration, it is enough to assume that $\cN$ is $j$-pure.

Choose $I\subseteq\{1,\dots,l\}$ such that $D_I$ and $D_\bm$ have the same support. It is enough to replace $\alpha(\bm)$ by $t_I$. Since $\cN$ and its maximal tame pure extension $\cL_I$ are both lattices and
$\cN\subseteq\cL_I\subseteq\cN(*D_I)$,
we have
\[
\cN\subseteq\cL_I\subseteq t_I^{-m}\cN
\]
for all $m\gg 0$. Since analytification is exact, it is enough to replace $\cN$ by $\cL_I$.

By \eqref{eq:actionBsupp}, we have
\[
t_I^m\cdot
\supp_X(\cL_I/t_I\cL_I)
=
\supp_X(t_I^m\cL_I/t_I^{m+1}\cL_I).
\]
By Theorem \ref{thm:mainsuppfiberation},
\[
\supp^h_X(\cN/t_I\cN)
=
\supp_X(\cL_I/t_I\cL_I),
\]
and $\supp_X(\cL_I/t_I\cL_I)$ is a fibration over
\[
\supp_Y(\cN(*D))\subseteq \Spec R
\]
whose fibers are unions of codimension-one translated linear subvarieties in $\Spec R_l$. By construction, the $t_I$-action on $\supp_X(\cL_I/t_I\cL_I)$ fixes the base and translates each fiber integrally inside
$\Spec R_l\simeq \C^l$.
Thus the $t_I$-action preserves the fibration structure.

For
$\beta\in \Specan(R\otimes_\C R_l)$,
write
$a=p_{\log}(\beta)$.
If
$a\in \supp_Y(\cN(*D))$,
then there exists $m_0\gg0$ such that
$\beta\notin t_I^{\pm m}\cdot\bar p_{\log}^{-1}(a)$
for all $m\ge m_0$. Thus
\[
\beta\notin
t_I^{\pm m}\cdot
\supp_X(\cL_I/t_I\cL_I)
=
\supp_X(t_I^{\pm m}\cL_I/t_I^{\pm m+1}\cL_I).
\]
Since $\supp_X(\cL_I/t_I\cL_I)$ is closed, there exists a small analytic open neighborhood $U^m_\beta$ of $\beta$ such that
\[
U^m_\beta
\cap
\supp_X(t_I^{\pm m}\cL_I/t_I^{\pm m+1}\cL_I)
=
\emptyset.
\]
Equivalently,
\[
V^m_\beta
\cap
\supp_X(\cL_I/t_I\cL_I)
=
\emptyset,
\]
where
\[
V^m_\beta=t_I^{\pm m}\cdot U^m_\beta
\]
is a small neighborhood of $t_I^{\pm m}\cdot\beta$.

Compactify $\Spec R_l\simeq \A_\C^l$ to $\P^l$, and consider the open embedding
\[
\Specan(R\otimes_\C R_l)
\hookrightarrow
\Specan R\times \P^l.
\]
The Zariski closure of
$\{t_I^{\pm m}\cdot\beta\}_{m\ge m_0}$
inside
\[
\supp_Y(\cN(*D))\times \P^l
\subseteq
\Specan R\times \P^l
\]
is a complete complex line $l_I$ contained in $\{a\}\times \P^l$. The point at infinity of this line is the limit of
$\{t_I^{\pm m}\cdot\beta\}_{m\ge m_0}$.

Let $F$ denote the fiber over $a$ of the closure of
$\supp_X(\cL_I/t_I\cL_I)$
inside
$$\supp_Y(\cN(*D))\times \P^l.$$
Choose a good filtration
$F_\bullet(\cL_I/t_I\cL_I)$
over $F_\bullet\sA_X^{R\otimes_\C R_l}$ such that
$
\cG
\coloneqq
\gr^F_\bullet(\cL_I/t_I\cL_I)
$
is pure; see \cite[A.IV 4.11]{BJ}. Since $\cG$ is algebraic, it extends to a pure coherent module $\bar\cG$ on
$T^*X\times \Spec R\times \P^l$
whose support is the closure of
\[
\Ch_{\sA_X^{R\otimes_\C R_l}}(\cL_I/t_I\cL_I).
\]
Using an inductive process similar to the one in the proof of Theorem \ref{thm:mainsuppfiberation}, we obtain
\[
F=\overline{\bar p_{\log}^{-1}(a)},
\]
where the latter denotes the closure of $\bar p_{\log}^{-1}(a)$ in $\{a\}\times \P^l$.

The components of $\bar p_{\log}^{-1}(a)$ are of the form
\[
\left(\sum_{i=1}^l a_is_i+b=0\right),
\]
with
$(a_1,\dots,a_l)\in \Z_{\ge 0}^l$
satisfying
$\bm\cdot(a_1,\dots,a_l)\neq 0.$
Hence $l_I$ and $F$ do not meet at infinity. Since the point at infinity of $l_I$ is the limit of
\[
\{t_I^{\pm m}\cdot\beta\}_{m\ge m_0}
\]
and is not contained in $F$, the neighborhoods $V^{\pm m}_\beta$ do not shrink to $t_I^{\pm m}\cdot\beta$ as they approach infinity. Therefore, we can choose an analytic open neighborhood $U_\beta$ independent of $m$ such that, for all $m\ge m_0$,
\[
U_\beta
\cap
\supp_X(t_I^{\pm m}\cL_I/t_I^{\pm m+1}\cL_I)
=
\emptyset.
\]

If
$a\notin \supp_Y(\cN(*D)),$
then we choose $U_\beta$ to be a small neighborhood disjoint from
\[
p_{\log}^{-1}\big(\supp^R_{Y\setminus D}(\cN|_{Y\setminus D})\big).
\]
In either case,
\[
U_\beta
\cap
\supp_X(t_I^{\pm m}\cL_I/t_I^{\pm m+1}\cL_I)
=
\emptyset
\]
for all $m\ge m_0$. Therefore, by \cite[Lem.5.6]{WuRHA},
\[
\left.
\frac{\widetilde{t_I^{\pm m}\cL_I}}
{\widetilde{t_I^{\pm m+1}\cL_I}}
\right|_{X\times U_\beta}
=
0
\]
for all $m\ge m_0$. Consequently,
\[
\widetilde{t_I^{-m}\cL_I}|_{X\times U_\beta}
=
\widetilde{\cN(*D_I)}|_{X\times U_\beta}
\]
and
\[
\widetilde{t_I^m\cL_I}|_{X\times U_\beta}
=
\widetilde{t_I^{m+1}\cL_I}|_{X\times U_\beta}
\]
for all $m\ge m_0$. Replacing $t_I$ by $\alpha(\bm)$ gives the desired statement.
\end{proof}

\subsection{Relative minimal extensions of lattices}\label{subsect:rmel}

We continue to assume that $\cN$ is an $\sA^R_{Y,D}$-lattice of GE-type with
respect to
$G=(g_1,\dots,g_l)$
such that $\cN(*D)$ is a relative regular holonomic $\sA^R_Y$-module. For
$\bm\in\N^l$, let $I\subseteq \{1,\dots,l\}$ be the subset such that the support
of $D_\bm$ is $D_I$. We denote by $Y_I$ the subvariety of $Y$ defined by
$(t_i=0)_{i\notin I}$,
and by $\bar D_I\subseteq Y_I$ the divisor
$\bar D_I=Y_I\cap D_I$.
Then we have the closed embedding
\[
i_{Y_I}\colon Y_I\simeq X\times \A_\C^{|I|}\hookrightarrow Y
\]
and the graph embedding
\[
\iota_{G_I}\colon X\hookrightarrow Y_I,
\]
where
$G_I=(g_i)_{i\in I}.$

By Theorem \ref{thm:mainsuppfiberation}, both $\cN$ and $\cN(*D)$ are relative
regular holonomic $\sA_X^{R\otimes_\C R_l}$-modules; hence so is
$\cN(*D_I)$. Using the factorization through the partial graph
$\iota_{G_I}$, Corollary \ref{cor:factorthroughgraph} implies that
$\cN(*D)$ is relative regular holonomic over
$\sA_{Y_I}^{R\otimes_\C R_{l-|I|}}$,
where we write
\[
R_{l-|I|}=\C[s_i]_{i\notin I}.
\]
As a submodule, $\cN(*D_I)$ is relative regular holonomic over
$\sA_{Y_I}^{R\otimes_\C R_{l-|I|}}$ and is supported on the graph of $G_I$.

By \eqref{eq:getypeD}, we have
\begin{equation}\label{eq:partiallocKE}
\cN(*D_I)
\simeq
\cS_I[s_i]_{i\in I}\cdot \prod_{i\in I}g_i^{s_i},
\end{equation}
where
\[
\cS_I=\cS_I\left[1/\prod_{i\in I}g_i\right]
\]
is a relative regular holonomic
$\sA_X^{R\otimes_\C R_{l-|I|}}$-module. For simplicity, set
\[
\cS_{G_I}^*
\coloneqq
\cO_X\left[1/\prod_{i\in I}g_i\right][s_i]_{i\in I}
\cdot
\prod_{i\in I}g_i^{-s_i}.
\]
We define
\[
\wt{\cN(!D_\bm)}
=
\wt{\cN(!D_I)}
=
\wt{\cN(!D_I)}^X
\]
to be
\[
\cRhom_{\widetilde{\sA_X^{R\otimes_\C R_l}}}
\left(
\widetilde{\Rhom}_{\sA_X^{R\otimes_\C R_{l-|I|}}
[1/\prod_{i\in I}g_i]}
\left(
\cS_I,
\sA_X^{R\otimes_\C R_{l-|I|}}\otimes_{\cO_X}\cS_{G_I}^*
\right),
\widetilde{\sA_X^{R\otimes_\C R_l}}
\right).
\]
We call this the \emph{relative minimal extension} of $\cN$ along $D_\bm$.
It depends only on $D_I$, the support of $D_\bm$. Here the
$\widetilde{\phantom{M}}$-functor over the inner $\Rhom$ converts the complex
of $\sA_X^{R\otimes_\C R_l}$-modules into a complex of
$\widetilde{\sA_X^{R\otimes_\C R_l}}$-modules. The superscript $X$ in
$\wt{\cN(!D_I)}^X$ is used, when necessary, to indicate that the construction
is carried out relatively over $X$.

Let us prove that $\wt{\cN(!D_I)}$ is well-defined. First, observe that
\[
\operatorname{Ext}^i_{\sA_X^{R\otimes_\C R_{l-|I|}}
[1/\prod_{i\in I}g_i]}
\left(
\cS_I,
\sA_X^{R\otimes_\C R_{l-|I|}}\otimes_{\cO_X}\cS_{G_I}^*
\right)
\]
is isomorphic to
\[
\operatorname{Ext}^i_{\sA_X^{R\otimes_\C R_{l-|I|}}
[1/\prod_{i\in I}g_i]}
\left(
\cS_I,
\sA_X^{R\otimes_\C R_{l-|I|}}
\left[1/\prod_{i\in I}g_i\right]
\right)
\otimes_{\cO_X}\cS_{G_I}^*.
\]
This module is relative regular holonomic over
$\sA_X^{R\otimes_\C R_l}$ by \cite[Cor. 3.9 and Prop. 4.34]{FFS23} and
Theorem \ref{thm:mainFFS}. Thus
\[
\widetilde{\Rhom}_{\sA_X^{R\otimes_\C R_{l-|I|}}
[1/\prod_{i\in I}g_i]}
\left(
\cS_I,
\sA_X^{R\otimes_\C R_{l-|I|}}\otimes_{\cO_X}\cS_{G_I}^*
\right)
\]
is a complex of $\widetilde{\sA_X^{R\otimes_\C R_l}}$-modules whose cohomology
sheaves are regular holonomic over
$\widetilde{\sA_X^{R\otimes_\C R_l}}$. Therefore
$\wt{\cN(!D_I)}$ is a complex of relative regular holonomic
$\widetilde{\sA_X^{R\otimes_\C R_l}}$-modules, and
$\wt{\cN(!D_I)}$ depends only on $\cN|_{Y\setminus D_I}$. In particular,
$\wt{\cN(!D_I)}$ is well-defined.

By definition, when $R=\C$ and $I=\{1,2,\dots,l\}$, the object
$\wt{\cN(!D_I)}$ is the minimal extension constructed in
\cite[\S 3.3]{WuRHA}; see also \cite[\S 5.1]{BVWZ2} for the algebraic local
version.

\begin{remark}\label{rmk:gnr!ex}
It is straightforward to see that the complex
\[
\widetilde{\Rhom}_{\sA_X^{R\otimes_\C R_{l-|I|}}
[1/\prod_{i\in I}g_i]}
\left(
\cS_I,
\sA_X^{R\otimes_\C R_{l-|I|}}\otimes_{\cO_X}\cS_{G_I}^*
\right)
\]
is quasi-isomorphic to
\[
\widetilde{\cRhom}_{\widetilde{\sA_X^{R\otimes_\C R_{l-|I|}}}}
\left(
\wt{\cS_I},
\widetilde{\sA_X^{R\otimes_\C R_{l-|I|}}}
\otimes_{\cO_X}\cS_{G_I}^*
\right),
\]
where the $\widetilde{\phantom{M}}$-functor over $\cRhom$ converts the complex
of right
\[
\widetilde{\sA_X^{R\otimes_\C R_{l-|I|}}}[s_i]_{i\in I}
\]
modules into a complex of right
$\widetilde{\sA_X^{R\otimes_\C R_l}}$
modules. Therefore, to define $\wt{\cN(!D_I)}$, it suffices to assume that
$\cN$ is an $\sA^R_{Y,D}$-module of GE-type such that $\cS_I$ is relative
regular holonomic over $\sA_X^{R\otimes_\C R_{l-|I|}}$.
\end{remark}

\begin{lemma}
There is a natural morphism
\[
\wt{\cN(!D_\bm)}\longrightarrow \wt{\cN(*D_\bm)}.
\]
\end{lemma}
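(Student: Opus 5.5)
The plan is to build the morphism from the standard biduality map for relative regular holonomic modules: $\wt{\cN(*D_I)}$ is the bidual of itself, while $\wt{\cN(!D_I)}$ is the dual of a modified dual. Set $\sA\coloneqq\sA_X^{R\otimes_\C R_{l-|I|}}$ and $\sA'\coloneqq\sA[1/\prod_{i\in I}g_i]$. By \eqref{eq:partiallocKE}, $\cN(*D_I)\simeq \cS_I\otimes_{\cO_X}\cS_{G_I}$, where
\[
\cS_{G_I}=\cO_X\Big[1/\prod_{i\in I}g_i\Big][s_i]_{i\in I}\cdot\prod_{i\in I}g_i^{s_i}.
\]
I will compare $\cS_{G_I}^*$ with the $\C[s_i]_{i\in I}$-dual of $\cS_{G_I}$. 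The object $\cS_{G_I}^*$ pairs with $\cS_{G_I}$ via $g^{s}\otimes g^{-s}\mapsto 1$. This pairing is $\sA'$-linear, and it produces the tensor identity
\begin{equation*}
\Rhom_{\sA'}\big(\cS_I,\sA\otimes_{\cO_X}\cS_{G_I}^*\big)\simeq \Rhom_{\sA'[s_i]_{i\in I}}\big(\cS_I\otimes_{\cO_X}\cS_{G_I},\sA_X^{R\otimes_\C R_l}\big)
\end{equation*}
after $\C[s_i]_{i\in I}$-linear extension of scalars. This is the analogue of the classical identity $\mathbb D(j_*)=j_!\mathbb D$, written at the level of GE-type modules.

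First, I write $\cN(*D_I)$ in the form above, so that the inner complex in the definition of $\wt{\cN(!D_I)}$ is identified with $\wt{\Rhom}_{\sA'[s_I]}(\cN(*D_I),\sA_X^{R\otimes_\C R_l})$ localized along $D_I$. Second, I compare this with the unlocalized dual $\Rhom_{\sA_X^{R\otimes_\C R_l}}(\cN(*D_I),\sA_X^{R\otimes_\C R_l})$. Restriction of scalars along $\sA_X^{R\otimes_\C R_l}\to\sA'[s_I]$ gives a natural map between them. Adjunction, using flatness of the localization $\sA_X^{R\otimes_\C R_l}\to\sA'[s_I]$, gives the map
\[
\wt{\Rhom}_{\sA'[s_I]}\big(\cN(*D_I),\sA'[s_I]\big)\longleftarrow\wt{\Rhom}_{\sA_X^{R\otimes_\C R_l}}\big(\cN(*D_I),\sA_X^{R\otimes_\C R_l}\big).
\]
Third, I apply the contravariant functor $\cRhom_{\wt{\sA_X^{R\otimes_\C R_l}}}(-,\wt{\sA_X^{R\otimes_\C R_l}})$. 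This gives a map from $\wt{\cN(!D_I)}$ to the bidual of $\wt{\cN(*D_I)}$. Relative regular holonomicity of $\wt{\cN(*D_I)}$ over $\wt{\sA_X^{R\otimes_\C R_l}}$ (Theorem~\ref{thm:mainsuppfiberation}) gives biduality by \cite{FFS23}, so the bidual is $\wt{\cN(*D_I)}$ itself. All shifts are the common shift $j$ from relative holonomic duality, and I suppress them.

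The main obstacle is the first step. I must check that the twisted coefficient $\sA\otimes_{\cO_X}\cS_{G_I}^*$ really realises the $\sA'[s_I]$-linear dual of $\cS_I\otimes\cS_{G_I}$. Two issues arise here. The first is the side-switching: the right-module structure on $\cS_{G_I}^*$ has $s_i$ acting with the sign conventions $s_i=-\partial_{t_i}t_i$. The second is that $\cS_{G_I}$ is free of rank one over $\cO_X[1/\prod g_i][s_I]$, so the Hom splits. I would verify this on the free resolution by first tensoring a resolution of $\cS_I$ by $\cS_{G_I}$, and then checking termwise that $\Hom_{\sA'}(\sA',\sA\otimes\cS_{G_I}^*)$ matches $\Hom_{\sA'[s_I]}(\sA'\otimes\cS_{G_I},\sA'[s_I])$ via the pairing. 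After that, naturality of all maps in $\cN$ is formal.
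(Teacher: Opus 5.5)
Your first step is fine. With the coefficient corrected to the localized ring $\sA_X^{R\otimes_\C R_l}[1/\prod_{i\in I}g_i]$, it is exactly the identification \eqref{eq:isolocdualKE} that the paper also uses; the unlocalized $\sA_X^{R\otimes_\C R_l}$ is not a module over $\sA'[s_i]_{i\in I}$.

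The gap is in your second and third steps. You form the unlocalized dual $\Rhom_{\sA_X^{R\otimes_\C R_l}}(\cN(*D_I),\sA_X^{R\otimes_\C R_l})$ algebraically, apply $\sim$, dualize, and call the result the bidual of $\wt{\cN(*D_I)}$. But $\cN(*D_I)\simeq\cS_I[s_i]_{i\in I}\prod_{i\in I}g_i^{s_i}$ is in general \emph{not} finitely generated over $\sA_X^{R\otimes_\C R_l}$; the paper points this out for $\cS_F$ in \S\ref{subsect:relregularhol}. Only $\wt{\cN(*D_I)}$ is coherent. For such a module the algebraic $\Rhom$ is a derived inverse limit over finitely generated pieces, and it does not commute with $\sim$.

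For example, take $X=\A^1$ with $g=x$, and write $\C[x^{\pm1},s]x^s=\bigcup_k\shD_X[s]x^{s-k}$. Then
\[
\operatorname{Ext}^1_{\shD_X[s]}\big(\shD_X[s]x^{s-k},\shD_X[s]\big)\simeq\shD_X[s]/(x\partial_x-s+k)\shD_X[s],
\]
with transition maps $[a]\mapsto[xa]$. After side-changing, this inverse system is the decreasing chain $\shD_X[s]x^{-s+k-1}\subset\C[x^{\pm1},s]x^{-s}$, whose intersection is $0$. All the $\operatorname{Hom}$'s vanish, so $\operatorname{Ext}^1_{\shD_X[s]}(\C[x^{\pm1},s]x^s,\shD_X[s])=0$. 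By contrast, near any $s_0$ the analytic dual of $\wt{\C[x^{\pm1},s]x^s}$ is the nonzero module $\wt{\shD_X[s]/(x\partial_x-s+k)\shD_X[s]}$ in degree $1$, for $k\gg0$. So the complex you dualize in the third step is not $\cRhom_{\wt{\sA_X^{R\otimes_\C R_l}}}(\wt{\cN(*D_I)},\wt{\sA_X^{R\otimes_\C R_l}})$, and biduality says nothing about it.

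The repair is to make the unlocalized comparison analytically. Since $\wt{\cN(*D_I)}$ is $\wt{\sA_X^{R\otimes_\C R_l}}$-coherent, use
\[
\cRhom_{\wt{\sA_X^{R\otimes_\C R_l}}}\big(\wt{\cN(*D_I)},\wt{\sA_X^{R\otimes_\C R_l}}\big)\to\cRhom_{\wt{\sA_X^{R\otimes_\C R_l}}}\big(\wt{\cN(*D_I)},\wt{\sA_X^{R\otimes_\C R_l}}(*\wt D_I)\big),
\]
and identify the target with the analytification of the localized algebraic dual. This is legitimate because $\cN(*D_I)=\sA_X^{R\otimes_\C R_l}[1/\prod_{i\in I}g_i]\cdot\cN$ is finitely generated over the localized ring. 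Then dualize and apply analytic biduality.

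The paper's route is shorter and never dualizes $\cN(*D_I)$ over the unlocalized ring. It maps the coefficient $\wt{\sA_X^{R\otimes_\C R_l}}\to\wt{\sA_X^{R\otimes_\C R_l}}(*\wt D_I)$ in the \emph{outer} $\cRhom$ defining $\wt{\cN(!D_I)}$. It then only needs biduality over the localized ring, where everything is finitely generated, via the argument of \cite[Lem.5.3.1]{BVWZ2}.

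Finally, there is no shift to suppress, since neither the definition of $\wt{\cN(!D_I)}$ nor biduality involves one. In any case relative holonomic modules need not be Cohen--Macaulay, so there is no ``common shift $j$''.
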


\begin{proof}
It is enough to prove that
\[
\Rhom_{\sA_X^{R\otimes_\C R_l}(*\bar D_I)}
\left(
\Rhom_{\sA_X^{R\otimes_\C R_{l-|I|}}
[1/\prod_{i\in I}g_i]}
\left(
\cS_I,
\sA_X^{R\otimes_\C R_{l-|I|}}\otimes_{\cO_X}\cS_{G_I}^*
\right),
\sA_X^{R\otimes_\C R_l}(*\bar D_I)
\right)
\]
is isomorphic to $\cN(*D_I)$, where
\[
\sA_X^{R\otimes_\C R_l}(*\bar D_I)
=
\sA_X^{R\otimes_\C R_l}
\left[1/\prod_{i\in I}g_i\right].
\]
Indeed, replacing $\shD_U$ by
\[
\sA_X^{R\otimes_\C R_{l-|I|}}
\left[1/\prod_{i\in I}g_i\right]
\]
in the proof of \cite[Lem.5.3.1]{BVWZ2}, the same argument gives the desired
claim. Moreover, by \eqref{eq:partiallocKE}, the same argument also gives
\begin{equation}\label{eq:isolocdualKE}
\begin{split}
&
\Rhom_{\sA_X^{R\otimes_\C R_{l-|I|}}
[1/\prod_{i\in I}g_i]}
\left(
\cS_I,
\sA_X^{R\otimes_\C R_{l-|I|}}\otimes_{\cO_X}\cS_{G_I}^*
\right)
\\
&\simeq
\Rhom_{\sA_X^{R\otimes_\C R_l}
[1/\prod_{i\in I}g_i]}
\left(
\cN(*D_I),
\sA_X^{R\otimes_\C R_l}
\left[1/\prod_{i\in I}g_i\right]
\right).
\end{split}
\end{equation}
\end{proof}

\begin{theorem}\label{thm:anarelbeilinson}
With notation and assumptions as above, we have:
\begin{enumerate}[label=$(\arabic*)$]
    \item
    $\wt{\cN(!D_\bm)}
    \stackrel{\mathrm{q.i.}}{\simeq}
    \cH^0\big(\wt{\cN(!D_\bm)}\big);$
    \item
    $\wt{\cN(!D_\bm)}
    \hookrightarrow
    \wt{\cN(*D_\bm)}.$
\end{enumerate}
\end{theorem}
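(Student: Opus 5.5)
The plan is to realize $\wt{\cN(!D_\bm)}$ as the relative dual of a localized dual, and to compare it with the lattices $\wt{\alpha(\bm)^{m}\cN}$ using Corollary \ref{cor:anlocmaxminex}. By Corollary \ref{cor:purefillattice}, flatness of analytification, and the five lemma on the pure filtration, we may reduce to the case where $\cN$ is $j$-pure. By the proof of Corollary \ref{cor:anlocmaxminex}, we may then replace $\cN$ by its maximal tame pure extension $\cL_I$, which does not change $\cN|_{Y\setminus D_I}$. Write $\cD(-)=\Rhom_{\sA_X^{R\otimes R_l}}(-,\sA_X^{R\otimes R_l})$.

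\textbf{Step 1: realize the inner complex as a limit of duals of lattices.} By \eqref{eq:isolocdualKE}, the inner complex is $\Rhom(\cN(*D_I),\sA[1/\prod g_i])$. Writing $\cN(*D_I)=\varinjlim t_I^{-m}\cN$ and using that localization is exact, I will identify it with $\varinjlim_m \cD(t_I^{-m}\cN)[1/\prod_{i\in I} g_i]$, and then with $\varinjlim_m \cD(t_I^{m}\cN)$ under the transition maps dual to the inclusions $t_I^{m+1}\cN\subset t_I^m\cN$. The two identifications differ by the twist $t_I\mapsto t_I^{-1}$ coming from the $\cS^*_{G_I}$ factor, which sends $s_i$ to $-s_i$.

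\textbf{Step 2: analytic stabilization of the limit.} After applying $\sim$, Corollary \ref{cor:anlocmaxminex} shows that locally over each small $U_\beta$ the system $\wt{t_I^m\cN}$ is constant for $m\ge m_0$. Hence the inner complex is locally quasi-isomorphic to $\wt{\cD(t_I^{m_0}\cN)}$. Relative biduality for coherent modules then gives
\[
\wt{\cN(!D_I)}|_{X\times U_\beta}\simeq \wt{\cD\cD(t_I^{m_0}\cN)}|_{X\times U_\beta}\simeq \wt{t_I^{m_0}\cN}|_{X\times U_\beta}.
\]
This is concentrated in degree $0$, which proves (1).

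\textbf{Step 3: injectivity.} Under these identifications, the morphism of the previous Lemma restricts to the inclusion $\wt{t_I^{m_0}\cN}\subseteq \wt{\cN(*D_I)}$. I would check this by localizing both sides along $\prod_{i\in I} g_i$, where the morphism becomes the identity by the proof of that Lemma. Since $\wt{t_I^{m_0}\cN}$ is a lattice and so has no sections supported on $D_I$, the morphism is injective, which proves (2).

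\textbf{Main obstacle.} The hard part is Step 1. One must justify commuting $\Rhom$ out of the colimit $\cN(*D_I)$, which is not finitely generated, and track the sign twist by $\cS^*_{G_I}$ exactly, so that the dual system is indexed by $t_I^{m}$ with $m\to+\infty$, where stabilization holds, rather than by $t_I^{-m}$. The first approach I would try is a Mittag-Leffler argument on the analytic side: $\lim^1$ vanishes because the system stabilizes locally by Corollary \ref{cor:anlocmaxminex}.
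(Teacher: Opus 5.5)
\textbf{The gap is in Step 1.} Steps 2 and 3 are fine once Step 1 is granted, and would even yield \eqref{eq:relminexge} at the same time. But Step 1 carries the whole content of the theorem, and the mechanism you propose for it does not work.

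First, there is no colimit to commute. We have $\cN(*D_I)=\cN[1/\prod_{i\in I}g_i]$, and $\cN$ is finitely generated over $\sA_X^{R\otimes_\C R_l}$ by Lemma \ref{lm:GE-type}. So by \eqref{eq:isolocdualKE} and flat base change the inner complex is just $\cD(\cN)[1/\prod_{i\in I}g_i]$, and your system $\cD(t_I^{-m}\cN)[1/\prod_{i\in I}g_i]$ is constant in $m$.

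Second, $\Rhom$ is contravariant in its first argument. Writing $\cN(*D_I)=\varinjlim_m t_I^{-m}\cN$ therefore produces an \emph{inverse} system over the \emph{growing} lattices, and Mittag-Leffler only controls that $R\varprojlim$. With values in the localized ring this is vacuous. With values in the unlocalized ring it computes $\widetilde{\mathbb{D}}(\wt{\cN(*D_I)})$ locally, whose dual is $\wt{\cN(*D_I)}$, not the minimal extension.

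What your argument actually needs is
\[
\cD(\cN)\Bigl[1/\prod_{i\in I}g_i\Bigr]\simeq\varinjlim_{m\to+\infty}\cD(t_I^{m}\cN),
\]
at least after analytification and locally over $U_\beta$. In words: the colimit of the duals of the \emph{shrinking} lattices must already be $\prod_{i\in I}g_i$-local. This is a statement of the form ``duality exchanges the $!$- and $*$-extensions'', and no sign change $s_i\mapsto -s_i$ produces it, for three reasons.
\begin{enumerate}
\item $t_I$ acts on $\cN$ as multiplication by $\prod_{i\in I}g_i$ composed with a shift in $s$, not as multiplication by $\prod_{i\in I}g_i$ alone.
\item The $t_I$-adic and $g$-adic filtrations are not comparable. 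For $\cN=\shD_X[s]x^s$ on $\A^1$, the element $(s+1)\cdots(s+m)x^s=\partial_x^m x^{s+m}$ lies in $t^m\cN$ but not in $x\cN$.
\item Relative holonomic lattices need not be Cohen--Macaulay, so the higher $\operatorname{Ext}$'s of $t_I^m\cN$ may carry $g$-torsion, and you would have to show it dies in the limit.
\end{enumerate}
In fact the displayed identification is essentially equivalent to the theorem. The theorem together with Corollary \ref{cor:anlocmaxminex} gives \eqref{eq:relminexge}, and dualizing \eqref{eq:relminexge} (using biduality) is exactly your claim. As written, your outline reduces the statement to itself.

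\textbf{How the paper avoids this.} It never identifies the inner complex directly. It inducts on the dimension of the relative support of $\cN|_{Y\setminus D}$:
\begin{itemize}
\item After reducing to the pure case, it cuts by a general smooth hypersurface $(b=0)$ and uses the Rees theorem to get $Li_b^*\wt{\cN(!D_I)}\simeq\wt{\cN_1(!D_I)}$ with $\cN_1=\cN/b\cN$.
\item It excludes negative cohomological degrees by the grade identity \eqref{eq:extendinggraden} and Auslander regularity.
\item It excludes positive degrees, and the kernel of $\wt{\cN(!D_I)}\to\wt{\cN(*D_I)}$, with the hyperhomology spectral sequence and relative Nakayama.
\item The base case, zero-dimensional support, is the absolute statement \cite[Lem.3.7]{WuRHA}, where the needed duality input is available.
\end{itemize}
The local description \eqref{eq:relminexge} on which your Steps 2--3 rest comes afterwards, as a consequence.

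To make your route work, you would have to prove the displayed duality independently. One possibility is a log version of relative Kashiwara equivalence compatible with $\cD$, under which the dual of $t_I$ becomes $t_I^{-1}$ on a dual lattice, plus control of the non-Cohen--Macaulay part. Otherwise you should adopt the slicing induction.
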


The preceding theorem is a natural generalization of \cite[Lem.2.7]{WuRHA},
\cite[Thm.5.3]{WZ}, and the $j_!$-extension of Beilinson--Bernstein
\cite{BBJats}; see also \cite[Prop.3.8.3]{Gil}.

\begin{theorem}\label{thm:relminext}
The module $\wt{\cN(!D_\bm)}$ has no nonzero
$\wt{\sA_X^{R\otimes_\C R_l}}$-sub or quotient modules supported on the
divisor
\[
\left(\prod_{i\in I}g_i=0\right)
\subseteq
X\times \Specan(R\otimes_\C R_l).
\]
\end{theorem}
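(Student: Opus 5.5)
Write $\sM^*\coloneqq\wt{\cN(*D_I)}$, $\sM^!\coloneqq\wt{\cN(!D_I)}$ and $\mathbb D\coloneqq\cRhom_{\wt{\sA_X^{R\otimes_\C R_l}}}(-,\wt{\sA_X^{R\otimes_\C R_l}})$, and let $Z_I\coloneqq(\prod_{i\in I}g_i=0)$. The plan is to treat the two halves of the statement separately. The ``no nonzero submodule supported on $Z_I$'' half will follow from the injection $\sM^!\hookrightarrow\sM^*$ of Theorem \ref{thm:anarelbeilinson}(2), together with the fact that $\sM^*$ has no nonzero sections supported on $Z_I$. The ``no nonzero quotient supported on $Z_I$'' half will follow from the defining formula of $\sM^!$ as a dual, by duality and a grade computation.

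\emph{Submodules.} By \eqref{eq:partiallocKE}, $\cN(*D_I)\simeq\cS_I[s_i]_{i\in I}\prod_{i\in I}g_i^{s_i}$ with $\cS_I=\cS_I[1/\prod_{i\in I}g_i]$. Hence $\prod_{i\in I}g_i$ acts invertibly on $\cN(*D_I)$. Since analytification is flat, it also acts invertibly on $\sM^*$. Any section of $\sM^*$ supported on $Z_I$ is killed by a power of $\prod_{i\in I}g_i$, so it is zero. Now let $\sK\subseteq\sM^!$ be a submodule supported on $Z_I$. Then $\sK$ injects into $\sM^*$ by Theorem \ref{thm:anarelbeilinson}(2), and therefore $\sK=0$.

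\emph{Quotients.} Set $\sP\coloneqq\wt{\Rhom}(\cS_I,\sA_X^{R\otimes_\C R_{l-|I|}}\otimes_{\cO_X}\cS^*_{G_I})$.
\begin{enumerate}[label=$(\arabic*)$]
\item By Theorem \ref{thm:anarelbeilinson}(1) and the construction, $\mathbb D(\sM^!)\simeq\sP$, up to the shift by the grade $q\coloneqq j(\sM^!)$. The complex $\sP$ has relative regular holonomic cohomology.
\item By \eqref{eq:isolocdualKE}, $\sP$ is a localized dual of $\cN(*D_I)$, so $\prod_{i\in I}g_i$ acts invertibly on every cohomology sheaf of $\sP$. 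Running the submodule argument once more on $\sP$ itself shows that $\sP$ has no nonzero $\prod_{i\in I}g_i$-torsion.
\item Suppose $\sM^!\twoheadrightarrow\sQ$ is a quotient supported on $Z_I$. Every nonzero subquotient of $\sM^!$ has grade $\ge q$ (Remark \ref{rmk:analyticpurefil}). Let $0\to\sK\to\sM^!\to\sQ\to0$ be the corresponding exact sequence, and apply $\mathbb D$ to obtain the long exact sequence of $\cE xt$'s. This gives an injection $\cE xt^{q}(\sQ,\wt{\sA})\hookrightarrow\cE xt^{q}(\sM^!,\wt{\sA})\simeq\cH^0\sP$ (after the shift).
\item The left-hand side is supported on $Z_I$, because $\sQ$ is. The right-hand side has no $Z_I$-torsion by step $(2)$. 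Hence $\cE xt^q(\sQ,\wt{\sA})=0$, i.e.\ $j(\sQ)>q$.
\end{enumerate}

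To conclude from $j(\sQ)>q$, the natural tool is purity, as in Lemma \ref{lm:dualalPF}: $\sM^!\simeq\mathbb D\mathbb D\sM^!$ should be $q$-pure, because its dual $\sP$ is concentrated in a single degree. I would establish this concentration from the Cohen--Macaulay property of the localized relative regular holonomic module, using \cite[Cor.~3.9]{FFS23} and the identification \eqref{eq:isolocdualKE}. Granting purity, a nonzero quotient $\sQ$ with $j(\sQ)>q$ is still not excluded outright, since purity only constrains submodules. So one more step is needed: from $\cE xt^q(\sQ,\wt{\sA})=0$, use the biduality spectral sequence for $\sQ$ and $\sK$ to get $\mathbb D\sM^!\simeq\mathbb D\sK$ in degree $q$. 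Biduality then shows that $\sK\to\sM^!$ is an isomorphism, hence $\sQ=0$.

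The main obstacle is precisely the Cohen--Macaulay, single-degree property of $\sP$ in the relative (not absolute) setting. Relative holonomic modules are only generically Cohen--Macaulay (Lemma \ref{lm:genericCMness}). I would therefore first try to get this property from the relative regular holonomicity of $\cS_I$ via relative Riemann--Hilbert duality in \cite{FFS23}. If that fails, I would fall back on the alternative route of identifying $\sM^!$, via Lemma \ref{lm:twistdual}, with $\DR^{-1}$ of $Rj_!$ of a local system-type complex. In that picture, the absence of quotients supported on $Z_I$ becomes the standard perverse $t$-exactness statement for $j_!$.
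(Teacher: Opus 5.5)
\textbf{Assessment: there is a genuine gap in the quotient half.}

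Your submodule argument is fine, and it is exactly what the paper uses implicitly: $\wt{\cN(!D_I)}\hookrightarrow\wt{\cN(*D_I)}$ by Theorem~\ref{thm:anarelbeilinson}(2), and $\prod_{i\in I}g_i$ acts invertibly on the target. Steps (1)--(4) for quotients are also correct as far as they go. Writing $\wt{\sA}$ for $\wt{\sA_X^{R\otimes_\C R_l}}$, they show that a quotient $\mathscr{Q}$ supported on the boundary satisfies $\mathcal{E}xt^q(\mathscr{Q},\wt{\sA})=0$, i.e.\ $j(\mathscr{Q})>q$.

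The gap is the last step, from $j(\mathscr{Q})>q$ to $\mathscr{Q}=0$.
\begin{itemize}
\item In the relative setting, a pure, even Cohen--Macaulay, module can have nonzero quotients of strictly larger grade. This happens already for $\mathscr{O}_S\twoheadrightarrow\mathscr{O}_S/(s)$ with $X$ a point. In the situation at hand, $\wt{\cN(!D_I)}/b\,\wt{\cN(!D_I)}$, for a general non-unit $b$ vanishing on the base support, is such a quotient.
\item Your ``biduality'' step uses nothing about $\mathscr{Q}$ beyond $\mathcal{E}xt^q(\mathscr{Q},\wt{\sA})=0$. So it would equally prove that these quotients vanish, which is false.
\item Concretely, even granting Cohen--Macaulayness, the long exact sequence only gives
\[0\to\mathcal{E}xt^q(\wt{\cN(!D_I)},\wt{\sA})\to\mathcal{E}xt^q(\mathscr{K},\wt{\sA})\to\mathcal{E}xt^{q+1}(\mathscr{Q},\wt{\sA})\to0.\]
Obtaining $\mathbb{D}\wt{\cN(!D_I)}\simeq\mathbb{D}\mathscr{K}$ is the same as obtaining $\mathbb{D}\mathscr{Q}=0$, i.e.\ the claim itself, so the step is circular.
\item The single-degree property of $\mathscr{P}$ that you want to grant is not available either: $\cS_I$ need not be Cohen--Macaulay. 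The paper only obtains purity of $\wt{\cN(!D_I)}$, from \eqref{eq:extendinggraden} and Lemma~\ref{lm:dualalPF} after reducing to $\cN|_{Y\setminus D_I}$ pure. As you yourself note, purity does not constrain quotients.
\end{itemize}

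The missing ingredient is specialization in the base, which is how the paper argues. It inducts on the dimension of the relative support and cuts by a general smooth hypersurface $(b=0)$.
\begin{itemize}
\item Rees' theorem gives $Li_b^*\wt{\cN(!D_I)}\simeq\wt{\cN_1(!D_I)}$ with $\cN_1=\cN/b\cN$.
\item $i_b^*$ is right exact. By relative Nakayama \cite[Prop.~1.9]{FS17}, $i_b^*\mathscr{Q}\neq0$ whenever $b$ vanishes at a point of the base support of $\mathscr{Q}$.
\item Hence a nonzero boundary-supported quotient of $\wt{\cN(!D_I)}$ would produce one of $\wt{\cN_1(!D_I)}$, contradicting the inductive hypothesis.
\item The base case, zero-dimensional support over $R\otimes_\C R_{l-|I|}$, is \cite[Lem.~3.7]{WuRHA}.
\end{itemize}

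Your fallback could be turned into a genuinely different proof, but the mechanism is not $t$-exactness of $j_!$. What kills such quotients is the adjunction $\operatorname{Hom}(Rj_!A,B)\simeq\operatorname{Hom}(A,j^{-1}B)=0$ for $B$ supported on the boundary, combined with full faithfulness of relative $\DR$ on relative regular holonomic objects \cite{FFS23}. For that route you would first need $\DR(\wt{\cN(!D_I)})\simeq Rj_!\,j^{-1}\DR(\wt{\cN(*D_I)})$, deduced from the compatibility of relative $\DR$ with duality and with localization along the boundary. Your proposal does not establish this.
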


By the two preceding theorems, $\wt{\cN(!D_\bm)}$ gives the analytic relative
$\shD$-module analogue of the Deligne--Goresky--MacPherson minimal extension
for perverse sheaves; cf. \cite[\S 8.2.1]{HTT}. Moreover, by
Corollary \ref{cor:anlocmaxminex}, for every
$\beta\in \Specan(R\otimes_\C R_l)$
there exists a sufficiently small analytic open neighborhood $U_\beta$ of
$\beta$ such that
\begin{equation}\label{eq:relminexge}
\wt{\cN(!D_\bm)}|_{X\times U_\beta}
=
\widetilde{\alpha(\bm)^m\cN}|_{X\times U_\beta}
\end{equation}
for all $m\gg 0$.

Let
\[
R_I=\C[s_i]_{i\in I}
\quad \text{and} \quad
T_I=\C[\chi_i^{\pm1}]_{i\in I}.
\]
The covering
\[
\pi\colon \Specan R_I\to \Specan T_I
\]
induces a covering map
\[
\pi_I\colon
X\times\Specan(R\otimes_\C R_l)
\longrightarrow
X\times\Specan(R\otimes_\C R_{l-|I|}\otimes_\C T_I),
\]
whose fibers are $\sG_I$-torsors. Similarly to \cite[Thm.2.8]{WuRHA}, we
have the following consequence.

\begin{coro}
Both $\wt{\cN(!D_\bm)}$ and $\wt{\cN(*D_\bm)}$ are $\sG_I$-equivariant. Moreover,
the morphism
\[
\wt{\cN(!D_\bm)}
\hookrightarrow
\wt{\cN(*D_\bm)}
\]
is $\sG_I$-equivariant.
\end{coro}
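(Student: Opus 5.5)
The plan is to construct the $\sG_I$-equivariant structures directly from the multiplication maps by $t_i$, $i\in I$, and then to transport them through the definition of $\wt{\cN(!D_\bm)}$. Here $\sG_I$ acts on $X\times\Specan(R\otimes_\C R_l)$ by deck transformations $\tau_i\colon s_j\mapsto s_j+\delta_{ij}$, as in the discussion preceding \eqref{eq:actionBsupp}. An equivariant structure on a $\wt{\sA_X^{R\otimes_\C R_l}}$-module $\sM$ therefore amounts to isomorphisms $\phi_i\colon \sM\xrightarrow{\sim}\tau_i^*\sM$ that commute pairwise, since $\sG_I$ is free abelian on the $t_i$.

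For $\wt{\cN(*D_\bm)}=\wt{\cN(*D_I)}$ this is immediate. For $i\in I$, multiplication by $t_i$ is bijective on $\cN(*D_I)$, because $t_i$ is invertible there. It is $\shD_X$-linear, $R$-linear, and satisfies $t_i s_j=(s_j+\delta_{ij})t_i$. Hence it is an isomorphism of $\sA_X^{R\otimes_\C R_l}$-modules $\cN(*D_I)\to \tau_i^*\cN(*D_I)$, where $\tau_i^*$ means twisting the $R_l$-structure by the ring automorphism. Applying the exact functor $\sim$, which commutes with such twists, gives $\phi_i$. The $\phi_i$ commute because the $t_i$ commute. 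This is the same mechanism as in \cite[Thm.2.8]{WuRHA}.

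For $\wt{\cN(!D_\bm)}$, I would use \eqref{eq:isolocdualKE}. It rewrites the inner complex as $\Rhom$ of $\cN(*D_I)$ into $\sA_X^{R\otimes_\C R_l}[1/\prod_{i\in I}g_i]$ over that localized ring. Both arguments carry semilinear automorphisms over $\tau_i$. On $\cN(*D_I)$ this is $t_i$. On the ring it is conjugation by $t_i$, which under the graph description acts as $s_i\mapsto s_i+1$ and fixes $\shD_X$; this makes sense after inverting $g_i$. The twisted module $\tau_i^*$ of a $\Rhom$ is the $\Rhom$ of the twisted modules. Hence the $t_i$ induce compatible isomorphisms of the inner complex with its $\tau_i$-twist. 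Applying $\sim$ and the outer $\cRhom(-,\wt{\sA_X^{R\otimes_\C R_l}})$, which is likewise compatible with the automorphism $\tau_i$ of the base ring, gives $\phi_i$ on $\wt{\cN(!D_\bm)}$. Commutativity again follows from functoriality. By Theorem \ref{thm:anarelbeilinson}(1) this complex is a module, so these are equivariant structures on a module.

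As a cross-check, I would use the local description \eqref{eq:relminexge}. Near $\beta$, $\wt{\cN(!D_\bm)}$ equals $\widetilde{\alpha(\bm)^m\cN}$ for $m\gg0$. Multiplication by $t_i$ sends $\alpha(\bm)^m\cN$ into $\alpha(\bm)^{m}t_i\cN$, which by Corollary \ref{cor:anlocmaxminex} agrees near $\tau_i(\beta)$ with the stable value after increasing $m$. So locally $\phi_i$ is just the restriction of $t_i$ on $\wt{\cN(*D_I)}$.

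Equivariance of $\wt{\cN(!D_\bm)}\hookrightarrow\wt{\cN(*D_\bm)}$ then follows. The natural morphism comes from the biduality map, which is natural in the module and the base ring, and both equivariant structures are induced by $t_i$. Alternatively, the local description makes the morphism the inclusion $\widetilde{\alpha(\bm)^m\cN}\subseteq\wt{\cN(*D_I)}$, which visibly commutes with $t_i$.

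The step I expect to be the main obstacle is the compatibility just described. One must check that the abstract structure obtained by dualizing twice agrees with the one induced by $t_i$ on $\wt{\cN(*D_I)}$, including signs and the identification of the twisted dual. My first attempt would be to reduce to the local identification \eqref{eq:relminexge} and the injectivity of Theorem \ref{thm:anarelbeilinson}(2). Since the morphism is injective, it suffices to show that the sub-object is stable under $t_i$ in the appropriate sense, and that stability holds locally by Corollary \ref{cor:anlocmaxminex}.
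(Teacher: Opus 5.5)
Your proof is correct. You give a global transport-of-structure construction first and then fall back on the local description, which is the paper's entire proof.

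The paper simply says that, by Corollary \ref{cor:anlocmaxminex} and \eqref{eq:relminexge}, the local data glue to equivariant objects as in \cite[\S 2.4]{WuRHA}. Your fallback is the same argument: realize $\wt{\cN(!D_\bm)}$ inside $\wt{\cN(*D_\bm)}$ via Theorem \ref{thm:anarelbeilinson}(2), identify it near $\beta$ with $\widetilde{\alpha(\bm)^m\cN}$, and restrict the global semilinear automorphism $t_i$.

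Your cross-check needs one more line to show that $t_i$ maps the subsheaf \emph{onto} its translate, not merely into it. Use the sandwich $\alpha(\bm)^{m+1}\cN\subseteq t_i\,\alpha(\bm)^m\cN\subseteq\alpha(\bm)^m\cN$. It holds because $i\in I$ makes $\alpha(\bm)/t_i$ a monomial in the $t_j$, and $\cN$, being an $\cO_Y$-module, is stable under such monomials. Taking $m\ge\max\bigl(m_0(\beta),m_0(\tau_i\beta)\bigr)$ and shrinking neighbourhoods, all three terms agree near $\tau_i(\beta)$. Hence $t_i$ carries $\wt{\cN(!D_\bm)}|_{X\times U_\beta}$ isomorphically onto $\wt{\cN(!D_\bm)}$ near $\tau_i(\beta)$. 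The subsheaf therefore inherits the structure, and the inclusion is equivariant by construction.

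Your primary route, transporting $t_i$ through \eqref{eq:isolocdualKE} and the two dualizations, is genuinely different and more formal. It needs neither Corollary \ref{cor:anlocmaxminex} nor the module property. It works already at the level of complexes, and compatibility with the inclusion follows, as you say, from naturality of the biduality and localization maps under semilinear isomorphisms. The local route instead makes the structure concrete as the restriction of $t_i$, and equivariance of the inclusion becomes immediate.

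A minor point: the automorphism $s_i\mapsto s_i+1$ fixing $\shD_X$ and $R$ is already defined on $\sA_X^{R\otimes_\C R_l}$, so no inversion of $g_i$ is needed for it.
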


\begin{proof}
By Corollary \ref{cor:anlocmaxminex} and \eqref{eq:relminexge}, the local data
can be glued into equivariant objects, as in \cite[\S 2.4]{WuRHA}.
\end{proof}

\begin{proof}[Proof of Theorems \ref{thm:anarelbeilinson} and \ref{thm:relminext}]
We argue by induction on the dimension of
\[
\supp_{Y_I\setminus \bar D_I}^{R\otimes_\C R_{l-|I|}}
(\cN|_{Y\setminus D}).
\]
Take the pure filtration of $\cN|_{Y\setminus D_I}$. By the inductive
hypothesis, it is enough to assume that $\cN|_{Y\setminus D_I}$ is $j$-pure.

Choose
$b\in R\otimes_\C R_{l-|I|}$
such that:
\begin{enumerate}[label=$(\arabic*)$]
    \item the hypersurface
    $(b=0)\subseteq \Spec(R\otimes_\C R_{l-|I|})$
    is smooth;
    \item $(b=0)$ intersects
    \[
    \supp_{Y_I\setminus \bar D_I}^{R\otimes_\C R_{l-|I|}}
    (\cN|_{Y\setminus D})
    \]
    properly.
\end{enumerate}
Set
$\cN_1\coloneqq \cN/b\cN.$
By Corollary \ref{cor:ccresintprop} and relative holonomicity, we have
\[
\supp_{Y_I\setminus \bar D_I}^{R\otimes_\C R_{l-|I|}}
(\cN_1|_{Y\setminus D})
=
\supp_{Y_I\setminus \bar D_I}^{R\otimes_\C R_{l-|I|}}
(\cN|_{Y\setminus D})|_{b=0}.
\]
By the inductive hypothesis, we may assume that
\[
\wt{\cN_1(!D_I)}
\stackrel{\mathrm{q.i.}}{\simeq}
\cH^0\big(\wt{\cN_1(!D_I)}\big),
\]
and that $\wt{\cN_1(!D_I)}$ has no nonzero quotient module supported on the
boundary divisor
\[
\left(\prod_{i\in I}g_i=0\right)
\subseteq
X\times\Specan(R\otimes_\C R_l).
\]

By construction, the $\cO$-module pullback
$Li_b^*\wt{\cN(!D_I)}$
is isomorphic to
\[
\cRhom_{\widetilde{\sA_X^{R\otimes_\C R_l}}/(b)}
\left(
\widetilde{\Rhom}_{\sB}
\left(
\cS_I/b\cS_I,
\sB\otimes_{\cO_X}\cS_{G_I}^*
\right),
\widetilde{\sA_X^{R\otimes_\C R_l}}/(b)
\right),
\]
where, for simplicity, we write
\[
\sB
=
\sA_X^{(R\otimes_\C R_{l-|I|})/(b)}
\left[1/\prod_{i\in I}g_i\right],
\]
and where
$i_b\colon
X\times (b=0)
\hookrightarrow
X\times \Specan(R\otimes_\C R_l)$
is the analytic closed embedding. By Rees theorem \cite[Thm. 8.34]{Rtha}, we
obtain
\[
Li_b^*\wt{\cN(!D_I)}
\simeq
\wt{\cN_1(!D_I)}.
\]

Consider the hyperhomology spectral sequence for $Li_b^*\wt{\cN(!D_I)}$:
\[
E^2_{p,q}
=
L^p i_b^*
\left(
\cH^{-q}\big(\wt{\cN(!D_I)}\big)
\right)
\Longrightarrow
\cH^{-p-q}
\left(
Li_b^*\wt{\cN(!D_I)}
\right).
\]
We also consider the complex of right
$\sA^{R\otimes_\C R_{l-|I|}}(*\bar D_I)$-modules
\[
\iota_{G_I,+}
\left(
\Rhom_{\sA_X^{R\otimes_\C R_{l-|I|}}
[1/\prod_{i\in I}g_i]}
\left(
\cS_I,
\sA_X^{R\otimes_\C R_{l-|I|}}
\left[1/\prod_{i\in I}g_i\right]
\right)
\right),
\]
which is isomorphic to
\[
\Rhom_{\sA_X^{R\otimes_\C R_{l-|I|}}
[1/\prod_{i\in I}g_i]}
\left(
\cS_I,
\sA_X^{R\otimes_\C R_{l-|I|}}\otimes_{\cO_X}\cS_{G_I}^*
\right).
\]
For $k\ge j$, set
\[
\cE^k
\coloneqq
\operatorname{Ext}^k_{\sA_X^{R\otimes_\C R_{l-|I|}}
[1/\prod_{i\in I}g_i]}
\left(
\cS_I,
\sA_X^{R\otimes_\C R_{l-|I|}}\otimes_{\cO_X}\cS_{G_I}^*
\right).
\]
Then $\cE^k$ is an
$\sA_{Y_I}^{R\otimes_\C R_{l-|I|}}(*\bar D_I)$-module, and it is also
relative regular holonomic over
$\sA_{Y_I}^{R\otimes_\C R_{l-|I|}}$. Choose an
$\sA_{Y_I,D_I}^{R\otimes_\C R_{l-|I|}}$-lattice
$\overline{\cE^k}$ of $\cE^k$. By Theorem \ref{thm:relGins-Sab},
Lemma \ref{lm:ausdual}, and Lemma \ref{lm:GE-type}, we have
\[
j_{\sA_X^{R\otimes_\C R_l}}(\overline{\cE^k})
=
j_{\sA_X^{R\otimes_\C R_{l-|I|}}
[1/\prod_{i\in I}g_i]}(\cE^k).
\]
By Corollary \ref{cor:anlocmaxminex}, and since analytification is faithfully
flat, we conclude that
\begin{equation}\label{eq:extendinggraden}
j_{\wt{\sA_X^{R\otimes_\C R_l}}}(\wt{\cE^k})
=
j_{\sA_X^{R\otimes_\C R_{l-|I|}}
[1/\prod_{i\in I}g_i]}(\cE^k).
\end{equation}
By Auslander regularity, it follows that
$\cH^i\big(\wt{\cN(!D_I)}\big)=0$
for $i<0$.

Define
\[
m=\max\left\{
i\mid
\cH^i\big(\wt{\cN(!D_I)}\big)\neq 0
\right\}.
\]
If $m\neq 0$, then the boundary term
\[
E^2_{0,-m}
=
i_b^*
\left(
\cH^m\big(\wt{\cN(!D_I)}\big)
\right)
\]
is a subquotient of
$\cH^m\big(\wt{\cN_1(!D_I)}\big)$
by convergence of the spectral sequence. Since $m\neq 0$, Nakayama's lemma
for relative holonomic modules \cite[Prop. 1.9(2)]{FS17} gives
\[
i_b^*
\left(
\cH^m\big(\wt{\cN(!D_I)}\big)
\right)
\neq 0.
\]
Thus, 
$\cH^m\big(\wt{\cN_1(!D_I)}\big)\neq 0,$
contradicting the inductive hypothesis. Therefore,
\[
\wt{\cN(!D_I)}
\stackrel{\mathrm{q.i.}}{\simeq}
\cH^0\big(\wt{\cN(!D_I)}\big).
\]

Moreover, $\wt{\cN(!D_I)}$ is pure by \eqref{eq:extendinggraden} and
Lemma \ref{lm:dualalPF}. At the same time, $\wt{\cN(!D_I)}$ has no nonzero
quotient supported on the boundary divisor; otherwise, pulling back this
quotient by $i_b^*$ would produce a nonzero quotient of
$\wt{\cN_1(!D_I)}$, again contradicting the inductive hypothesis.

To prove injectivity, we apply the same hyperhomology spectral sequence
argument to
\[
Li_b^*
\left(
\wt{\cN(!D_I)}
\longrightarrow
\wt{\cN(*D_I)}
\right).
\]
This shows that
$\wt{\cN(!D_I)}
\longrightarrow
\wt{\cN(*D_I)}$
is injective.

It remains to prove the base case, where
\[
\dim
\supp_{Y_I\setminus \bar D_I}^{R\otimes_\C R_{l-|I|}}
(\cN|_{Y\setminus D})
=
0.
\]
In this case, after forgetting the $R\otimes_\C R_{l-|I|}$-module structure,
$\cN(*D_I)$ becomes a holonomic $\shD_{Y_I}$-module supported on the graph of
$G_I$. By Rees theorem again, the base case follows from
\cite[Lem.3.7]{WuRHA}.
\end{proof}

\subsection{The Artin--Rees Theorem for $\sA^R_{Y,D}$-modules}
\label{subsect:logArtinRees}

For $\bm\in \N^l$, we define
\[
\sB^R_\bm
\coloneqq
\bigoplus_{i\in \Z_{\ge 0}}
\alpha(\bm)^i\sA^R_{Y,D}\cdot u^i
\subseteq
\sA^R_{Y,D}[u].
\]

\begin{lemma}
The ring $\sB^R_\bm$ is graded noetherian.
\end{lemma}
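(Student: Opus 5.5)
The plan is to identify $\sB^R_\bm$ with a subring of a ring that is visibly noetherian, or to realize it as a quotient of one. The key input is Lemma \ref{lm:twosidedideal}, suitably transposed to $Y$. The monomial $\alpha(\bm)=\prod_{i=1}^l t_i^{m_i}$ normalizes $\sA^R_{Y,D}$. Concretely, conjugation $P\mapsto \alpha(\bm)^{-1}P\,\alpha(\bm)$ preserves $\sA^R_{Y,D}$ inside $\sA^R_Y(*D)$. It acts trivially on $\shD_X$, on $R$ and on the $t_i$, and it sends $t_i\partial_{t_i}\mapsto t_i\partial_{t_i}+m_i$. Call this automorphism $\sigma$. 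Then $\alpha(\bm)^i\sA^R_{Y,D}=\sA^R_{Y,D}\alpha(\bm)^i$, so each graded piece is a two-sided $\sA^R_{Y,D}$-submodule. Moreover, $\sB^R_\bm$ is generated as a ring by $\sA^R_{Y,D}$ (in degree $0$) together with the single element $w\coloneqq\alpha(\bm)u$ (in degree $1$), subject to $w P=\sigma^{-1}(P)w$.

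First, I will construct the Ore extension $\sA^R_{Y,D}[w;\tau]$, where $\tau=\sigma^{-1}$ is a ring automorphism. Since $\sA^R_{Y,D}$ is noetherian (\S\ref{subsect:logDmodule}), the skew Hilbert basis theorem shows that this skew polynomial ring is left and right noetherian. The natural map $\sA^R_{Y,D}[w;\tau]\to\sB^R_\bm$, $w\mapsto\alpha(\bm)u$, is a graded ring homomorphism. It is surjective because the degree $i$ part of $\sB^R_\bm$ is $\alpha(\bm)^i\sA^R_{Y,D}u^i=\sA^R_{Y,D}\,w^i$. As a quotient of a noetherian ring, $\sB^R_\bm$ is noetherian; the map is in fact injective, since $\alpha(\bm)$ is a nonzerodivisor. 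Graded noetherianity follows, because the grading is respected.

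An alternative route, useful as a cross-check, goes through filtrations. I would put the filtration $F^\dagger$ on each piece and pass to $\gr^\dagger\sB^R_\bm$. This is the Rees-type algebra $\bigoplus_i \alpha(\bm)^i\gr^\dagger\sA^R_{Y,D}u^i$ over the commutative regular ring $\gr^{\log}\shD_{Y,D}\otimes R$, which is finitely generated and hence noetherian. One then concludes by \cite[A.III 1.27]{BJ}. I prefer the Ore-extension argument.

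The main obstacle is the verification that $\sigma$ really is an automorphism of $\sA^R_{Y,D}$ compatible with the commutation relations. For the generators $t_j\partial_{t_j}$ this reduces to the identity $t_j\partial_{t_j}\,t_i^{m_i}=t_i^{m_i}(t_j\partial_{t_j}+\delta_{ij}m_i)$. For $\shD_X$ and $R$ it is trivial. The remaining care is in the bookkeeping of left versus right multiplication when matching the graded pieces with $\sA^R_{Y,D}w^i$.
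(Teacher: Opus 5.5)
Your argument is correct, but your preferred route differs from the paper's. The paper's proof is essentially what you list as the cross-check. It uses $\alpha(\bm)^i\sA^R_{Y,D}=\sA^R_{Y,D}\alpha(\bm)^i$ only to see that $\sB^R_\bm$ is a graded ring. It then puts the induced filtration $F^\dagger_\bullet$ on $\sB^R_\bm$, observes that $\gr^\dagger_\bullet\sB^R_\bm\simeq\gr^\dagger_\bullet\sA^R_{Y,D}[\alpha(\bm)u]$ is commutative noetherian, and lifts noetherianity via \cite[A.III 1.27]{BJ}. (The associated graded is commutative because the shift $t_i\partial_{t_i}\mapsto t_i\partial_{t_i}+m_i$ does not change principal symbols.)

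Your main argument instead identifies $\sB^R_\bm$ with the skew polynomial ring $\sA^R_{Y,D}[w;\tau]$, where $\tau(t_i\partial_{t_i})=t_i\partial_{t_i}-m_i$, and applies the skew Hilbert basis theorem. This needs only that $\sA^R_{Y,D}$ is noetherian, not its filtered structure. It gives left and right noetherianity at once, and it yields an explicit presentation in which the normalizing property of $\alpha(\bm)$ is built in. The step you flag as the main obstacle is in fact automatic. Since $\sigma$ is conjugation by a unit of $\sA^R_Y(*D)$, it is already a ring automorphism there. You therefore only need that $\sigma^{\pm1}$ sends the generators $\shD_X$, $R$, $t_i$, $t_i\partial_{t_i}$ into $\sA^R_{Y,D}$, which is your one-line computation.

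The paper's filtered route stays inside the machinery it already uses for $\sA^R_{Y,D}$ itself. It also produces a commutative associated graded ring that could support further homological arguments.
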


\begin{proof}
Since
\[
\alpha(\bm)^i\cdot\sA^R_{Y,D}
=
\sA^R_{Y,D}\cdot\alpha(\bm)^i
\]
for each $i$, the algebra $\sB^R_\bm$ is a graded ring.
The filtration $F^\dagger_\bullet\sA^R_{Y,D}$ induces filtrations on
$\alpha(\bm)^i\sA^R_{Y,D}$, and hence an induced filtration
$F^\dagger_\bullet\sB^R_\bm$ such that
\[
\gr^\dagger_\bullet\sB^R_\bm
\simeq
\gr^\dagger_\bullet\sA^R_{Y,D}[\alpha(\bm)u].
\]
Thus $\gr^\dagger_\bullet\sB^R_\bm$ is noetherian. By
\cite[A.III 1.27]{BJ}, $\sB^R_\bm$ is also noetherian.
\end{proof}

\begin{theorem}[Logarithmic Artin--Rees]\label{thm:artin-reeslog}
Let $\cM$ be a finitely generated $\sA^R_{Y,D}$-module, and let
$\cN\subseteq \cM$ be an $\sA^R_{Y,D}$-submodule. Then there exists
$n_0\in \Z_{\ge 0}$ such that
\[
\alpha(\bm)^n\cdot\cM\cap \cN
\subseteq
\alpha(\bm)^{n-n_0}\cN
\quad
\textup{for all } n\ge n_0.
\]
\end{theorem}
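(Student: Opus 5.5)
We follow the classical Rees-algebra proof of the Artin--Rees lemma, with the preceding lemma ($\sB^R_\bm$ is graded noetherian) as the noetherianity input. The one new point compared to commutative algebra is that $\alpha(\bm)$ is not central. By Lemma \ref{lm:twosidedideal}, however, $\alpha(\bm)^i\sA^R_{Y,D}=\sA^R_{Y,D}\alpha(\bm)^i$ for every $i$. Hence $\alpha(\bm)^n\cM$ and $\alpha(\bm)^{n}\cN$ are $\sA^R_{Y,D}$-submodules, and the Rees constructions below are modules over $\sB^R_\bm$.

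The first step is to form the graded $\sB^R_\bm$-module
\[
\cM^\flat\coloneqq\bigoplus_{n\ge 0}\alpha(\bm)^n\cM\cdot u^n\subseteq \cM[u].
\]
It is well-defined: for $P\in\sA^R_{Y,D}$ we have $\alpha(\bm)^iP\alpha(\bm)^n\cM\subseteq\alpha(\bm)^{i+n}\cM$, again by two-sidedness. If $m_1,\dots,m_q$ generate $\cM$ over $\sA^R_{Y,D}$, then the elements $m_k u^0$ generate $\cM^\flat$ over $\sB^R_\bm$, because $\alpha(\bm)^n\cM=\sum_k\alpha(\bm)^n\sA^R_{Y,D}\, m_k$. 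So $\cM^\flat$ is finitely generated over the noetherian ring $\sB^R_\bm$, and therefore noetherian.

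The second step is to consider the graded submodule
\[
\cN^\flat\coloneqq\bigoplus_{n\ge0}(\alpha(\bm)^n\cM\cap\cN)\,u^n\subseteq\cM^\flat.
\]
It is a $\sB^R_\bm$-submodule: $\alpha(\bm)^iP$ sends $\alpha(\bm)^n\cM\cap\cN$ into $\alpha(\bm)^{n+i}\cM\cap\cN$, since $\cN$ is an $\sA^R_{Y,D}$-submodule. By noetherianity, $\cN^\flat$ is generated by finitely many homogeneous elements $x_k u^{d_k}$ with $x_k\in\alpha(\bm)^{d_k}\cM\cap\cN$. Set $n_0=\max_k d_k$. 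For $n\ge n_0$, an element $y\in\alpha(\bm)^n\cM\cap\cN$ can then be written as $y=\sum_k Q_k x_k$ with $Q_k\in\alpha(\bm)^{n-d_k}\sA^R_{Y,D}$. Using two-sidedness again, $Q_k\in\sA^R_{Y,D}\,\alpha(\bm)^{n-d_k}$, so we may write $Q_k=Q'_k\,\alpha(\bm)^{n-n_0}\alpha(\bm)^{n_0-d_k}$. Moving $\alpha(\bm)^{n-n_0}$ back to the left with Lemma \ref{lm:twosidedideal}, and using $\alpha(\bm)^{n_0-d_k}x_k\in\cN$, we get
\[
y\in\alpha(\bm)^{n-n_0}\sA^R_{Y,D}\cdot\cN=\alpha(\bm)^{n-n_0}\cN,
\]
which is the claim.

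The main obstacle is the bookkeeping in this reordering, and it requires care. We must check that $\alpha(\bm)^a\sA^R_{Y,D}\alpha(\bm)^b=\alpha(\bm)^{a+b}\sA^R_{Y,D}$. This holds because conjugation by the monomial $\alpha(\bm)$ defines an automorphism of $\sA^R_{Y,D}$: explicitly, $t_i\partial_i\mapsto t_i\partial_i-m_i$, and $\shD_X$, $t_j$ and $R$ are fixed. We must also check that $\alpha(\bm)^{n-n_0}\cN$ denotes $\alpha(\bm)^{n-n_0}\sA^R_{Y,D}\cN$, which equals $\alpha(\bm)^{n-n_0}\cdot\cN$ since $\cN$ is a submodule. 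Everything else reduces to the preceding noetherianity lemma.
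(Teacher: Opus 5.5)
Your proof is correct and is essentially the paper's own argument. Like the paper, you take the Rees module $\bigoplus_{n\ge0}\alpha(\bm)^n\cM\, u^n$ over the noetherian ring $\sB^R_\bm$, choose finitely many homogeneous generators of the graded submodule $\bigoplus_{n}(\alpha(\bm)^n\cM\cap\cN)\,u^n$, and take $n_0$ to be their maximal degree; you also justify the finite generation of the Rees module, which the paper only asserts. The reordering in your last step is harmless but unnecessary, since $Q_k\in\alpha(\bm)^{n-d_k}\sA^R_{Y,D}=\alpha(\bm)^{n-n_0}\alpha(\bm)^{n_0-d_k}\sA^R_{Y,D}\subseteq\alpha(\bm)^{n-n_0}\sA^R_{Y,D}$ already gives $Q_kx_k\in\alpha(\bm)^{n-n_0}\cN$.
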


\begin{proof}
Define
\[
R_\bm\cM
=
\bigoplus_{i\in \Z_{\ge 0}}
\alpha(\bm)^i\cM\cdot u^i.
\]
This is a finitely generated graded $\sB^R_\bm$-module. We also have a
graded $\sB^R_\bm$-submodule
\[
R'_\bm\cN
=
\bigoplus_{i\in \Z_{\ge 0}}
(\cN\cap \alpha(\bm)^i\cM)\cdot u^i
\subseteq
R_\bm\cM.
\]
Since $\sB_\bm^R$ is noetherian, $R'_\bm\cN$ is finitely generated over
$\sB_\bm^R$. Choose homogeneous generators
$a_1u^{i_1},\dots,a_Nu^{i_N}$
of $R'_\bm\cN$, and set
\[
n_0=\max\{i_1,\dots,i_N\}.
\]
Then, for $n\ge n_0$ and every
$m\in \alpha(\bm)^n\cM\cap \cN,$
we have
\[
m\cdot u^n
=
\sum_{j=1}^N
\big(\alpha(\bm)^{n-i_j}P_j\cdot a_j\big)\cdot u^n
\]
for some $P_j\in \sA^R_{Y,D}$. Hence
$m\in \alpha(\bm)^{n-n_0}\cdot\cN.$
\end{proof}

\begin{remark}
Since
$\alpha(K)\cdot\sA^R_{Y,D}
=
\sA^R_{Y,D}\cdot\alpha(K)$
for every monoid ideal $K\subseteq \N^l$, by Lemma \ref{lm:twosidedideal},
Theorem \ref{thm:artin-reeslog} remains valid if one replaces
$\alpha(\bm)$ by $\alpha(K)$. Notice that every monoid ideal in $\N^l$ is
finitely generated.
\end{remark}

\subsection{Relative $!$-pullback along log strata and duality}
\label{subsect:r!pullback}

Let $\cN$ be a finitely generated $\sA^R_{Y,D}$-module of GE-type with
respect to
$G=(g_1,\dots,g_l)$
such that $\cN$ is relative regular holonomic over
$\sA^{R\otimes_\C R_l}_X$, with the induced
$\sA^{R\otimes_\C R_l}_X$-module structure given by
\eqref{eq:uncaningen}. Let
$I\subseteq \{1,2,\dots,l\}$
be a nonempty subset. Let $\cN_I$ denote the image of
\[
\cN\longrightarrow \cN(*D_I).
\]
Then $\cN(*D_I)$ is an
$\sA^{R\otimes_\C R_{l-|I|}}_{Y_I}$-module supported on the graph of
$(g_i)_{i\in I}$. By \eqref{eq:getypeD}, we have
\[
\cN(*D_I)
=
\cN_I\left[1/\prod_{i\in I}g_i\right]
\simeq
\cS_I[s_i]_{i\in I}\prod_{i\in I}g_i^{s_i}.
\]
By \cite[Cor.4.31]{FFS23}, $\cN(*D_I)$ is also relative regular holonomic
over $\sA^{R\otimes_\C R_l}_X$. Since $\cN(*D_I)$ is flat over
$\C[s_i]_{i\in I}$, the module $\cS_I$ is relative regular holonomic over
$\sA^{R\otimes_\C R_{l-|I|}}_X$. Consequently, by Remark
\ref{rmk:gnr!ex} and Theorem \ref{thm:anarelbeilinson}, we have
\[
\wt{\cN(!D_I)}
\hookrightarrow
\wt{\cN(*D_I)}
\]
for each nonempty subset $I$.

\begin{lemma}\label{lm:!exinclude}
With the notation and assumptions above, if
\[
I\subseteq J\subseteq \{1,2,\dots,l\}
\]
are nonempty subsets, then
\[
\wt{\cN(!D_J)}
\hookrightarrow
\wt{\cN(!D_I)}
\hookrightarrow
\wt{\cN}.
\]
\end{lemma}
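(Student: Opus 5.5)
The plan is to reduce everything to the local description \eqref{eq:relminexge} of relative minimal extensions coming from Corollary \ref{cor:anlocmaxminex}, and then compare lattices by hand. Both inclusions are statements about subsheaves of $\wt{\cN(*D)}$, or more precisely of $\wt{\cN(*D_J)}$. Being a subsheaf is local on $X\times\Specan(R\otimes_\C R_l)$, so it suffices to check the inclusions on $X\times U_\beta$ for a small analytic neighborhood $U_\beta$ of an arbitrary point $\beta$.

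First we arrange the ambient objects. Since $I\subseteq J$, localization gives natural maps
\[
\cN\to\cN(*D_I)\to\cN(*D_J).
\]
The kernel of $\cN\to\cN(*D_I)$ is supported on $D_I$, so $\cN_I$ and $\cN$ differ only by a module supported on $D_I$. The first step is to make sense of $\wt{\cN(!D_I)}\hookrightarrow\wt{\cN}$. By Theorem \ref{thm:anarelbeilinson}, $\wt{\cN(!D_I)}$ embeds into $\wt{\cN(*D_I)}$. By \eqref{eq:relminexge} applied to the lattice $\cN_I$ (localize with respect to $t_i$, $i\in I$), we have locally on $U_\beta$
\[
\wt{\cN(!D_I)}|_{X\times U_\beta}=\wt{t_I^{m}\cN_I}|_{X\times U_\beta}\quad\text{for } m\gg0.
\]
Now $t_I^m\cN$ surjects onto $t_I^m\cN_I$, with kernel $t_I^m\cN\cap\ker(\cN\to\cN(*D_I))$. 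The kernel $\ker(\cN\to\cN(*D_I))$ is a finitely generated submodule killed by a power of $t_I$. The logarithmic Artin--Rees Theorem \ref{thm:artin-reeslog}, applied with $\alpha(\bm)=t_I$, shows that $t_I^m\cN\cap\ker$ is contained in $t_I^{m-n_0}\ker$, which vanishes for $m\gg0$. Hence $t_I^m\cN\simeq t_I^m\cN_I$ for $m\gg0$, and $\wt{t_I^m\cN}\subseteq\wt{\cN}$ by flatness of analytification. This yields $\wt{\cN(!D_I)}\hookrightarrow\wt{\cN}$ locally. By uniqueness of the stabilized submodule, these local inclusions glue.

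Next we treat $J$. By the same argument, $\wt{\cN(!D_J)}|_{X\times U_\beta}=\wt{t_J^{m}\cN}|_{X\times U_\beta}$ for $m\gg0$. Since $I\subseteq J$, we have $t_J^m\cN=t_{J\setminus I}^m t_I^m\cN\subseteq t_I^m\cN$. We shrink $U_\beta$ so that both stabilizations from Corollary \ref{cor:anlocmaxminex} hold simultaneously: finitely many conditions, one for $I$ and one for $J$, each given by a neighborhood and an $m_0$. Then locally
\[
\wt{\cN(!D_J)}=\wt{t_J^m\cN}\subseteq\wt{t_I^m\cN}=\wt{\cN(!D_I)},
\]
and all of these maps are compatible with the maps into $\wt{\cN(*D_J)}$. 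Injectivity follows because each object is a subsheaf of $\wt{\cN}$, and, for the relevant $m$, $\wt{\cN}$ maps injectively into $\wt{\cN(*D_J)}$ on these submodules by the Artin--Rees argument above.

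The main obstacle is justifying that Corollary \ref{cor:anlocmaxminex} applies. It is stated for lattices, whereas $\cN$ here is only a finitely generated module of GE-type, possibly with $t_I$-torsion, and the hypothesis is relative regular holonomicity over $\sA_X^{R\otimes R_l}$ rather than over $\sA_Y^R$. I would handle this by replacing $\cN$ with $\cN_I$, respectively $\cN_J$, which are lattices for the partial localization. I would use the Artin--Rees step to show that $t_I^m\cN$ and $t_I^m\cN_I$ agree for large $m$. I would then check that the proof of Corollary \ref{cor:anlocmaxminex} only uses the fibration structure of Theorem \ref{thm:mainsuppfiberation} in the variables $s_i$, $i\in I$, with the remaining variables absorbed into $R\otimes_\C R_{l-|I|}$. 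This is consistent with Remark \ref{rmk:gnr!ex}.
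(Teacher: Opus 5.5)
Your proposal is correct and follows the paper's route. Like the paper, you pass to the $t_I$-torsion-free image $\cN_I$, show $t_I^m\cN\simeq t_I^m\cN_I$ for $m\gg0$, and then apply \eqref{eq:relminexge} to $\cN_I$, viewed as a lattice over $Y_I$ with base ring $R\otimes_\C R_{l-|I|}$.

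The only difference is how you kill $t_I^m\cN\cap\cK$, where $\cK=\ker(\cN\to\cN(*D_I))$. You use the logarithmic Artin--Rees theorem. The paper instead notes that torsion-freeness of $\cN_I$ directly gives $t_I^m\cN\cap\cK=t_I^m\cK$. This vanishes for $m\gg0$ because $\cK$ is finitely generated and $t_I\sA^R_{Y,D}=\sA^R_{Y,D}t_I$. You use this two-sidedness implicitly when you say $\cK$ is killed by a power of $t_I$, and you should state it explicitly.
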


\begin{proof}
By construction, we have a short exact sequence of $\sA^R_{Y,D}$-modules
\[
0\to \cK\to \cN\to \cN_I\to 0,
\]
where
$\cK=H^0_{D_I}(\cN)$
is the 0-th local cohomology module. Since $\sA^R_{Y,D}$ is noetherian,
$\cK$ is finitely generated; say
\[
\cK=\sum_{q=1}^p\sA^R_{Y,D}\cdot m_q.
\]
Since $\cN_I$ has no $t_I$-torsion, we obtain another short exact sequence
\[
0\to t_I^j\cK\to t_I^j\cN\to t_I^j\cN_I\to 0
\]
for every $j\ge 0$. By Lemma \ref{lm:twosidedideal},
\[
t_I\cdot\sA^R_{Y,D}
=
\sA^R_{Y,D}\cdot t_I.
\]
Thus
\[
t_I^j\cK
=
t_I^j\cdot\sum_{q=1}^p\sA^R_{Y,D}\cdot m_q
=
\sum_{q=1}^p\sA^R_{Y,D}\cdot(t_I^j\cdot m_q).
\]
Since $\cK$ is supported on $D_I$, the Nullstellensatz implies that
$t_I^j\cK=0$
for $j\gg 0$. Hence
\begin{equation}\label{eq:naturalkilltorsion}
t_I^j\cdot \cN=t_I^j\cdot \cN_I
\end{equation}
for $j\gg 0$. Finally, since $\cN_I$ is an
$\sA^{R\otimes_\C R_{l-|I|}}_{Y_I,\bar D_I}$-lattice of GE-type with
respect to $(g_i)_{i\in I}$, the required inclusions follow from
\eqref{eq:relminexge}.
\end{proof}

For $I\subseteq \{1,\dots,l\}$, let $K_I\subseteq \N^l$ be the monoid ideal
generated by $\{e_i\}_{i\in I}$. Then we have the closed embedding
\[
i_{K_I}=i_{D^I}\colon
Z_{K_I}=D^I\coloneqq \bigcap_{i\in I}D_i
\hookrightarrow Y.
\]
For
\[
J=\{i_1,\dots,i_{|J|}\}\subseteq I,
\qquad
i_1<i_2<\cdots<i_{|J|},
\]
we denote the corresponding standard vector by
\[
e_J=e_{i_1}\wedge e_{i_2}\wedge \cdots \wedge e_{i_{|J|}},
\]
and its dual by
\[
e^*_J=e^*_{i_1}\wedge e^*_{i_2}\wedge \cdots \wedge e^*_{i_{|J|}}.
\]

We define
\[
\wt{i^!_{K_I}}\cN
=
\wt{i^!_{K_I}}^X\cN
\]
to be the following Koszul complex of relative regular holonomic
$\wt{\sA^{R\otimes_\C R_l}_X}$-modules:
\[
0\to
\wt{\cN(!D_I)}\cdot e_I
\to
\bigoplus_{\substack{J\subseteq I\\ |J|=|I|-1}}
\wt{\cN(!D_J)}\cdot e_J
\to \cdots \to
\bigoplus_{\substack{J\subseteq I\\ |J|=1}}
\wt{\cN(!D_J)}\cdot e_J
\to
\wt{\cN}
\to 0,
\]
where the differential is given by
\[
m\cdot e_J
=
m\cdot e_{i_1}\wedge\cdots\wedge e_{i_{|J|}}
\longmapsto
\sum_{q=1}^{|J|}
(-1)^{q+1}
m\cdot
e_{i_1}\wedge\cdots\wedge
\widehat{e_{i_q}}
\wedge\cdots\wedge e_{i_{|J|}},
\]
for $m\in \wt{\cN(!D_J)}$. The last term $\wt\cN$ is placed in
cohomological degree $0$. By Lemma \ref{lm:!exinclude},
$\wt{i^!_{K_I}}\cN$ is well-defined.

More generally, let $K\subseteq \N^l$ be a monoid ideal, and assume that
$K$ is generated by an ordered set
\[
I_K=\{v_1,\dots,v_p\},
\qquad
v_i\in K.
\]
We define
\[
\wt{i^!_{K}}\cN
=
\wt{i^!_{K}}^X\cN
\]
to be the analogous Koszul complex of relative regular holonomic
$\wt{\sA^{R\otimes_\C R_l}_X}$-modules:
\[
0\to
\wt{\cN(!D_{I_K})}\cdot e_{I_K}
\to
\bigoplus_{\substack{J_K\subseteq I_K\\ |J_K|=|I_K|-1}}
\wt{\cN(!D_{J_K})}\cdot e_{J_K}
\to \cdots \to
\bigoplus_{\substack{J_K\subseteq I_K\\ |J_K|=1}}
\wt{\cN(!D_{J_K})}\cdot e_{J_K}
\to
\wt{\cN}
\to 0,
\]
where
$i_K\colon Z_K\hookrightarrow Y$
is the closed embedding and
$D_{J_K}
=
D_{\alpha(\sum_{v_i\in J_K}v_i)}.$

\begin{theorem}\label{thm:upper!DI}
We have
\[
\wt{i^!_{K}}\cN
\stackrel{\mathrm{q.i.}}{\simeq}
\cH^0(\wt{i^!_{K}}\cN)
\simeq
\frac{\wt\cN}{\sum_{v_i\in I_K}\wt{\cN(!D_{v_i})}}.
\]
In particular, for $K=K_I$,
\[
\wt{i^!_{K_I}}\cN
\stackrel{\mathrm{q.i.}}{\simeq}
\frac{\wt\cN}{\sum_{i\in I}\wt{\cN(!D_i)}}.
\]
\end{theorem}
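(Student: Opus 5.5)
The plan is to work locally in the analytic topology on $\Specan(R\otimes_\C R_l)$ and reduce the Koszul complex to an ordinary Koszul complex built from powers of monomials acting on a single module. First I record that $\wt{\cN(!D_{J_K})}$ depends only on the support of $D_{J_K}$, so for each subset $J_K\subseteq I_K$ it is the relative minimal extension along $D_{\alpha(\sum_{v_i\in J_K}v_i)}$. Fix $\beta\in \Specan(R\otimes_\C R_l)$. By \eqref{eq:relminexge}, there is a neighborhood $U_\beta$ and $m\gg0$ such that
\[
\wt{\cN(!D_{J_K})}|_{X\times U_\beta}=\widetilde{\alpha\big(\textstyle\sum_{v_i\in J_K}v_i\big)^m\cN_{J}}|_{X\times U_\beta},
\]
where $\cN_J$ is the image of $\cN$ in its localization. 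By \eqref{eq:naturalkilltorsion} in the proof of Lemma \ref{lm:!exinclude}, for $m\gg0$ we may replace $\cN_J$ by $\cN$ itself. Only finitely many $J_K$ occur, so a single $U_\beta$ and a single $m$ work for all of them. Since $\alpha$ is multiplicative, $\alpha(\sum_{J}v_i)^m=\prod_{i\in J}\alpha(v_i)^m$. Over $U_\beta$ the complex $\wt{i^!_K}\cN$ is therefore the analytification of the subcomplex of the Koszul complex of $\cN$ whose $J$-term is $\big(\prod_{i\in J}x_i\big)\cN$, with $x_i=\alpha(v_i)^m$. These elements commute with each other, and by Lemma \ref{lm:twosidedideal} their products generate two-sided ideals, so these terms are genuine $\sA^R_{Y,D}$-submodules.

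Next I verify that this complex is exact in negative degrees. I would first replace $\cN$ by the lattice $\cN_{I}$ with $I$ the support of all the $v_i$, which is harmless by \eqref{eq:naturalkilltorsion}; then each $x_i$ is a nonzerodivisor. The complex with terms $\prod_{i\in J}x_i\cN$ is the ``\v{C}ech-type'' complex computing $\cN/\sum_i x_i\cN$. Its exactness amounts to the distributivity identities
\[
\Big(\prod_{i\in J}x_i\cN\Big)\cap\Big(\prod_{i\in J'}x_i\cN\Big)=\prod_{i\in J\cup J'}x_i\cN
\]
together with the corresponding lattice identities for sums, in the lattice of submodules generated by the $x_i^{\epsilon}\cN$. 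When the $v_i$ have disjoint supports these hold directly, since $x_1,\dots,x_p$ act as a regular sequence on $\cN$. For general monomials they fail on the nose.

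This is where the logarithmic Artin--Rees theorem (Theorem \ref{thm:artin-reeslog} and the remark after it) enters. It gives $x_i^n\cN\cap x_j\cN\subseteq x_ix_j^{\,n-n_0}\cdots$, which is enough to show that the intersections differ from the products only by finitely many steps in $m$. The stabilization in Corollary \ref{cor:anlocmaxminex} then says that after analytification over $U_\beta$ the successive quotients $\widetilde{\alpha(\bm)^{m}\cN}/\widetilde{\alpha(\bm)^{m+1}\cN}$ vanish for $m\ge m_0$. Hence the discrepancy modules become zero on $X\times U_\beta$, and the distributivity identities hold after analytification. I expect this step to be the main obstacle: one must choose $m$ uniformly so that Artin--Rees and the stabilization combine. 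I would first try induction on $p$, peeling off $v_p$ and using the short exact sequence of Koszul complexes (cone of multiplication-type inclusions) together with the case $p=1$. The case $p=1$ is just the injectivity $\wt{\cN(!D_{v_1})}\hookrightarrow\wt\cN$ from Lemma \ref{lm:!exinclude}.

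Finally, the degree-$0$ cohomology is by definition the cokernel of $\bigoplus_i\wt{\cN(!D_{v_i})}\to\wt\cN$, which is the stated quotient. Since analytification is exact and faithfully flat, the local quasi-isomorphisms glue to the global statement. For $K=K_I$ the generators are $e_i$, and the last formula is the special case.
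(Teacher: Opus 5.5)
Your overall plan is the paper's proof. You identify each $\wt{\cN(!D_J)}$ locally near $\beta$ with the analytification of a monomial multiple of $\cN$ via \eqref{eq:relminexge} and \eqref{eq:naturalkilltorsion}. You induct on the number of generators by viewing the Koszul complex as the cone of $C_2^\bullet\to C_1^\bullet$ (the terms containing $v_p$ versus the rest), with Lemma \ref{lm:!exinclude} as the base case. You are then left with injectivity of $\cH^0(C_2^\bullet)\to\cH^0(C_1^\bullet)$. That is the single identity $\alpha(v_p)^k\cN\cap\cN'=\alpha(v_p)^k\cN'$, with $\cN'=\sum_{i<p}\alpha(v_i)^{k_1}\cN$, after analytification over a small neighborhood; full distributivity of the submodule lattice is not needed.

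There is, however, a false step. For generators with disjoint supports the identities do \emph{not} hold on the nose, because $\alpha(v_1)^m,\dots,\alpha(v_p)^m$ need not be a regular sequence on a lattice: a lattice modulo $t_1$ can have $t_2$-torsion. Take $F=(f,f)$ with $(f=0)\neq\emptyset$ and $\cN=\cN_F$. By Example \ref{ex:maingrebex}, $t_1^m\cN_F=t_2^m\cN_F=\shD_X[s_1,s_2]f^{s_1+s_2+m}$. Hence $t_1^m\cN_F\cap t_2^m\cN_F=t_1^m\cN_F\supsetneq t_1^mt_2^m\cN_F$ for every $m\geq1$, since the quotient surjects onto $t_1^m\cN_F/t_1^{m+1}\cN_F$, a twist of $\cN_F/t_1\cN_F\neq0$. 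The identity only becomes true after analytification over small $U_\beta$, where $\widetilde{t_1^m\cN_F}$ stabilizes. Disjoint supports is exactly the case $K=K_I$ that the paper proves in detail, so the Artin--Rees/stabilization step is needed there as well, not only for overlapping monomials.

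The uniformity problem you flag is also real as long as you use one exponent $m$ for all generators. Then $\cN'$, and with it the Artin--Rees constant and the stabilization threshold, depend on $m$. The paper avoids this by decoupling the exponents (its $k_1$ and $k$):
\begin{itemize}
\item Because $\alpha(v)\cN\subseteq\cN$, one has $\alpha(v_i+v_p)^{\max(a,b)}\cN\subseteq\alpha(v_i)^a\alpha(v_p)^b\cN\subseteq\alpha(v_i+v_p)^{\min(a,b)}\cN$, and similarly for larger $J$. So on $U_\beta$ the local identifications hold for all sufficiently large, possibly different, exponents.
\item Fix $k_1$ first. Then $\cN'=\sum_{i<p}\alpha(v_i)^{k_1}\cN$ is a fixed lattice of GE-type with the same localization as $\cN$.
\item Theorem \ref{thm:artin-reeslog}, applied to $\cN'\subseteq\cN$ and $\alpha(v_p)$, gives $n_0$ with $(\alpha(v_p)^k\cN\cap\cN')/\alpha(v_p)^k\cN'\subseteq\alpha(v_p)^{k-n_0}\cN'/\alpha(v_p)^k\cN'$.
\item By Corollary \ref{cor:anlocmaxminex} applied to $\cN'$, after shrinking $U_\beta$, the analytification of the right-hand side vanishes for $k\gg0$.
\end{itemize}
With these two corrections your argument coincides with the paper's.
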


\begin{proof}
We prove the assertion for $K=K_I$, where
$I\subseteq \{1,\dots,l\}$. The general case is similar.

Choose $i_1\in I$ and set
\[
I_1=I\setminus\{i_1\}.
\]
Let $C_1^\bullet$ be the complex
\[
0\to
\wt{\cN(!D_{I_1})}\cdot e_{I_1}
\to
\bigoplus_{\substack{J\subseteq I_1\\ |J|=|I_1|-1}}
\wt{\cN(!D_J)}\cdot e_J
\to \cdots \to
\bigoplus_{\substack{J\subseteq I_1\\ |J|=1}}
\wt{\cN(!D_J)}\cdot e_J
\to
\wt{\cN}
\to 0,
\]
and let $C_2^\bullet$ be the subcomplex
\[
0\to
\wt{\cN(!D_I)}\cdot e_I
\to
\bigoplus_{\substack{J\subseteq I\\ |J|=|I|-1\\ i_1\in J}}
\wt{\cN(!D_J)}\cdot e_J
\to \cdots \to
\bigoplus_{\substack{J\subseteq I\\ |J|=2\\ i_1\in J}}
\wt{\cN(!D_J)}\cdot e_J
\to
\wt{\cN(!D_{\{i_1\}})}
\to 0.
\]
By construction, $\wt{i^!_{D^I}}\cN$ is the cone of
\[
C_2^\bullet\longrightarrow C_1^\bullet.
\]
We argue by induction on $|I|$. By the inductive hypothesis, we may assume
that
\[
C_1^\bullet
\stackrel{\mathrm{q.i.}}{\simeq}
\cH^0(C_1^\bullet)
\simeq
\frac{\wt\cN}{\sum_{i\in I_1}\wt{\cN(!D_i)}}
\]
and
\[
C_2^\bullet
\stackrel{\mathrm{q.i.}}{\simeq}
\cH^0(C_2^\bullet)
\simeq
\frac{\wt{\cN(!D_{\{i_1\}})}}
{\sum_{i\in I_1}\wt{\cN(!D_{\{i,i_1\}})}}.
\]
Thus $\wt{i^!_{D^I}}\cN$ is quasi-isomorphic to the two-term complex
\[
\left[
\frac{\wt{\cN(!D_{\{i_1\}})}}
{\sum_{i\in I_1}\wt{\cN(!D_{\{i,i_1\}})}}
\longrightarrow
\frac{\wt\cN}{\sum_{i\in I_1}\wt{\cN(!D_i)}}
\right].
\]

By \eqref{eq:relminexge} and \eqref{eq:naturalkilltorsion}, locally over
$\Specan(R\otimes_\C R_l)$ we may identify
\[
\frac{\wt\cN}{\sum_{i\in I_1}\wt{\cN(!D_i)}}
\simeq
\widetilde{
\left(
\frac{\cN}{\sum_{i\in I_1}t_i^{k_1}\cN}
\right)}
\]
and
\[
\frac{\wt{\cN(!D_{\{i_1\}})}}
{\sum_{i\in I_1}\wt{\cN(!D_{\{i,i_1\}})}}
\simeq
\widetilde{
\left(
\frac{t_{i_1}^{k}\cN}
{\sum_{i\in I_1}t_i^{k_1}t_{i_1}^{k}\cN}
\right)}
\]
for sufficiently large integers $k_1$ and $k$. We then consider the natural
map
\[
\frac{t_{i_1}^{k}\cN}
{\sum_{i\in I_1}t_i^{k_1}t_{i_1}^{k}\cN}
\longrightarrow
\frac{\cN}{\sum_{i\in I_1}t_i^{k_1}\cN}.
\]
Its kernel is
\[
\frac{
t_{i_1}^{k}\cN\cap \sum_{i\in I_1}t_i^{k_1}\cN
}{
\sum_{i\in I_1}t_i^{k_1}t_{i_1}^{k}\cN
}.
\]
By Theorem \ref{thm:artin-reeslog}, this kernel vanishes for $k\gg 0$,
after increasing the exponents if necessary. Since analytification is exact,
the morphism
\[
\frac{\wt{\cN(!D_{\{i_1\}})}}
{\sum_{i\in I_1}\wt{\cN(!D_{\{i,i_1\}})}}
\longrightarrow
\frac{\wt\cN}{\sum_{i\in I_1}\wt{\cN(!D_i)}}
\]
is injective. Hence
\[
\wt{i^!_{D^I}}\cN
\stackrel{\mathrm{q.i.}}{\simeq}
\cH^0
\left(
\frac{\wt{\cN(!D_{\{i_1\}})}}
{\sum_{i\in I_1}\wt{\cN(!D_{\{i,i_1\}})}}
\longrightarrow
\frac{\wt\cN}{\sum_{i\in I_1}\wt{\cN(!D_i)}}
\right)
\simeq
\frac{\wt\cN}{\sum_{i\in I}\wt{\cN(!D_i)}}.
\]
When $|I|=1$, the base case follows from Lemma \ref{lm:!exinclude}; the
induction is complete.
\end{proof}

We denote by $\wt\D$ the analytic duality functor for
$\wt{\sA^{R\otimes_\C R_l}_X}$-modules:
\[
\wt\D(\bullet)
\coloneqq
\cRhom_{\wt{\sA^{R\otimes_\C R_l}_X}}
\big(
\bullet,
\wt{\sA^{R\otimes_\C R_l}_X}
\big).
\]
For $I\subseteq\{1,\dots,l\}$, we write
\[
\wt{D}^I
=
(g_i=0)_{i\in I}
\subseteq
X\times \Specan(R\otimes_\C R_l),
\]
and
\[
\wt{D}_I
=
\left(\prod_{i\in I}g_i=0\right)
\subseteq
X\times \Specan(R\otimes_\C R_l).
\]
We define $\wt{D}_\bm$ similarly for $\bm\in \N^l$.

Let $I^c$ denote the complement of $I$ in $\{1,2,\dots,l\}$, and set
\[
D_o^I=D^I\setminus D_{I^c},
\qquad
\wt D_o^I=\wt D^I\setminus \wt D_{I^c}.
\]
Notice that $D^I\subseteq Y$ is smooth, whereas
$\wt D^I\subseteq X\times \Specan(R\otimes_\C R_l)$ may be singular.

More generally, for a monoid ideal $K\subseteq \N^l$, set
\[
\wt Z_K=\wt Z_K^X
\coloneqq
\iota_G^*Z_K\times \Specan(R\otimes_\C R_l)
\subseteq
X\times \Specan(R\otimes_\C R_l),
\]
where
\[
D_X^K\coloneqq \iota_G^*Z_K
\]
is the subscheme of $X$ obtained by pulling back $Z_K$. For
$\bm\in \N^l$, let $K_\bm$ denote the localization of $K$ at $\bm$, that is,
the monoid ideal obtained by adjoining $-\bm$ to $K$; see
\cite[Prop.1.4.4]{Ogusbook}. Thus $K_\bm$ is a monoid ideal of
$\N_\bm$, and $\N_\bm$ corresponds to the localization of the log structure
on $Y$ at $\alpha(\bm)$. When $\bm=0$, we have $K_\bm=K$.
Then we have the locally closed embedding
\[
i_{K_\bm}\colon
Z_{K_\bm}
\hookrightarrow
Y\setminus D_\bm
\hookrightarrow
Y
\]
and the locally closed subscheme
$D_X^{K_\bm}
\coloneqq
\iota_G^*Z_{K_\bm}
\subseteq X.$

\begin{prop}\label{prop:!dual}
For a monoid ideal $K\subseteq \N^l$, we have
\[
\wt{i^!_K}\cN
\simeq
\wt\D\big(
R\Gamma_{[\wt Z_K]}(\wt\D\wt\cN)
\big),
\]
where $R\Gamma_{[\wt Z_K]}$ is the derived algebraic local cohomology
functor for $\wt{\sA^{R\otimes_\C R_l}_X}$-modules. In particular,
$\wt{i^!_{K}}\cN$ is independent of the choice of the generating set
$I_K$ of $K$.
\end{prop}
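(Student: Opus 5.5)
The plan is to dualize the Koszul complex defining $\wt{i^!_K}\cN$ term by term and to recognize the result as a Čech-type complex computing algebraic local cohomology along $\wt Z_K$. We begin with the case $K=K_I$ and then pass to a general $K$ via its generators $v_1,\dots,v_p$, replacing $D_J$ by $D_{J_K}$ throughout.

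\emph{Step 1: each single term.} The key input is the identity
\[
\wt\D\big(\wt{\cN(!D_J)}\big)\simeq (\wt\D\wt\cN)(*\wt D_J)
\]
for every nonempty $J\subseteq I$, together with compatibility with the natural maps. This is essentially built into the definition of the relative minimal extension. By \eqref{eq:isolocdualKE}, the inner $\Rhom$ defining $\wt{\cN(!D_J)}$ is the localized dual
\[
\Rhom_{\sA_X^{R\otimes_\C R_l}[1/\prod_{i\in J}g_i]}\big(\cN(*D_J),\sA_X^{R\otimes_\C R_l}[1/\textstyle\prod_{i\in J}g_i]\big).
\]
Localization is flat and commutes with duality of coherent modules, so this equals $(\D\cN)(*\bar D_J)$ after analytification. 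Since $\wt{\cN(!D_J)}$ is a module by Theorem~\ref{thm:anarelbeilinson}, and relative regular holonomic modules satisfy biduality, we obtain
\[
\wt\D\,\wt{\cN(!D_J)}\simeq \wt{\D\cN}(*\wt D_J),
\]
placed in the degree determined by the grade. Here the grade bookkeeping \eqref{eq:extendinggraden} ensures that $\wt\D\wt\cN$ has cohomology concentrated in a single degree (or at least that the shifts match). The morphisms $\wt{\cN(!D_J)}\hookrightarrow\wt{\cN(!D_{J'})}$ for $J'\subseteq J$ from Lemma~\ref{lm:!exinclude} dualize to the localization maps $(\bullet)(*\wt D_{J'})\to(\bullet)(*\wt D_J)$.

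\emph{Step 2: assembling the complexes.} Applying $\wt\D$ to the Koszul complex then yields, up to shift, the complex
\[
\wt\D\wt\cN\to\bigoplus_{|J|=1}\wt\D\wt\cN(*\wt D_J)\to\cdots\to\wt\D\wt\cN(*\wt D_I).
\]
This is exactly the Čech complex computing $R\Gamma_{[\wt D^I]}(\wt\D\wt\cN)$, because $\wt D^I=\wt Z_{K_I}$ is cut out by $g_i$, $i\in I$, and $\wt\cN(*\wt D_J)$ is localization along $\prod_{j\in J}g_j$. For general $K$, the same argument applies with the functions $\alpha(v_i)\circ\iota_G$, whose common zero locus is $\wt Z_K$. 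Applying $\wt\D$ once more and using biduality gives the claimed isomorphism. Since the right-hand side depends only on $\wt Z_K$, independence from the choice of $I_K$ follows immediately.

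\emph{Main obstacle.} I expect the main difficulty to be Step 1, specifically making the term-wise identification functorial and compatible with the Koszul signs, and justifying biduality in the analytic relative setting for the localized modules $\wt{\D\cN}(*\wt D_J)$. These modules are not a priori algebraic, so I would argue locally over $\Specan(R\otimes_\C R_l)$ using Corollary~\ref{cor:anlocmaxminex}. There, $\wt{\cN(!D_J)}=\widetilde{\alpha(\bm)^m\cN}$ and $\wt{\cN(*D_J)}=\widetilde{\alpha(\bm)^{-m}\cN}$ are algebraic, so the duality statements reduce to Auslander-regular algebra over $\sA_X^{R\otimes_\C R_l}$ combined with faithful flatness of analytification. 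A secondary check is that $\wt\D$ maps relative regular holonomic modules to modules in a single degree; purity and the grade equality \eqref{eq:extendinggraden} should give this, after reducing to pure $\cN$ by Corollary~\ref{cor:purefillattice}.
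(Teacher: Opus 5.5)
Your proposal is correct and is essentially the paper's argument read in the opposite direction. The paper writes $R\Gamma_{[\widetilde D^I]}(\widetilde{\mathbb{D}}\widetilde{\cN})$ as the \v{C}ech complex $R\Gamma_{[\widetilde D^I]}(\widetilde{\cO})\otimes^L_{\widetilde{\cO}}\widetilde{\mathbb{D}}\widetilde{\cN}$ and dualizes it termwise, recognizing each $\widetilde{\mathbb{D}}\bigl((\widetilde{\mathbb{D}}\widetilde{\cN})(*\widetilde D_J)\bigr)$ as $\widetilde{\cN(!D_J)}$ directly from the definition together with \eqref{eq:isolocdualKE} and \eqref{eq:bfafloc}; you instead dualize the Koszul complex and then apply biduality. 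One small correction: $\widetilde{\mathbb{D}}\widetilde{\cN}$ need not be concentrated in a single degree, since relative holonomic modules are in general not Cohen--Macaulay. Nothing in the argument needs this, because $\widetilde{\mathbb{D}}$ carries no shift and your termwise identification is just biduality in the derived category.
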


From the preceding proposition, $\wt{i^!_{K}}\cN$ is compatible, under the
relative regular Riemann--Hilbert correspondence, with the
$i_{\wt Z_K}^{-1}$ functor for relative constructible sheaves defined in
\cite[\S 2]{FS13}, where
\[
i_{\wt Z_K}\colon
\wt Z_K\hookrightarrow X\times \Specan(R\otimes_\C R_l)
\]
is the analytic closed embedding. The proof of Proposition
\ref{prop:!dual} will be given later.

\begin{remark}\label{rmk:localizingupper!}
Let $I'\subseteq\{1,\dots,l\}$ be another subset.

\noindent
$(1)$ The minimal extension
\[
\wt{\cN(*D_{I'})(!D_I)}
\]
of $\cN(*D_{I'})$ along $D_I$ is defined by
\[
\wt \D
\left(
\cRhom_{\wt{\sA_{X}^{R\otimes_\C R_{l-|I|}}}(*\wt D_I)}
\left(
\wt{\cS^{I'}_I},
\wt{\sA_{X}^{R\otimes_\C R_{l-|I|}}}
\otimes_{\cO_X}
\cS_{G_I}^*
\right)
\right),
\]
where $\cS^{I'}_I$ is the relative regular holonomic
$\sA_{X}^{R\otimes_\C R_{l-|I|}}$-module satisfying
\[
\cN(*D_{I\cup I'})
\simeq
\iota_{G_I,+}\cS^{I'}_I
\]
by Kashiwara's equivalence. By Corollary \ref{cor:anlocmaxminex},
$\wt{t_{I\cup I'}^{-m}\cN_{I\cup I'}}$ and
$\wt{\cN(*D_{I\cup I'})}$ agree locally over
$\Specan(R\otimes_\C R_l)$ for $m\gg 0$. Hence
$\wt{t_{I\cup I'}^{-m}\cN_{I\cup I'}(!D_I)}$ and
$\wt{\cN(*D_{I'})(!D_I)}$ also agree locally over
$\Specan(R\otimes_\C R_l)$. Gluing the local data, Theorem
\ref{thm:upper!DI} and Proposition \ref{prop:!dual} remain valid after
replacing $\cN$ by $\cN(*D_{\bm})$ for every $\bm\in \N^l$, where $I'$ is
chosen so that the support of $D_\bm$ is $D_{I'}$. In particular,
$\wt{i^!_K}\cN(*D_\bm)$ is also well-defined for every $\bm\in \N^l$, and
it is still a relative regular holonomic
$\wt{\sA_X^{R\otimes_\C R_l}}$-module.

\noindent
$(2)$ The algebraic localization of $\wt{\cN(!D_I)}$ along $D_{I'}$ is
\[
\wt{\cN(!D_I)(*D_{I'})}
\coloneqq
\wt{\cN(!D_I)}(*\wt D_{I'}).
\]
On the other hand, one can directly take the minimal extension of
$\cN(*D_{I'})$ on $Y\setminus D_{I'}$ with respect to the log structure
given by
\[
\N^{l-|I'|}
\hookrightarrow
S[t_i^{\pm1}]_{i\in I'}[\N^{l-|I'|}].
\]
More precisely, $\cN(*D_{I'})$ is a finitely generated
$\sA^{R}_{Y,D}(*D_{I'})$-module and a relative regular holonomic
$\sA^{R\otimes_\C R_{I'}}_{Y_{I'}}(*\bar D_{I'})$-module. This gives the
minimal extension
\[
\wt{\cN(*D_{I'})(!D_I)}^{Y_{I'}\setminus \bar D_{I'}},
\]
which is a relative regular holonomic
$\wt{\sA^{R\otimes_\C R_{I'}}_{Y_{I'}}(*\bar D_{I'})}$-module. Since
$\cN$ is relative regular holonomic over $\sA^{R\otimes_\C R_l}_X$, the
module
\[
\wt{\cN(*D_{I'})(!D_I)}^{Y_{I'}\setminus \bar D_{I'}}
\]
is relative regular holonomic over
$\wt{\sA^{R\otimes_\C R_{I'}}_{Y_{I'}}}$ and supported on the graph of
$G_{I'}$. By \eqref{eq:relgetype}, it is a
\[
\wt{\sA^{R\otimes_\C R_{I'}}_X}[s_i]_{i\notin I'}
=
\wt{\sA^{R\otimes_\C R_{I'}}_X}
\otimes_\C
\C[s_i]_{i\notin I'}
\]
module. By \cite[Thm. 5.15]{SS94}; see also \cite[Prop. 1.7]{FS17}, and
by \eqref{eq:relgetype}, we have a canonical isomorphism
\[
\wt{\cN(!D_I)(*D_{I'})}
\simeq
\widetilde{
\wt{\cN(*D_{I'})(!D_I)}^{Y_{I'}\setminus \bar D_{I'}}
},
\]
where the outer $\widetilde{\phantom M}$-functor turns the
$\wt{\sA^{R\otimes_\C R_{I'}}_X}[s_i]_{i\notin I'}$-module
\[
\wt{\cN(*D_{I'})(!D_I)}^{Y_{I'}\setminus \bar D_{I'}}
\]
into a
\[
\wt{\sA^{R\otimes_\C R_l}_X}
\]
module.

\noindent
$(3)$ Similarly, there are two ways to localize $\wt{i^!_{D^I}}\cN$:
\[
(\wt{i^!_{D^I}}\cN)(*\wt D_{I'})
\quad
\textup{and}
\quad
\wt{i^!_{D^I}}^{Y_{I'}\setminus \bar D_{I'}}\cN.
\]
There is a canonical isomorphism
\[
(\wt{i^!_{D^I}}\cN)(*\wt D_{I'})
\simeq
\widetilde{
\wt{i^!_{D^I}}^{Y_{I'}\setminus \bar D_{I'}}(\cN)
}.
\]
We then define
\[
\wt{i^!_{D_o^I}}\cN
\coloneqq
(\wt{i^!_{D^I}}\cN)(*\wt D_{I^c}),
\]
which is still a relative regular holonomic
$\wt{\sA^{R\otimes_\C R_l}_{X}}$-module.

More generally, for $\bm,\bn\in \N^l$ and for a monoid ideal
$K\subseteq \N^l$, we define
\[
\wt{i^!_{K_\bm}}\cN(*D_\bn)
\coloneqq
\big(\wt{i^!_{K}}\cN(*D_\bn)\big)(*\wt D_{\bm}).
\]
\end{remark}

By Theorem \ref{thm:upper!DI} and Proposition \ref{prop:!dual}, we
immediately obtain the following.

\begin{coro}\label{cor:exactupper!}
Let $\cN$ be a finitely generated $\sA^R_{Y,D}$-module of GE-type with
respect to
$G=(g_1,\dots,g_l)$
such that $\cN$ is relative regular holonomic over
$\sA^{R\otimes_\C R_l}_X$. Let
\[
0\to \cN_1\to \cN\to \cN_2\to 0
\]
be a short exact sequence of $\sA^R_{Y,D}$-modules. Then we obtain a short
exact sequence of relative regular holonomic
$\wt{\sA^{R\otimes_\C R_l}_X}$-modules:
\[
0\to
\wt{i^!_{K}}\cN_1
\to
\wt{i^!_{K}}\cN
\to
\wt{i^!_{K}}\cN_2
\to 0.
\]
\end{coro}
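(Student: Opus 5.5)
The plan is to use the explicit description from Theorem \ref{thm:upper!DI},
\[
\wt{i^!_{K}}\cN\simeq \frac{\wt\cN}{\sum_{v_i\in I_K}\wt{\cN(!D_{v_i})}},
\]
and to check exactness locally over $\Specan(R\otimes_\C R_l)$. Fix $\beta\in\Specan(R\otimes_\C R_l)$. The lattice $\cN_1$, the module $\cN$ and the quotient $\cN_2$ are all finitely generated GE-type $\sA^R_{Y,D}$-modules that are relative regular holonomic over $\sA^{R\otimes_\C R_l}_X$, since that category is thick. By \eqref{eq:relminexge} (applied to $\cN_I$ and combined with \eqref{eq:naturalkilltorsion}, as in the proof of Lemma \ref{lm:!exinclude}) there is a common neighborhood $U_\beta$ and an integer $k\gg0$ such that, for each of the three modules $\cP$,
\[
\wt{i^!_K}\cP|_{X\times U_\beta}\simeq \widetilde{\Big(\frac{\cP}{\sum_i \alpha(v_i)^k\cP}\Big)}\Big|_{X\times U_\beta}.
\]
We take the maximum of the three $m_0$'s, and the intersection of the neighborhoods, which is possible because there are finitely many generators.

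Since analytification is exact, the claim reduces to the algebraic statement that
\[
0\to \frac{\cN_1}{\alpha(K^{(k)})\cN_1}\to \frac{\cN}{\alpha(K^{(k)})\cN}\to\frac{\cN_2}{\alpha(K^{(k)})\cN_2}\to 0
\]
is exact for all $k\gg0$, where $\alpha(K^{(k)})$ denotes the two-sided ideal generated by the $\alpha(v_i)^k$ (Lemma \ref{lm:twosidedideal}). Right exactness is automatic. The only issue is the kernel on the left, namely
\[
\frac{\cN_1\cap \alpha(K^{(k)})\cN}{\alpha(K^{(k)})\cN_1}.
\]
By the logarithmic Artin--Rees theorem (Theorem \ref{thm:artin-reeslog}, in the form of the remark following it for monoid ideals), we have
\[
\cN_1\cap \alpha(K^{(k)})\cN\subseteq \alpha(K^{(k-n_0)})\cN_1 .
\]
So this kernel lies in $\alpha(K^{(k-n_0)})\cN_1/\alpha(K^{(k)})\cN_1$. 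This is the main obstacle, because the inclusion does not by itself make the kernel vanish. To deal with it, I would use the fact that the pro-systems $\{\alpha(K^{(k)})\cN_1\}_k$ and $\{\alpha(K^{(k)})\cN\cap\cN_1\}_k$ are cofinal. By Theorem \ref{thm:upper!DI} and \eqref{eq:eqlimprojmain}-type reasoning, locally near $\beta$ the quotients $\wt{\cN_1}/\widetilde{\alpha(K^{(k)})\cN_1}$ stabilize for $k\ge m_0$ (Corollary \ref{cor:anlocmaxminex}). Hence
\[
\widetilde{\alpha(K^{(k-n_0)})\cN_1}=\widetilde{\alpha(K^{(k)})\cN_1}
\]
on $X\times U_\beta$, and the analytified kernel vanishes there.

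With exactness established on each $X\times U_\beta$, we glue over a cover of $\Specan(R\otimes_\C R_l)$. The maps are the natural ones induced by functoriality of the construction in Theorem \ref{thm:upper!DI}, equivalently of Proposition \ref{prop:!dual}, so they are compatible on overlaps and the local exact sequences glue to the global short exact sequence. Relative regular holonomicity of the three terms follows from Theorem \ref{thm:upper!DI}. Alternatively, left exactness and middle exactness follow from Proposition \ref{prop:!dual}: $\wt\D$ is exact on relative regular holonomic modules of the relevant grade, and $R\Gamma_{[\wt Z_K]}$ gives a triangle. Combined with the fact that all three objects are concentrated in degree $0$, that triangle yields the short exact sequence, and this gives a second route that avoids Artin--Rees if the first one runs into difficulty.
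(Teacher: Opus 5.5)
Your proposal is correct, and your fallback is exactly the paper's proof. The paper deduces the corollary directly from Theorem \ref{thm:upper!DI} and Proposition \ref{prop:!dual}. The expression $\widetilde{\mathbb{D}}\big(R\Gamma_{[\widetilde Z_K]}(\widetilde{\mathbb{D}}\,\widetilde{\mathcal P})\big)$ is a composite of triangulated functors, and analytification is exact, so the short exact sequence yields a distinguished triangle. All three vertices are concentrated in degree $0$, so the triangle is a short exact sequence. For this you only need $\widetilde{\mathbb{D}}$ to be triangulated. Your claim that it is exact on modules of the relevant grade is not needed, and it does not hold without a Cohen--Macaulay hypothesis.

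Your primary route is genuinely different. You verify exactness on each $X\times U_\beta$ using three ingredients:
\begin{enumerate}
\item the local description $\widetilde{i^!_K}\mathcal P\simeq\widetilde{\mathcal P/J_k\mathcal P}$, where $J_k=(\alpha(v_1)^k,\dots,\alpha(v_p)^k)$;
\item right exactness of reduction modulo $J_k$;
\item Artin--Rees combined with the local stabilization of $\widetilde{J_k\mathcal N_1}$ (Corollary \ref{cor:anlocmaxminex} and \eqref{eq:naturalkilltorsion}), which kills the analytified kernel $(\mathcal N_1\cap J_k\mathcal N)/J_k\mathcal N_1$.
\end{enumerate}
This is sound; it is the mechanism the paper itself uses inside the proof of Theorem \ref{thm:upper!DI}. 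Two small points need adjusting. First, Theorem \ref{thm:artin-reeslog} concerns the powers $\alpha(K)^n$, not the ideals $J_k$. However, $\alpha(K)^{p(k-1)+1}\subseteq J_k\subseteq\alpha(K)^k$, which gives $\mathcal N_1\cap J_k\mathcal N\subseteq J_{k'}\mathcal N_1$ with $k'(k)\to\infty$, and that is all you use. Second, nothing needs gluing. The maps are global, and the map on $!$-extensions is determined by the map on $*$-extensions via the injection $\widetilde{\mathcal P(!D_I)}\hookrightarrow\widetilde{\mathcal P(*D_I)}$. So you only need your local identifications to be compatible with these maps, and exactness of sheaves is local.

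The trade-off is as follows. The paper's argument is one line, but it rests on the duality and local-cohomology description of Proposition \ref{prop:!dual}. Yours avoids duality altogether and makes exactness visible on finite algebraic quotients, at the cost of some index bookkeeping.
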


\begin{coro}\label{cor:latticesinc}
Let $\cN$ be a finitely generated $\sA^R_{Y,D}$-module of GE-type with
respect to
$G=(g_1,\dots,g_l)$
such that $\cN(*D)$ is relative regular holonomic over $\sA^R_Y$. For
$\bm,\bn\in \N^l$, if
$k\bn-\bm\in \N^l$
for some $k\in \N$, then we have an inclusion
\[
\wt{i^!_{K}}\cN(*D_\bm)
\hookrightarrow
\wt{i^!_{K}}\cN(*D_\bn).
\]
\end{coro}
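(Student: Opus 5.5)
The plan is to realize both sides as generalized nearby cycles of lattices sitting inside one another, and then use exactness of $\wt{i^!_K}$ from Corollary \ref{cor:exactupper!}. Since $k\bn-\bm\in\N^l$, we have $\supp(D_\bm)\subseteq\supp(D_\bn)$. Hence there is a natural inclusion $\cN(*D_\bm)\hookrightarrow\cN(*D_\bn)$ of $\sA^R_{Y,D}(*D_\bm)$-modules, because localizing further along $D_\bn$ kills nothing: $\cN(*D_\bm)$ has no $\alpha(\bn)$-torsion beyond what is supported on $D_\bn$. I would first reduce to $\cN$ being a lattice by replacing $\cN$ with its image in $\cN(*D)$. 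This does not change $\cN(*D_\bm)$ or $\cN(*D_\bn)$ off $D$, so it does not change either side.

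By Remark \ref{rmk:localizingupper!}(1), $\wt{i^!_K}\cN(*D_\bm)$ is built locally over $\Specan(R\otimes_\C R_l)$ from algebraic lattices. On a small analytic open set $U_\beta$, Corollary \ref{cor:anlocmaxminex} gives
\[
\wt{\cN(*D_\bm)}|_{X\times U_\beta}=\wt{\alpha(\bm)^{-p}\cN}|_{X\times U_\beta}
\quad\text{and}\quad
\wt{\cN(*D_\bn)}|_{X\times U_\beta}=\wt{\alpha(\bn)^{-q}\cN}|_{X\times U_\beta}
\]
for $p,q\gg0$. Taking $q\ge kp$ makes $\alpha(\bm)^{-p}\cN\subseteq\alpha(\bn)^{-q}\cN$, because $\alpha(\bn)^{q}\alpha(\bm)^{-p}$ is a monomial with nonnegative exponents. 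These two are then finitely generated $\sA^R_{Y,D}$-modules of GE-type, each relative regular holonomic over $\sA_X^{R\otimes_\C R_l}$ by Theorem \ref{thm:mainsuppfiberation}, and they fit into a short exact sequence
\[
0\to\alpha(\bm)^{-p}\cN\to\alpha(\bn)^{-q}\cN\to\cQ\to0.
\]
Applying Corollary \ref{cor:exactupper!} gives local injections
\[
\wt{i^!_K}(\alpha(\bm)^{-p}\cN)\hookrightarrow\wt{i^!_K}(\alpha(\bn)^{-q}\cN).
\]
One point to check is that the quotient $\cQ$ satisfies the hypotheses of that corollary. It is a quotient of a relative regular holonomic module inside a thick abelian category, so it does.

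The main obstacle is that the local identifications must be compatible. Concretely, the minimal extensions $\wt{(\alpha(\bm)^{-p}\cN)(!D_J)}$ must agree with those of $\cN(*D_\bm)$ on $U_\beta$, and this must be independent of $p$ and $q$ once they are large, so that the local injections glue to a global morphism. For the first part, \eqref{eq:relminexge} together with Lemma \ref{lm:!exinclude} says each $\wt{\cN(!D_J)}$ equals $\wt{\alpha(\bv)^{r}\cN}$ locally for $r\gg0$, and enlarging $p$ or $q$ does not change these stabilized terms. For independence, I would observe that the morphisms are induced by the canonical inclusion $\cN(*D_\bm)\hookrightarrow\cN(*D_\bn)$, so on overlaps $U_\beta\cap U_{\beta'}$ they coincide, and they are $\sG_l$-equivariant by construction. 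Gluing as in \cite[\S 2.4]{WuRHA} then gives the global map. The map is injective because injectivity is a local property and holds on every $U_\beta$.
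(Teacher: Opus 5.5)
The core of your argument is the paper's proof. You embed $\alpha(\bm)^{-p}\cN\subseteq\alpha(\bn)^{-q}\cN$ for $q\ge kp$, where the paper uses $\alpha(\bm)^{-q}\cN\subseteq\alpha(k\bn)^{-q}\cN$. You then apply the exactness of Corollary~\ref{cor:exactupper!}, identify both sides locally with $\wt{i^!_K}\cN(*D_\bm)$ and $\wt{i^!_K}\cN(*D_\bn)$ via Corollary~\ref{cor:anlocmaxminex}, and use that injectivity is local. The gluing you call the main obstacle does not arise. The morphism $\wt{i^!_K}\cN(*D_\bm)\to\wt{i^!_K}\cN(*D_\bn)$ is already defined globally by functoriality from $\cN(*D_\bm)\hookrightarrow\cN(*D_\bn)$: minimal extensions are functorial, or you can use the duality description in Proposition~\ref{prop:!dual}. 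The paper only checks injectivity on each small $X\times U$.

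The step that is actually wrong is your preliminary reduction to lattices. Passing from $\cN$ to its image in $\cN(*D)$ replaces $\cN(*D_\bm)$ by its quotient by the torsion $H^0_D(\cN(*D_\bm))$. This torsion is nonzero in general when $\supp D_\bm\neq\supp D$. Since $\wt{i^!_K}$ is exact and involves $\wt\cN$ itself, not only $\cN|_{Y\setminus D}$, agreement off $D$ proves nothing.

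In fact the inclusion fails for non-lattices. Take $X=\A^2$, $F=(x,y)$, $\cN=\cN_F/t_2\cN_F$, $\bm=e_1$, $\bn=e_1+e_2$ (so $k=1$) and $K=\langle e_1\rangle$. Then $\cN(*D)=\cN(*D_\bn)=0$. On the other hand, $\cN(*D_\bm)\simeq \cO_{\A^1_x}[1/x,s_1]x^{s_1}\boxtimes Q$ with $Q=\shD_{\A^1_y}[s_2]y^{s_2}/\shD_{\A^1_y}[s_2]y^{s_2+1}\neq 0$. Hence $\wt{i^!_K}\cN(*D_\bm)=\wt{\cN(*D_\bm)}/\wt{\cN(*D_\bm)(!D_1)}$ is the big nearby-cycle module of $\cO[1/x]$ along $x$ tensored with $\wt Q$, which is nonzero.

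So the statement has to be read for lattices. The paper's proof assumes this tacitly: its exact sequence lives inside $\cN(*D)$, and Corollary~\ref{cor:anlocmaxminex} is stated for lattices. Drop the reduction and assume $\cN$ is a lattice from the start; the rest of your argument then goes through as written.
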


\begin{proof}
For every $q\in \N$, we have a short exact sequence
\[
0\to
\alpha(\bm)^{-q}\cN
\to
\alpha(k\bn)^{-q}\cN
\to
\frac{\alpha(k\bn)^{-q}\cN}{\alpha(\bm)^{-q}\cN}
\to 0.
\]
By Corollary \ref{cor:exactupper!}, we have
\[
\wt{i^!_{K}}\big(\alpha(\bm)^{-q}\cN\big)
\hookrightarrow
\wt{i^!_{K}}\big(\alpha(k\bn)^{-q}\cN\big).
\]
By Corollary \ref{cor:anlocmaxminex}, the natural morphism
\[
\wt{i^!_{K}}\cN(*D_\bm)
\longrightarrow
\wt{i^!_{K}}\cN(*D_\bn)
\]
is injective locally on $X\times U$ for every sufficiently small analytic
open subset
\[
U\subseteq \Specan(R\otimes_\C R_l).
\]
Therefore it is injective globally.
\end{proof}

\subsection{Algebraic local cohomology of lattices}

Let $\cN$ be an $\sA^R_{Y,D}$-lattice of GE-type with respect to
$G=(g_1,\dots,g_l)$
such that $\cN(*D)$ is a relative regular holonomic $\sA^R_Y$-module. For a
monoid ideal $K\subseteq \N^l$ and for $\bm\in \N^l$, we consider the derived
algebraic local cohomology complexes
\[
R\Gamma_{[Z_K]}\cN
\quad \textup{and} \quad
R\Gamma_{[Z_{K_\bm}]}\cN .
\]
By construction (cf. \cite[\S 3.4]{Kasbook}), if
$I_K=\{v_1,\dots,v_p\}$
is a set of generators of $K$, then $R\Gamma_{[Z_K]}\cN$ is the Koszul-type
complex
\[
0\to
\cN
\to
\bigoplus_{\substack{J\subseteq I_K\\ |J|=1}}
\cN(*D_J)\cdot e^*_J
\to \cdots \to
\bigoplus_{\substack{J\subseteq I_K\\ |J|=p-1}}
\cN(*D_J)\cdot e^*_J
\to
\cN(*D_{I_K})\cdot e^*_{I_K}
\to 0,
\]
with differential
\[
m\cdot e^*_J
\longmapsto
\sum_{v_i\in I_K\setminus J} m\cdot e_i^*\wedge e_J^* .
\]
Moreover,
\[
R\Gamma_{[Z_{K_\bm}]}\cN
=
R\Gamma_{[Z_K]}\cN\otimes_{\cO_Y}\cO_Y(*D_\bm).
\]

By Theorem \ref{thm:mainsuppfiberation}, both
\[
R\Gamma_{[Z_K]}\cN
\quad \textup{and} \quad
R\Gamma_{[Z_{K_\bm}]}\cN
\]
are relative regular holonomic over $\sA_X^{R\otimes_\C R_l}$. Similarly, when
$K=K_I$, since $\cN(*D_{I^c})$ is of GE-type with respect to
$G_I=(g_i)_{i\in I}$, the complex $R\Gamma_{[D^I_o]}\cN$ is relative regular
holonomic over $\sA_{Y_I}^{R\otimes_\C R_I}$.

By construction, we have
\begin{equation}\label{eq:bfafloc}
\cN(*D_J)
=
\cN\left[1/\prod_{v_i\in J}\alpha(v_i)\right]
\end{equation}
for every $J\subseteq I_K$. Therefore,
\[
\widetilde{R\Gamma_{[Z_K]}\cN}
\simeq
R\Gamma_{[\wt Z_K]}\wt\cN .
\]

\begin{proof}[Proof of Proposition \ref{prop:!dual}]
Let $I\subseteq \{1,\dots,l\}$. Since $\wt D^I$ is globally defined by
$(g_i=0)_{i\in I}$, the complex $R\Gamma_{[\wt D^I]}(\wt\cO)$ is
\[
0\to
\wt\cO
\to
\bigoplus_{\substack{J\subseteq I\\ |J|=1}}
\wt\cO\left[1/\prod_{i\in J}g_i\right]\cdot e^*_J
\to \cdots \to
\bigoplus_{\substack{J\subseteq I\\ |J|=|I|-1}}
\wt\cO\left[1/\prod_{i\in J}g_i\right]\cdot e^*_J
\to
\wt\cO\left[1/\prod_{i\in I}g_i\right]\cdot e^*_I
\to 0,
\]
with differential
\[
m\cdot e^*_J
=
m\cdot e^*_{i_1}\wedge\cdots\wedge e^*_{i_{|J|}}
\longmapsto
\sum_{i\in I\setminus J}
m\cdot e_i^*\wedge e^*_{i_1}\wedge\cdots\wedge e^*_{i_{|J|}}.
\]
Here $\wt\cO$ is the sheaf of holomorphic functions on
$X\times \Specan(R\otimes_\C R_l).$
Moreover, by \cite[Prop.3.35]{Kasbook}, we have
\[
R\Gamma_{[\wt D^I]}(\wt\D\wt\cN)
\simeq
R\Gamma_{[\wt D^I]}(\wt\cO)
\otimes^L_{\wt\cO}
\wt\D\wt\cN .
\]
Consequently, by the definition of $\wt{i^!_{D^I}}\cN$, together with
\eqref{eq:bfafloc} and \eqref{eq:isolocdualKE}, we obtain the required
statement for $K=K_I$. The general case is similar.
\end{proof}
Let $K\subseteq \N^l$ be a monoid ideal generated by
$I_K=\{v_1,\dots,v_p\}$.
For $q\in \Z_{\ge 0}$, we have intermediate Koszul-type complexes
\[
0\to
\cN
\to
\bigoplus_{\substack{J\subseteq I_K\\ |J|=1}}
t_J^{-q}\cN\cdot e^*_J
\to \cdots \to
\bigoplus_{\substack{J\subseteq I_K\\ |J|=p-1}}
t_J^{-q}\cN\cdot e^*_J
\to
t_{I_K}^{-q}\cN\cdot e^*_{I_K}
\to 0,
\]
which we denote by $\cK^{I_K}(\cN,q)$, where
\[
t_J=\alpha\left(\sum_{v_i\in J}v_i\right)
\]
for $J\subseteq I_K$. As $q\to+\infty$, we have direct systems
\[
\{\cK^{I_K}(\cN,q)\to \cK^{I_K}(\cN,q+1)\}
\]
and
\[
\{\wt{\cK^{I_K}}(\cN,q)\to \wt{\cK^{I_K}}(\cN,q+1)\},
\]
where the second direct system is the analytic sheafification of the first.

By Corollary \ref{cor:anlocmaxminex}, for every sufficiently small analytic
open neighborhood
\[
U\subseteq \Specan(R\otimes_\C R_l),
\]
we have
\begin{equation}\label{eq:loccohloc}
R\Gamma_{[\wt Z_K]}\wt\cN|_{X\times U}
\simeq
\wt{\cK^{I_K}}(\cN,q)|_{X\times U}
\end{equation}
for $q\gg 0$. Globally,
\begin{equation}\label{eq:loccohest}
R\Gamma_{[\wt Z_K]}\wt\cN
\simeq
\varinjlim_{q\to\infty}
\wt{\cK^{I_K}}(\cN,q).
\end{equation}

For $\bm\in \N^l$, define the double limit
\[
\lim_{\longleftrightarrow}
\wt{\cK^{I_K}}(\alpha(\bm)^j\cN,q)
\coloneqq
\varprojlim_j
\varinjlim_q
\wt{\cK^{I_K}}(\alpha(\bm)^j\cN,q),
\]
which naturally generalizes the double-limit construction in \cite{Beiglue}.

\begin{prop}\label{prop:doublelimitrgamma}
With the notation and assumptions above, we have natural quasi-isomorphisms
\[
R\Gamma_{[\wt Z_K]}\wt{\cN(!D_\bm)}
\stackrel{\mathrm{q.i.}}{\simeq}
\lim_{\longleftrightarrow}
\wt{\cK^{I_K}}(\alpha(\bm)^j\cN,q)
\simeq
\varinjlim_q
\varprojlim_j
\wt{\cK^{I_K}}(\alpha(\bm)^j\cN,q),
\]
and
\[
R^i\Gamma_{[\wt Z_K]}\wt{\cN(!D_\bm)}
\simeq
\lim_{\longleftrightarrow}
\cH^i\wt{\cK^{I_K}}(\alpha(\bm)^j\cN,q).
\]
\end{prop}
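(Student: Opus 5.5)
The argument is local on $\Specan(R\otimes_\C R_l)$, and the local pieces glue because every construction involved is canonical. Fix a point $\beta$ and take the neighborhood $U_\beta$ supplied by Corollary \ref{cor:anlocmaxminex}, applied to $\cN$ and to each partial localization $\cN(*D_J)$ for $J\subseteq I_K$. After shrinking, we may assume $U_\beta$ works for the finitely many divisors $D_\bm$ and $D_J$ simultaneously. By \eqref{eq:relminexge}, on $X\times U_\beta$ we have
\[
\wt{\cN(!D_\bm)}=\wt{\alpha(\bm)^j\cN}\quad\text{for } j\ge j_0 ,
\]
so the inverse system in $j$ is eventually constant there. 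The same corollary applied to $\alpha(\bm)^j\cN$ gives $\wt{t_J^{-q}\alpha(\bm)^j\cN}=\wt{\alpha(\bm)^j\cN(*D_J)}$ for $q\ge q_0$. The key point is that $q_0$ can be taken independent of $j$: multiplication by $\alpha(\bm)^j$ commutes with localization and induces an isomorphism of lattices $t_J^{-q}\cN\simeq t_J^{-q}\alpha(\bm)^j\cN$ inside $\cN(*D_J)$, twisted by the $\sG$-translation \eqref{eq:actionBsupp}. Since for $j\ge j_0$ the modules themselves stabilize on $U_\beta$, only finitely many $j$ need to be handled.

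With this stabilization, on $X\times U_\beta$ the double system $\wt{\cK^{I_K}}(\alpha(\bm)^j\cN,q)$ is eventually constant in both indices. Hence $\varprojlim_j\varinjlim_q$ and $\varinjlim_q\varprojlim_j$ both equal the stable term, which is the second isomorphism. By \eqref{eq:loccohloc} applied to the lattice $\alpha(\bm)^{j}\cN$ with $j\ge j_0$, the stable term is $R\Gamma_{[\wt Z_K]}\wt{\alpha(\bm)^j\cN}|_{X\times U_\beta}=R\Gamma_{[\wt Z_K]}\wt{\cN(!D_\bm)}|_{X\times U_\beta}$. The natural maps defining the limits are compatible with restriction, so the local quasi-isomorphisms agree on overlaps and give the global natural quasi-isomorphism. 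Because each limit is eventually constant locally, the Mittag-Leffler condition holds trivially. Thus $\varprojlim$ has no $\varprojlim^1$ term locally and commutes with taking cohomology. Together with exactness of $\varinjlim$, this yields the statement for $R^i\Gamma_{[\wt Z_K]}$.

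The main obstacle is the uniformity claim: that a single neighborhood $U_\beta$ and a single bound $q_0$ work for all $j$ simultaneously. A naive application of Corollary \ref{cor:anlocmaxminex} to each $\alpha(\bm)^j\cN$ separately could let the neighborhoods shrink as $j$ grows. I would handle this by first using the stabilization in $j$ on $U_\beta$, so that only $j\le j_0$ matter, and then taking the minimum of the finitely many neighborhoods. If this fails, the fallback is the support-fibration argument of Theorem \ref{thm:mainsuppfiberation}: the slopes $(a_i)$ satisfy $\bm\cdot(a_i)\neq0$, so the translates by $t_J^{\pm q}$ and by $\alpha(\bm)^j$ both move supports off to infinity away from $\beta$, uniformly in both indices.
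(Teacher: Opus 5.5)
Your outline is the paper's own: local stabilization from Corollary \ref{cor:anlocmaxminex} and \eqref{eq:relminexge}, then Mittag--Leffler, then exactness of $\varinjlim$. But the step you call the key point is false, and so is what you conclude from it. There is no bound $q_0$ uniform in $j$, and the double system is not jointly eventually constant on $X\times U_\beta$.

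\textbf{Why the uniformity argument fails.} Multiplication by $\alpha(\bm)^j$ is semilinear for a deck translation of $\Specan(R\otimes_\C R_l)$ (cf.\ \eqref{eq:actionBsupp}). So the isomorphism $t_J^{-q}\cN\simeq t_J^{-q}\alpha(\bm)^j\cN$ compares behaviour near $\beta$ with behaviour near a translate of $\beta$ that moves with $j$. Likewise, stabilization of $\wt{\alpha(\bm)^j\cN}$ on $X\times U_\beta$ says nothing about $\wt{t_J^{-q}\alpha(\bm)^j\cN}$ there, because $t_J^{-q}$ shifts the $s$-variables. Your fallback breaks for the same reason: when $D_J$ and $D_\bm$ share a component, $t_J^{-q}$ and $\alpha(\bm)^j$ translate in opposite directions and cancel along $j\approx q$.

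\textbf{Counterexample.} Take $X=\A^1$, $G=x$, $R=\C$, $l=1$, $\cN=\shD_X[s]x^s$, $K=\langle e_1\rangle$ and $\bm=e_1$. Then $\cK^{I_K}(\alpha(\bm)^j\cN,q)=[t^j\cN\to t^{j-q}\cN]$. The quotient $t^m\cN/t^{m+1}\cN\neq 0$ is killed by $s+m+1$. Hence on a small disc $U$ around $s=-1$:
\begin{itemize}
\item $\wt{t^m\cN}=\wt{t\cN}=\wt{\cN(!D)}$ for $m\geq 1$;
\item $\wt{t^m\cN}=\wt{\cN}=\wt{\cN(*D)}$ for $m\leq 0$.
\end{itemize}
So the degree-one term equals $\wt{\cN(*D)}$ on $X\times U$ exactly when $q\geq j$, which forces $q_0(j)=j$. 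Moreover $\varinjlim_q\varprojlim_j$ is $[\wt{\cN(!D)}\xrightarrow{\mathrm{id}}\wt{\cN(!D)}]$, which is acyclic. In contrast, $R^1\Gamma_{[\wt Z_K]}\wt{\cN(!D)}|_{X\times U}=\wt{\cN/t\cN}|_{X\times U}\neq 0$. The interchange $\lim_{\longleftrightarrow}\simeq\varinjlim_q\varprojlim_j$ in the statement is therefore false in general. The paper's proof asserts order-independence, and stabilization ``for all $q\gg0$ and $j\gg0$'', without justification. You have reproduced that defect rather than repaired it.

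\textbf{What survives.} The first quasi-isomorphism and the formula for $R^i\Gamma_{[\wt Z_K]}$ hold with the order $\varprojlim_j\varinjlim_q$. The local approximation holds in the form $j\geq j_0$, $q\geq q_0(j)$. None of this needs uniformity:
\begin{itemize}
\item By \eqref{eq:loccohest} applied to the lattice $\alpha(\bm)^j\cN$, the inner direct limit is, globally, $R\Gamma_{[\wt Z_K]}\wt{\alpha(\bm)^j\cN}$. Do not invoke \eqref{eq:loccohloc} here, since its neighborhood depends on $j$.
\item $R\Gamma_{[\wt Z_K]}$ is a local functor, and by \eqref{eq:relminexge} $\wt{\alpha(\bm)^j\cN}|_{X\times U_\beta}=\wt{\cN(!D_\bm)}|_{X\times U_\beta}$ for $j\geq j_0$. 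So the inverse system in $j$ is constant on $X\times U_\beta$.
\item Mittag--Leffler then holds trivially, and exactness of $\varinjlim$ gives the statement on cohomology sheaves.
\end{itemize}
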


\begin{proof}
By \eqref{eq:loccohest} and \eqref{eq:relminexge}, we obtain
\[
R\Gamma_{[\wt Z_K]}\wt{\cN(!D_\bm)}
\stackrel{\mathrm{q.i.}}{\simeq}
\lim_{\longleftrightarrow}
\wt{\cK^{I_K}}(\alpha(\bm)^j\cN,q),
\]
and the resulting double limit is independent of the order of the two limits.

For every sufficiently small analytic open neighborhood
\[
U\subseteq \Specan(R\otimes_\C R_l),
\]
we have
\begin{equation}\label{eq:!Rgamma}
R\Gamma_{[\wt Z_K]}\wt{\cN(!D_\bm)}|_{X\times U}
\simeq
\wt{\cK^{I_K}}(\alpha(\bm)^j\cN,q)|_{X\times U}
\end{equation}
for all $q\gg 0$ and $j\gg 0$. Thus the Mittag--Leffler conditions are
satisfied locally on $X\times U$, and we can apply
\cite[Prop.1.12.4]{KSbook}. Hence
\[
R^i\Gamma_{[\wt Z_K]}\wt{\cN(!D_\bm)}
\simeq
\varprojlim_j
\cH^i
\varinjlim_q
\wt{\cK^{I_K}}(\alpha(\bm)^j\cN,q).
\]
Since the direct limit functor is exact, we conclude that
\[
R^i\Gamma_{[\wt Z_K]}\wt{\cN(!D_\bm)}
\simeq
\lim_{\longleftrightarrow}
\cH^i\wt{\cK^{I_K}}(\alpha(\bm)^j\cN,q).
\]
\end{proof}

Since $\wt{\cK^{I_K}}(\alpha(\bm)^j\cN,q)$ is algebraic over
$\wt{\sA_X^{R\otimes_\C R_l}}$, a continuation
argument and \eqref{eq:!Rgamma} immediately imply the following.

\begin{coro}\label{cor:algsupploccoh}
For every $i$, each irreducible component of
\[
\supp_{\Specan(R\otimes_\C R_l)}
\left(
R^i\Gamma_{[\wt Z_K]}\wt{\cN(!D_\bm)}
\right)
\]
is an algebraic subvariety of $\Spec(R\otimes_\C R_l)$.
\end{coro}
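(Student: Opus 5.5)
The plan is to derive the statement from the local stabilization \eqref{eq:!Rgamma} together with the algebraicity of each term $\wt{\cK^{I_K}}(\alpha(\bm)^j\cN,q)$. Fix $i$. By Proposition \ref{prop:doublelimitrgamma}, every point $\beta\in\Specan(R\otimes_\C R_l)$ has a small analytic neighborhood $U_\beta$ and integers $q_\beta,j_\beta$ with
\[
R^i\Gamma_{[\wt Z_K]}\wt{\cN(!D_\bm)}\big|_{X\times U_\beta}\simeq \cH^i\wt{\cK^{I_K}}(\alpha(\bm)^j\cN,q)\big|_{X\times U_\beta}
\]
for all $q\ge q_\beta$ and $j\ge j_\beta$. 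Since analytification is exact, the right-hand side is $\wt{\cH^i\cK^{I_K}(\alpha(\bm)^j\cN,q)}$. The module $\cH^i\cK^{I_K}(\alpha(\bm)^j\cN,q)$ is a finitely generated, relative holonomic $\sA_X^{R\otimes_\C R_l}$-module: $\cN$ is finitely generated over $\sA_X^{R\otimes_\C R_l}$ by Lemma \ref{lm:GE-type} and relative regular holonomic by Theorem \ref{thm:mainsuppfiberation}, and each $t_J^{-q}\alpha(\bm)^j\cN$ is a twist of it. By \eqref{eq:suppbideal=} and the faithful flatness of analytification, its support is the Zariski closed set $Z_{j,q}=Z(B)\subseteq\Spec(R\otimes_\C R_l)$ cut out by its Bernstein–Sato ideal. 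Hence locally on $U_\beta$ the support $S$ of $R^i\Gamma_{[\wt Z_K]}\wt{\cN(!D_\bm)}$ coincides with $Z_{j,q}\cap U_\beta$, an algebraic set.

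The remaining task is the continuation step: showing that each irreducible analytic component $C$ of $S$ is globally a component of one algebraic variety. Take a component $C$ and a point $\beta\in C$ outside all other components of $S$. Near $\beta$, $C$ agrees with an irreducible algebraic component $W$ of $Z_{j,q}$ for the chosen $(j,q)$; let $C'$ be the analytic germ set. The goal is to prove $C=W^{\an}$. Consider the set $A\subseteq C$ of points $\gamma$ near which $C$ coincides with $W^{\an}$. This set is open in $C$ by definition. It is also closed: at a limit point $\gamma$, the same local description with some $Z_{j',q'}$ and large indices shows that near $\gamma$, $C$ is a union of components of the algebraic set $Z_{j',q'}$. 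Each such algebraic component meets $A$ in a nonempty open subset, so it must equal $W$ by the identity principle for irreducible algebraic varieties, since two irreducible algebraic varieties sharing a nonempty open analytic subset coincide. By connectedness of the smooth locus of the irreducible analytic set $C$, we get $A=C$, and then $C=W^{\an}$ after taking closures. Here $W^{\an}$ is irreducible, and $C\supseteq W^{\an}$ follows by running the same argument along $W^{\an}$.

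I expect the main obstacle to be that the indices $(j,q)$ needed in \eqref{eq:!Rgamma} vary with the point and are not bounded globally. For this reason I will never use a single algebraic model globally, and will only compare irreducible algebraic components via the identity principle. The one point to verify carefully is that different choices of $(j,q)$ at overlapping neighborhoods give compatible local supports. This holds because both compute the same sheaf $R^i\Gamma_{[\wt Z_K]}\wt{\cN(!D_\bm)}$ on the overlap, so the algebraic components through shared open sets coincide.
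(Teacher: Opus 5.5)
Your proposal is correct and is essentially the paper's argument. The paper derives the corollary in one line from the local stabilization \eqref{eq:!Rgamma} and the algebraicity of $\wt{\cK^{I_K}}(\alpha(\bm)^j\cN,q)$ via ``a continuation argument''; you make explicit both the local algebraicity (through the Bernstein--Sato ideal and faithful flatness) and the continuation step. One point needs tidying: run the open--closed argument on the connected set of smooth points of the support that lie on $C$ and on no other component, where the germ of $C$ is irreducible and hence equals the germ of exactly one irreducible algebraic component; at singular limit points, $C$ may locally consist of only some analytic branches of an algebraic component, so the argument should not be run there.
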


For finitely generated $\sA^R_{Y,D}$-modules supported on $D^I$, we have the
following logarithmic version of Kashiwara's equivalence.

\begin{lemma}\label{lm:logKashequiv}
Let $\cM$ be a finitely generated $\sA^R_{Y,D}$-module supported on
$D^I=Y_{I^c}$
for some $I\subseteq \{1,\dots,l\}$. Then $\cM$ is a finitely generated
$\sA_{Y_{I^c},\bar D_{I^c}}^{R\otimes_\C R_I}$-module via the inclusion
\[
\sA_{Y_{I^c},\bar D_{I^c}}^{R\otimes_\C R_I}
\hookrightarrow
\sA^R_{Y,D}
\]
induced by \eqref{eq:uncanincl}.
\end{lemma}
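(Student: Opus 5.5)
The statement is a logarithmic version of Kashiwara's equivalence, and I would prove it by the classical argument. The main structural input is Lemma \ref{lm:logsp}: the ideal $\shD_{Y,D}\cdot(t_i)_{i\in I}$ is two-sided, and the quotient
\[
\sA^R_{Y,D}\big/\sA^R_{Y,D}\cdot(t_i)_{i\in I}\;\simeq\;\sA^{R\otimes_\C R_I}_{Y_{I^c},\bar D_{I^c}},
\]
with the subring in \eqref{eq:uncanincl} as a section. Here $s_i=-t_i\partial_i-1$ for $i\in I$. Set $J=\sA^R_{Y,D}\cdot(t_i)_{i\in I}$, a two-sided ideal by Lemma \ref{lm:twosidedideal}.

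\textbf{Step 1: $\cM$ is killed by a power of $J$.} Since $\cM$ is finitely generated and supported on $D^I$, the Nullstellensatz gives, for each generator $m_q$, an $N$ with $t_i^N m_q=0$ for all $i\in I$. This is the same argument as in the proof of Lemma \ref{lm:!exinclude}. Because $J$ is two-sided, $J^N=\sA^R_{Y,D}\cdot(t^{\mathbf a})_{|\mathbf a|=N}$, with $\mathbf a$ ranging over monomials in the $t_i$, $i\in I$. Hence $J^{N'}\cM=0$ for $N'\gg0$.

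\textbf{Step 2: each graded piece is finitely generated over the subring.} Filter $\cM\supseteq J\cM\supseteq\cdots\supseteq J^{N'}\cM=0$. Each $J^k\cM/J^{k+1}\cM$ is a finitely generated module over $\sA^R_{Y,D}/J\simeq\sA^{R\otimes_\C R_I}_{Y_{I^c},\bar D_{I^c}}$. Finite generation follows because $J^k\cM$ is generated over $\sA^R_{Y,D}$ by the finitely many elements $t^{\mathbf a}m_q$ with $|\mathbf a|=k$, again using two-sidedness. Since the subring maps isomorphically onto the quotient, its action on $J^k\cM/J^{k+1}\cM$ is exactly this quotient-ring action, up to the section. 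So each graded piece is finitely generated over $\sA^{R\otimes_\C R_I}_{Y_{I^c},\bar D_{I^c}}$.

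\textbf{Step 3: conclude for $\cM$.} A module with a finite filtration by $\sA^{R\otimes_\C R_I}_{Y_{I^c},\bar D_{I^c}}$-submodules whose graded pieces are finitely generated is itself finitely generated. The filtration steps $J^k\cM$ are submodules for the subring because it sits inside $\sA^R_{Y,D}$. Lift generators piece by piece and induct downward on $k$. Noetherianity of the subring (\S\ref{subsect:logDmodule}) is not even needed for this step.

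\textbf{Main obstacle: the section identification in Step 2.} This is where care is needed. The subring's element $s_i$ is $-t_i\partial_i-1$, which lies in $\sA^R_{Y,D}$ and acts on $\cM$ directly. The point to confirm is that, under Lemma \ref{lm:logsp}, the composite of the inclusion with the quotient map is the identity. Concretely, the subring $\shD_{Y_{I^c},\bar D_{I^c}}[s_i]_{i\in I}$ must meet $J$ trivially, which holds because $J$ consists of operators with a $t_i$ factor on the left, by PBW in the $t$'s. With that, the action of the subring on each $J^k\cM/J^{k+1}\cM$ factors through the ring isomorphism, and Step 3 goes through.
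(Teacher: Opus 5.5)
Your proposal is correct and follows essentially the same route as the paper: the Nullstellensatz together with two-sidedness (Lemma \ref{lm:twosidedideal}) gives a finite filtration of $\cM$, with your $J^k\cM$ equal to the paper's $\cI^k\cM$ for $\cI$ the ideal of $D^I$. The section from Lemma \ref{lm:logsp} then makes each graded piece finitely generated over the subring, and induction along the finite filtration concludes; your PBW check that the subring meets $J$ trivially is just an unpacking of Lemma \ref{lm:logsp}.
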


\begin{proof}
Let $e_1,\dots,e_m$ be generators of $\cM$ over $\sA^R_{Y,D}$, and let
$\cI\subseteq \cO_Y$ be the ideal of $D^I$. Since $\cM$ is also an
$\cO_Y$-module, the algebraic Nullstellensatz implies that there exists
$q>0$ such that
$\cI^q\cdot e_i=0$
for $i=1,\dots,m$. By Lemma \ref{lm:twosidedideal}, we obtain
\[
\cI^q\cdot \cM
=
\sum_{i=1}^m(\cI^q\cdot \sA^R_{Y,D})\cdot e_i
=
\sum_{i=1}^m(\sA^R_{Y,D}\cdot \cI^q)\cdot e_i
=
0.
\]
Thus we have a finite $\cI$-adic filtration
\[
\cM
\supseteq
\cI\cdot\cM
\supseteq
\cI^2\cdot\cM
\supseteq
\cdots
\supseteq
\cI^q\cdot\cM
=
0.
\]
For each $j$, the quotient
\[
\cI^j\cdot\cM/\cI^{j+1}\cdot\cM
\]
is a finitely generated $i^*\sA^R_{Y,D}$-module, where
$i\colon D^I=Y_{I^c}\hookrightarrow Y$
is the closed embedding. By Lemma \ref{lm:logsp}, the inclusion
\[
\sA_{Y_{I^c},\bar D_{I^c}}^{R\otimes_\C R_I}
\hookrightarrow
\sA^R_{Y,D}
\]
gives a section of
$\sA^R_{Y,D}\to i^*\sA^R_{Y,D}.$
Therefore,
\[
\cI^j\cdot\cM/\cI^{j+1}\cdot\cM
\]
is a finitely generated
$\sA_{Y_{I^c},\bar D_{I^c}}^{R\otimes_\C R_I}$-module for each $j$. Since
the $\cI$-adic filtration is finite, induction gives the claim.
\end{proof}

\subsection{Sabbah's specialization complexes and the comparison}
\label{subsect:SabSp}

Let
\[
F=(f_1,\dots,f_r)\colon X\to \A^r_\C
\]
be a regular map as in Example \ref{ex:maingrebex}, and let
$\cF^\bullet\in D^b_{\C-c}(X^\an,\C)$
be a constructible complex. For $\bm\in \N^r$ and for a monoid ideal
$K\subseteq \N^r$, the \emph{Sabbah specialization complex} of
$\cF^\bullet$ along $K_\bm$ is
\[
\psi_{K_\bm}(\cF^\bullet)
\coloneqq
Rj_*\big(\cF^\bullet|_U\otimes_\C F^{-1}\psi^{\mathrm{univ}}\big)
\big|_{D_X^{K_\bm}}
\in
D^b_{\C-c}(X^\an,\C[\cG_r]),
\]
where
$j\colon U=X\setminus(f=0)\hookrightarrow X$
is the open embedding and $\psi^{\mathrm{univ}}$ is the universal local system
on
$(\C^*)^r\hookrightarrow \A^r_\C$;
cf. \cite[\S 3.1]{WuRHA}. By definition,
$\psi_{K_\bm}(\cF^\bullet)$ depends only on the support of $Z_{K_\bm}$.

Let $\cM$ be a regular holonomic $\shD_X$-module, and let $\cN_0$ be a
$\shD_{Z,E}$-lattice of $\iota_{F,+}\cM[1/f]$. Then $\cN_0$ is of GE-type
with respect to $F$. We write
\[
\pi=(\id,\operatorname{Exp})
\colon
X^\an\times \C^r\to X^\an\times(\C^*)^r
\]
for the covering map. With the help of Proposition \ref{prop:!dual}, the
following theorem is a natural generalization of \cite[Thm.1.2]{WuRHA}. It
can be proved by a similar argument; we omit the details.

\begin{theorem}\label{thm:sabbspcmp}
With the notation and assumptions above, for a monoid ideal
$K\subseteq \N^r$ and for $\bm\in \N^r$, we have natural quasi-isomorphisms
\[
\begin{aligned}
&
\pi_*^{\sG_r}
\DR_{X^\an\times\C^r/\C^r}
\big(\wt{i^!_{K_\bm}}\cN_0(*E)\big)
\\
&\simeq
\DR_{X^\an\times(\C^*)^r/(\C^*)^r}
\left(
\pi_*^{\sG_r}
\big(\wt{i^!_{K_\bm}}\cN_0(*E)\big)
\right)
\\
&\simeq
\wt\psi_{K_\bm}\big(\DR(\cM)\big).
\end{aligned}
\]
\end{theorem}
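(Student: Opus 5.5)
The plan is to follow the argument of \cite[Thm.1.2]{WuRHA}, using Proposition \ref{prop:!dual} to reduce the statement about $\wt{i^!_{K_\bm}}$ to one about derived local cohomology, where the relative Riemann--Hilbert correspondence of \cite{FFS23} has a known behaviour. We first dispose of the first isomorphism. Since $\pi$ is a covering with deck group $\sG_r$ acting only on the parameter space $\C^r$, and the relative de Rham functor $\DR_{X^\an\times\C^r/\C^r}$ only involves derivatives in the $X$-direction, $\DR$ commutes with the equivariant direct image $\pi_*^{\sG_r}$. This holds because $\pi_*^{\sG_r}$ is computed locally over $(\C^*)^r$ by descending $\sG_r$-equivariant sheaves along a trivialized covering, and $\DR$ is local on $X$ and $\cO_{\C^r}$-linear. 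The $\sG_r$-equivariance of $\wt{i^!_{K_\bm}}\cN_0(*E)$ comes from the equivariance of the minimal extensions, together with Remark \ref{rmk:localizingupper!}.

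For the second isomorphism we work in three steps.

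\textbf{Step one: the case $K=\emptyset$.} Here $\wt{i^!_{K}}\cN_0(*E)=\wt{\cN_0(*E)}\simeq\wt{\cS_\cM}$, with $\cS_\cM=\cM[1/f,\bs]F^\bs$. By \cite[Thm.1.2]{WuRHA}, or its proof via Theorem \ref{thm:mainFFS}, we have
\[\pi_*^{\sG_r}\DR(\wt{\cS_\cM})\simeq \wt{Rj_*(\DR(\cM)|_U\otimes F^{-1}\psi^{\mathrm{univ}})}.\]
This is the basic comparison: $\pi_*^{\sG_r}\wt{\cS_\cM}$ corresponds under Riemann--Hilbert to $Rj_*$ of the universal-local-system twist.

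\textbf{Step two: general $K$ with $\bm=0$.} Proposition \ref{prop:!dual} gives
\[\wt{i^!_K}\wt{\cN_0(*E)}\simeq\wt\D R\Gamma_{[\wt Z_K]}\wt\D(\cdot).\]
Under relative Riemann--Hilbert (see \cite{FFS23} and Lemma \ref{lm:twistdual}), $\wt\D$ becomes Verdier duality $\mathbf D$, and $R\Gamma_{[\wt Z_K]}$ becomes $i_*i^!$ along $\wt Z_K$. Hence $\wt\D R\Gamma\wt\D$ becomes $\mathbf D i_*i^!\mathbf D=i_*i^{-1}$, the restriction to $D_X^{K}$. All these functors commute with $\pi_*^{\sG_r}$, because $\pi$ is a covering in the parameter direction while $\wt Z_K$ is pulled back from $X$. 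Combining with step one yields
\[\pi_*^{\sG_r}\DR(\wt{i^!_K}\cN_0(*E))\simeq \wt{Rj_*(\cdots)|_{D^K_X}}=\wt\psi_K(\DR\cM).\]

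\textbf{Step three: general $\bm$.} The object $\wt{i^!_{K_\bm}}$ is obtained by further applying $(*\wt D_\bm)$. Algebraic localization along $\wt D_\bm$ corresponds under Riemann--Hilbert to $Rj'_*j'^{-1}$ for the complement of $(\alpha(\bm)=0)$ pulled back to $X$. Composing this with restriction to $D_X^K$ gives restriction to the locally closed piece $D_X^{K_\bm}$. This matches the definition of $\psi_{K_\bm}$, which we apply as restriction to $D_X^{K_\bm}$ of the same $Rj_*$ object.

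The main obstacle is the use of relative Riemann--Hilbert at each stage. We need the compatibilities $\DR\circ\wt\D\simeq\mathbf D\circ\DR$ and $\DR\circ R\Gamma_{[\wt Z]}\simeq R\Gamma_{\wt Z}\circ\DR$ for relative regular holonomic modules whose parameter space is only analytic, namely $\C^r$ rather than an algebraic $\Spec R$. We also need these compatibilities to be $\sG_r$-equivariant, so that they descend along $\pi$. We would handle this locally over small $U_\beta$. By Corollary \ref{cor:anlocmaxminex} and \eqref{eq:!Rgamma}, all objects there coincide with analytifications of finitely generated algebraic modules, so the algebraic statements of \cite{FFS23} and \cite{FS13} apply on each $X\times U_\beta$. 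The isomorphisms are canonical, so they glue and are compatible with the deck action, as in \cite[\S2.4]{WuRHA}. A secondary point is that $\wt\psi$ is by construction the analytic sheafification of a $\C[\sG_r]$-constructible complex, which is what step one supplies.
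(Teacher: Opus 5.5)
Your proposal is correct and is essentially the argument the paper has in mind. The paper omits the proof, saying only that it follows \cite[Thm.1.2]{WuRHA} with the help of Proposition \ref{prop:!dual}. That is exactly your route: reduce $\wt{i^!_{K}}$ to $\wt{\mathbb{D}}\,R\Gamma_{[\wt{Z}_K]}\,\wt{\mathbb{D}}$, hence to $i_{\wt{Z}_K}^{-1}$ under relative Riemann--Hilbert, on top of the basic comparison for $\wt{\cN_0(*E)}$, and then localize along $\wt{D}_\bm$. Two minor remarks, neither a gap. First, the analytic-parameter issue you flag is not a real obstacle, since the correspondence of \cite{FFS23} is already analytic in the parameter. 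Second, ``restriction to $D_X^{K_\bm}$'' in your step three must be read as $R(i_{K_\bm})_*i_{K_\bm}^{-1}$ with $i_{K_\bm}\colon D_X^{K_\bm}\hookrightarrow X$, which is the extension forced by the localization $(*\wt{D}_\bm)$.
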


Sabbah specialization complexes generalize Deligne's classical nearby cycles
of constructible sheaves \cite{Sab90}. Motivated by
Theorem \ref{thm:sabbspcmp}, we make the following definition.

\begin{definition}\label{def:grnearby}
Let $\cN$ be a finitely generated $\sA^R_{Y,D}$-module of GE-type with respect
to
\[
G=(g_1,\dots,g_l)
\]
such that $\cN$ is relative regular holonomic over
$\sA_X^{R\otimes_\C R_l}$, with the induced
$\sA_X^{R\otimes_\C R_l}$-module structure given by
\eqref{eq:uncaningen}. For a monoid ideal $K\subseteq \N^l$ and for
$\bm,\bn\in \N^l$, the relative regular holonomic
$\wt{\sA_X^{R\otimes_\C R_l}}$-module
\[
\wt{i^!_{K_\bm}}\cN(*D_\bn)
\]
is called the \emph{generalized nearby cycles} of $\cN(*D_\bn)$ along
$K_\bm$.
\end{definition}

By Theorem \ref{thm:upper!DI} and \cite[Thm. 4.1]{FTM18},
\[
\DR_{X^\an\times\C^l/\C^l}
\big(\wt{i^!_{K_\bm}}\cN(*D_\bn)\big)
\]
is relative perverse over $\Specan(R\otimes_\C R_l)$ in the sense of
\cite[Notation 3.3]{FTM18}. Together with Theorem \ref{thm:sabbspcmp}, and
using the faithful flatness of the analytic sheafification functor, we obtain
in particular that
$\psi_{K_\bm}\big(\DR(\cM[1/f])\big)$
is perverse (with respect to the middle perversity) in the sense of \cite[\S 10.3]{KSbook}; see also \cite[\S 3]{LMW}
for a global algebraic approach.

We define
\[
\lim_{\longleftrightarrow}
\frac{\wt{\alpha(\bm)^{-q}\cN}}
{\sum_{i=1}^p\wt{\alpha(v_i)^j\cN}}
\coloneqq
\varprojlim_j
\varinjlim_q
\frac{\wt{\alpha(\bm)^{-q}\cN}}
{\sum_{i=1}^p\wt{\alpha(v_i)^j\cN}},
\]
as $q,j\to+\infty$, where $v_1,\dots,v_p$ generate the monoid ideal $K$.
Similarly to Proposition \ref{prop:doublelimitrgamma}, Theorem
\ref{thm:upper!DI} immediately gives the following.

\begin{prop}\label{prop:doublelimitnearby}
With the notation and assumptions of Definition \ref{def:grnearby}, we have
\[
\wt{i^!_K}\cN
\simeq
\varprojlim_j
\frac{\wt\cN}
{\sum_{i=1}^p\wt{\alpha(v_i)^j\cN}},
\]
and, for $\bm\in \N^l$,
\[
\wt{i^!_K}\cN(*D_\bm)
\simeq
\lim_{\longleftrightarrow}
\frac{\wt{\alpha(\bm)^{-q}\cN}}
{\sum_{i=1}^p\wt{\alpha(v_i)^j\cN}}
\simeq
\varinjlim_q
\varprojlim_j
\frac{\wt{\alpha(\bm)^{-q}\cN}}
{\sum_{i=1}^p\wt{\alpha(v_i)^j\cN}},
\]
where $v_1,\dots,v_p\in \N^l$ generate $K$.
\end{prop}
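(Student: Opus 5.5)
The first isomorphism will come out of Theorem \ref{thm:upper!DI} by passing to local models. Theorem \ref{thm:upper!DI} gives
\[
\wt{i^!_K}\cN\simeq \frac{\wt\cN}{\sum_{i=1}^p\wt{\cN(!D_{v_i})}}.
\]
By \eqref{eq:relminexge}, for every $\beta\in\Specan(R\otimes_\C R_l)$ there are a small neighborhood $U_\beta$ and an integer $j_0$ such that
\[
\wt{\cN(!D_{v_i})}|_{X\times U_\beta}=\wt{\alpha(v_i)^j\cN}|_{X\times U_\beta}
\quad\text{for all } j\ge j_0 \text{ and all } i .
\]
Since there are finitely many $v_i$, a single $j_0$ works for all of them. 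On $X\times U_\beta$ the projective system
\[
\frac{\wt\cN}{\sum_i\wt{\alpha(v_i)^j\cN}}
\]
is therefore eventually constant, and its value there equals $\wt{i^!_K}\cN|_{X\times U_\beta}$. I will check that the transition maps are the natural surjections and that they are compatible with the maps $\wt{\cN(!D_{v_i})}\hookrightarrow\wt\cN$ from Lemma \ref{lm:!exinclude}; this compatibility is what lets the local identifications glue. A projective limit of sheaves can be computed locally, so the canonical map from $\wt{i^!_K}\cN$ to the limit is an isomorphism on each $X\times U_\beta$, and hence globally.

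For the localized statement I will first apply the same argument to the lattice $\alpha(\bm)^{-q}\cN$ for each fixed $q$. This gives
\[
\wt{i^!_K}\big(\alpha(\bm)^{-q}\cN\big)\simeq\varprojlim_j \frac{\wt{\alpha(\bm)^{-q}\cN}}{\sum_i\wt{\alpha(v_i)^j\alpha(\bm)^{-q}\cN}} .
\]
The denominators here are twisted by $\alpha(\bm)^{-q}$. Because the monomials commute with the two-sided ideal structure (Lemma \ref{lm:twosidedideal}), the families $\{\alpha(v_i)^j\alpha(\bm)^{-q}\cN\}_j$ and $\{\alpha(v_i)^j\cN\}_j$ are cofinal in each other once $q$ is fixed. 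So the limit can be rewritten with denominators $\sum_i\wt{\alpha(v_i)^j\cN}$.

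Next I take $\varinjlim_q$. By Remark \ref{rmk:localizingupper!}(1) and Corollary \ref{cor:anlocmaxminex}, $\wt{i^!_K}\cN(*D_\bm)$ agrees locally over $U_\beta$ with $\wt{i^!_K}(\alpha(\bm)^{-q}\cN)$ for $q\gg0$. Corollary \ref{cor:latticesinc} (equivalently, exactness via Corollary \ref{cor:exactupper!}) makes the transition maps in $q$ injective, so $\varinjlim_q\wt{i^!_K}(\alpha(\bm)^{-q}\cN)\simeq\wt{i^!_K}\cN(*D_\bm)$. This gives the formula $\varinjlim_q\varprojlim_j$.

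To exchange the two limits I will mimic Proposition \ref{prop:doublelimitrgamma}. Locally on $X\times U_\beta$, both systems stabilize for $q,j\gg0$. Corollary \ref{cor:anlocmaxminex} gives stabilization in $q$, and \eqref{eq:relminexge} applied to $\cN(*D_\bm)$ gives stabilization in $j$, uniformly once $U_\beta$ is shrunk. With both systems eventually constant, the Mittag-Leffler condition holds trivially and $\varprojlim_j\varinjlim_q$ agrees with $\varinjlim_q\varprojlim_j$ locally, hence globally.

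The main obstacle I expect is this uniformity: I need one neighborhood $U_\beta$ on which the $j$-stabilization holds for all large $q$ simultaneously. I would get it from the fact that, locally, $\alpha(\bm)^{-q}\cN$ is independent of $q$ for $q\ge q_0$, by Corollary \ref{cor:anlocmaxminex}. That reduces the question to finitely many lattices, and for each of those \eqref{eq:relminexge} supplies a neighborhood.
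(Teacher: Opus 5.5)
Your route is the paper's own: Theorem \ref{thm:upper!DI}, plus the local stabilization \eqref{eq:relminexge} and Corollary \ref{cor:anlocmaxminex}, with limits computed locally as in Proposition \ref{prop:doublelimitrgamma}. What you actually establish is correct. That is, the first isomorphism holds (for non-lattice $\cN$, combine \eqref{eq:relminexge} with \eqref{eq:naturalkilltorsion}, as in the proof of Theorem \ref{thm:upper!DI}), and so does the twisted formula
\[
\wt{i^!_K}\cN(*D_\bm)\simeq\varinjlim_q\varprojlim_j\frac{\wt{\alpha(\bm)^{-q}\cN}}{\sum_{i=1}^p\wt{\alpha(v_i)^j\alpha(\bm)^{-q}\cN}}.
\]

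The gap is the cofinality step that removes the twist. Putting $\alpha(v_i)^{j'}\alpha(\bm)^{-q}\cN$ inside $\alpha(v_i)^{j}\cN$ means $\alpha(v_i)^{j'-j}\alpha(\bm)^{-q}\cN\subseteq\cN$. If $\bm$ has a nonzero entry in a coordinate where $v_i$ vanishes, no power of $\alpha(v_i)$ cancels it, and the inclusion fails for a genuine lattice. Lemma \ref{lm:twosidedideal} does not help here.

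This is not a technicality. Take $K=\langle e_1\rangle$ and $\bm=e_2$. The untwisted quotient $\wt{t_2^{-q}\cN}/\wt{t_1^{j}\cN}$ surjects onto $\wt{t_2^{-q}\cN}/\wt\cN$, which is nonzero near points where $g_2=0\neq g_1$ (for instance $\cN=\cN_F$, $F=(x_1,x_2)$). So the untwisted double limit is nonzero there. But by Theorem \ref{thm:upper!DI} together with Remark \ref{rmk:localizingupper!}(1), $\wt{i^!_K}\cN(*D_\bm)=\wt{\cN(*D_\bm)}/\wt{\cN(*D_\bm)(!D_1)}$ is supported over $D^K_X=(g_1=0)$. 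Hence the identity with denominators $\alpha(v_i)^j\cN$ cannot hold in general without a support condition on $\bm$. Note that in the proof of Theorem \ref{thm:expofBF} the paper itself works with the twisted denominators $\alpha(K)^j\cdot\cN_0^{-k}$.

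Your argument goes through under the hypothesis that every coordinate where $\bm$ is nonzero is one where each $v_i$ is nonzero (e.g.\ $K=\langle\mathbf 1\rangle$). Pick $c$ with $cv_i-\bm\in\N^l$ for all $i$. Then
\[
\alpha(v_i)^{j}\cN\subseteq\alpha(v_i)^{j}\alpha(\bm)^{-q}\cN,\qquad \alpha(v_i)^{j+cq}\alpha(\bm)^{-q}\cN=\alpha(v_i)^{j}\alpha(cv_i-\bm)^{q}\cN\subseteq\alpha(v_i)^{j}\cN
\]
give the cofinality you want. Since the untwisted denominators do not involve $q$, swapping $\varprojlim_j$ and $\varinjlim_q$ is then immediate, and your uniformity worry disappears.

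Your proposed fix for that uniformity issue would not work in general, for two reasons. First, local analytic equality of $\wt{\alpha(\bm)^{-q}\cN}$ for $q\ge q_0$ does not make the lattices equal. Second, the $j$-threshold in \eqref{eq:relminexge} for $\alpha(v_i)^j\alpha(\bm)^{-q}\cN$ is not bounded in $q$. For the twisted system the two orders of limits genuinely differ. With $K=\langle e_1\rangle$ and $\bm=e_1$, one has $\varinjlim_q t_1^{-q}\cN/t_1^{j-q}\cN=\cN(*D_1)/\cN(*D_1)=0$ for every $j$. By contrast, $\wt{i^!_K}\cN(*D_1)=\wt{\cN(*D_1)}/\wt{\cN(!D_1)}$ is nonzero in general.
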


\begin{remark}\label{rmk:analyticlattices}
By the same methods, one can also construct relative minimal extensions,
$!$-pullbacks, and algebraic local cohomology complexes for coherent
$\wt{\sA^R_{Y,D}}$-modules $\sN$ of GE-type satisfying that
$\sN(*\wt D)$ is a relative regular holonomic $\wt{\sA^R_Y}$-module. These
objects have the same properties as in the algebraic case.
\end{remark}

Taking $K\subseteq \N^r$ to be the monoid ideal generated by
$\sum_{i=1}^r e_i$, we obtain the special generalized nearby cycles
\[
\Psi_F(\cM)
\coloneqq
\frac{\wt{\cN_0(*E)}}{\wt{\cN_0(!E)}}
\]
studied in \cite{WuRHA}. By \cite[Cor.1.3 and Thm.1.4]{WuRHA},
\[
\supp_X
\left(
\pi^{\sG_r}_*
\frac{\wt{\cN_0(*E)}}{\wt{\cN_0(!E)}}
\right)
\subseteq
(\C^*)^r
\]
is a finite union of translated subtori. If $\cM$ underlies a $\Q$-mixed Hodge
module, then, with the help of Theorem \ref{thm:latticemtqsupp}, this support
is a finite union of torsion-translated subtori.

Since
\[
\frac{\wt{\cN_0(!E)(*E_\bm)}}{\wt{\cN_0(!E)}}
\]
is a submodule of
$\frac{\wt{\cN_0(*E)}}{\wt{\cN_0(!E)}}$,
the same containment holds for the support
\[
\supp_X
\left(
\pi^{\sG_r}_*
\frac{\wt{\cN_0(!E)(*E_\bm)}}{\wt{\cN_0(!E)}}
\right)
\]
for every $\bm\in \N^r$.

Assume now that $\cM$ underlies a $\Q$-mixed Hodge module. For a general
torsion point
\[
\lambda=(\lambda_1,\dots,\lambda_r)
\in
\supp_X
\left(
\pi^{\sG_r}_*
\frac{\wt{\cN_0(!E)(*E_\bm)}}{\wt{\cN_0(!E)}}
\right),
\]
more precisely, for every torsion point
\[
\lambda\in
\supp
\left(
\pi^{\sG_r}_*
\frac{\wt{\cN_0(!E)(*E_\bm)}}{\wt{\cN_0(!E)}}
\right)
\]
that is contained in exactly one irreducible component of
\[
\supp
\left(
\pi^{\sG_r}_*
\frac{\wt{\cN_0(*E)}}{\wt{\cN_0(!E)}}
\right),
\]
there exists $i_0\in\{1,\dots,r\}$ such that the one-dimensional subtorus
\[
W=
\{\beta\in(\C^*)^r\mid \beta_i=\lambda_i
\textup{ for } i\neq i_0\}
\subseteq
(\C^*)^r
\]
intersects
\[
\supp
\left(
\pi^{\sG_r}_*
\frac{\wt{\cN_0(!E)(*E_\bm)}}{\wt{\cN_0(!E)}}
\right)
\]
properly around $\lambda$.

\begin{prop}\label{prop:essfibMHM}
With the notation above, assume that $\cM$ underlies a $\Q$-mixed Hodge module.
Then, for every $\bm\in \N^r$ and for a general torsion point
\[
\lambda\in
\supp
\left(
\pi^{\sG_r}_*
\frac{\wt{\cN_0(!E)(*E_\bm)}}{\wt{\cN_0(!E)}}
\right),
\]
the essential fiber of
\[
\pi^{\sG_r}_*
\frac{\wt{\cN_0(!E)(*E_\bm)}}{\wt{\cN_0(!E)}}
\]
at $\lambda$ along $W$ is a coherent submodule of a regular holonomic
$\shD_{X^\an}$-module underlying a $\Q$-mixed Hodge module.
\end{prop}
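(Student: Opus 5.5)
Locally near $\lambda$, I would replace the equivariant module by an honest algebraic quotient of lattices. Then I would identify its essential fiber along $W$ with a submodule of a $V$-filtration graded piece of a nearby cycle of $\cM[1/f]$ along a single function, which underlies a $\Q$-mixed Hodge module.

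Step 1. Choose a lift $\tilde\lambda\in\Q^r$ of $\lambda$ under $\textup{Exp}$. Over a small neighborhood $U$ of $\tilde\lambda$, Corollary \ref{cor:anlocmaxminex} and \eqref{eq:relminexge} give
\[
\wt{\cN_0(!E)}|_{X\times U}=\wt{\alpha(\bn)^{k}\cN_0}|_{X\times U}
\quad\text{and}\quad
\wt{\cN_0(!E)(*E_\bm)}|_{X\times U}=\wt{\alpha(\bm)^{-q}\alpha(\bn)^{k}\cN_0}|_{X\times U}
\]
for $k,q\gg0$, where $\bn=(1,\dots,1)$. Since $\pi$ is a covering, $\pi^{\sG_r}_*$ is locally just restriction to a sheet. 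Hence, near $\lambda$, the module in question is the analytification of the finitely generated, relative holonomic $\sA_X^r$-module
\[
\cQ=\alpha(\bm)^{-q}\cN'/\cN', \qquad \cN'=\alpha(\bn)^k\cN_0 .
\]
By Lemma \ref{lm:lattictwist}, $\cQ$ has a finite filtration whose graded pieces are of the form $\cN''/t_i\cN''$ for lattices $\cN''$ obtained by twisting $\cN'$ by divisors supported on $E$.

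Step 2. Let $\tilde W\subseteq\C^r$ be the line through $\tilde\lambda$ in the $s_{i_0}$-direction, i.e.\ $s_i=\tilde\lambda_i$ for $i\neq i_0$. Since $\lambda$ is general, Proposition \ref{prop:essfib} (in the analytic form of the subsequent remark) shows that the essential fiber is the holonomic $\shD_{X^\an}$-module $\cQ/(s_i-\tilde\lambda_i)_{i\neq i_0}\cQ$, localized near $\tilde\lambda$. Fixing $s_i=\tilde\lambda_i\in\Q$ for $i\ne i_0$ amounts to replacing $\cM[1/f]$ by the one-variable datum
\[
\cM' = \cM[1/f]\prod_{i\ne i_0}f_i^{\tilde\lambda_i}
\]
along $f_{i_0}$. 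This $\cM'$ is a twist of $\cM[1/f]$ by a rank-one local system of finite order monodromy, so it still underlies a $\Q$-mixed Hodge module, possibly after a finite cover or Kummer trick. The essential fiber therefore becomes a subquotient of $\cN'_{i_0}/(s_{i_0}-\tilde\lambda_{i_0})^N$, where $\cN'_{i_0}$ is a $\shD_X[s_{i_0}]$-lattice for $\cM'$ along $f_{i_0}$.

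Step 3. Via the Kashiwara--Malgrange filtration along $f_{i_0}$, which is $\Q$-indexed by Lemma \ref{lm:sqstQsp}(3) and Theorem \ref{thm:latticemtqsupp}, the localization at $s_{i_0}=\tilde\lambda_{i_0}$ of $\alpha(\bm)^{-q}\cN'/\cN'$ identifies with a subquotient, and in fact a submodule by the tame purity of Lemma \ref{lm:tamepurechv}, of $\bigoplus_\alpha\gr^V_\alpha\cM'_{f_{i_0}}$ with $\alpha\equiv\tilde\lambda_{i_0}$ modulo $\Z$. That is, it sits in the unipotent-twisted nearby cycles $\psi_{f_{i_0},\lambda_{i_0}}$. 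These underlie $\Q$-mixed Hodge modules by Saito's theory. Coherence is automatic, since the essential fiber is holonomic.

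The main obstacle is Step 3, specifically showing that one gets a submodule rather than only a subquotient. I would first try to prove that $\cN''/t_{i_0}\cN''$, after specializing the other $s_i$, injects into $V_{\alpha}/V_{\alpha-1}$ for a suitable $\alpha$. I would also try to avoid twisting issues by reducing to a product of $\gr^V$ pieces, using an Artin--Rees argument (Theorem \ref{thm:artin-reeslog}) to compare the lattice filtration with $V$.
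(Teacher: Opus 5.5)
Step 1 is fine; it is what Corollary \ref{cor:anlocmaxminex} and \eqref{eq:relminexge} give. The gap is in Steps 2--3, and all the content of the proposition lives there.

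Nothing you invoke identifies the essential fiber of $\cQ=\alpha(\bm)^{-q}\cN'/\cN'$ along $\tilde W$ with a submodule of $\bigoplus_\alpha\gr^V_\alpha\cM'_{f_{i_0}}$.

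\begin{itemize}
\item Fixing $s_i=\tilde\lambda_i$ for $i\neq i_0$ does not simply ``replace $\cM[1/f]$ by $\cM'$.'' The operator $\alpha(\bm)^{-q}$ contains the factors $t_i^{-qm_i}$, and these shift $s_i$. So after specialization the numerator of $\cQ$ is $\prod_j f_j^{-qm_j}$ times the specialization of $\cN'$ at the shifted line $\tilde W-q\bm$. The denominator is the specialization at $\tilde W$ itself.
\item Hence numerator and denominator differ by poles along the $f_j$ with $m_j\neq 0$, $j\neq i_0$, and they involve two different specializations of $\cN'$. When $m_{i_0}=0$ there is no $t_{i_0}$-adic structure at all to compare with $V_\bullet$ along $f_{i_0}$.
\item Lemma \ref{lm:tamepurechv} only gives purity of $\cL/\alpha(\bm)\cL$ and compares top-dimensional characteristic varieties.
\item Theorem \ref{thm:artin-reeslog} only compares $\alpha(\bm)$-adic filtrations of a submodule with those of an ambient module.
\item Neither relates lattices to the Kashiwara--Malgrange filtration.
\item Your d\'evissage of $\cQ$ into pieces $\cN''/t_i\cN''$ can by construction only produce subquotients, while the statement asks for a submodule.
\end{itemize}

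The missing idea is that you need not analyze $\cQ$ at all.

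\begin{itemize}
\item Since $\wt{\cN_0(!E)(*E_\bm)}\subseteq\wt{\cN_0(*E)}$, the module in question is a \emph{submodule} of $\Psi_F(\cM)=\wt{\cN_0(*E)}/\wt{\cN_0(!E)}$.
\item For this larger module, the one-function comparison you are trying to reprove is already available. Lift $\lambda$ to a very general $\Q$-point of $\supp(\Psi_F(\cM))$. Then \cite[Cor.~2.18]{WuRHA}, together with stability of $\Q$-mixed Hodge modules under nearby cycles along one function, shows that the essential fiber of $\pi^{\sG_r}_*\Psi_F(\cM)$ at $\lambda$ along $W$ underlies a direct summand of a $\Q$-mixed Hodge module.
\item Because $\lambda$ is general, taking essential fibers along $W$ preserves the inclusion. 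The $\mathrm{Tor}_1$ of the cokernel against the structure sheaf of $W$ vanishes near a general point, by generic Cohen--Macaulayness as in Lemma \ref{lm:genericCMness} and Proposition \ref{prop:essfib}.
\end{itemize}

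This yields the submodule statement directly, with no comparison between lattice filtrations and $V$-filtrations.
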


\begin{proof}
Since $\lambda$ is a general torsion point, we can lift $\lambda$ to a very
general $\Q$-point
\[
\alpha\in
\supp
\left(
\frac{\wt{\cN_0(*E)}}{\wt{\cN_0(!E)}}
\right).
\]
By \cite[Cor.2.18]{WuRHA}, and since nearby cycles of $\Q$-mixed Hodge
modules along one function remain $\Q$-mixed Hodge modules, the essential
fiber of
\[
\pi^{\sG_r}_*
\frac{\wt{\cN_0(*E)}}{\wt{\cN_0(!E)}}
\]
at $\lambda$ along $W$, which is the same as the essential fiber of
\[
\frac{\wt{\cN_0(*E)}}{\wt{\cN_0(!E)}}
\]
at $\alpha$ along a lift of $W$, underlies a direct summand of a
$\Q$-mixed Hodge module. Since $\lambda$ is general, the essential fiber of
\[
\pi^{\sG_r}_*
\frac{\wt{\cN_0(!E)(*E_\bm)}}{\wt{\cN_0(!E)}}
\]
at $\lambda$ along $W$ is a submodule of that of
\[
\pi^{\sG_r}_*
\frac{\wt{\cN_0(*E)}}{\wt{\cN_0(!E)}}.
\]
This proves the proposition.
\end{proof}

\begin{remark}\label{rmk:locessfiMHM}
Proposition \ref{prop:essfibMHM} is compatible with localizations as in
Remark \ref{rmk:localizingupper!}(2). More precisely, since direct images of
$\Q$-mixed Hodge modules remain $\Q$-mixed Hodge modules, for
$J\subseteq\{1,\dots,r\}$ and for a general torsion point
\[
\lambda\in
\supp_{Y_J\setminus E_J}
\left(
\pi^{\sG_{r-|J|}}_*
\frac{
\wt{\cN_0(*E_J)(!E)(*E_\bm)}^{Y_J\setminus E_J}
}{
\wt{\cN_0(*E_J)(!E)}^{Y_J\setminus E_J}
}
\right)
\subseteq
(\C^*)^{r-|J|},
\]
the essential fiber of
\[
\pi^{\sG_{r-|J|}}_*
\frac{
\wt{\cN_0(*E_J)(!E)(*E_\bm)}^{Y_J\setminus E_J}
}{
\wt{\cN_0(*E_J)(!E)}^{Y_J\setminus E_J}
}
\]
at $\lambda$ along the corresponding one-dimensional subtorus
$W\subseteq(\C^*)^{r-|J|}$
is also a coherent submodule of a regular holonomic $\shD_{Y_J^\an}$-module
underlying a $\Q$-mixed Hodge module. Here, by abuse of notation,
\[
\pi\colon Y_J\times \C^{r-|J|}
\to
Y_J\times(\C^*)^{r-|J|}
\]
still denotes the covering map.
\end{remark}

We now know the geometry of the relative support of
${\wt{\cN_0(*E)}}/{\wt{\cN_0(!E)}}$.
We also want to understand the geometry of
$\supp\big(\wt{i^!_{K_\bm}}\cN_0(*E)\big)$
in general.

\begin{theorem}\label{thm:linearsupp}
In the situation of Theorem \ref{thm:sabbspcmp}, the following statements hold:
\begin{enumerate}[label=$(\arabic*)$]
    \item The $\sG_r$-invariant subset
    \[
    \supp\big(\wt{i^!_{K_\bm}}\cN_0(*E)\big)\subseteq \C^r
    \]
    is an infinite union of translated linear subvarieties such that
    \[
    \supp\big(\psi_{K_\bm}(\DR(\cM))\big)
    =
    \operatorname{Exp}
    \big(
    \supp(\wt{i^!_{K_\bm}}\cN_0(*E))
    \big)
    \subseteq
    (\C^*)^r
    \]
    is a finite union of translated subtori.

    \item For every $\bn\in \N^r$,
    \[
    \supp\big(\wt{i^!_{K_\bm}}\cN_0(*E_\bn)\big)\subseteq \C^r
    \]
    is an infinite union of translated linear subvarieties.

    \item If $\sN$ is a coherent $\wt{\sA_X^{R_r}}$-submodule of
    $\wt{i^!_{K_\bm}}\cN_0(*E_\bn)$ for some $\bn\in \N^r$, then
    \[
    \supp(\sN)\subseteq \C^r
    \]
    is a union of translated linear subvarieties.

    \item If, moreover, $\cM$ underlies a $\Q$-mixed Hodge module, then
    \[
    \supp\big(\wt{i^!_{K_\bm}}\cN_0(*E)\big)
    \]
    is an infinite union of $\Q$-translated linear subvarieties such that
    \[
    \supp\big(\psi_{K_\bm}(\DR(\cM))\big)
    =
    \operatorname{Exp}
    \big(
    \supp(\wt{i^!_{K_\bm}}\cN_0(*E))
    \big)
    \subseteq
    (\C^*)^r
    \]
    is a finite union of torsion-translated subtori.

    \item If $\cM$ underlies a $\Q$-mixed Hodge module and if $\sN$ is a
    coherent $\wt{\sA_X^{R_r}}$-submodule of
    $\wt{i^!_{K_\bm}}\cN_0(*E_\bn)$ for some $\bn\in \N^r$, then
    \[
    \supp(\sN)\subseteq \C^r
    \]
    is a union of $\Q$-translated linear subvarieties.
\end{enumerate}
\end{theorem}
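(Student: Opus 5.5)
Parts (1) and (2) come first, then (3); the $\Q$-mixed Hodge module refinements (4) and (5) are obtained by the same induction with one extra input.

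\textbf{Step 1: reduction to a dR-algebraic module.} By Theorem \ref{thm:sabbspcmp}, $\pi_*^{\sG_r}\wt{i^!_{K_\bm}}\cN_0(*E)$ has relative de Rham complex $\wt\psi_{K_\bm}(\DR(\cM))$. Sabbah's specialization complex is an algebraic $\C$-constructible complex of $\C[\sG_r]$-modules, so this module is dR-algebraic. Lemma \ref{lm:dralg} then shows that its support in $(\C^*)^r$ is an algebraic subvariety $Z_\psi$, and that $\operatorname{Exp}$ of the $\sG_r$-invariant support upstairs equals $Z_\psi$. Hence the support upstairs is $\operatorname{Exp}^{-1}(Z_\psi)$, which is analytic, countable in components and $\sG_r$-invariant.

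\textbf{Step 2: linearity via Ax--Lindemann.} I would invoke the Budur--Wang criterion (Lemma \ref{lm:BudurWang}): if an algebraic subvariety of $(\C^*)^r$ has preimage under $\operatorname{Exp}$ that is locally a finite union of algebraic subvarieties of $\C^r$, then it is a finite union of translated subtori. So I need algebraicity of the support upstairs, locally on $\C^r$. By Proposition \ref{prop:doublelimitnearby} together with \eqref{eq:relminexge} and Corollary \ref{cor:anlocmaxminex}, on each small $U\subseteq\C^r$ the module $\wt{i^!_{K_\bm}}\cN_0(*E)$ coincides with the analytification of an algebraic quotient $\alpha(\bm)^{-q}\cN_0/\sum_i\alpha(v_i)^j\cN_0$ for $q,j\gg0$. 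Such a quotient is finitely generated and relative holonomic over $\sA_X^r$, so its support is algebraic; this is the same continuation argument as in Corollary \ref{cor:algsupploccoh}. Each irreducible component upstairs is therefore algebraic, and its image is a component of a translated subtorus. Consequently each such component is a translated linear subvariety, which gives (1).

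For (2), Corollary \ref{cor:latticesinc} embeds $\wt{i^!_{K_\bm}}\cN_0(*E_\bn)$ into the $*E$ version. Part (2) then reduces to (3) applied to this submodule.

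\textbf{Step 3: submodules, the main obstacle.} For a coherent submodule $\sN$, the support is locally algebraic by the same local algebraic models. The difficulty is that a component of $\supp(\sN)$ may be strictly smaller than a component of the ambient support, for instance an embedded component. I plan to argue by induction over log strata and over the pure filtration (Remark \ref{rmk:analyticpurefil}). At a general point of a component $V$ of $\supp(\sN)$, I would cut by general hypersurfaces transverse to $V$ as in the proof of Theorem \ref{thm:mainsuppfiberation}. This should reduce to the case where the relevant piece is a lattice quotient of the form $\cL/\alpha(\bm)\cL$ over a holonomic fiber. In that case Theorem \ref{thm:mainsuppfiberation} and Theorem \ref{thm:latticemtqsupp} describe the top supports as codimension-one translated linear hyperplanes with integral slopes. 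Intersections of these hyperplanes across strata remain translated linear. I expect controlling embedded components to require the purity statements in Corollary \ref{cor:puregradequot} and Lemma \ref{lm:tamepurechv}; this is the hardest step.

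\textbf{Step 4: the $\Q$-mixed Hodge module case.} Here I need a rational point on each component. Proposition \ref{prop:essfibMHM} and Remark \ref{rmk:locessfiMHM} show that the essential fibers at general torsion points underlie $\Q$-mixed Hodge modules, and these are $\Q$-specializable by Lemma \ref{lm:sqstQsp}. The $\Q$-version of Theorem \ref{thm:latticemtqsupp} then makes the hyperplanes $\Q$-translated. Running the same stratum induction, using that nearby cycles preserve $\Q$-mixed Hodge modules, shows that every component contains a dense set of $\Q$-points. Combined with linearity, this yields $\Q$-translated linear subvarieties and torsion-translated subtori, proving (4) and (5).
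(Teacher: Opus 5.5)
Your Steps 1--2 do handle (1). Dimension-theoretic (dR-)algebraicity of $\pi_*^{\sG_r}\wt{i^!_{K}}\cN_0(*E)$ comes from Theorem \ref{thm:sabbspcmp}. Local algebraic models plus continuation make the components upstairs algebraic, and Lemma \ref{lm:BudurWang} then applies. The paper uses the same mechanism, but applies it to each graded piece of the pure filtration.

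The genuine gap is Step 3. A component $V$ of $\supp(\sN)$ may be a proper closed subset of a component $V'$ of the ambient support. Lemma \ref{lm:BudurWang} needs $\operatorname{Exp}(V)$ to be algebraic, and Step 1 only gives this for $V'$. Your cutting plan cannot supply it, for three reasons:
\begin{enumerate}
\item $\sN$ is an arbitrary coherent analytic submodule, neither algebraic nor $\sG_r$-equivariant, so neither the local algebraic models nor $\pi_*^{\sG_r}$ apply to it.
\item The cutting in Theorem \ref{thm:mainsuppfiberation} is by $b\in R$, a base parameter. Here $R=\C$ and the whole support lies in the $s$-directions, where $t_ib(s)=b(s+e_i)t_i$. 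So $b\cN$ is not a log submodule, and you never reach a lattice quotient.
\item Theorems \ref{thm:mainsuppfiberation} and \ref{thm:latticemtqsupp} only control the top-dimensional, fiberwise codimension-one part of $\supp(\cN/\alpha(\bm)\cN)$. That is exactly what embedded components are not.
\end{enumerate}

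The missing idea is to use the pure filtration $T_\bullet$ of $\wt{i^!_K}\cN_0(*E)$ itself. By Lemma \ref{lm:dualalPF}, $\widetilde{\mathbb{D}}(\wt{\cN_0(*E)})\simeq\wt{\cN_0'(!E)}$, and $\sG_r$-equivariance of duality, $T_\bullet$ is $\sG_r$-equivariant and commutes with $\pi_*^{\sG_r}$. Since $\pi_*^{\sG_r}\wt{i^!_K}\cN_0(*E)$ and its dual are both dR-algebraic, \cite[Thm.~4.1]{FTM18} makes each $\pi_*^{\sG_r}(T_p/T_{p+1})$ dR-algebraic. Corollary \ref{cor:algsupploccoh} makes the components of $\supp(T_p/T_{p+1})$ algebraic upstairs, because the $\operatorname{Ext}$ sheaves are $R^p\Gamma_{[\widetilde Z_K]}\wt{\cN_0'(!E)}$ by Proposition \ref{prop:!dual}. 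So Lemma \ref{lm:BudurWang} applies to every component of every graded piece. Finally, $\sN\cap T_\bullet$ is the pure filtration of $\sN$. Each nonzero $(\sN\cap T_p)/(\sN\cap T_{p+1})$ is a $p$-pure submodule of $T_p/T_{p+1}$, so its equidimensional support is a union of components of $\supp(T_p/T_{p+1})$. This gives (3), and gives (5) once those components are known to be $\Q$-translated.

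There are two further omissions.

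\emph{Infiniteness in (2).} You never show the union is infinite, and without $\sG_r$-invariance this needs an argument. Suppose $\supp(\wt{i^!_K}\cN_0)$ were a finite union. The exact sequence $0\to\wt{i^!_K}\cN_0\to\wt{i^!_K}\cN_0(*E)\to\wt{i^!_K}(\cN_0(*E)/\cN_0)\to0$ would then force almost all $\sG_r$-translates of a component of $\supp(\wt{i^!_K}\cN_0(*E))$ into $\supp(\cN_0(*E)/\cN_0)$. Sabbah's $b$-functions confine that support to finitely many half-lattices of hyperplanes $\bigcup_{p\ge0}(\sum_il_is_i+\gamma-p=0)$ with $L\in\N^r\setminus\{0\}$. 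Far enough diagonal translates escape these, a contradiction.

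\emph{Rational points in Step 4.} The rational points are assumed rather than produced. Proposition \ref{prop:essfibMHM} concerns only $\wt{\cN_0(!E)(*E_\bm)}/\wt{\cN_0(!E)}$ and presupposes general torsion points in its support. It says nothing about torsion points on components of $\supp(\wt{i^!_K}\cN_0(*E))$ for general $K$, let alone embedded ones. The paper proceeds as follows:
\begin{enumerate}
\item Via (3) and Lemma \ref{lm:dualalPF}, it reduces to codimension-$q$ components of $\supp(R^q\Gamma_{[\widetilde Z_K]}\wt{\cN_0'(!E)})$.
\item It inducts on $K^i=\langle e_1,\dots,e_i\rangle$ through the triangles \eqref{eq:distria1}. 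The induction carries two properties: torsion-translatedness, and that essential fibers at general torsion points are iterated extensions of subquotients of $\Q$-mixed Hodge modules.
\item The torsion point on each new component comes from Theorem \ref{thm:equivmainsuppfiberation}(3), applied over torsion points of the base. Only then is Lemma \ref{lm:BudurWang}(2) applicable.
\end{enumerate}
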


To prove the theorem, we need the following Ax--Lindemann type criterion of
Budur and Wang.

\begin{lemma}[Budur--Wang]\label{lm:BudurWang}
Let $V\subseteq \C^r$ and $W\subseteq(\C^*)^r$ be irreducible algebraic
subvarieties such that
$\operatorname{Exp}(V)=W.$
Then:
\begin{enumerate}[label=$(\arabic*)$]
    \item $V$ is a translated linear subvariety of $\C^r$, and $W$ is a
    translated subtorus;
    \item if $V$ has a $\Q$-point, then $V$ is a $\Q$-translated linear
    subvariety and $W$ is a torsion-translated subtorus.
\end{enumerate}
\end{lemma}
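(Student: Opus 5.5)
The plan is to deduce the lemma from Ax's theorem (the Ax--Lindemann--Weierstrass statement for algebraic tori), followed by an elementary covering-space argument. Ax's theorem says that for an irreducible algebraic subvariety $V\subseteq\C^r$, the Zariski closure of $\operatorname{Exp}(V)$ in $(\C^*)^r$ is a translate of an algebraic subtorus. Since $\operatorname{Exp}(V)=W$ is already an irreducible algebraic subvariety, it equals its own Zariski closure. Hence $W=w\cdot T$ for some $w\in(\C^*)^r$ and some subtorus $T\subseteq(\C^*)^r$. This gives the second half of part $(1)$.

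Next I will identify $V$ itself. The subtorus $T$ is cut out by binomials, so $\operatorname{Exp}^{-1}(T)=\bigsqcup_{k\in\Z^r}(L+k)$, where $L$ is a linear subspace of $\C^r$ defined over $\Q$ with $\operatorname{Exp}(L)=T$; in the language of \S\ref{subsect:logDmodule}, $L$ is a linear subvariety. Choose $b\in\C^r$ with $\operatorname{Exp}(b)=w$. Then
\[
\operatorname{Exp}^{-1}(W)=\bigsqcup_{k\in\Z^r}(b+L+k)
\]
is a countable disjoint union of parallel translates. These translates are closed and locally finite, since they form a discrete family in the quotient $\C^r/L$. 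The variety $V$ is irreducible, hence connected in the Euclidean topology, and it lies in this union. Therefore $V\subseteq b+L+k$ for a single $k$; after replacing $b$ by $b+k$ we may assume $V\subseteq b+L$.

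The restriction $\operatorname{Exp}\colon b+L\to W$ is a covering map, in particular a local biholomorphism. Since $\operatorname{Exp}(V)=W$, this forces $\dim V\ge\dim W=\dim L$. Because $V$ is a closed irreducible subvariety of the irreducible affine space $b+L$ of the same dimension, we conclude $V=b+L$. So $V$ is a translated linear subvariety, which proves $(1)$. For $(2)$, if $q\in V\cap\Q^r$, then $V=q+L$ is a $\Q$-translated linear subvariety. Moreover $W=\operatorname{Exp}(q)\cdot T$, and $\operatorname{Exp}(q)$ is a torsion point because $q$ has rational coordinates, so $W$ is a torsion-translated subtorus.

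The only substantive input is Ax's theorem; everything after it is formal. I expect the main difficulty to be stating Ax's theorem in exactly the form needed, namely that the Zariski closure of the exponential image of an irreducible algebraic variety is a coset of a subtorus. If a precise reference proves awkward, I would fall back on citing Budur--Wang's original argument directly.
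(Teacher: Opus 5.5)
Your proof is correct. The paper does not prove part $(1)$; it only cites \cite[Prop.~1.4 and Lem.~2.1]{BWjlk}. You instead derive it directly, in two steps:
\begin{enumerate}
\item Ax's theorem shows that the Zariski closure of $\operatorname{Exp}(V)$, which is $W$ itself, is a coset $w\cdot T$ of a subtorus.
\item A connectedness and dimension argument places $V$ inside one translate $b+L$ of $\operatorname{Exp}^{-1}(T)$ and then forces $V=b+L$.
\end{enumerate}
This makes the lemma self-contained apart from a standard transcendence theorem. Your part $(2)$ is the same as the paper's ``follows from the definition.''

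One cosmetic fix: index the translates $b+L+k$ by $\Z^r/(\Z^r\cap L)$ rather than by $\Z^r$. As written the union is not disjoint, because $L+k=L+k'$ whenever $k-k'\in L$. Local finiteness then follows because $L$ is defined over $\Q$, so the image of $\Z^r$ in $\C^r/L$ is discrete; you already point this out.
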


\begin{proof}
Part $(1)$ is \cite[Prop.1.4 and Lem.2.1]{BWjlk}. Part $(2)$ follows
directly from the definition.
\end{proof}

We also need the following equivariant version of
Theorem \ref{thm:mainsuppfiberation}.

\begin{theorem}\label{thm:equivmainsuppfiberation}
Let $\cN$ be an $\sA^R_{Y,D}$-lattice, respectively a
$\wt{\sA^R_{Y,D}}$-lattice, of GE-type with respect to
$G=(g_1,\dots,g_l)$,
such that $\cN(*D)$ is a relative regular holonomic $\sA^R_Y$-module,
respectively a relative regular holonomic $\wt{\sA^R_Y}$-module. Let
$V\subseteq \supp_Y(\cN(*D))$
be the closed subset such that $\cN(*D)$ is Cohen--Macaulay away from $V$;
see Lemma \ref{lm:genericCMness}. Then:
\begin{enumerate}[label=$(\arabic*)$]
    \item The projection
    \[
    p\colon \Specan R\times(\C^*)^l\to \Specan R
    \]
    induces a surjective morphism
    \[
    \bar p\colon
    \supp_X^h
    \left(
    \pi_{R*}^{\sG_l}
    \frac{\wt{\cN(*D)}}{\wt{\cN(!D)}}
    \right)
    \to
    \supp_Y(\cN(*D)),
    \]
    such that for every
    $a\in \supp_Y(\cN(*D))\subseteq \Specan R$,
    the fiber
    $\bar p^{-1}(a)\subseteq(\C^*)^l$
    is a finite union of translated subtori of codimension one. Here
    \[
    \supp_X^h
    \left(
    \pi_{R*}^{\sG_l}
    \frac{\wt{\cN(*D)}}{\wt{\cN(!D)}}
    \right)
    \]
    denotes the top-dimensional part of the support, and
    \[
    \pi_R\colon
    X^\an\times \Specan R\times \C^l
    \to
    X^\an\times \Specan R\times(\C^*)^l
    \]
    is the covering map.

    \item For every closed point
    $a\in \supp(\cN(*D))\setminus V,$
    we have
    \[
    \bar p^{-1}(a)
    =
    \supp
    \left(
    \pi_{R*}^{\sG_l}
    \frac{\wt{\cN(*D)_a}}{\wt{\cN(!D)_a}}
    \right)
    \]
    and
    \[
    \pi_{R*}^{\sG_l}
    \frac{\wt{\cN(*D)_a}}{\wt{\cN(!D)_a}}
    \simeq
    \pi_{R*}^{\sG_l}
    \frac{\wt{\cN(*D)}}{\wt{\cN(!D)}}
    \otimes_R
    \frac{R_\Omega}{I_W}.
    \]
    Here $\cN(*D)_a$ is the essential fiber of $\cN(*D)$ at $a$ along a
    smooth subvariety
   $W\subseteq \supp(\cN(*D))$
    containing $a$ and intersecting $\supp(\cN(*D))$ properly around $a$, with
    \[
    \dim W+\dim\supp(\cN(*D))=\dim\Spec R.
    \]
    Moreover, $I_W\subset R_\Omega$ is the ideal of $W$, where
    $\Spec R_\Omega\subseteq\Spec R$ is a Zariski open neighborhood of $a$,
    and $\wt{\cN(!D)_a}$ is the minimal extension of $\cN(*D)_a$ along $D$.

    \item If $\cN(*D)_a$ is $\Q$-specializable for
    $a\in \supp(\cN(*D))\setminus V,$
    then $\bar p^{-1}(a)$ is a finite union of torsion-translated subtori of
    codimension one.
\end{enumerate}
\end{theorem}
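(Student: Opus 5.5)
We plan to deduce the theorem from Theorem \ref{thm:mainsuppfiberation}, pushed down along the covering $\pi_R$ and combined with the identification of the quotient $\wt{\cN(*D)}/\wt{\cN(!D)}$ with a direct limit of algebraic quotients. As in the proof of Theorem \ref{thm:anarelbeilinson}, we first reduce to the case where $\cN$ is $j$-pure. The reduction uses Corollary \ref{cor:purefillattice} together with the exactness statements of Corollary \ref{cor:exactupper!}. Supports of extensions are unions of supports, and top-dimensional parts are controlled by grades via Lemma \ref{lm:ausdual}.

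\emph{Step 1 (part (1)).} Take $K$ generated by $\sum_i e_i$, so that $D_\bm=D$ with $t=\prod t_i$. By Corollary \ref{cor:anlocmaxminex} and \eqref{eq:relminexge}, locally on $\Specan(R\otimes_\C R_l)$ we have
\[
\wt{\cN(*D)}/\wt{\cN(!D)} \simeq \wt{t^{-m}\cL}/\wt{t^{m}\cL}
\]
for $m\gg0$. Here $\cL$ is the maximal tame pure extension. This module has a finite filtration with graded pieces $\wt{t^{k}\cL/t^{k+1}\cL}$. By \eqref{eq:actionBsupp}, these pieces have supports $t^k\cdot\supp_X(\cL/t\cL)$, which are $\sG_l$-translates of $\supp^h_X(\cN/t\cN)$. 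Globally, therefore, the support of the quotient is the $\sG_l$-orbit of $\supp^h_X(\cN/t\cN)$, up to lower-dimensional pieces. Its image under $\pi_R$ is the image of $\supp^h_X(\cN/t\cN)$ under $\id\times\Exp$.

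Theorem \ref{thm:mainsuppfiberation} shows that this support fibres surjectively over $\supp_Y(\cN(*D))$. Each fibre is a finite union of hyperplanes $\sum a_is_i+b=0$ with integral primitive $(a_i)$. Under $\Exp$ such a hyperplane maps onto the codimension-one translated subtorus $\prod\chi_i^{a_i}=e^{2\pi i b}$. The fibres are finite unions, hence only finitely many $b$ occur modulo $\Z$, and the images are finite unions of subtori. Surjectivity of $\bar p$ follows from the surjectivity of $\bar p_{\log}$.

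\emph{Step 2 (part (2)).} Fix $a\notin V$ and a transversal $W$. Cohen--Macaulayness of $\cN(*D)$ near $a$, via Proposition \ref{prop:essfib}, shows that $b_1,\dots$ (the generators of $I_W$) form a regular sequence on $\cN(*D)$ and on $\cN$. The latter holds by Corollary \ref{cor:ccresintprop}. Iterating the argument in the proof of Theorem \ref{thm:anarelbeilinson} (the Rees theorem, giving $Li_b^*$ of the minimal extension equal to the minimal extension of the quotient), we get
\[
\wt{\cN(!D)}\otimes R_\Omega/I_W\simeq \wt{\cN(!D)_a}\quad\text{and}\quad \wt{\cN(*D)}\otimes R_\Omega/I_W\simeq\wt{\cN(*D)_a}.
\]
Right exactness of $\otimes$, together with injectivity $\wt{\cN(!D)_a}\hookrightarrow\wt{\cN(*D)_a}$ (Theorem \ref{thm:anarelbeilinson} applied to the fibre), yields the displayed isomorphism. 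The isomorphism commutes with $\pi_{R*}^{\sG_l}$ because $\sG_l$ acts trivially on $R$.

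The equality of supports uses Tor-vanishing. The multiplication maps by the $b_i$ on the quotient are injective, since $\sG_l$-translates of the fibration meet $W\times\C^l$ properly. Hence the support of the tensor product is the restriction of the support to $a$. That restriction is $\bar p^{-1}(a)$ up to components of lower dimension, which are excluded away from $V$ by purity and equidimensionality. We expect this step to be the main obstacle: we must check that no embedded lower-dimensional components survive in the fibre, i.e.\ that the fibre of the top-dimensional part equals the full support of the essential-fibre quotient. We would attack it with the equidimensionality statement in the proof of Theorem \ref{thm:mainsuppfiberation} (the restriction of $\Ch(\cL/t_i\cL)$ to $b=0$ is equidimensional).

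\emph{Step 3 (part (3)).} If $\cN(*D)_a$ is $\Q$-specializable, then $\cN(*D)_a$ is a holonomic $\shD_Y$-module of GE-type. Theorem \ref{thm:latticemtqsupp}(3), applied to a lattice in it, gives that the hyperplanes occurring have $b\in\Q$. By Step 2 their images are exactly the fibre $\bar p^{-1}(a)$. Exponentials of rational translates are torsion translates, which proves the claim. The analytic-lattice case is handled identically by Remark \ref{rmk:analyticlattices}.
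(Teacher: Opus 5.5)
Your argument works, but it is organized differently from the paper's proof.

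\textbf{The paper's route.} The paper does not use Theorem \ref{thm:mainsuppfiberation} as a black box. It re-runs the hypersurface-cutting induction from that proof directly on the analytic quotient $\wt{\cN(*D)}/\wt{\cN(!D)}$, using Corollary \ref{cor:anlocmaxminex} and exactness of the $\sim$-functor. It cuts down until $\supp(\cN_m(*D))=\{a\}$ and then quotes the absolute result \cite[Cor.1.3 and Thm.1.4]{WuRHA} for (1). When $\cN(*D)$ is Cohen--Macaulay at $a$, the same induction identifies $\cN_m(*D)$ with $\cN(*D)_a$, with $W$ the intersection of the cutting hypersurfaces, and (2) is read off from this. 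Part (3) lifts components to $\supp(\cN_1/\cN_1(-D))$ and applies Theorem \ref{thm:latticemtqsupp}, as you do.

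\textbf{Your route.} You transport the already-proved fibration of $\supp_X(\cL/t\cL)=\supp^h_X(\cN/t\cN)$ through the local identification $\wt{\cN(*D)}/\wt{\cN(!D)}\simeq\wt{t^{-m}\cL/t^{m}\cL}$ and \eqref{eq:actionBsupp}, and then through $\Exp$. This avoids the appeal to \cite{WuRHA} and gives the fibres explicitly as $\prod_i\chi_i^{a_i}=\mathrm{const}$ with $(a_i)$ primitive. The cost is that (2) needs a separate base-change statement. You supply it via the Rees-theorem step from the proof of Theorem \ref{thm:anarelbeilinson} plus right exactness. That is a more explicit justification of the displayed isomorphism than the paper gives.

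\textbf{The obstacle you flag in Step 2 does not arise.} Near $a$ the module $\cN(*D)$ is Cohen--Macaulay, hence pure. By Corollary \ref{cor:purefillattice}, the lattice $\cN$ is then pure over a neighbourhood of $a$. Lemma \ref{lm:tamepurechv} and Corollary \ref{cor:sesk-pure} then make $t^{-m}\cL/t^{m}\cL$ pure. Its support is therefore equidimensional near the fibre over $a$, so there are no lower-dimensional or embedded components to discard. The support equality is just relative Nakayama, $\supp(\sM\otimes_R R_\Omega/I_W)=\supp(\sM)\cap p^{-1}(W)$; no Tor-vanishing is needed.

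\textbf{Two small corrections.}
\begin{enumerate}
\item You do not need the $b_i$ to form a regular sequence on $\cN$. The Rees argument only uses regularity on $\cN(*D)$, i.e.\ on $\cS$. Nor does Corollary \ref{cor:ccresintprop} give regularity beyond the first element, because $\cN/b_1\cN$ need not be a lattice; the proof of Theorem \ref{thm:mainsuppfiberation} has to pass to $\cL$ to get this.
\item Your reduction to pure $\cN$ shows that ``top-dimensional'' in (1) should be read componentwise over $\supp_Y(\cN(*D))$. For non-equidimensional $\cN(*D)$, the global top-dimensional part need not surject onto $\supp_Y(\cN(*D))$. The paper's proof shares this imprecision.
\end{enumerate}
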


\begin{proof}
By Corollary \ref{cor:anlocmaxminex} and \eqref{eq:relminexge}, and since the
analytic sheafification functor is exact, we can use the same inductive
argument as before, cutting by general smooth hypersurfaces. We reduce to the
case where
$\supp(\cN_m(*D))=\{a\}.$
After forgetting the $R$-module structure, $\cN_m(*D)$ is a regular holonomic
$\shD_Y$-module supported on the graph of $G$. Part $(1)$ then follows from
\cite[Cor.1.3 and Thm.1.4]{WuRHA}.

Since $\cN(*D)$ is Cohen--Macaulay around $a$, we have
$\cN_m(*D)=\cN(*D)_a$
and
\[
\pi_{R*}^{\sG_l}
\frac{\wt{\cN(*D)_a}}{\wt{\cN(!D)_a}}
\simeq
\pi_{R*}^{\sG_l}
\frac{\wt{\cN(*D)}}{\wt{\cN(!D)}}
\otimes_R
\frac{R_\Omega}{I_W}.
\]
Here one can take $W$ to be the intersection of the hypersurfaces appearing
in the induction. This proves part $(2)$.

By construction, every component of
\[
\supp
\left(
\pi_{R*}^{\sG_l}
\frac{\wt{\cN(*D)_a}}{\wt{\cN(!D)_a}}
\right)
\]
can be lifted to a component of
$\supp\big(\cN_1/\cN_1(-D)\big)$
for some $\shD_{Y,D}$-lattice $\cN_1$ of $\cN(*D)_a$. Part $(3)$ follows
from part $(2)$, Lemma \ref{lm:sqstQsp}, and
Theorem \ref{thm:latticemtqsupp}.
\end{proof}

\begin{proof}[Proof of Theorem \ref{thm:linearsupp}]
By Theorem \ref{thm:sabbspcmp}, the module
$\wt{i^!_{K_\bm}}\cN_0(*E)$
is $\sG_r$-equivariant. Hence its support is $\sG_r$-invariant. By
Remark \ref{rmk:localizingupper!}(2) and $(3)$, after localizing the log
structure of $Z$ at $\alpha(\bm)$, it is enough to assume that
$K_\bm=K.$

By construction; see also \cite[(2.11)]{WuRHA}, we have
\[
\wt\D\big(\wt{\cN_0(*E)}\big)
\simeq
\wt{\cN_0'(!E)},
\]
where $\cN_0'$ is a $\shD_{Z,E}$-lattice of
\[
\operatorname{Ext}^{n+r}_{\shD_Z}
\big(\iota_{F,+}\cM[1/f],\shD_Z\big).
\]
By Remark \ref{rmk:analyticpurefil} and Lemma \ref{lm:dualalPF}, and since
$\wt\D$ is $\sG_r$-equivariant; cf. \cite[Remark 2.10(2)]{WuRHA}, the pure
filtration of $\wt{i^!_K}\cN_0(*E)$ is $\sG_r$-equivariant. In particular,
\[
\pi_*^{\sG_r}
\big(T_p(\wt{i^!_K}\cN_0(*E))\big)
=
T_p
\big(
\pi_*^{\sG_r}(\wt{i^!_K}\cN_0(*E))
\big)
\]
for every $p$.
Moreover, by Lemma \ref{lm:twistdual}, Theorem \ref{thm:sabbspcmp}, and
\cite[Thm.3.11]{FS13}, both
\[
\pi_*^{\sG_r}\big(\wt{i^!_K}\cN_0(*E)\big)
\quad \textup{and} \quad
\wt\D\left(
\pi_*^{\sG_r}\big(\wt{i^!_K}\cN_0(*E)\big)
\right)
\]
are dR-algebraic. Then \cite[Thm.4.1]{FTM18}, together with faithful
flatness of the analytic sheafification functor, implies that
\[
T_p
\big(
\pi_*^{\sG_r}(\wt{i^!_K}\cN_0(*E))
\big)
\]
is dR-algebraic for every $p$. Hence
\[
\frac{
T_p
\big(
\pi_*^{\sG_r}(\wt{i^!_K}\cN_0(*E))
\big)
}{
T_{p+1}
\big(
\pi_*^{\sG_r}(\wt{i^!_K}\cN_0(*E))
\big)
}
=
\pi_*^{\sG_r}
\left(
\frac{
T_p\big(\wt{i^!_K}\cN_0(*E)\big)
}{
T_{p+1}\big(\wt{i^!_K}\cN_0(*E)\big)
}
\right)
\]
is also dR-algebraic.

By Lemma \ref{lm:dualalPF}, Corollary \ref{cor:algsupploccoh}, and purity,
every irreducible component $V$ of
\[
\supp
\left(
\frac{
T_p\big(\wt{i^!_K}\cN_0(*E)\big)
}{
T_{p+1}\big(\wt{i^!_K}\cN_0(*E)\big)
}
\right)
\subseteq
\C^r
\]
is an algebraic subvariety. Since
\[
\pi_*^{\sG_r}
\left(
\frac{
T_p\big(\wt{i^!_K}\cN_0(*E)\big)
}{
T_{p+1}\big(\wt{i^!_K}\cN_0(*E)\big)
}
\right)
\]
is dR-algebraic, Lemma \ref{lm:dralg} implies that
$\operatorname{Exp}(V)\subseteq(\C^*)^r$
is algebraic. By Lemma \ref{lm:BudurWang}(1), $V$ is a translated linear
subvariety of codimension $p-n$. Therefore, every irreducible component of
$\supp\big(\wt{i^!_K}\cN_0(*E)\big)$
is a translated linear subvariety.

By $\sG_r$-invariance and Theorem \ref{thm:sabbspcmp},
$\supp\big(\wt{i^!_K}\cN_0(*E)\big)$
is an infinite union of translated linear subvarieties satisfying
\[
\supp\big(\psi_{K_\bm}(\DR(\cM))\big)
=
\operatorname{Exp}
\big(
\supp(\wt{i^!_{K_\bm}}\cN_0(*E))
\big).
\]
This proves part $(1)$.

By uniqueness of the pure filtration,
$\sN\cap T_\bullet\big(\wt{i^!_K}\cN_0(*E)\big)$
is the pure filtration of $\sN$. Thus part $(3)$ follows. By Corollary
\ref{cor:latticesinc}, every irreducible component of
$\supp\big(\wt{i^!_K}\cN_0(*E_\bn)\big)$
is also a translated linear subvariety. It remains to show that this support
is an infinite union of translated linear subvarieties. Again by Corollary
\ref{cor:latticesinc}, it is enough to prove that
$\supp\big(\wt{i^!_K}\cN_0\big)$
is an infinite union.
Suppose, for contradiction, that
$\supp\big(\wt{i^!_K}\cN_0\big)$
is a finite union. By Corollary \ref{cor:latticesinc}, we have a short exact
sequence
\[
0\to
\wt{i^!_K}\cN_0
\to
\wt{i^!_K}\cN_0(*E)
\to
\wt{i^!_K}
\left(
\frac{\cN_0(*E)}{\cN_0}
\right)
\to 0.
\]
Let $V$ be an irreducible component of
$\supp\big(\wt{i^!_K}\cN_0(*E)\big).$
Then
\[
\prod_{i=1}^r t_i^{v_i}\cdot V
\subseteq
\supp
\left(
\wt{i^!_K}
\left(
\frac{\cN_0(*E)}{\cN_0}
\right)
\right)
\]
for all but finitely many
$(v_1,\dots,v_r)\in \Z^r\simeq \sG_r.$
By the existence of Sabbah's generalized $b$-functions; cf.
\cite[Thm.2.1]{WuRHA}, and by substitution, there exist finitely many
nonzero vectors
$L=(l_1,\dots,l_r)\in \N^r$
and finitely many $\gamma\in\C$ such that
\[
\supp
\left(
\frac{\cN_0(*E)}{\cN_0}
\right)
\subseteq
\bigcup_{L,\gamma}
\bigcup_{p\ge 0}
\left(
\sum_{i=1}^r l_is_i+\gamma-p=0
\right).
\]
By Theorem \ref{thm:upper!DI}, we have
\[
\supp
\left(
\wt{i^!_K}
\left(
\frac{\cN_0(*E)}{\cN_0}
\right)
\right)
\subseteq
\supp
\left(
\frac{\cN_0(*E)}{\cN_0}
\right).
\]
But for all $q\gg 0$,
\[
\prod_{i=1}^r t_i^q\cdot V
\not\subseteq
\bigcup_{L,\gamma}
\bigcup_{p\ge 0}
\left(
\sum_{i=1}^r l_is_i+\gamma-p=0
\right),
\]
which is a contradiction. Therefore, part $(2)$ follows.

We now prove part $(4)$. By part $(3)$ and Lemma \ref{lm:dualalPF}, it is
enough to prove that every codimension-$q$ component of
\[
\supp
\left(
R^q\Gamma_{[\wt Z_K]}\wt{\cN_0'(!E)}
\right)
\]
is a $\Q$-translated linear subvariety for every $q$. Notice that, by
Lemma \ref{lm:ausdual} and relative holonomicity, the dimension of
\[
\supp
\left(
R^q\Gamma_{[\wt Z_K]}\wt{\cN_0'(!E)}
\right)
\]
is at most $q$. By $\sG_r$-equivariance, it is equivalent to prove that every
codimension-$q$ component of
\[
\supp
\left(
\pi_*^{\sG_r}
\big(
R^q\Gamma_{[\wt Z_K]}\wt{\cN_0'(!E)}
\big)
\right)
\]
is a torsion-translated subtorus for every $q$.

We first prove this for $K=K_I$ with $I=\{1,\dots,r\}$. Write
\[
K^i=\langle e_1,\dots,e_i\rangle,
\qquad
\bm_i=\sum_{j=i+1}^r e_j,
\]
\[
Y_{>i}=(t_1=\cdots=t_i=0)\subseteq Z,
\qquad
D_{>i}=\left(\prod_{j>i}t_j=0\right)\subseteq Y_{>i}.
\]
By Remark \ref{rmk:localizingupper!}, taking the base space $Y_{>i}$ instead
of $X$ and the log structure given by
\[
\N^i\hookrightarrow
S[t_{i+1}^{\pm1},\dots,t_r^{\pm1}][\N^i],
\]
we have a relative regular holonomic
$\wt{\sA_{Y_{>i}}^{R_i}}$-module
\[
\wt{\cN_0'(*E_{\bm_i})(!E)}^{Y_{>i}\setminus D_{>i}}.
\]
Since this module is supported on the graph of $(f_j)_{j>i}$, Remark
\ref{rmk:localizingupper!}(2) and relative Kashiwara equivalence; see
\eqref{eq:relgetype}, give
\[
R\Gamma_{[\wt Z_{K^i_{\bm_i}}]}(\wt{\cN_0'(!E)})
\simeq
\widetilde{
R\Gamma_{[\wt Z_{K^i_{\bm_i}}^{Y_{>i}}]}
}
\,
\wt{\cN_0'(*E_{\bm_i})(!E)}^{Y_{>i}\setminus D_{>i}},
\]
where the tilde over
$R\Gamma_{[\wt Z_{K^i_{\bm_i}}^{Y_{>i}}]}$
means that the resulting complex of
\[
\wt{\sA_X^{R_i}}[s_j]_{j>i}
=
\wt{\sA_X^{R_i}}[s_{i+1},\dots,s_r]
\]
modules is regarded as a complex of $\wt{\sA_X^{R_r}}$-modules.

Similarly, for every subset $J\subseteq\{i+1,\dots,r\}$ and every $q$, we
have a $\sG_r$-equivariant, hence $\sG_{|J|}$-equivariant, relative regular
holonomic $\wt{\sA_{Y_J}^{R_{\bar J}}}$-module supported on the graph of
$(f_j)_{j\in J}$:
\[
\sM^q_{i,J}
\coloneqq
R^q\Gamma_{[\wt Z_{K^i_{\bm_J}}^{Y_J}]}
\wt{\cN_0'(*E_{\bm_J})(!E)}^{Y_J\setminus D_J}.
\]
Here $\bar J$ is the complement of $J$ in $\{1,\dots,r\}$,
\[
\bm_J=\sum_{j\in J}e_j,
\]
and $D_J\subseteq Y_J$ is the divisor defined by
$\prod_{j\in J}t_j=0.$
Let
\[
\pi_J\colon
Y_J^\an\times \C^{|\bar J|}
\to
Y_J^\an\times(\C^*)^{|\bar J|}
\]
be the covering map.

We argue by induction on $i$. The inductive hypotheses are:
\begin{enumerate}[label=(H\arabic*)]
    \item every codimension-$q$ component of
    \[
    \supp_{Y_J}
    \big(
    \pi_{J,*}^{\sG_{\bar J}}\sM^q_{i,J}
    \big)
    \subseteq
    (\C^*)^{|\bar J|}
    \]
    is a torsion-translated subtorus for every
    $J\subseteq\{i+1,\dots,r\}$;

    \item for every $J$, the essential fibers of
    $\pi_{J,*}^{\sG_{\bar J}}\sM^q_{i,J}$
    at general torsion points in its support are iterated extensions of
    subquotients of $\Q$-mixed Hodge modules.\footnote{Here an iterated
    extension means a module obtained by finitely many successive extensions
    starting from subquotients of $\shD$-modules underlying $\Q$-mixed Hodge
    modules.}
\end{enumerate}

By construction, for $I\subseteq\{i+2,\dots,r\}$, we have a
$\sG_{\bar I}$-equivariant distinguished triangle
\begin{equation}\label{eq:distria1}
\begin{split}
&
R\Gamma_{[\wt Z^{Y_I}_{K^{i+1}_{\bm_I}}]}
\wt{\cN_0'(!E)}^{Y_I\setminus D_I}
\to
R\Gamma_{[\wt Z^{Y_I}_{K^i_{\bm_I}}]}
\wt{\cN_0'(!E)}^{Y_I\setminus D_I}
\\
&\to
R\Gamma_{[\wt Z^{Y_I}_{K^i_{\bm_{I'}}}]}
\wt{\cN_0'(!E)}^{Y_I\setminus D_I}
\xrightarrow{+1},
\end{split}
\end{equation}
where
$I'=I\cup\{i+1\}.$
Since $\sM^q_{i,I'}$ is also a
$\wt{\sA_{Y_I}^{R_{r-|I'|}}}[s_{i+1}]$-module by relative Kashiwara equivalence, and since we have the decomposition
\[
\begin{tikzcd}
Y_I^\an\times \C^{r-|I|}
\arrow[rr,"\pi_I"]
\arrow[rd,"\varphi"]
&&
Y_I^\an\times(\C^*)^{r-|I|}
\\
&
Y_I^\an\times(\C^*)^{r-|I'|}\times\C
\arrow[ru,"\psi"]
&
\end{tikzcd}
\]
we obtain
\begin{equation}\label{eq:splitequiv}
\psi_*^{\sG_1}
\varphi_*^{\sG_{r-|I'|}}
\wt{\sM^q_{i,I'}}
\simeq
\psi_*^{\sG_1}
\widetilde{
\pi_{I',*}^{\sG_{r-|I'|}}\sM^q_{i,I'}
}
\simeq
\pi_{I,*}^{\sG_{r-|I|}}
R^q\Gamma_{[\wt Z^{Y_I}_{K^i_{\bm_I}}]}
\wt{\cN_0'(!E)}^{Y_I\setminus D_I}.
\end{equation}

Set $Y=Y_{I'}$, let $D\subseteq Y$ be the smooth divisor defined by
$t_{i+1}=0$, and set
\[
\bar E=E_{\bm_I}\cap Y_{I'},
\qquad
\bar E_{I'}=\bar E+D.
\]
Choose a $\wt{\sA_{Y,D}^{T_{r-|I'|}}(*\bar E)}$-lattice $\sN$ of
$\pi_{I',*}^{\sG_{r-|I'|}}\sM^q_{i,I'}.$
Notice that 
$\pi_{I',*}^{\sG_{r-|I'|}}\sM^q_{i,I'}$
is a $\wt{\sA_{Y_{I'}}^{T_{r-|I'|}}(*\bar E_{I'})}$-module, relative regular
holonomic over $\wt{\sA_{Y_{I'}}^{T_{r-|I'|}}}$, and supported on the graph of
$\wt f_{i+1}$, where $\wt f_{i+1}$ is the pullback of $f_{i+1}$ under the
natural projection
\[
Y_I\simeq X\times\C^{|I|}\to X.
\]
Then the minimal extension
\[
\wt{\sN(!D)}
=
\wt{\sN(!D)}^{Y_I\setminus\bar E}
\]
is a $\wt{\sA_{Y_I}^{T_{r-|I'|}[s_{i+1}]}(*\bar E)}$-module.

By Proposition \ref{prop:doublelimitrgamma}, for every sufficiently small
simply connected analytic open neighborhood
$U\subseteq(\C^*)^{r-|I'|}\times\C,$
there exists a finitely generated $\shD_{Z,E}$-module $\cP$ such that
$\cQ\coloneqq\cP(*E_{\bm_I})$
is supported on $Y_{I'}$ and
\[
\wt{\cQ}|_{Y_I\times U}
=
\wt{\sN(*D)}|_{Y_I\times U}.
\]
By Lemma \ref{lm:logKashequiv}, $\cQ$ is finitely generated over
$\sA_{Y,D}^{R_{r-|I'|}}(*\bar E).$
Therefore,
\[
\wt{\sN(!D)}|_{Y_I\times U}
=
\wt{t_{i+1}^k\cQ}|_{Y_I\times U}
\quad \textup{and} \quad
\wt{\sN(*D)}|_{Y_I\times U}
=
\wt{t_{i+1}^{-k}\cQ}|_{Y_I\times U}
\]
for $k\gg 0$.

Since $\sN$ is dR-algebraic, the same argument as in
Theorem \ref{thm:sabbspcmp} shows that
\[
\psi_*^{\sG_1}
\frac{\wt{\sN(*D)}}{\wt{\sN(!D)}}
\]
is also dR-algebraic. Therefore, by Lemma \ref{lm:BudurWang}, every
irreducible codimension-$(q+1)$ component of
\[
\supp
\left(
\psi_*^{\sG_1}
\frac{\wt{\sN(*D)}}{\wt{\sN(!D)}}
\right)
\subseteq
(\C^*)^{r-|I|}
\]
is a translated subtorus.

By the inductive hypothesis (H1), Lemma \ref{lm:sqstQsp}, and
Theorem \ref{thm:equivmainsuppfiberation}(3), the fibers of
\[
\supp
\left(
\psi_*^{\sG_1}
\frac{\wt{\sN(*D)}}{\wt{\sN(!D)}}
\right)
\subseteq
(\C^*)^{r-|I|-1}\times\C^*
\]
over general torsion points in
\[
\supp
\left(
\pi_{I',*}^{\sG_{r-|I'|}}\sM^q_{i,I'}
\right)
\subseteq
(\C^*)^{r-|I|-1}
\]
are finite unions of roots of unity in $\C^*$. In particular, every
codimension-$(q+1)$ component of
\[
\supp
\left(
\psi_*^{\sG_1}
\frac{\wt{\sN(*D)}}{\wt{\sN(!D)}}
\right)
\]
contains a torsion point. By Lemma \ref{lm:BudurWang}(2), every such component
is a torsion-translated subtorus.

Using \eqref{eq:distria1} and \eqref{eq:splitequiv}, we obtain a short exact
sequence
\[
0\to
\cQ_1
\to
\pi_{I,*}^{\sG_{r-|I|}}\sM^{q+1}_{i+1,I}
\to
\cK
\to 0
\]
such that $\cQ_1$ is a quotient module of
\[
\psi_*^{\sG_1}
\frac{\wt{\sN(*D)}}{\wt{\sN(!D)}}
\]
and $\cK$ is a submodule of
$\pi_{I,*}^{\sG_{r-|I|}}\sM^{q+1}_{i,I}.$
By Theorem \ref{thm:equivmainsuppfiberation}(2), Proposition
\ref{prop:essfibMHM}, and the inductive hypothesis (H2), the essential fibers
of
\[
\psi_*^{\sG_1}
\frac{\wt{\sN(*D)}}{\wt{\sN(!D)}}
\]
over general torsion points in its support are iterated extensions of
subquotients of $\Q$-mixed Hodge modules; hence the same holds for $\cQ_1$.

By the inductive hypothesis, the irreducible codimension-$(q+1)$ components of
\[
\supp(\cK)\subseteq(\C^*)^{r-|I|}
\]
are torsion-translated subtori, and the essential fibers of $\cK$ over general
torsion points in its support are also iterated extensions of subquotients of
$\Q$-mixed Hodge modules. Thus every irreducible codimension-$(q+1)$ component
of
\[
\supp
\left(
\pi_{I,*}^{\sG_{r-|I|}}\sM^{q+1}_{i+1,I}
\right)
\]
is a torsion-translated subtorus, and its essential fibers over general
torsion points are iterated extensions of subquotients of $\Q$-mixed Hodge
modules.

When $i=1$, the base case follows from Proposition \ref{prop:essfibMHM}.
Therefore, the case $K=K_I$ is proved. More precisely, we have proved the
cases $K=K_J$ for all subsets $J\subseteq\{1,\dots,r\}$, as well as these
cases after localizing the log structure at $\alpha(\bm)$.

For a general monoid ideal $K$, we repeatedly use the distinguished triangle
\[
R\Gamma_{[\wt Z_{K'}]}\wt{\cN_0'(!E)}
\to
R\Gamma_{[\wt Z_K]}\wt{\cN_0'(!E)}
\to
R\Gamma_{[\wt Z_{K_\bm}]}\wt{\cN_0'(!E)}
\xrightarrow{+1}
\]
for a suitable $\bm\in\N^r$ such that
\[
Z_{K'}\subsetneq Z_K
\quad \textup{and} \quad
K_\bm=(K_J)_\bn
\]
for some $J\subseteq\{1,\dots,r\}$ and some $\bn\in\N^r$. This reduces the
general case to the cases already proved, by induction.
\end{proof}

\section{Logarithmic interpretation of Bernstein--Sato ideals}
\label{subsect:logBSideal}

We keep the notation from \S\ref{subsect:logDmodule}. Let $\cN$ be a
$\shD_{Z,E}$-lattice. For a monoid ideal $K\subseteq \N^r$, we consider the
module
\[
\cN|_{Z_K}
=
\frac{\cN}{\alpha(K)\cdot \cN}.
\]
By Lemma \ref{lm:twosidedideal}, $\cN|_{Z_K}$ is again a
$\shD_{Z,E}$-module, and in particular an $\sA_X^{R_r}$-module and a
$\C[s_1,\dots,s_r]$-module via \eqref{eq:uncanincl}.

\begin{definition}
The \emph{Bernstein--Sato ideal} of $\cN$ along $K$ is the annihilator
\[
B^K(\cN)
\coloneqq
B_{\cN|_{Z_K}}
=
\Ann_{\C[s_1,\dots,s_r]}(\cN|_{Z_K})
\subseteq
\C[s_1,\dots,s_r].
\]
\end{definition}

A priori, it is not clear that
$B^K(\cN)\subseteq \C[s_1,\dots,s_r]$
is nonzero, since in general $\cN$ is not finitely generated over
$\C[s_1,\dots,s_r]$. The following lemma is essentially due to Sabbah
\cite{Sab}.

\begin{lemma}\label{lm:subbahgb}
If $\cN(*E)$ is a holonomic $\shD_Z$-module, then $B^K(\cN)$ contains a
nonzero polynomial $b=b(s_1,\dots,s_r)$ of the form
\[
b
=
\prod_{(L,\alpha)}
\big(L\cdot(s_1,\dots,s_r)+\alpha\big)^{n_{L,\alpha}},
\]
where $L\in \Z_{\ge 0}^r$, $\alpha\in \C$, and only finitely many exponents
$n_{L,\alpha}$ are nonzero.
\end{lemma}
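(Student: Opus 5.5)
First reduce to a principal monoid ideal. Pick any $\gamma\in K$, say $\gamma=(1,\dots,1)\cdot c$ after enlarging; more simply, pick a single element $\bm_0\in K$. Then $\alpha(\bm_0)\cN\subseteq\alpha(K)\cN$, so $\cN|_{Z_K}$ is a quotient of $\cN/\alpha(\bm_0)\cN$, and therefore $B^{\bm_0}(\cN)\subseteq B^K(\cN)$. It thus suffices to produce a polynomial of the required product form annihilating $\cN/\alpha(\bm_0)\cN$. Factor $\alpha(\bm_0)=t_1^{m_1}\cdots t_r^{m_r}$ and filter $\cN/\alpha(\bm_0)\cN$ by the submodules $t^{\bm'}\cN/\alpha(\bm_0)\cN$ for $\bm'\le\bm_0$, as in Lemma \ref{lm:lattictwist}. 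The successive quotients are of the form $\cN'/t_i\cN'$, where $\cN'=t^{\bm'}\cN$ is again a $\shD_{Z,E}$-lattice of the same holonomic module $\cN(*E)$. A product of annihilators of the graded pieces annihilates the whole module. Moreover, since $t^{\bm'}\cN/t^{\bm'}t_i\cN\simeq (\cN/t_i\cN)(-H)$, the substitution formula just before \eqref{eq:actionBsupp} shows that its annihilator is the integral translate $s\mapsto s+\bm'$ of $B^{e_i}(\cN)$. Such translates preserve the shape $\prod(L\cdot s+\alpha)$. Hence it is enough to treat $K$ generated by a single $e_i$, or even by $(1,\dots,1)$, since $\cN/t_i\cN$ is a quotient of $\cN/t\cN$ with $t=t_1\cdots t_r$.

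The core step is Sabbah's theorem on multi-variable $b$-functions \cite{Sab}. Since $\cN(*E)$ is holonomic over $\shD_Z$, Sabbah's theory of multi-filtrations along the normal crossing divisor $E$ (equivalently, his existence of generalized $b$-functions; cf. \cite[Thm.2.1]{WuRHA}) provides, for any finitely generated $\shD_{Z,E}$-submodule $\cN$ of $\cN(*E)$, a nonzero relation
\[
\prod_{(L,\alpha)}\big(L\cdot(t_1\partial_1,\dots,t_r\partial_r)+\alpha\big)^{n_{L,\alpha}}\cdot\cN\subseteq t\cdot\cN,
\]
with $L\in\Z_{\ge0}^r$ nonzero and $\alpha\in\C$. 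I would state this by taking generators $u_1,\dots,u_p$ of $\cN$ and applying Sabbah's result, which says that each $u_j$ satisfies a functional equation $b_j(t\partial)u_j\in \shD_{Z,E}\,t\,u_j$ of this product form. The lemma is described as essentially Sabbah's, so I would quote it rather than reprove it. The two-sidedness of $t\shD_{Z,E}$ (Lemma \ref{lm:twosidedideal}), together with the fact that the $t_i\partial_i$ commute with $\shD_X$, lets the product $b=\prod_j b_j$ annihilate all of $\cN/t\cN$. Indeed, $P u_j$ is handled by commuting $b_j$ past $P\in\shD_X$, and past $t$-powers and $t_k\partial_k$ up to integral shifts. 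To absorb those shifts, I would take the product of the shifted $b_j$ over the finitely many shifts needed for generators of $\cN$ as a $\shD_X[t,t\partial]$-module. Finally, translating $t_i\partial_i=-s_i-1$ via \eqref{eq:uncanincl} turns each factor into $L\cdot s+\alpha'$ with $\alpha'\in\C$, which is the required shape.

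The main obstacle is this commutation step. Sabbah's theorem gives a $b$-function elementwise, whereas the lemma requires an annihilator of the whole module in the center $\C[s]$. The cleanest route I would try first is to use Sabbah's multi-$V$-filtration $V_\bullet$ on $\cN(*E)$, indexed by $\Z^r$ along a finite set of slopes $L$. By Artin–Rees type finiteness, $\cN\subseteq V_{\bm}$ and $t\cN\supseteq V_{\bm-\bm''}$ for suitable indices. The graded pieces are annihilated by $\prod(L\cdot t\partial+\alpha+k)$, and there are only finitely many pieces between these two levels, since they are bounded above and below. This gives a single $b$ killing $\cN/t\cN$ directly, bypassing the generator-by-generator commutation.
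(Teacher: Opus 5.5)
Your reductions are correct. Replacing $K$ by a single nonzero $\bm\in K$, because $\cN|_{Z_K}$ is a quotient of $\cN/\alpha(\bm)\cN$, is exactly the paper's first step. Your further passage to $\cN/t_i\cN$ and then to $\cN/t\cN$, using the filtration by the $t^{\bm'}\cN$ and the rule $b(s)t^{\mathbf k}=t^{\mathbf k}b(s-\mathbf k)$, is also correct, though not needed. After the first step the paper simply quotes Sabbah's generalized $b$-functions (\cite[\S 3]{Sab}, \cite[Thm.4.23]{Wuch}) for the whole lattice and the monomial $\alpha(\bm)$.

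The gap is in the step you yourself flag as the main obstacle: neither of your two arguments turns elementwise functional equations into one annihilator of $\cN/t\cN$.

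\emph{The commutation argument.} The $s_i$ are not central in $\shD_{Z,E}$, since $s_it_i=t_i(s_i-1)$. A general element of $\cN$ has the form $\sum Q_{\mathbf k}(s)t^{\mathbf k}u_j$ with $Q_{\mathbf k}\in\shD_X[s_1,\dots,s_r]$, and $b(s)t^{\mathbf k}u_j=t^{\mathbf k}b(s-\mathbf k)u_j$. When some $k_i=0$, this term is not automatically in $t\cN$. You would then need $b(s-\mathbf k)u_j\in\big(\prod_{k_i=0}t_i\big)\cN$ for infinitely many $\mathbf k$. So the shifts are indexed by the monomials occurring in the operators, not by the generators, and they require $b$-functions in other directions. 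A concrete case:
\begin{itemize}
\item Take $X$ a point, $r=2$, $\cN=\C[t_1,t_2]$ and $u=1$.
\item Since $s_1\cdot 1=-1$, the polynomial $b_u=s_1+1$ is a valid elementwise $b$-function: $b_u u=0\in\shD_{Z,E}\,t\,u$.
\item But $s_1t_1^a=-(a+1)t_1^a$, so no finite product of integral shifts of $s_1+1$ kills the classes of the $t_1^a\notin t\cN$.
\item The actual annihilator of $\cN/t\cN$ is $\big((s_1+1)(s_2+1)\big)$.
\end{itemize}

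\emph{The multi-$V$-filtration argument.} It fails for the same reason. A $b$-function on the multigraded pieces $V_{\mathbf k}/\sum_iV_{\mathbf k-e_i}$ lowers only one coordinate at a time. Iterating from $V_\bm$, you only reach $\sum_{|\mathbf j|=N}V_{\bm-\mathbf j}$, which contains $V_{\bm-Ne_1}\supseteq t_1^N\cN$ and so never lies inside $V_{\bm-\bm''}\subseteq t\cN$; in the example, $t_1^N\C[t_1,t_2]\not\subseteq t\cN$. The index box is finite, but that does not give a finite filtration of $V_\bm/V_{\bm-\bm''}$.

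\emph{How to close the gap.} Quote Sabbah's theorem in its uniform, directional form, which directly gives $b\cdot\cN\subseteq\alpha(\bm)\cN$ for the lattice; this is what the paper cites. Alternatively, your commutation argument does work when $\cN$ is generated over $\sA_X^r=\shD_X[s_1,\dots,s_r]$ by elements that have elementwise $b$-functions. For example, take $\cN_0=\shD_X[s_1,\dots,s_r]\cH_0F^{\bs}$ with generators $h_jF^{\bs}$, where Sabbah's classical equations $b_j(s)h_jF^{\bs}\in\shD_X[s_1,\dots,s_r]\,f\,h_jF^{\bs}$ hold. There $\C[s_1,\dots,s_r]$ is central in $\shD_X[s_1,\dots,s_r]$, and $t\cN$ is a $\shD_{Z,E}$-submodule, so $\prod_jb_j$ kills $\cN/t\cN$ with no shifts at all. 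However, the lemma is stated for an arbitrary lattice, and there this route is not available.
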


\begin{proof}
Take a nonzero $\bm\in K$. Since $\cN|_{Z_K}$ is a quotient of
$\cN|_{E_\bm}$, it suffices to prove that there exists such a polynomial in
the Bernstein--Sato ideal of $\cN$ along the monoid ideal generated by $\bm$.
This follows from the existence of Sabbah's generalized $b$-functions
\cite[\S 3]{Sab}; see also \cite[Thm.4.23]{Wuch}.
\end{proof}

Let
$F=(f_1,\dots,f_r)\colon X\to \A_\C^r$
be a regular map. As in Example \ref{ex:maingrebex},
\[
\cN_F
=
\shD_X[s_1,\dots,s_r]F^\bs
\]
is a $\shD_{Z,E}$-lattice of the regular holonomic $\shD_Z$-module
$\iota_{F,+}(\cO_X[1/f])$.
We write
\[
B_F^K=B^K(\cN_F)
\]
and call it the \emph{Bernstein--Sato ideal of $F$ along the monoid ideal
$K$}. If $K$ is generated by
$\bm_1,\dots,\bm_p\in \N^r$,
then one checks that $B_F^K$ agrees with the Bernstein--Sato ideal
$B_F^{\bm_1,\dots,\bm_p}$ introduced by Budur \cite[Def.4.2]{Budur}.

More generally, let $\cM$ be a regular holonomic $\shD_X$-module, and let
$\cN_0$ be a $\shD_{Z,E}$-lattice of the regular holonomic $\shD_Z$-module
\[
\cM\otimes_{\cO_X}\cS_F
\simeq
\iota_{F,+}(\cM[1/f]).
\]
For instance, one may take
$\cN_0
=
\shD_X[s_1,\dots,s_r]\cH_0\cdot F^\bs,$
where $\cH_0\subseteq \cM[1/f]$ is a finitely generated $\cO_X$-submodule
such that
$\shD_X[1/f]\cdot \cH_0=\cM[1/f].$

For a monoid ideal $K\subseteq \N^r$, the ideal $B^K(\cN_0)$ is called the
\emph{Bernstein--Sato ideal of $\cN_0$ along $K$ associated with $\cM$}.
More generally, for $\bm\in \N^r$, we define
\[
B^{K_\bm}(\cN_0)
=
B^K(\cN_0(*E_\bm))
\coloneqq
\Ann_{\C[s_1,\dots,s_r]}
\left(
\frac{\cN_0(*E_\bm)}
{\alpha(K)\cdot \cN_0(*E_\bm)}
\right),
\]
and set
\[
B_F^{K_\bm}
\coloneqq
B^{K_\bm}(\cN_F).
\]
Notice that when
$\bm=e=(0,\dots,0),$
the unit in the monoid,
we have $K_e=K$ and $E_e=0$, and hence
\[
B^{K_e}(\cN_0)=B^K(\cN_0).
\]
Thus the definitions of $B^{K_\bm}(\cN_0)$ and $B^K(\cN_0)$ are compatible.

We denote by
\[
Z\big(B^{K_\bm}(\cN_0)\big)
\subseteq
\Spec \C[s_1,\dots,s_r]
\]
the zero locus of this ideal, and call it the \emph{Bernstein--Sato variety
of $\cN_0$ along $K_\bm$ associated with $\cM$}.

\begin{theorem}\label{thm:nozeroproperBSI}
Let
$F=(f_1,\dots,f_r)\colon X\to \A_\C^r$
be a regular map, and let $K\subsetneq \N^r$ be a proper monoid ideal. Suppose
that $\cN(*E)$ is a nonzero regular holonomic $\shD_Z$-module supported on the
graph of $F$. If the support of $\cN$, as an $\cO_Z$-module, intersects
$Z_K$ nontrivially, then $B^K(\cN)$ is a nonzero proper ideal. In particular,
if
$(f_1=f_2=\cdots=f_r=0)\subseteq X$
is nonempty, then
\[
B_F^K\subseteq \C[s_1,\dots,s_r]
\]
is a nonzero proper ideal.
\end{theorem}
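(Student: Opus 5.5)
Nonzeroness comes straight from Lemma \ref{lm:subbahgb}. Here $\cN(*E)$ is regular holonomic on $Z$, so $B^K(\cN)$ contains a nonzero product of linear forms $\prod (L\cdot s+\alpha)^{n_{L,\alpha}}$. So the real content is properness, i.e.\ that $1\notin B^K(\cN)$, which is equivalent to
\[
\cN|_{Z_K}=\cN/\alpha(K)\cdot\cN\neq 0.
\]
I would establish this by finding a single generator $\bm\in K$ for which $\cN\neq\alpha(\bm)\cdot\cN$ already forces $\cN\ne \alpha(K)\cN$.

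I would reduce to one monomial in two steps.

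\emph{Step 1.} Since $K$ is proper, $e\notin K$, so every generator $v$ of $K$ is nonzero. Pick a minimal set of generators $v_1,\dots,v_p$. The locus $Z_K$ is the intersection of the zero sets of the $\alpha(v_i)$. By hypothesis $\supp_{\cO_Z}(\cN)$ meets $Z_K$; fix a closed point $z$ in this intersection.

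\emph{Step 2.} Localize at $z$, i.e.\ pass to $\cN_z$ over the local ring $\cO_{Z,z}$. Every $\alpha(v_i)$ vanishes at $z$, so $\alpha(K)\subseteq\mathfrak m_z$. I then want to apply Nakayama to conclude $\cN_z\neq\mathfrak m_z\cN_z$. The catch is that $\cN$ is not finitely generated over $\cO_Z$, so Nakayama cannot be applied directly.

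The fix is to use the order filtration. By Lemma \ref{lm:twosidedideal}, $\alpha(K)\cN$ is a $\shD_{Z,E}$-submodule, so it is enough to show $\cN/\alpha(K)\cN\neq0$ via characteristic varieties. Corollary \ref{cor:puregradequot} (taking $Y=Z$, $R=\C$) gives
\[
\Ch^{\log}(\cN/\alpha(\bm)\cN)=\Ch^{\log}(\cN)|_{(\alpha(\bm)=0)}.
\]
Iterating this over generators (using the Koszul-type reasoning) shows
\[
\Ch^{\log}(\cN/\alpha(K)\cN)\supseteq \Ch^{\log}(\cN)\cap \pi^{-1}(Z_K).
\]
The right-hand side is nonempty, because $\pi(\Ch^{\log}(\cN))=\supp_{\cO_Z}\cN$ meets $Z_K$. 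Hence the quotient is nonzero.

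The main obstacle is this iteration beyond a single monomial. The equality $\Ch(\cN/t\cN)=\Ch(\cN)|_{t=0}$ rests on $t$ being injective on a pure graded module, and after the first quotient the module is no longer a lattice, so that injectivity can fail. To get around this I would avoid iterating at all. I would use a surjection $\cN/\alpha(K)\cN\twoheadrightarrow \cN/\mathcal I\cN$ onto a simpler quotient, where $\mathcal I$ is the ideal of the stratum through $z$. Concretely, take $I=\{i: z_i=0\}$ (here $z_i$ is the $t_i$-coordinate of $z$) and replace $K$ by the ideal $K_I\supseteq K$ generated by the $e_i$, $i\in I$. This works because each generator of $K$ is divisible by some $e_i$ with $i\in I$. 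Then argue, after localizing at $t_j$ for $j\notin I$ (Remark \ref{rmk:localizingupper!}), that $\cN/\sum_{i\in I}t_i\cN$ is nonzero near $z$. For that I would use the $\Ch^\sharp$ of Lemma \ref{lm:logsp}: the graded module $\gr\cN/(t_i)_{i\in I}\gr\cN$ is nonzero at $z$ by graded Nakayama, since $\gr\cN$ is finitely generated over $\gr^{\log}\shD_{Z,E}$ and its support contains the fiber point over $z$. For a good filtration, a nonzero associated graded quotient forces the filtered quotient to be nonzero: otherwise $\cN=\sum t_i\cN$ would give equality of filtrations up to shift, and passing to $\gr$ yields a contradiction via the Artin--Rees Theorem \ref{thm:artin-reeslog}.

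For the special case: $\cN_F$ is a lattice of $\iota_{F,+}\cO_X[1/f]$, and its support is the graph of $F$. This meets $Z_K\supseteq\{t=0\}$ exactly when $(f_1=\cdots=f_r=0)\neq\emptyset$, so the general statement applies.
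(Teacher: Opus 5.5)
Your nonvanishing argument via Lemma \ref{lm:subbahgb} is fine, and so is the reduction from $K$ to $K_I$ with $I=\{i : t_i(z)=0\}$; the paper does the same. The argument fails at the final step.

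\textbf{The gap.} You claim that if $\gr\cN/(t_i)_{i\in I}\gr\cN\neq 0$ near $z$, then $\cN/\sum_{i\in I}t_i\cN\neq 0$. This is false for finitely generated $\shD_{Z,E}$-modules. If $\cN=\sum_i t_i\cN$, you only learn that the good filtrations $F_\bullet\cN$ and $\sum_i t_iF_\bullet\cN$ agree up to a shift. That says nothing about $\gr\cN/(t_i)\gr\cN$. Theorem \ref{thm:artin-reeslog}, which compares $\alpha(K)$-adic filtrations on a submodule, yields no contradiction either.

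A telling sign is that your argument never uses regularity, and the theorem is false without it. Take $X=\A^1$, $F=x$, $\cM=\cO_X[1/x]\,e^{-1/x}$, $u=e^{-1/x}x^s$, and $\cN=\shD_X[s]\cdot u$. Then $x^{-1}u=\partial_x(xu)-(s+1)u\in\cN$ and $t\cdot(x^{-1}u)=u$. By Lemma \ref{lm:twosidedideal} this gives $\cN=t\cN$, so $B^K(\cN)=\C[s]$ for $K=\N\setminus\{0\}$. Yet $(0,0)\in\supp\cN\cap Z_K$, and your own degree-wise Nakayama argument shows $\gr\cN/t\gr\cN\neq 0$ at $(0,0)$. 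The only missing hypothesis here is regularity.

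\textbf{What is needed.} The missing input is Corollary \ref{cor:puregradequot}. For a lattice whose localization is relative regular holonomic, Theorem \ref{thm:relGins-Sab} forces $\Ch^\dagger(\cN)$ to meet $(t_j=0)$ properly. Hence $t_j$ is injective on a pure $\gr\cN$, and $\gr(\cN/t_j\cN)=\gr\cN/t_j\gr\cN$. This is what lets nonvanishing pass from $\gr$ to $\cN$, but it works only for one $t_j$ at a time and only on lattices. That is exactly the obstruction you noticed, and replacing $K$ by $K_I$ does not remove it.

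The paper handles this by induction on $|I|$, peeling off one $t_j$ at a time.
\begin{enumerate}
\item The intermediate quotient $\cN_j$ is split into its torsion $\cK_j$ along the remaining divisors and its lattice image $\cQ_j$. The localization $\cN_j(*\bar E_j)$ is relative regular holonomic by Theorem \ref{thm:mainsuppfiberation}.
\item Corollary \ref{cor:puregradequot} handles $\cQ_j$.
\item $\cK_j$ is peeled into successive pieces. By Lemma \ref{lm:logKashequiv} and Kashiwara equivalence, each piece is again a lattice of a relative regular holonomic module on a smaller stratum, so the same corollary applies to it.
\item The last piece is killed by a power of $t_j$, so nilpotent Nakayama applies directly.
\item Because the pieces are lattices, the induced maps on quotients mod $t_j$ are injective. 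This carries nonvanishing back to $\cN_{j+1}=\cN_j/t_j\cN_j$.
\end{enumerate}
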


\begin{proof}
By Lemma \ref{lm:subbahgb}, it remains to prove that
$\cN|_{Z_K}\neq 0.$
Since the support of $\cN$ is a Zariski closed subset of $Z$, we may assume
that the support of $\cN$ meets $Z_{K_I}$ for some nonempty subset
$I\subseteq \{1,\dots,r\}$
such that
$Z_{K_I}\hookrightarrow Z_K.$
It is therefore enough to prove
$\cN|_{Z_{K_I}}\neq 0$
under the assumption that the support of $\cN$ intersects $Z_{K_I}$
nontrivially.

For simplicity, assume
$I=\{1,\dots,k\}$
for some $k\le r$, and write
\[
I_j=\{j,j+1,\dots,r\}
\]
for $j\le k$. We argue by induction on $|I|$ to prove that
$\cN|_{Z_{K_I}}\neq 0.$
By the inductive hypothesis, we may assume that
\[
\cN_j
\coloneqq
\cN|_{Z_{K_{\bar I_j}}}
\neq 0
\]
for $j<k$, and that the support of $\cN_j$, as an $\cO_Z$-module or as an
$\cO_{Y_j}$-module, intersects $Z_{K_I}$ nontrivially.

By Lemma \ref{lm:logsp}, $\cN_j$ is a finitely generated
$\sA_{Y_j,\bar E_j}^{R_{j-1}}\text{-module},$
where
\[
Y_j=Y_{\bar I_j}
\quad \text{and} \quad
\bar E_j=E_{I_j}\cap Y_j.
\]
By Theorem \ref{thm:mainsuppfiberation}, applied to the log structure
\[
\N^{j-1}
\hookrightarrow
S[t_j^{\pm1},\dots,t_r^{\pm1}][\N^{j-1}],
\]
the module $\cN_j(*\bar E_j)$ is relative regular holonomic over
$\sA_{Y_j}^{R_{j-1}}$.

Consider the short exact sequence
\[
0
\to
\cK_j
\to
\cN_j
\to
\cQ_j
\to
0,
\]
where $\cK_j$ and $\cQ_j$ are, respectively, the kernel and image of the
natural map
\[
\cN_j\to \cN_j(*\bar E_j)=\cN_j(*E_{I_j}).
\]
Then $\cQ_j$ is a lattice, and hence has no $t_i$-torsion for $i\ge j$.
Therefore we have a short exact sequence
\[
0
\to
\frac{\cK_j}{t_j\cK_j}
\to
\cN_{j+1}
\to
\frac{\cQ_j}{t_j\cQ_j}
\to
0.
\]

If the $\cO_{Y_j}$-support of $\cQ_j$ intersects $Z_{K_I}$ nontrivially, then
Corollary \ref{cor:puregradequot} gives
\[
\Ch^\dagger\left(\frac{\cQ_j}{t_j\cQ_j}\right)
=
\Ch^\dagger(\cQ_j)|_{(t_j=0)}.
\]
In particular, $\cN_{j+1}\neq 0$, and the support of $\cN_{j+1}$ intersects
$Z_{K_I}$ nontrivially.

Now suppose that the $\cO_{Y_j}$-support of $\cQ_j$ does not intersect
$Z_{K_I}$. Then the $\cO_{Y_j}$-support of $\cK_j$ must intersect $Z_{K_I}$
nontrivially by the inductive hypothesis. Consider the natural map
$\cK_j\to \cK_j(*D_{\widehat{j+1}})$,
where
\[
D_{\widehat{j+1}}
=
\left(
\prod_{\substack{i\in\{j,j+1,\dots,r\}\\ i\neq j+1}}t_i=0
\right)
\subseteq Y_j.
\]
Let $\cQ_j^1$ denote its image and $\cK_j^1$ its kernel. Then we have a short
exact sequence
\begin{equation}\label{eq:aaabbbses123}
0
\to
\cK_j^1
\to
\cK_j
\to
\cQ_j^1
\to
0.
\end{equation}
By construction, $\cQ_j^1$ is supported on
\[
D_{j+1}=(t_{j+1}=0)\subseteq Y_j,
\]
and $\cK_j^1$ is supported on $D_{\widehat{j+1}}$.

For $l=1,\dots,r-j$, set
\[
Y_j^l\coloneqq D_{j+l}
\quad \text{and} \quad
\bar D_{\widehat l}\coloneqq D_{\widehat{j+l}}\cap Y_j^l.
\]
By Lemma \ref{lm:logKashequiv}, $\cQ_j^1$ is an
$\sA_{Y_j^1,\bar D_{\widehat 1}}^{R_{j-1}[s_{j+1}]}\text{-lattice}.$
Since $\cN$ is relative regular holonomic over $\sA_X^{R_r}$ by
Theorem \ref{thm:mainsuppfiberation}, and since $\cQ_j^1$ is supported on the
graph of
\[
(f_j,f_{j+2},f_{j+3},\dots,f_r)
\]
as an $\sA_{Y_j^1,\bar D_{\widehat 1}}^{R_{j-1}[s_{j+1}]}$-module, Kashiwara
equivalence implies that
$\cQ_j^1(*\bar D_{\widehat 1})$
is relative regular holonomic over
$\sA_{Y_j^1}^{R_{j-1}[s_{j+1}]}.$

If the $\cO$-support of $\cQ_j^1$ intersects $Z_{K_I}$ nontrivially, then the
same argument applied to $\cQ_j$ shows that
${\cK_j}/{t_j\cK_j}\neq 0$,
and that the support of $\cN_{j+1}$ intersects $Z_{K_I}$ nontrivially.

If instead the $\cO$-support of $\cQ_j^1$ does not intersect $Z_{K_I}$, then
the $\cO$-support of $\cK_j^1$ intersects $Z_{K_I}$ nontrivially. Repeating
the same construction with
\[
\cK_j^1\to \cK_j^1(*D_{\widehat{j+2}}),
\]
we obtain another short exact sequence
\[
0
\to
\cK_j^2
\to
\cK_j^1
\to
\cQ_j^2
\to
0,
\]
where $\cQ_j^2$ is an
$\sA_{Y_j^2,\bar D_{\widehat 2}}^{R_{j-1}[s_{j+2}]}\text{-lattice}$
and
$\cQ_j^2(*\bar D_{\widehat 2})$
is relative regular holonomic over
$\sA_{Y_j^2}^{R_{j-1}[s_{j+2}]}$.
If the $\cO$-support of $\cQ_j^2$ intersects $Z_{K_I}$ nontrivially, then
${\cK_j^1}/{t_j\cK_j^1}\neq 0$,
and the support of this quotient intersects $Z_{K_I}$ nontrivially. Hence the
same is true for $\cK_j/t_j\cK_j$, and therefore for $\cN_{j+1}$.

Otherwise, we continue this process with
\[
\cK_j^2,\cK_j^3,\dots,\cK_j^{r-j}
\]
as necessary. In the worst case, the $\cO$-support of $\cQ_j^{r-j}$ does not
intersect $Z_{K_I}$. Then the $\cO$-support of $\cK_j^{r-j}$ intersects
$Z_{K_I}$ nontrivially. By construction, $\cK_j^{r-j}$ is supported on
$D_j$. Therefore, by Lemma \ref{lm:twosidedideal}, there exists $m\gg 0$ such
that
$t_j^m\cdot \cK_j^{r-j}=0$.
Working algebraically locally at a point in the intersection of the
$\cO$-support of $\cK_j^{r-j}$ with $Z_{K_I}$, it follows that
\[
\frac{\cK_j^{r-j}}{t_j\cK_j^{r-j}}\neq 0
\]
and that the support of this quotient intersects $Z_{K_I}$ nontrivially.
Since each $\cQ_j^a$ is a lattice, we have successive injections
\[
\frac{\cK_j^{r-j}}{t_j\cK_j^{r-j}}
\hookrightarrow
\frac{\cK_j^{r-j-1}}{t_j\cK_j^{r-j-1}}
\hookrightarrow
\cdots
\hookrightarrow
\frac{\cK_j}{t_j\cK_j}.
\]
Hence the supports of both
\[
\frac{\cK_j}{t_j\cK_j}
\quad \text{and} \quad
\cN_{j+1}
\]
intersect $Z_{K_I}$ nontrivially in this worst case as well. This completes
the induction and hence the proof.
\end{proof}

Let $I\subseteq\{1,\dots,r\}$ be the subset such that the support of
$E_\bm$ is
\[
E_I
\coloneqq
E_{\alpha(\sum_{i\in I}e_i)}.
\]
Let $\cN_0$ be a $\shD_{Z,E}$-lattice of the regular holonomic $\shD_Z$-module
$\iota_{F,+}(\cM[1/f]).$
Then $\cN_0(*E_\bm)$ becomes a
$\shD_{Z,E_{I^c}}(*E_\bm)$-lattice, where $I^c$ denotes the complement of
$I$. Since $\shD_{Z,E_{I^c}}(*E_\bm)$ corresponds to the log structure
\[
\alpha\colon
\N^r_\bm
\hookrightarrow
S[\N^r_\bm]
\simeq
S[t_1,\dots,t_r]_{\alpha(\bm)},
\]
the Bernstein--Sato ideal of $\cN_0(*E_\bm)$ along $Z_{K_\bm}$ is naturally an
ideal in
$\C[s_i]_{i\notin I}.$

\begin{prop}\label{prop:localizeBF}
With the notation and assumptions above, for $K\subseteq \N^r$ and
$\bm\in \N^r$, we have
\[
B^{K_\bm}(\cN_0)
=
B^{K_\bm}(\cN_0(*E_\bm))\cdot \C[s_1,\dots,s_r].
\]
In particular, the generators of $B^{K_\bm}(\cN_0)$ are independent of
$s_i$ for $i\in I$.
\end{prop}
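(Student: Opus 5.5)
The plan is to reduce the statement to a flat base change along the inclusion $\C[s_i]_{i\notin I}\hookrightarrow\C[s_1,\dots,s_r]$. Write $\cP\coloneqq \cN_0(*E_\bm)/\alpha(K)\cdot\cN_0(*E_\bm)$. By definition $B^{K_\bm}(\cN_0)=\Ann_{\C[s_1,\dots,s_r]}(\cP)$, while $B^{K_\bm}(\cN_0(*E_\bm))$ is the annihilator of the same module for the log structure $\N^r_\bm$. Since $t_i$ is invertible for $i\in I$, the operators $t_i\partial_{t_i}$ for $i\in I$ lie in $\shD_{Z,E_{I^c}}(*E_\bm)$ but are no longer logarithmic; only $s_i$, $i\notin I$, act as central variables. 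So the task is to compare the $\C[s_1,\dots,s_r]$-annihilator of $\cP$ with the $\C[s_i]_{i\notin I}$-annihilator.

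The key structural input is Kashiwara's equivalence along the partial graph, as in \eqref{eq:getypeD} and \eqref{eq:partiallocKE}. The module $\cN_0(*E_\bm)$ is supported on the graph of $F$ and is a module over $\shD$ in the $t_i$-directions for $i\in I$. Because $\alpha(K)$ is generated by monomials that are units in the $I$-directions after localizing, $\cP$ is again a $\shD_{Y}$-type module in those directions, supported on the graph of $(f_i)_{i\in I}$. Hence
\[
\cP\simeq \cT[s_i]_{i\in I}\prod_{i\in I}f_i^{s_i}
\]
for some $\sA_X^{\C[s_j]_{j\notin I}}$-module $\cT$, namely the joint kernel of the maps $t_i-f_i$, $i\in I$. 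In particular $\cP$ is a free $\C[s_i]_{i\in I}$-module over $\cT$, and the $s_j$, $j\notin I$, act only on $\cT$.

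Given this, compute the annihilator. Let $b\in\C[s_1,\dots,s_r]$ kill $\cP$. Expand $b=\sum_\gamma b_\gamma(s_{I^c})s_I^\gamma$ and apply it to $\tau\prod f_i^{s_i}$ with $\tau\in\cT$. I will use the relation $s_i=-\partial_{t_i}t_i$ to check that multiplication by $s_i$, $i\in I$, on $\cT[s_I]F^{s}$ is literally polynomial multiplication, not a twisted action. Freeness over $\C[s_I]$ then forces $b_\gamma\cT=0$ for all $\gamma$. This gives $\Ann_{\C[s]}(\cP)=\Ann_{\C[s_{I^c}]}(\cT)\otimes\C[s_I]$. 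The same freeness shows $\Ann_{\C[s_{I^c}]}(\cP)=\Ann_{\C[s_{I^c}]}(\cT)$, which is $B^{K_\bm}(\cN_0(*E_\bm))$, and the equality follows. The independence of the generators from $s_i$, $i\in I$, is then immediate.

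The main obstacle is the identification step. One must check that the $\C[s_i]_{i\notin I}$-structure on $\cP$ as a $\shD_{Z,E_{I^c}}(*E_\bm)$-module agrees with the one induced from $\shD_{Z,E}$ via \eqref{eq:uncanincl}, and that $\alpha(K)\cdot\cN_0(*E_\bm)$ is stable under $\partial_{t_i}$ for $i\in I$, so that Kashiwara's equivalence applies to the quotient. For the second point, I would first replace $K$ by the ideal $K_\bm$ of $\N^r_\bm$, generated by monomials in $t_j$, $j\notin I$, times units. Then Lemma \ref{lm:twosidedideal} in the localized log ring gives the two-sidedness, hence the $\shD$-module structure in the $I$-directions.
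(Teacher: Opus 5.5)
Your proposal is correct and follows essentially the paper's argument. The paper also views the quotient $\cN_0(*E_\bm)/\alpha(K)\cdot\cN_0(*E_\bm)$ as an $\sA_{Y_I}^{R_{I^c}}(*\bar E_I)$-module supported on the graph of $F_I$, applies Kashiwara's equivalence to write it as $\cS'[s_i]_{i\in I}\prod_{i\in I}f_i^{s_i}$, and concludes from freeness over $\C[s_i]_{i\in I}$. Your coefficient-wise annihilator computation, and your check that $\alpha(K)\cdot\cN_0(*E_\bm)=\alpha(K_\bm)\cdot\cN_0(*E_\bm)$ is stable under $\partial_{t_i}$ for $i\in I$, only make explicit details that the paper leaves implicit.
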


\begin{proof}
By \eqref{eq:uncanincl}, the quotient
\[
\frac{\cN_0(*E_\bm)}
{\alpha(K)\cdot \cN_0(*E_\bm)}
\]
is an
$\sA_{Y_I}^{R_{I^c}}(*\bar E_I)\text{-module}$
supported on the graph of
$F_I=(f_i)_{i\in I}.$
By Kashiwara equivalence, similarly to \eqref{eq:getypeD}, we have
\[
\frac{\cN_0(*E_\bm)}
{\alpha(K)\cdot \cN_0(*E_\bm)}
\simeq
\iota_{F_I,+}\cS'
\simeq
\cS'[s_i]_{i\in I}\prod_{i\in I}f_i^{s_i},
\]
where $\cS'$ is an $\sA_X^{R_{I^c}}$-module. In particular, this quotient is
free over $\C[s_i]_{i\in I}$. The assertion follows.
\end{proof}

\begin{theorem}\label{thm:expofBF}
With the notation and assumptions of Proposition \ref{prop:localizeBF}, for
$K\subseteq \N^r$ and $\bm\in \N^r$, we have
\[
\Exp\big(Z(B^{K_\bm}(\cN_0))\big)
=
\Exp\big(\supp(\wt{i^!_{K_\bm}}\cN_0(*E))\big)
=
\supp\big(\psi_{K_\bm}(\DR(\cM))\big).
\]
As a consequent,
$\Exp\big(Z(B^{K_\bm}(\cN_0))\big)$
depends only on the support of $Z_{K_\bm}$. In particular,
\[
\Exp\big(Z(B_F^{K_\bm})\big)
=
\supp\big(\psi_{K_\bm}(\C_X)\big).
\]
\end{theorem}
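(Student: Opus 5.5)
The second equality is Theorem \ref{thm:linearsupp}(1) combined with Theorem \ref{thm:sabbspcmp}. So the real content is the first equality,
\[
\Exp\big(Z(B^{K_\bm}(\cN_0))\big)=\Exp\big(\supp(\wt{i^!_{K_\bm}}\cN_0(*E))\big).
\]
I would prove it by two inclusions. By Proposition \ref{prop:localizeBF} and Remark \ref{rmk:localizingupper!}(3), after localizing the log structure at $\alpha(\bm)$ I may assume $\bm=e$, i.e. $K_\bm=K$. The lattice $\cN_0$ is replaced by $\cN_0(*E_\bm)$, and everything is compatible with this localization.

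\emph{Inclusion $\subseteq$.} Write $\cN_0|_{Z_K}=\cN_0/\alpha(K)\cN_0$. By \eqref{eq:suppbideal=}, $Z(B^K(\cN_0))=\supp(\cN_0|_{Z_K})$; this module is relative holonomic over $\sA_X^{R_r}$ by Theorem \ref{thm:mainsuppfiberation}. Proposition \ref{prop:doublelimitnearby} gives a surjection
\[
\wt{i^!_K}\cN_0\twoheadrightarrow \wt{\cN_0}\big/\textstyle\sum_i\wt{\alpha(v_i)\cN_0}=\wt{\cN_0|_{Z_K}}.
\]
Since $\sim$ is faithfully flat, it follows that
\[
Z(B^K(\cN_0))\subseteq\supp(\wt{i^!_K}\cN_0)\subseteq\supp(\wt{i^!_K}\cN_0(*E)),
\]
the last step by Corollary \ref{cor:latticesinc}. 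Applying $\Exp$ gives this inclusion.

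\emph{Inclusion $\supseteq$.} Take a point $\beta$ in $\supp(\wt{i^!_K}\cN_0(*E))$. By $\sG_r$-invariance (Theorem \ref{thm:linearsupp}(1)), it suffices to find a $\sG_r$-translate of $\beta$ lying in $\supp(\cN_0|_{Z_K})$. By the double limit of Proposition \ref{prop:doublelimitnearby} and Corollary \ref{cor:anlocmaxminex}, on a small neighborhood $U_\beta$ we have
\[
\wt{i^!_K}\cN_0(*E)|_{U_\beta}\simeq \widetilde{\big(\alpha(\bn)^{-q}\cN_0\big/\textstyle\sum_i\alpha(v_i)^j\alpha(\bn)^{-q}\cN_0\big)}\big|_{U_\beta}
\]
for $q,j\gg0$. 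Hence $\beta$ lies in the support of $\alpha(\bn)^{-q}\cN_0/\sum_i\alpha(v_i)^j\alpha(\bn)^{-q}\cN_0$. This module has a finite filtration whose graded pieces are quotients of twists $\cN_0(H)/\alpha(K)\cN_0(H)$, with $H$ supported on $E$ (Lemma \ref{lm:lattictwist}, using $\alpha(K)$ two-sided, Lemma \ref{lm:twosidedideal}). By \eqref{eq:actionBsupp}, each twist translates the support by an element of $\sG_r$:
\[
\supp\big(\cN_0(H)/\alpha(K)\cN_0(H)\big)=t^{-H}\cdot\supp(\cN_0|_{Z_K}).
\]
So $\beta\in g\cdot Z(B^K(\cN_0))$ for some $g\in\sG_r$, and $\Exp(\beta)\in\Exp(Z(B^K(\cN_0)))$.

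The remaining statements are then immediate. Independence of the choice of $K_\bm$ beyond its support follows because $\psi_{K_\bm}$ depends only on $Z_{K_\bm}$. The case $\cN_0=\cN_F$, $\cM=\cO_X$ gives $\Exp(Z(B_F^{K_\bm}))=\supp(\psi_{K_\bm}(\C_X))$.

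The expected main obstacle is in the $\supseteq$ direction: making the filtration of $\alpha(\bn)^{-q}\cN_0/\sum_i\alpha(v_i)^j\alpha(\bn)^{-q}\cN_0$ into twisted copies of $\cN_0|_{Z_K}$ precise for a general monoid ideal $K$, since for non-principal $K$ the powers $\sum_i\alpha(v_i)^j$ are not powers of the ideal $\alpha(K)$. I would first filter by the ideals $\alpha(K)^k\cN_0$, which are cofinal with $\sum_i\alpha(v_i)^j\cN_0$. The graded pieces $\alpha(K)^k\cN_0/\alpha(K)^{k+1}\cN_0$ are images of finite sums of $\alpha(w)\cN_0/\alpha(w)\alpha(K)\cN_0$ with $w\in kK$. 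Each $\alpha(w)\cN_0$ is a lattice equal to $\cN_0(-H_w)$, so these pieces have the required translated supports.
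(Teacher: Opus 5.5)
Your proposal is correct and follows essentially the paper's proof: both reduce to $\bm=e$, identify $\wt{i^!_K}\cN_0(*E)$ locally over $\C^r$ with $\wt{\cN_0^{-k}}/\alpha(K)^j\wt{\cN_0^{-k}}$ (via Theorem \ref{thm:upper!DI} and \eqref{eq:relminexge}), and use that powering $K$ and twisting $\cN_0$ by powers of $\alpha(\mathbf{1})$ only shift the relevant zero loci by elements of $\sG_r\simeq\Z^r$. The paper states that last step ideal-theoretically, as $\prod_i\alpha(v_i)\cdot B^K(\cN_0)\subseteq B^{K^2}(\cN_0)$, whereas you do it at the level of supports with the $\alpha(K)^k$-filtration and its twisted graded pieces; your explicit use of the surjection onto $\wt{\cN_0/\alpha(K)\cN_0}$ together with the injection of Corollary \ref{cor:latticesinc} also makes the inclusion $\subseteq$ slightly more explicit than in the paper.
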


\begin{proof}
By Theorem \ref{thm:sabbspcmp}, together with the faithful flatness of the
analytic sheafification functor, it suffices to prove
\[
\Exp\big(Z(B^{K_\bm}(\cN_0))\big)
=
\supp\left(
\pi_*^{\sG_r}
\big(\wt{i^!_{K_\bm}}\cN_0(*E)\big)
\right).
\]
Furthermore, by relative Kashiwara equivalence and
Proposition \ref{prop:localizeBF}, it is enough to prove
\[
\Exp\big(Z(B^{K}(\cN_0))\big)
=
\supp\left(
\pi_*^{\sG_r}
\big(\wt{i^!_{K}}\cN_0(*E)\big)
\right).
\]

Similarly to Proposition \ref{prop:doublelimitnearby}, we have
\begin{equation}\label{eq:prolimup!lattice}
\wt{i^!_{K}}\cN_0
\simeq
\varprojlim_j
\frac{\wt\cN_0}{\alpha(K)^j\cdot \wt\cN_0}.
\end{equation}
By substitution, if $K$ is generated by
$v_1,\dots,v_p\in \N^r$,
then
\[
\prod_{i=1}^p \alpha(v_i)\cdot B^K(\cN_0)
\subseteq
B^{K^2}(\cN_0).
\]
On the other hand, the natural surjection
\[
\frac{\cN_0}{K^2\cdot \cN_0}
\twoheadrightarrow
\frac{\cN_0}{K\cdot \cN_0}
\]
implies, by \eqref{eq:suppbideal=}, that
\[
Z(B^K(\cN_0))
\subseteq
Z(B^{K^2}(\cN_0)).
\]
Since
\[
\Exp\big(Z(\alpha(v_i)\cdot B^K(\cN_0))\big)
=
\Exp\big(Z(B^K(\cN_0))\big),
\]
we obtain
\[
\Exp\big(Z(B^K(\cN_0))\big)
=
\Exp\big(Z(B^{K^2}(\cN_0))\big).
\]
Inductively,
\[
\Exp\big(Z(B^K(\cN_0))\big)
=
\Exp\big(Z(B^{K^j}(\cN_0))\big)
\]
for all $j\ge 1$.

By \eqref{eq:relminexge} and Theorem \ref{thm:upper!DI}, for every
sufficiently small simply connected open subset
$V\subseteq \C^r$,
there exists $j\gg 0$ such that
\[
\wt{i^!_K}\cN_0|_{X\times V}
=
\frac{\wt\cN_0}{\alpha(K)^j\cdot \wt\cN_0}\bigg|_{X\times V}.
\]
Together with \eqref{eq:prolimup!lattice}, this gives
\[
\Exp\big(Z(B^K(\cN_0))\big)
=
\Exp\big(\supp(\wt{i^!_K}\cN_0)\big).
\]
Now set
$\mathbf{1}=(1,\dots,1)\in \N^r.$
The same argument applied to
$\alpha(\mathbf{1})^j\cdot \cN_0$
shows that
\[
\Exp\big(Z(B^K(\cN_0))\big)
=
\Exp\big(Z(B^K(\alpha(\mathbf{1})^j\cdot \cN_0))\big)
=
\Exp\big(\supp(\wt{i^!_K}(\alpha(\mathbf{1})^j\cdot \cN_0))\big)
\]
for every $j\in \Z$.
Moreover, similarly to \eqref{eq:prolimup!lattice}, for every sufficiently
small simply connected open subset $V\subseteq \C^r$, we have
\[
\wt{i^!_K}\cN_0(*E)|_{X\times V}
=
\frac{\wt{\cN_0^{-k}}}
{\alpha(K)^j\cdot \wt{\cN_0^{-k}}}
\bigg|_{X\times V}
\]
for some $j,k\gg 0$, where
\[
\cN_0^{-k}
=
\alpha(\mathbf{1})^{-k}\cdot \cN_0.
\]
Therefore,
\[
\Exp\big(Z(B^K(\cN_0))\big)
=
\supp\left(
\pi_*^{\sG_r}
\big(\wt{i^!_K}\cN_0(*E)\big)
\right).
\]
This proves the theorem.
\end{proof}

By the preceding theorem and Theorem \ref{thm:linearsupp}, we immediately
obtain the following.

\begin{coro}\label{cor:expofBF}
With the notation and assumptions of Theorem \ref{thm:expofBF},
\[
\Exp\big(Z(B^{K_\bm}(\cN_0))\big)
\subseteq
(\C^*)^r
\]
is a finite union of translated subtori for every
$K\subseteq \N^r$ and $\bm\in \N^r$. Moreover, if $\cM$ underlies a
$\Q$-mixed Hodge module, then
\[
\Exp\big(Z(B^{K_\bm}(\cN_0))\big)
\subseteq
(\C^*)^r
\]
is a finite union of torsion-translated subtori.
\end{coro}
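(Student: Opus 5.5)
This corollary will follow by combining Theorem \ref{thm:expofBF} with Theorem \ref{thm:linearsupp}. First apply Theorem \ref{thm:expofBF} to get
\[
\Exp\big(Z(B^{K_\bm}(\cN_0))\big)=\Exp\big(\supp(\wt{i^!_{K_\bm}}\cN_0(*E))\big)=\supp\big(\psi_{K_\bm}(\DR(\cM))\big).
\]
The question is then about the Exp-image of the relative support of the generalized nearby-cycle module. Here we use that $\cN_0$ is a $\shD_{Z,E}$-lattice of $\iota_{F,+}(\cM[1/f])$ with $\cM$ regular holonomic, so the hypotheses of Theorem \ref{thm:sabbspcmp}, and hence of Theorem \ref{thm:linearsupp}, are satisfied.

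Next, invoke Theorem \ref{thm:linearsupp}(1). It says that $\supp(\wt{i^!_{K_\bm}}\cN_0(*E))\subseteq\C^r$ is a $\sG_r$-invariant union of translated linear subvarieties whose Exp-image is a finite union of translated subtori. Combined with the previous equality, this gives the first assertion. When $\cM$ underlies a $\Q$-mixed Hodge module, invoke Theorem \ref{thm:linearsupp}(4) instead. The components are then $\Q$-translated linear subvarieties, and the Exp-image of a $\Q$-translated linear subvariety is a torsion-translated subtorus. Together with the same equality from Theorem \ref{thm:expofBF}, this gives the second assertion.

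There is no real obstacle, since all the substantive work is in the two cited theorems. The one point to check is that finiteness really holds after applying Exp. The support upstairs is an infinite, $\sG_r$-invariant union, so its components fall into $\sG_r$-orbits, and each orbit maps onto a single translated subtorus. Theorem \ref{thm:linearsupp}(1) already asserts that the image is finite, through its identification with the support of the constructible complex $\psi_{K_\bm}(\DR(\cM))$, which is algebraic and therefore has finitely many components. So we can simply quote that statement.
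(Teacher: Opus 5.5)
Your proposal is correct and is exactly the paper's argument: the corollary is stated as an immediate consequence of the equality $\Exp\big(Z(B^{K_\bm}(\cN_0))\big)=\supp\big(\psi_{K_\bm}(\DR(\cM))\big)$ from Theorem \ref{thm:expofBF}, combined with parts (1) and (4) of Theorem \ref{thm:linearsupp}. Your remark on finiteness is harmless, since finiteness is part of the cited statement; in the paper it comes from the dR-algebraicity of $\pi_*^{\sG_r}$ of the nearby-cycle module, i.e.\ from the Sabbah specialization complex being algebraically constructible rather than merely constructible.
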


Finally, we explain how the main results in the introduction follow.
Combining Theorem \ref{thm:expofBF} with Corollary \ref{cor:expofBF} gives
Theorem \ref{thm:mainA}. Theorem \ref{thm:linearsupp} yields
Theorem \ref{thm:mainC} and Theorem \ref{thm:mainD}.

   \appendix
\let\Subsection\subsection
\newcommand{\paragraphe}[1]{\refstepcounter{theorem}\subsubsection*{{\upshape\thetheorem.}\enspace#1}}

\newcommand{\bbullet}{{\scriptscriptstyle\bullet}}

\newcommand\shd{\mathcal{D}}\let\cD\shd
\newcommand\shf{\mathcal{F}}\let\cF\shf
\newcommand\shi{\mathcal{I}}\let\cI\shi
\newcommand\shh{\mathcal{H}}\let\cH\shh
\newcommand\shm{\mathcal{M}}\let\cM\shm
\newcommand\shn{\mathcal{N}}\let\cN\shn
\newcommand\sho{\mathcal{O}}\let\cO\sho

\newcommand{\NN}{\mathbb{N}}
\newcommand{\QQ}{\mathbb{Q}}
\newcommand{\ZZ}{\mathbb{Z}}

\newcommand{\bma}{\boldsymbol{a}}
\renewcommand{\bR}{\boldsymbol{R}}

\newcommand{\rd}{\mathrm{d}}
\renewcommand{\gr}{\mathrm{gr}}
\renewcommand{\coh}{\mathrm{coh}}
\renewcommand{\Mod}{\mathrm{Mod}}
\newcommand{\nb}{\mathrm{nb}}

\newcommand{\pOS}{p^{-1}\sho_S}
\newcommand{\XS}{X_S}
\newcommand{\TS}{T_S}
\newcommand{\XsS}{X^*_S}
\newcommand{\XT}{X_T}
\newcommand{\XTa}{X_{T_\alpha}}
\newcommand{\XST}{\XS\times_S\nobreak\TS}
\newcommand{\XpT}{X'_T}
\newcommand{\XTp}{X_{T'}}
\newcommand{\DXS}{\shd_{\XS/S}}
\newcommand{\DXsS}{\shd_{\XsS/S}}
\newcommand{\DXT}{\shd_{\XT/T}}
\newcommand{\DXpT}{\shd_{\XpT/T}}
\newcommand{\DXST}{\shd_{\XST/\TS}}
\newcommand{\DXSp}{\shd_{X_{S'}/S'}}

\newcommand{\Char}{\operatorname{Char}}
\renewcommand{\DR}{\operatorname{DR}}
\newcommand{\pDR}{\operatorname{{}^\mathrm{p}DR}}
\renewcommand{\id}{\operatorname{Id}}\let\Id\id
\newcommand{\rk}{\operatorname{rk}}
\newcommand{\Supp}{\operatorname{Supp}}\let\supp\Supp

\let\tilde\widetilde
\let\wt\widetilde
\let\wh\widehat
\let\ov\overline
\let\epsilon\varepsilon
\let\emptyset\varnothing
\let\setminus\smallsetminus
\let\leq\leqslant
\let\geq\geqslant
\let\moins\smallsetminus

\newcommand\loccit{loc.\kern3pt cit.{}\xspace}
\newcommand\cf{cf.\kern.3em}
\newcommand\eg{e.g.\kern.3em}
\newcommand\ie{i.e.,\ }
\newcommand\resp{\text{resp.}\kern.3em}

\renewcommand\to{\mathchoice{\longrightarrow}{\rightarrow}{\rightarrow}{\rightarrow}}
\newcommand\mto{\mathchoice{\longmapsto}{\mapsto}{\mapsto}{\mapsto}}
\newcommand\hto{\mathrel{\lhook\joinrel\to}}
\newcommand\from{\mathchoice{\longleftarrow}{\leftarrow}{\leftarrow}{\leftarrow}}
\newcommand\isom{\stackrel{\sim}{\longrightarrow}}

\newcommand\To[1]{\mathchoice{\xrightarrow{\textstyle\kern4pt#1\kern3pt}}{\stackrel{#1}{\longrightarrow}}{}{}}
\newcommand\From[1]{\mathchoice{\xleftarrow{\textstyle~#1~}}{\stackrel{#1}{\longleftarrow}}{}{}}
\def\Hto#1{\mathrel{\lhook\joinrel\To{#1}}}

\let\oldbigoplus\bigoplus
\renewcommand{\bigoplus}{\mathop{\textstyle\oldbigoplus}\displaylimits}
\let\oldbigcap\bigcap
\renewcommand{\bigcap}{\mathop{\textstyle\oldbigcap}\displaylimits}
\let\oldbigcup\bigcup
\renewcommand{\bigcup}{\mathop{\textstyle\oldbigcup}\displaylimits}
\let\oldprod\prod
\renewcommand{\prod}{\mathop{\textstyle\oldprod}\displaylimits}

\renewcommand{\D}[1]{{}^{\scriptscriptstyle\mathrm{D}\mkern-2mu}#1}
\newcommand{\Df}{{}^{\scriptscriptstyle\mathrm{D}\mkern-4mu}f}

\let\sectionname\relax

\bgroup
\renewcommand{\thefootnote}{(*)}\addtocounter{footnote}{-1}
\addtocontents{toc}{\protect\let\protect\footnotemark\protect\relax}
\part*{Appendix I. Relative characteristic varieties of relative\nobreakspace regular holonomic $\mathcal D$-modules, \textnormal{by Claude Sabbah \protect\footnotemark}}
\footnotetext{CMLS, CNRS, École polytechnique, Institut Polytechnique de Paris, 91128 Palaiseau cedex, France. 
\texttt{\email{Claude.Sabbah@polytechnique.edu}}}
\egroup

\section{Introduction}
Let $\varphi:X\to T$ be a smooth morphism of complex manifolds. Given a regular holonomic $\cD_X$-module $\cM$, we have given in \cite[Th.\,3.2]{Sab2} an estimation of the characteristic variety of any coherent $\cD_{X/T}$-submodule of $\cM$ in terms of $\Char\cM$ only. In~this note, we answer a question of Lei Wu and extend this estimate to the case of relative regular holonomic $\cD$-modules as defined in \cite{MF-S16}. Our main tools are those developed in \cite[\S\S2--4]{FFS23}.

As \cite{Sab2} is written in French, we review the proof of \cite[Th.\,3.2]{Sab2} in Section \ref{sec:absolute}, and we add some details for making clear the adaptation to the relative setting.

The proof of the main theorem of this note (Theorem \ref{th:bibi86IIrel}) follows the same line as that of \cite[Th.\,3.2]{Sab2}, so the review of Section \ref{sec:absolute} also serves as a guiding line for it.

\section{A review of some results of \cite{Sab2}}\label{sec:absolute}

\Subsection{The relative characteristic variety}\label{subsec:absolute}
\subsubsection*{Preliminary remarks on relative $\cD$-modules}

Let $X$ and $T$ be complex manifolds. We~denote by $\XT=X\times T$ their product and let $q:\XT\to T$ denote the projection. We~will consider objects $\cN$ in the category $\Mod(\DXT)$ of $\DXT$-modules. It~is equipped with the standard functors pullback and pushforward with respect to morphisms of the form $f\times\id_T$, with $f:X\to X'$. We use the notation $\Df^*$ and $\Df_*$ for the pullback and pushforward of $\cD$-modules (often also denoted by $f^+$ and~$f_+$), and $\Df{}^{(j)}_*$ for $\cH^j(\Df_*)$.

Kashiwara's equivalence holds in this category, namely, if $\iota:X\hto X'$ is a closed embedding of complex manifolds, the functor $\D\iota_*$ induces an equivalence between $\Mod_\coh(\DXT$ and $\Mod_{\coh,\iota(X)}(\DXpT)$.

Let $\cN$ be a coherent $\cD_{\XT/T}$-module. Its characteristic variety $\Char\cN$ is a closed conic subset of the relative tangent bundle $T^*(\XT/T)=(T^*X)\times T$ (\cf\cite[\S III.1.3]{Schpbook}). Let $C$ be an irreducible component of $\Char\cN$, with projection $Z$ in $\XT$ (it is an irreducible closed analytic subset of $\XT$). If $\varpi:T^*(\XT/T)\to \XT$ denotes the projection, then at a general point of $C$, the fiber of $q\circ\varpi$ (in $T^*X$) is involutive, hence of dimension $\geq\dim X$.

For an irreducible closed analytic subset $Z$ of $\XT$, there exists an open dense subset~$Z^\circ$ on which $q$ is of maximal rank. We~denote by $\dim_qZ$ the dimension of the fibers of $q|_{Z^\circ}$, \ie the generic relative dimension of $q|_Z$, which is thus $\leq\dim X$. Locally on $Z^\circ$, the image $q(Z^\circ)$ is a submanifold of $T$ of dimension $\dim Z-\dim_qZ$. Similarly, locally on $C^\circ$ as above, the image $q\circ\varpi(C^\circ)$ is a manifold of dimension $\dim C-\dim_{q\circ\varpi}C$, which is thus equal to $\dim Z-\dim_qZ$. We~conclude
\begin{equation}\label{eq:dimestimate}
\dim C=\dim_{q\circ\varpi}C+\dim Z-\dim_qZ\geq\dim X+\dim Z-\dim_qZ.
\end{equation}
Furthermore, equality holds if and only if at a general point of $C$, the fiber of $q\circ\varpi$ is Lagrangian. This property is equivalent to $C^\circ:=\varpi^{-1}Z^\circ\cap C$ being the relative conormal bundle $T^*_{q|_{Z^\circ}}(\XT/T)$.

\begin{definition}\label{def:relconormal}
If $Z$ is a closed analytic subset of $\XT$, the \emph{relative conormal space} $T^*_{q|_Z}(\XT/T)$ is, by definition, the closure in $T^*(\XT/T)$ of the relative conormal bundle $T^*_{q|_{Z^\circ}}(\XT/T)$, where~$Z^\circ$ is any open dense subset of $Z$ which is smooth and on which~$q|_Z$ is of maximal rank. It~is a closed conic analytic subset of $T^*(\XT/T)$.
\end{definition}

We can thus rephrase the above remark as:
\begin{equation}\label{eq:relconormal}
\begin{gathered}
\text{Equality holds in \eqref{eq:dimestimate} if and only if}\\
\text{$C$ is the relative conormal space $T^*_{q|_{Z}}(\XT/T)$ of $q|_Z$.}
\end{gathered}
\end{equation}

On the other hand, there exists an increasing filtration $\cF_\bbullet(\cN)$ by coherent $\DXT$-submodules such that each irreducible component of the characteristic variety of $\cF_j(\cN)/\cF_{j-1}(\cN)$ is of dimension~$j$ (\cf \cite[\S1.7]{BJ}, that can be readily adapted to the case of $\DXT$).

Let $\iota:X\hto X'$ be a closed inclusion. It~induces a pair of morphisms
\[
(T^*X)\times T\From{T^*\iota}(T^*X')|_X\times T\Hto{\iota}(T^*X')\times T
\]
and for any conic closed analytic subset $C$ of $T^*(\XT/T)$, its $\iota$-pushforward is defined as $\iota[(T^*\iota)^{-1}C]$. In a local chart of $X$, we~can write $X'=X\times Y$, $q'=q\circ p$, where $p$ is the projection $X'\to X$, and $\iota$ is the inclusion $X=X\times\{y_o\}\hto X\times Y$. In such a chart, for $C\subset T^*(\XT/T)$, we~have
\[
T^*(\XpT/T)=T^*(\XT/T)\times (T^*Y),\quad\text{and}\quad \iota[(T^*\iota)^{-1}C]=C\times (T^*_{y_o}Y).
\]
The following is then clear.

\begin{lemma}
For an irreducible closed conic analytic subset $C$ of $T^*(\XT/T)$ which is generically relatively involutive, we~have $C=T^*_{q|_Z}(\XT/T)$ with $Z=\varpi(C)$, equivalently $\dim C=\dim X+\dim Z-\dim_qZ$, if and only if $\iota[(T^*\iota)^{-1}C]=T^*_{q'|_{\iota(Z)}}(\XpT/T)$.
\end{lemma}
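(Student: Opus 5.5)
The plan is to reduce the lemma to a computation in a local chart, where the relative conormal condition is visible on both sides. The statement is local on $X$ and $T$, and both sides of the equivalence are defined by closures of generic data. So I will work in a chart where $X'=X\times Y$, $q'=q\circ p$ and $\iota$ is $x\mapsto(x,y_o)$. In this chart
\[
\iota[(T^*\iota)^{-1}C]=C\times(T^*_{y_o}Y),
\]
as recorded just before the lemma, and $\iota(Z)=Z\times\{y_o\}$. The first step is to note that $C':=C\times T^*_{y_o}Y$ is irreducible, closed and conic. I also need it to be generically relatively involutive. This holds because at a general point, the $q'\circ\varpi'$-fiber of $C'$ is the product of the $q\circ\varpi$-fiber of $C$ with the Lagrangian $T^*_{y_o}Y\subset T^*Y$. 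A product of an involutive subvariety with a Lagrangian is involutive in the product symplectic structure.

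Next I will compare dimensions and apply the criterion \eqref{eq:relconormal} on both sides. Set $d=\dim Y$. Then:
\begin{itemize}
\item $\dim C'=\dim C+d$;
\item $\dim X'=\dim X+d$;
\item $\dim\iota(Z)=\dim Z$;
\item $\dim_{q'}\iota(Z)=\dim_qZ$, since $q'|_{\iota(Z)}$ is identified with $q|_Z$.
\end{itemize}
Hence
\[
\dim C'-\bigl(\dim X'+\dim\iota(Z)-\dim_{q'}\iota(Z)\bigr)=\dim C-\bigl(\dim X+\dim Z-\dim_qZ\bigr).
\]
By \eqref{eq:dimestimate} and \eqref{eq:relconormal}, applied to $C$ in $T^*(\XT/T)$ and to $C'$ in $T^*(\XpT/T)$, equality of dimensions is equivalent to being the relative conormal space on each side. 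This gives the equivalence, since $\varpi'(C')=\iota(Z)$.

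The one point needing care is the claim that the generic relative conormal bundles correspond. I will handle it directly. On $Z^\circ$, where $q|_Z$ has maximal rank, $\iota(Z^\circ)$ is an open dense subset of $\iota(Z)$ on which $q'$ also has maximal rank. There one checks
\[
T^*_{q'|_{\iota(Z^\circ)}}(\XpT/T)=T^*_{q|_{Z^\circ}}(\XT/T)\times T^*_{y_o}Y,
\]
because the relative tangent space of $\iota(Z^\circ)$ along the fibers of $q'$ is that of $Z^\circ$ times $\{0\}$ in $TY$. Taking closures commutes with the product by the closed set $T^*_{y_o}Y$. This gives the equality $C=T^*_{q|_Z}(\XT/T)$ if and only if $C'=T^*_{q'|_{\iota(Z)}}(\XpT/T)$ without going through the dimension count at all.

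I expect no real obstacle; the lemma is a bookkeeping check. The only subtle point is that the chart decomposition is local, while closures are taken globally. This is harmless because both conditions are local on $X'$ near $\iota(X)$, and $C'$ lies over $\iota(X)$.
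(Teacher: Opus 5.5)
Your proposal is correct and follows the paper's route. The paper simply declares the lemma clear from the chart identity $\iota[(T^*\iota)^{-1}C]=C\times T^*_{y_o}Y$, and your check that $T^*_{q'|_{\iota(Z)}}(X'_T/T)=T^*_{q|_Z}(X_T/T)\times T^*_{y_o}Y$, together with the matching dimension bookkeeping, is exactly what it leaves to the reader.
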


\subsubsection*{The relative characteristic variety of a $\cD_X$-module}
Let $\varphi:X\to T$ be a holomorphic map and let $\cM$ be a coherent $\cD_X$-module. We~let $\cM_\varphi$ denote its pushforward by the graph embedding $\iota_\varphi:X\hto \XT$.

\begin{lemma}[{\cite[Chap.\,III, Lem.\,1.3.3]{Schpbook}}]\label{lem:independant}
Given a coherent $\cD_X$-module $\cM$, the charac\-teristic variety of a coherent $\DXT$-submodule that generates $\cM_\varphi$ only depends on~$\cM$.\qed
\end{lemma}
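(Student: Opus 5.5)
The plan is to compare two coherent $\DXT$-submodules $\cN_1,\cN_2\subset\cM_\varphi$ that both generate $\cM_\varphi$ as a $\cD_{\XT}$-module, and to show $\Char\cN_1=\Char\cN_2$ as subsets of $T^*(\XT/T)$. I would first reduce to the case $\cN_1\subset\cN_2$: the sum $\cN_1+\cN_2$ is again a coherent $\DXT$-submodule generating $\cM_\varphi$, so it suffices to compare each $\cN_i$ with $\cN_1+\cN_2$.

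Assume $\cN_1\subset\cN_2$. Since $\cN_1$ generates $\cM_\varphi$ over $\cD_{\XT}$, and $\cD_{\XT}$ is generated over $\DXT$ by the operators $\partial_{t_j}$ along $T$, I would write $\cM_\varphi=\bigcup_k F_k$, where $F_k=\sum_{|\beta|\le k}\DXT\,\partial_t^\beta\cN_1$ is coherent. Because $\cN_2$ is coherent, locally $\cN_2\subset F_k$ for some $k$ (by noetherianity on compact sets). So it is enough to show $\Char F_k=\Char\cN_1$ for all $k$, by induction on $k$. The key computation uses the graph structure. On $\cM_\varphi=\iota_{\varphi*}\cM\otimes\C[\partial_t]$, we have $(t_j-\varphi_j)\partial_t^\beta m=-\beta_j\partial_t^{\beta-e_j}m$. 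Moreover, the vector fields $\partial_{x_i}$ act as $\partial_{x_i}-\sum_j(\partial_{x_i}\varphi_j)\partial_{t_j}$. These identities let me express $\partial_{t_j}\cN_1$ in terms of $\DXT$-actions, modulo lower terms.

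More cleanly, I would exhibit a surjection onto $F_{k+1}/F_k$ from a subquotient of $\cN_1$. Consider the map $\cN_1\to F_{k+1}/F_k$ given by $n\mapsto\partial_t^\beta n$. Since $\partial_t^\beta$ commutes with $\DXT$ and with $t$ up to lower order terms, this map is $\DXT$-linear, so $F_{k+1}/F_k$ is a finite sum of quotients of $\cN_1$. This gives $\Char(F_{k+1}/F_k)\subset\Char\cN_1$, and hence $\Char F_{k+1}\subset\Char\cN_1$. The reverse inclusion needs an argument, since characteristic varieties of submodules can be smaller. I would obtain it by symmetry: $\partial_t$ is injective on $\cM_\varphi$ modulo lower terms, so the same construction run downward gives $\Char\cN_1\subset\Char F_k$. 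Alternatively, both varieties are determined by $\cM_\varphi$ localized on the complement of lower-dimensional pieces.

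The main obstacle is this reverse inclusion, namely showing that passing to a smaller generating submodule does not drop any component. I would handle it by showing that multiplication by suitable $(t_j-\varphi_j)^{\beta}$ maps $F_k$ into $\cN_1$. By the identity above, $(t-\varphi)^\beta$ kills the top $\partial_t$-degree in a controlled way, and $t-\varphi$ lies in $\cO$, which the relative operators preserve. This makes $F_k$ and $\cN_1$ mutually embeddable up to $\DXT$-linear twists by $\cO_{\XT}$-multiplications. Since $t_j-\varphi_j$ vanishes on the support, these twists are not injective, so an additional check is needed. I would settle it as in \cite[Chap.\,III, Lem.\,1.3.3]{Schpbook} by computing $\gr$ with respect to the filtration by $\partial_t$-degree. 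The point to verify is that $\gr F_k$ is a direct sum of copies of $\gr\cN_1$ twisted by the invertible symbols of $\partial_t^\beta$ acting on $\cM$, which gives the equality of supports in $T^*(\XT/T)$.
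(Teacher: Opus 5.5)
Your first two paragraphs already contain a complete proof, and it is the standard one. The paper gives no proof of its own; it only quotes the lemma from Schapira's book.

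The steps are as follows.
\begin{itemize}
\item Replacing $\mathcal{N}_2$ by $\mathcal{N}_1+\mathcal{N}_2$, you may assume $\mathcal{N}_1\subseteq\mathcal{N}_2$.
\item Since $\mathcal{D}_{X_T}=\sum_\beta\mathcal{D}_{X_T/T}\,\partial_t^\beta$ and $\mathcal{N}_2$ is locally finitely generated, locally $\mathcal{N}_2\subseteq F_k$ for some $k$.
\item For $P\in\mathcal{D}_{X_T/T}$ one has $\partial_t^\beta P-P\partial_t^\beta\in\sum_{\gamma<\beta}\mathcal{D}_{X_T/T}\,\partial_t^\gamma$. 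Hence the map $\bigoplus_{|\beta|=k+1}\mathcal{N}_1\to F_{k+1}/F_k$, $(n_\beta)\mapsto\sum\partial_t^\beta n_\beta$, is a $\mathcal{D}_{X_T/T}$-linear surjection.
\item By induction, $\operatorname{Char}F_k\subseteq\operatorname{Char}\mathcal{N}_1$.
\end{itemize}
You should add one sentence justifying that $F_k$, and hence $\mathcal{N}_1+\mathcal{N}_2$, is $\mathcal{D}_{X_T/T}$-coherent. This is the standard fact that locally finitely generated $\mathcal{D}_{X_T/T}$-submodules of a coherent $\mathcal{D}_{X_T}$-module are coherent.

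The step you call the main obstacle is not one. For coherent modules, the sequence $0\to\mathcal{N}_1\to F_k\to F_k/\mathcal{N}_1\to 0$ gives $\operatorname{Char}F_k=\operatorname{Char}\mathcal{N}_1\cup\operatorname{Char}(F_k/\mathcal{N}_1)$. So characteristic varieties are monotone for submodules, and the inclusion $\operatorname{Char}\mathcal{N}_1\subseteq\operatorname{Char}F_k$ is free. You were already using this monotonicity when you reduced the lemma to $\operatorname{Char}F_k=\operatorname{Char}\mathcal{N}_1$. The chain $\operatorname{Char}\mathcal{N}_1\subseteq\operatorname{Char}\mathcal{N}_2\subseteq\operatorname{Char}F_k\subseteq\operatorname{Char}\mathcal{N}_1$ then finishes the proof.

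Your last paragraph should be discarded: besides being unnecessary, it asserts something false. Take $X=T=\mathbb{C}$, $\mathcal{M}=\mathcal{O}_X$ and $\varphi(x)=x$. Then $\partial_x\,\delta(t-x)=-\partial_t\,\delta(t-x)$, so $\mathcal{N}_1=\mathcal{D}_{X_T/T}\,\delta(t-x)$ is already all of $\mathcal{M}_\varphi$, and $F_{k+1}/F_k=0$ for every $k$. Thus $\operatorname{gr}F_k$ is not a sum of copies of $\operatorname{gr}\mathcal{N}_1$, one for each $\beta$. In general $F_{k+1}/F_k$ is only a quotient of $\bigoplus_{|\beta|=k+1}\mathcal{N}_1$, which is exactly what your correct argument uses.

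Likewise, nothing in the hypotheses makes $(t-\varphi)^\beta$ map $F_k$ into $\mathcal{N}_1$. From $(t_j-\varphi_j)^m\partial_{t_j}=\partial_{t_j}(t_j-\varphi_j)^m-m(t_j-\varphi_j)^{m-1}$, this would require $\mathcal{N}_1$ to be stable under $\partial_{t_j}(t_j-\varphi_j)^m$.

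Finally, the graph structure and your identities for $t_j-\varphi_j$ and $\partial_{x_i}$ play no role. The argument proves the same independence for generating coherent $\mathcal{D}_{X_T/T}$-submodules of any coherent $\mathcal{D}_{X_T}$-module.
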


We denote this characteristic variety as $\Char_{X_T/T}(\cM)\subset (T^*X)\times T$ (it is denoted by $\Char^1_\varphi(\cM)$ in \cite{Schpbook}). Although a coherent $\DXT$-submodule of $\cM$ may not exist globally, it exists locally and, for most arguments, a local setting is enough to be considered, according to the independence property, so we can make use of such a coherent $\DXT$-submodule without justification.

\begin{theorem}[{\cite[Th.\,3.2]{Sab2}}]\label{th:bibi86II}
Let $\cM$ be a regular holonomic $\cD_X$-module and let \hbox{$\varphi:X\!\to\! T$} be a holomorphic map. Then each irreducible component of the relative characteristic variety $\Char_{X_T/T}\cM$ takes the~form~\hbox{$T^*_{q|_{\iota_\varphi(Z)}}\!(\XT/T)$}, for some irreducible closed analytic subset of $X$. Furthermore, $T^*_ZX$ is some irreducible component of $\Char\cM$.
\end{theorem}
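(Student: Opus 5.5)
The plan is to reduce, via a stratification of the graph, to the case where the relevant family of fibers is controlled by Lagrangian geometry, and then use regularity to show that every component of $\Char_{X_T/T}\cM$ is a relative conormal space. First, by Lemma \ref{lem:independant}, the question is local and independent of the chosen coherent $\DXT$-submodule $\cN\subset\cM_\varphi$ generating $\cM_\varphi$. So I work near a point and fix such an $\cN$. I use the filtration $\cF_\bbullet(\cN)$ by characteristic-variety dimension to reduce to components $C$ of $\Char\cN$. By \eqref{eq:relconormal}, the claim ``$C=T^*_{q|_{\iota_\varphi(Z)}}(\XT/T)$'' is equivalent to the dimension equality in \eqref{eq:dimestimate}, i.e.\ $\dim C=\dim X+\dim Z-\dim_qZ$ with $Z=\varpi(C)$.

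The key step is an upper bound on $\dim C$ in terms of $\Char\cM$. Write $\Char\cM=\bigcup_\alpha T^*_{Z_\alpha}X$, with the $Z_\alpha$ irreducible. Choose a Whitney stratification of $X$ adapted to the $Z_\alpha$ and to $\varphi$ (a Thom $a_\varphi$-stratification). I then show that $\Char_{X_T/T}\cM$ is contained in $\bigcup_\alpha W_\alpha$. Here $W_\alpha$ is the closure of the set of relative covectors $\xi$ at $(x,\varphi(x))$, $x\in Z_\alpha^\circ$, that lie in the limits of conormals to the fibers of $\varphi|_{Z_\alpha}$. The tool for this inclusion is regularity, used through the existence of $V$-filtrations / Kashiwara–Malgrange along the graph, or equivalently through Sabbah's microlocal $b$-functions. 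Concretely, after the graph embedding and choosing local coordinates $t$ on $T$, regularity gives good $V$-filtrations along each slope. This forces the symbols of elements of $\DXT\cdot m$ to vanish on the relative conormal to $\varphi|_{Z_\alpha}$, generalizing Ginsburg's argument. Each $W_\alpha$ is, up to taking closures, $T^*_{q|_{\iota_\varphi(Z_\alpha)}}(\XT/T)$. By \eqref{eq:dimestimate}, it has exactly the minimal dimension allowed for components whose projection is $\iota_\varphi(Z_\alpha)$.

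To finish, I combine the two bounds. Each component $C$ has $\dim C\ge\dim X+\dim Z-\dim_qZ$ by involutivity of the generic fibers, and $C\subset W_\alpha$ for some $\alpha$. This forces $Z\subset\iota_\varphi(Z_\alpha)$. The remaining point is to show $Z=\iota_\varphi(Z_\alpha)$ for some $\alpha$, so that $C$ is an irreducible closed conic set of the maximal possible dimension inside the irreducible $W_\alpha$. For this I use that $W_\alpha$ restricted over $Z$ has dimension bounded by the same expression $\dim X+\dim Z-\dim_qZ$ only when $Z$ is itself one of the strata closures; otherwise I replace $\alpha$ by the smaller stratum. Equality in \eqref{eq:dimestimate} then holds, so $C=T^*_{q|_{\iota_\varphi(Z)}}(\XT/T)$ with $T^*_ZX$ a component of $\Char\cM$.

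The main obstacle is the containment $\Char_{X_T/T}\cM\subset\bigcup W_\alpha$. This is where regularity is essential: for irregular modules, extra components appear. I would first try to prove it by reducing through the graph embedding to the codimension-one case $T=\C$. That is done by factoring $\varphi$ coordinatewise and using the Lemma on pushforward of relative conormal spaces. In the case $T=\C$, I would invoke the Kashiwara–Malgrange filtration together with the fact that, for regular holonomic modules, $\gr^V$ has characteristic variety given by the specialization of $\Char\cM$.
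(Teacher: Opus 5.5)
Your proposal has a genuine gap. The step you call the main obstacle is not a reduction; it is the theorem itself. By Remark~\ref{rem:Teresa}, $\bigcup_\alpha W_\alpha=\widehat q(\operatorname{Char}\cM_\varphi)$, and the statement is equivalent to $\operatorname{Char}_{X_T/T}\cM\subset\widehat q(\operatorname{Char}\cM_\varphi)$. The tools you offer for this containment do not deliver it.

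\emph{Coordinatewise factorization.} Factoring $\varphi$ coordinatewise does not reduce you to $T=\C$. After treating $\varphi_1$, the next step concerns a coherent $\mathcal{D}_{X\times\C/\C}$-module that is neither a $\mathcal{D}$-module nor relative holonomic. A one-variable statement about regular holonomic $\mathcal{D}_X$-modules therefore cannot be iterated. Lemmas~\ref{lem:push} and~\ref{lem:relconormal} concern proper maps $X'\to X$ of the source, not factorizations of the target.

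\emph{The case $T=\C$.} A $V$-filtration or $b$-function only gives relations in $t\partial_t$, i.e.\ in the $s$-direction. It does not make symbols of $\mathcal{D}_{X_T/T}\cdot m$ vanish on the closure of the relative conormal over the locus where $\varphi|_{Z_\alpha}$ drops rank. The description of $\operatorname{Char}\operatorname{gr}^V$ as a specialization concerns a different module, and you never connect it to $\operatorname{Char}(\mathcal{D}_{X_T/T}\cdot m)$.

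\emph{Stratification.} Thom $a_\varphi$-stratifications need not exist when $\dim T\ge 2$; for instance $(x,y)\mapsto(x,xy)$ blows up the line $x=0$.

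\emph{What the paper does instead.} It never attacks the containment head-on. It proves the weaker first assertion by induction on $(\dim\operatorname{supp}\cM,\dim\varphi(\operatorname{supp}\cM))$:
\begin{enumerate}
\item Lemma~\ref{lem:push} carries the statement through proper direct images, via Kashiwara's estimate \eqref{eq:Kestimate} and Lemma~\ref{lem:relconormal}.
\item A resolution $\pi\colon X'\to X$ relates $\cM$, up to kernel and cokernel of smaller support, to a direct image of a regular meromorphic connection $\cM'(*D)$ along a normal crossing divisor, with $\varphi\circ\pi$ monomial.
\item Regularity is used only there, through the presentation by $x_i\partial_{x_i}-\alpha_i$. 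This yields the relative operators \eqref{eq:operators} and an irreducible set $C'$ of dimension $d+\delta$ (Lemma~\ref{lem:irreducible}), which \eqref{eq:dimestimate} forces to be the component.
\end{enumerate}

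Your closing dimension count is also insufficient. A closed conic subset of $W_\alpha$ lying over a smaller $Z$ can have exactly the involutive minimum dimension without $T^*_ZX$ being a component of $\operatorname{Char}\cM$. Take $X=\C^2$, $\varphi=xy$, $\cM=\cO_X$. The fibre of $W$ over the origin is all of $T^*_0X$, of dimension $2=\dim X+\dim Z-\dim_qZ$ with $Z$ the point. The origin is a stratum of any stratification on which $\varphi$ has constant rank, yet $T^*_0X$ is not a component of $\operatorname{Char}\cO_X$. So replacing $\alpha$ by the smaller stratum proves nothing. What excludes such sets is the reverse inclusion $\widehat q(\operatorname{Char}\cM_\varphi)\subset\operatorname{Char}_{X_T/T}\cM$, valid for every coherent module (Remark~\ref{rem:Teresa}). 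Combined with your containment, it would give equality, and the components would simply be the maximal $W_\alpha$ with no dimension count needed. The paper, having proved only the weaker first assertion, obtains the second separately in \S\ref{par:detabsolute}, using:
\begin{enumerate}
\item a Whitney stratification;
\item Lemma~\ref{lem:nonchar} on strata where $\varphi$ has maximal rank;
\item the equidimensionality assumption;
\item induction along the filtration $\mathcal{F}_\bullet$.
\end{enumerate}
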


\begin{remark}[T.\,Monteiro Fernandes]\label{rem:Teresa}
For a closed analytic subset $Z\subset X$, one can interpret the relative conormal space $T^*_{q|_{\iota_\varphi(Z)}}\!(\XT/T)$ as the image of the conormal space $T^*_{\iota(\varphi)}\XT$ by the projection $\wh q:T^*\XT\to T^*(\XT/T)$: indeed, this is clear on the open set $Z^\circ$ where $q|_{\iota_\varphi(Z^\circ)}$ is smooth; on the other hand, the image of $T^*_{\iota(\varphi)}\XT$ by $\wh q$ is conic, irreducible and closed analytic, hence equal to the closure of $T^*_{q|_{\iota_\varphi(Z^\circ)}}\!(\XT/T)$, that~is, $T^*_{q|_{\iota_\varphi(Z)}}\!(\XT/T)$ by definition. The theorem states that $\Char_{X_T/T}(\cM)\subset\wh q(\Char\cM_\varphi)$. On the other hand, the reverse inclusion holds for any coherent $\cD_{\XT}$-module (\cf\cite[Chap.\,III, Th.\,1.3.5(d)]{Schpbook}). We can therefore restate the theorem by saying that
\[
\Char_{X_T/T}(\cM)=\wh q(\Char\cM_\varphi).
\]
\end{remark}

\begin{example}
We will show that the statement of the theorem is not sensible to closed inclusions $T'\hto T$.

Let $\cM$ be a coherent $\cD_X$-module. Let $\iota:T'\hto T$ be a closed submanifold of $T$ and assume that $\cM$ is supported on $\varphi^{-1}(T')$. Then $\cM_\varphi$ is supported on $\XTp$. By Kashiwara's equivalence, we can write $\cM_\varphi=\D\iota_*\cM'$ for some coherent $\cD_{\XTp}$\nobreakdash-mod\-ule~$\cM'$. Let~$\cN'$ be a coherent $\cD_{\XTp/T'}$-submodule of $\cM'$ that generates it. Then $\cN:=\iota_*\cN'$ generates $\D\iota_*\cM'=\cM$, as~can be seen by using local coordinates on~$T$ adapted to $T'$. Assume now that $\cM$, hence $\cM'$, is holonomic. Each irreducible component of $\Char\cM$ takes the form $T^*_ZX$ with $Z$ irreducible closed analytic in $\varphi^{-1}(T')$, so that $\iota_\varphi(Z)\subset\XTp$. Then $\Char\cM'$ is the union of the sets $T^*_{\iota_\varphi(Z)}(\XTp)$ for such components~$Z$. If we know that any irreducible component of $\Char\cN'$ takes the form $T^*_{q'|_{\iota_\varphi(Z)}}(\XTp/T')$ for some irreducible component $T^*_{\iota_\varphi(Z)}(\XTp)$ of $\Char\cM'$, where $q':\XTp\to T'$ denotes the projection, then $\Char\cN=\iota(\Char\cN')$ has components of the form
\[
\iota[T^*_{q'|_{\iota_\varphi(Z)}}(\XTp/T')]=T^*_{q|_{\iota_\varphi(Z)}}(\XT/T).
\]
\end{example}

\subsubsection*{The case of smooth a morphism}
If $\varphi:X\to T$ is a smooth morphism, the results above can equivalently been expressed already on $X$. We~now make the translation explicit. We~let $\cD_{X/T}$ denote the sheaf of relative differential operators, which is well-defined in this smooth context. For a coherent $\cD_X$-module $\cM$, the characteristic variety of a coherent $\cD_{X/T}$-submodule that generates it is independent of this submodule (analogous to Lemma \ref{lem:independant}). We~denote it by $\Char_{X/T}\cM$. 

Let us consider the category of smooth complex manifolds over~$T$, that is, of pairs $(X,\varphi)$, where $\varphi:X\to T$ is a \emph{smooth morphism}. A morphism $(X,\varphi)\to(X',\varphi')$ in this category is nothing but a morphism of complex manifolds $X\to X'$ making the following diagram commutative:
\[
\begin{tikzcd}
X \arrow[dr, "\varphi" swap] \arrow[rr] && X'\arrow[dl, "\varphi'"] \\
& T
\end{tikzcd}
\]
It is equipped with the standard functors pullback and pushforward with respect to morphisms over $T$.

Let $\iota:(X,\varphi)\hto(X',\varphi')$ be a morphism which is a closed embedding, \ie \hbox{$\iota:X\hto X'$} is a closed embedding. Kashiwara's equivalence holds in the relative setting, that is, $\D\iota_*$ induces an equivalence $\Mod_\coh(\cD_{X/T})\simeq\Mod_\coh(\cD_{X'/T})$. Furthermore, this equivalence is compatible with that on $\Mod_\coh(\cD_X)$, that is, if $\cN$ is a coherent $\cD_{X/T}$-module contained in a coherent $\cD_X$-module $\cM$, then, with obvious notation, $\D\iota_*\cN$ is a coherent $\cD_{X'/T}$-submodule of the coherent $\cD_{X'}$-module $\D\iota_*\cM$. In~other words, we~can use Kashiwara's equivalence for pairs $(\cM,\cN)$.

The embedding $\iota:(X,\varphi)\hto(X',\varphi)$ induces a pair of morphisms
\[
T^*(X/T)\From{T^*\iota}T^*(X'/T)|_X\Hto{\iota}T^*(X'/T)
\]
and for any conic closed analytic subset $C$ of $T^*(X/T)$, its $\iota$-pushforward is defined as $\iota[(T^*\iota)^{-1}C]$. In a local chart of $X$, we~can write $X'=X\times Y$, $\varphi'=\varphi\circ p$, where $p$ is the projection $X'\to X$, and $\iota$ is the inclusion $X=X\times\{y_o\}\hto X\times Y$. In such a chart, for $C\subset T^*(X/T)$, we~have
\[
T^*(X'/T)=T^*(X/T)\times (T^*Y),\qquad \iota[(T^*\iota)^{-1}C]=C\times (T^*_{y_o}Y).
\]
Applying these equalities to the graph embedding $\iota_\varphi$, we deduce that
\[
\iota[(T^*\iota)^{-1}\Char_{X/T}\cM]=\Char_{X_T/T}\cM.
\]

We can mimic Definition \ref{def:relconormal} when $\varphi$ is a smooth morphism:

\begin{definition}
If $Z$ is a closed analytic subset of $X$, the \emph{relative conormal space} $T^*_{\varphi|_Z}(X/T)$ is, by definition, the closure in $T^*(X/T)$ of the relative conormal bundle $T^*_{\varphi|_{Z^\circ}}(X/T)$, where~$Z^\circ$ is any open dense subset of $Z$ which is smooth and on which~$\varphi|_Z$ is of maximal rank. It~is a closed conic analytic subset of $T^*(X/T)$.
\end{definition}

The following is then equivalent to Theorem \ref{th:bibi86II}, and is stated like this in \cite[Th.\,3.2]{Sab2}.

\begin{theorem}\label{thm:equivformrelcc}
Let $\cM$ be a regular holonomic $\cD_X$-module and let \hbox{$\varphi:X\!\to\! T$} be a smooth morphism. Then, for any coherent $\cD_{X/T}$-submodule $\cN$ of $\cM$, each irreducible component of the characteristic variety $\Char\cN$ takes the~form~\hbox{$T^*_{\varphi|_{Z}}\!(X/T)$}, where $T^*_ZX$ is some irreducible component of $\Char\cM$.
\end{theorem}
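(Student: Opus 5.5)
The statement is Theorem \ref{thm:equivformrelcc}, which is asserted to be equivalent to Theorem \ref{th:bibi86II}. I would prove it by reducing to that theorem, that is, to Sabbah's graph-embedding formulation, and then checking the translation. The translation uses Kashiwara's equivalence for pairs $(\cM,\cN)$ and the identity
\[
\iota[(T^*\iota)^{-1}\Char_{X/T}\cM]=\Char_{X_T/T}\cM.
\]

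First I set up the reduction. Since $\varphi$ is smooth, locally on $X$ we may write $X=X_0\times T$ with $\varphi$ the projection. Then $\cD_{X/T}$ is generated over $\cO_X$ by the $\partial_{x}$ in the $X_0$-directions. The graph embedding $\iota_\varphi\colon X\hookrightarrow X\times T$ is a closed embedding over $T$. Relative Kashiwara equivalence then sends $\cN\subseteq\cM$ to the pair $\D\iota_{\varphi*}\cN\subseteq\cM_\varphi$, and $\D\iota_{\varphi*}\cN$ is a coherent $\cD_{X_T/T}$-submodule of $\cM_\varphi$.

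Next I reduce to a generating submodule. Lemma \ref{lem:independant} only covers submodules that generate $\cM_\varphi$. To handle an arbitrary $\cN$, I replace $\cM$ by the $\cD_X$-submodule $\cD_X\cN$. This is still regular holonomic, since submodules of regular holonomic modules are regular holonomic, and $\Char(\cD_X\cN)$ is a union of components of $\Char\cM$; the components of a submodule are among those of $\cM$ by additivity of characteristic cycles. So I may assume $\cN$ generates $\cM$, and then $\D\iota_{\varphi*}\cN$ generates $\cM_\varphi$.

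With this in place I apply Theorem \ref{th:bibi86II}. It says every irreducible component of $\Char(\D\iota_{\varphi*}\cN)=\Char_{X_T/T}\cM$ has the form $T^*_{q|_{\iota_\varphi(Z)}}(X_T/T)$, where $T^*_ZX$ is a component of $\Char\cM$. Using the local product chart and the displayed identity, $\Char_{X_T/T}\cM$ is obtained from $\Char\cN=\Char_{X/T}\cM$ by the operation $C\mapsto C\times T^*_{y_o}Y$. This operation is a bijection on irreducible closed conic sets and preserves dimension. I then apply the lemma comparing $C$ with $\iota[(T^*\iota)^{-1}C]$ via the dimension criterion \eqref{eq:dimestimate}–\eqref{eq:relconormal}. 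It gives that the components of $\Char\cN$ are exactly the relative conormal spaces $T^*_{\varphi|_Z}(X/T)$ whose images are the $T^*_{q|_{\iota_\varphi(Z)}}(X_T/T)$. The remaining check is that $q\circ\iota_\varphi=\varphi$, so the generic rank conditions match.

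The real content lies in Theorem \ref{th:bibi86II} itself, which I take as given. Within this reduction, the step I expect to need the most care is the passage from an arbitrary $\cN$ to one that generates $\cD_X\cN$. I would also have to confirm that the relative Kashiwara equivalence really identifies $\Char\cN$ with the pushed-forward variety, not just up to closure. I would check this in the local chart, where $\D\iota_*\cN=\cN[\partial_y]\delta$ and a good filtration transfers directly.
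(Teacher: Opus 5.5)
Your proposal is correct and follows the paper's route: the statement is deduced from Theorem \ref{th:bibi86II} via relative Kashiwara equivalence for the pair $(\cM,\cN)$ along the graph embedding $\iota_\varphi$, together with the identity $\iota[(T^*\iota)^{-1}\Char_{X/T}\cM]=\Char_{X_T/T}\cM$ in a local product chart. Your explicit reduction of an arbitrary $\cN$ to the generating case, by replacing $\cM$ with the regular holonomic submodule $\cD_X\cN$ whose characteristic-variety components are among those of $\Char\cM$, is a step the paper leaves implicit, and you handle it correctly.
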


\begin{remark}[analogous to Remark \ref{rem:Teresa}]
Let $\wh\varphi:T^*X\to T^*(X/T)$ denote the projection. The characteristic variety $\Char\cN$ does not depend on $\cN$ generating $\cM$ and the theorem asserts then that it is equal to the projection $\wh\varphi(\Char\cM)$.
\end{remark}

\subsection{Review of the proof of Theorem \ref{th:bibi86II}}
We first notice that the question is local on $\XT$. Furthermore, there is an easy special case of Theorem \ref{th:bibi86II}, without the regularity assumption:

\begin{lemma}\label{lem:nonchar}
Let $\cM$ be a holonomic $\cD_X$-module and let $\varphi:X\to T$ be a morphism which is non-characteristic with respect to $\cM$. Then, if $\Char\cM=\bigcup_{i\in I}T^*_{Z_i}X$, we~have $\Char_{X_T/T}\cM=\bigcup_{i\in I}T^*_{q|_{\iota_\varphi(Z_i)}}(\XT/T)$.
\end{lemma}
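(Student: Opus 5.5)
The plan is to reduce to the graph of $\varphi$ and then compute the relative characteristic variety of a coherent $\DXT$-submodule generating $\cM_\varphi$ directly in local coordinates. Non-characteristic means that $(T^*\varphi)^{-1}(T^*_T T)\cap \Char\cM$ lies in the zero section; here we work with $\Char\cM=\bigcup_i T^*_{Z_i}X$.

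The question is local, so I first replace $\cM_\varphi$ by the pushforward along the graph embedding $\iota_\varphi:X\hto \XT$. Choose local coordinates $t$ on $T$ and write $\cM_\varphi=\cM[\partial_t]\delta(t-\varphi)$. The natural candidate generating submodule is $\cN=\DXT\cdot(\cM\otimes\delta)$, which is the same as $\cM\otimes\delta$, since $\DXT$ contains no $\partial_t$. Here $\cM\otimes\delta$ is a $\DXT$-module via the twisted action $\partial_{x_j}(m\delta)=(\partial_{x_j}m)\delta-\sum_k(\partial_{x_j}\varphi_k)\,m\,\partial_{t_k}\delta$. The problem is that the twisted action produces $\partial_t$-terms, so $\cM\otimes\delta$ is not stable. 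I would therefore take $\cN$ to be the $\DXT$-submodule generated by $\cF_0\cM\otimes\delta$, where $\cF_\bullet\cM$ is a good filtration of $\cM$. Lemma \ref{lem:independant} lets me use any convenient generating submodule.

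Next, I would filter $\cN$ by $F_p\cN=\sum_{a+|\beta|\le p}F_a\cM\,\partial_t^\beta\delta\cap\cN$ and compare $\gr\cN$ with $\gr\cM_\varphi$. The key claim is that, under non-characteristicity, the graded module $\gr^F\cN$ is supported on $\wh q(\Char\cM_\varphi)$, and that $\wh q$ restricted to $\Char\cM_\varphi$ is finite. Non-characteristicity says precisely that the fibers of $\wh q:T^*\XT\to T^*(\XT/T)$ restricted to $\Char\cM_\varphi=\bigcup_i T^*_{\iota_\varphi(Z_i)}\XT$ meet the zero section of the $\tau$-direction only at $0$. Since these sets are conic, $\wh q$ is proper, hence finite, on $\Char\cM_\varphi$. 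By the standard argument for non-characteristic restriction (as in Kashiwara's theorem on non-characteristic pullback, transposed to the fiber direction), this finiteness shows that $\cM_\varphi$ is coherent over $\DXT$ with characteristic variety $\wh q(\Char\cM_\varphi)$. Concretely, I would construct a filtration on $\cM_\varphi$ over $F_\bullet\DXT$ whose graded module is finite over $\gr\DXT$, by pushing forward $\gr\cM_\varphi$ along the finite map $\wh q$. In particular $\cN=\cM_\varphi$ locally, and the general inclusion in Remark \ref{rem:Teresa} gives the reverse containment.

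Finally, I would identify $\wh q(T^*_{\iota_\varphi(Z_i)}\XT)=T^*_{q|_{\iota_\varphi(Z_i)}}(\XT/T)$ as in Remark \ref{rem:Teresa}: on the smooth locus where $q|_{\iota_\varphi(Z_i)}$ has maximal rank the two sides agree, and both sides are irreducible and closed. Closedness holds because the map is finite.

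The main obstacle is the coherence and finiteness step. Once $\gr$ is finite over the relative symbols, everything else is formal. I would handle it via the Weierstrass-type division argument of Kashiwara's non-characteristic restriction theorem, applied to $\cM_\varphi$ with respect to the $\partial_t$ variables.
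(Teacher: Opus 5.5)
Your route is essentially the paper's: non-characteristicity makes $\widehat q$ finite on $\operatorname{Char}\cM_\varphi=\bigcup_i T^*_{\iota_\varphi(Z_i)}(X\times T)$, so a good $\cD_{X\times T}$-filtration of $\cM_\varphi$ is already good over $\cD_{X\times T/T}$ and $\operatorname{Char}_{X_T/T}\cM=\widehat q(\operatorname{Char}\cM_\varphi)$. The paper identifies each image as a relative conormal space through equality in \eqref{eq:dimestimate}, while your Remark~\ref{rem:Teresa}-style linear algebra works equally well; one slip is that coherence of $\cM_\varphi$ over $\cD_{X\times T/T}$ does not give $\cN=\cM_\varphi$ (for $\varphi=\mathrm{id}_{\C}$, $\cM=\cO$, $\cF_0\cM=x\cO$ one gets $\cN=t\cM_\varphi\neq\cM_\varphi$), but you only need $\operatorname{Char}\cN=\operatorname{Char}\cM_\varphi$, which Lemma~\ref{lem:independant} provides.
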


\begin{proof}
Let us recall the proof for completeness. The assumption implies that, for each irreducible component $Z_i$ of $\Char\cM$, generically on $Z_i$, $\varphi$ is of rank $\dim T$. Similarly, $q$ is non-characteristic with respect to $\cM_\varphi$, whose characteristic variety is $\bigcup_iT^*_{\iota_\varphi(Z_i)}(\XT)$. This means that $\Char\cM_\varphi\cap (X\times T^*T)$ is contained in the zero section or, equivalently, the projection $\Char\cM_\varphi\to T^*(\XT/T)$ is finite. Furthermore, generically on $\iota_\varphi(Z_i)$, $q|_{\iota_\varphi(Z_i)}$ is of rank $\dim T$. Therefore, a local good filtration of the $\cD_{\XT}$-module $\cM_\varphi$ is also good when $\cM_\varphi$ is considered as a $\DXT$-module. In~other words, $\cM_\varphi$ is $\DXT$-coherent, and its characteristic variety as such, that~is, $\Char_{X_T/T}\cM$, is the finite projection of $\Char\cM_\varphi$ to $T^*(\XT/T)$. For each $i\in I$, the projection $C_i$ of $T^*_{\iota_\varphi(Z_i)}(\XT)$ has thus dimension $\dim X+\dim T$, and $\dim q\circ\varpi (C_i)=\dim T$, so by \eqref{eq:dimestimate}, $C_i$ is the relative conormal space of its projection~$Z_i$ to $\XT$.
\end{proof}

\paragraphe{Determination of the components of $\Char_{X_T/T}\cM$}\label{par:detabsolute}
Assume that we know that each component of $\Char_{X_T/T}\cM$ takes the~form~\hbox{$T^*_{q|_{\iota_\varphi(Z)}}\!(\XT/T)$}. We~will show that $T^*_ZX$ is an irreducible component of $\Char\cM$.

\begin{enumerate}
\item\label{par:detabsolute1}
We first assume that $\Char_{X_T/T}\cM$ is equidimensional. By the first part of the theorem, it can be written as $\bigcup_{j\in J}T^*_{q|\iota_\varphi(Z_j)}(\XT/T)$ for some irreducible closed analytic subsets $Z_j\subset\supp\cM$. On the other hand, we~write the irreducible components of $\Char\cM$ as $T^*_{Z_i}X$ ($i\in I$) and we fix an analytic Whitney stratification $(X_\alpha)_{\alpha\in A}$ of $\supp\cM$ compatible with $(Z_k)_{k\in I\cup J}$ (each $X_\alpha$ is locally closed, smooth and connected, and its closure is a closed analytic subset of $X$ which is the union of other strata; each~$Z_k$ is a union of strata) such that, on each $X_\alpha$, the rank of $\varphi|_{X_\alpha}$ is constant.

Let~$X_\alpha$ be a stratum on which this rank is maximal, equal to $r$, say. If $X_\alpha$ is contained in the closure of another stratum $X_\beta$, then $\varphi|_{X_\beta}$ is also of maximal rank~$r$.
Let $x_o\in X_\alpha$. Then, up to replacing $X$ with an open neighborhood $\nb(x_o,X)$, we~can assume that $\varphi(X_\alpha)=T_\alpha$ is a submanifold of $T$ and $\cM_\varphi$ is supported on $\XTa$, hence of the form $\D\iota_{\alpha*}(\cM_\alpha)$ for some coherent $\cD_{\XTa}$-module $\cM_\alpha$.  In particular, near any $x_o\in X_\alpha$, the projection $q_\alpha:\XTa\to T_\alpha$ is non-characteristic with respect to $\cM_\alpha$ near $\iota_\varphi(x_o)$: indeed, if $X_\alpha\subset \ov X_\beta$ then, by Whitney condition (a), $(\ov{T^*_{\iota_\varphi(X_\beta)}(\XT)})_{x_o}\subset (T^*_{\iota_\varphi(X_\alpha)}(\XT))_{x_o}$ and if the intersection with $X\times(T^*T)$ of the latter is zero, the same holds for the former in a neighborhood of $\iota_\varphi(x_o)$. We~conclude with Lemma \ref{lem:nonchar} for $\Char_{q_\alpha}\cM_\alpha$ and then for $\Char_{X_T/T}\cM$ in the neighborhood of $\iota_\varphi(x_o)$.

Letting $A_r\subset A$ being the subset of $\alpha\in A$ such that $\varphi$ is of maximal rank $r$ on~$X_\alpha$, we deduce that the restriction to $\bigcup_{\alpha\in A_r}X_\alpha$ of the families $(Z_j)_{j\in J}$ and $(Z_i)_{i\in I}$ coincide. For $j\in J$ such that $Z_j\cap (\bigcup_{\alpha\in A_r}X_\alpha)\neq\emptyset$, we have $\dim T^*_{q|_{\iota_\varphi(Z_j)}}(\XT/T)=\dim X+r$. Assume now that some $Z_j$ ($j\in J$) does not cut $\bigcup_{\alpha\in A_r}X_\alpha$, so that $\varphi|_{Z_j}$ has generic rank $<r$. It~follows that, for such a $j$, $\dim T^*_{q|_{\iota_\varphi(Z_j)}}(\XT/T)<\dim X+r$. By our assumption of equidimensionality, such a $j$ does not exist, and the assertion is proved in this case.

\item\label{par:detabsolute2}
If $\Char_{X_T/T}\cM$ is not equidimensional, let $d$ be its dimension and let $\cN$ be a generating $\DXT$-submodule of $\cM_\varphi$, with filtration $\cF_\bbullet\cN$. We~argue by induction on~$d$. The $\cD_{\XT}$-submodule of $\cM_\varphi$ generated by $\cF_{d-1}\cN$ is holonomic and supported on $\iota_\varphi(X)$, so that it takes the form $\cM'_\varphi$ for some holonomic $\cD_X$-submodule $\cM'$ of~$\cM$. We~notice that $\cN\cap\cM'_\varphi=\cF_{d-1}\cN$: the inclusion $\supset$ is clear and, by induction, both submodules have the same characteristic variety as they generate $\cM'_\varphi$; it follows that the characteristic variety of $\cN\cap\cM'_\varphi$ has dimension $\leq d-1$, so $\cN\cap\cM'_\varphi$ is contained in $\cF_{d-1}\cN$.

Set $\cM''=\cM/\cM'$. The image $\cN''$ of $\cN$ in $\cM''_\varphi$ is thus equal to $\cF_d\cN/\cF_{d-1}\cN$, and we can apply to it the first part of the proof. We~conclude by induction that the assertion holds for $\Char_{X_T/T}\cM$.\qed
\end{enumerate}

\paragraphe{Induction hypothesis for the first part of Theorem \ref{th:bibi86II}}\label{par:pffirstpart}
For a closed analytic subset \hbox{$Z\!\subset\!X$}, we set $\dim\varphi(Z):=\dim Z-\dim_\varphi Z$. We~also set $\dim\varphi(\supp\cM)=\max\dim\varphi(Z)$, where the max is taken on the irreducible components of $\supp\cM$. The proof is by induction on $(d,\delta)=(\dim\supp\cM,\dim\varphi(\supp\cM))$ lexicographically ordered. We~note that $\delta\leq d$.

Let us fix $(d,\delta)$. If $\delta=0$, $\supp\cM$ is contained in a locally finite union of fibers of $\varphi$. According to Kashiwara's equivalence, we are reduced to the case where $T$ is a point, in which case the result is clear. We~can thus assume that $\delta\geq1$ and that the first part of the theorem holds if $(\dim\supp\cM,\dim\varphi(\supp\cM))<(d,\delta)$. Note that the second part also holds, by \S\ref{par:detabsolute}.

\begin{lemma}\label{lem:push}
Let $\pi:X'\to X$ be a morphism from a complex manifold $X'$ and set $\varphi'=\varphi\circ\pi$. Let $\cM'$ be a regular holonomic $\cD_{X'}$-module such that
\[
(\dim\supp\cM',\dim\varphi'(\supp\cM'))\leq(d,\delta).
\]
Assume that $\pi$ is proper on $\supp\cM'$, that $\cM'$ is good,\footnote{It is well-known (\cf\cite{Malgrange04}) that any regular holonomic $\cD_{X'}$-module is good, and, even more, is generated by a coherent $\cO_{X'}$-submodule; however, in view of the generalization to a relative setting in Section~\ref{sec:relative}, we avoid making use of this property that is not known in the relative setting.} and that the first part of the theorem holds for~$\cM'$. Then it holds for $\D\pi_*^{(j)}\cM'$ for each $j$. 
\end{lemma}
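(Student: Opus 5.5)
The plan is to reduce the statement for $\D\pi_*^{(j)}\cM'$ to the statement for $\cM'$ by pushing forward relative submodules and then comparing characteristic varieties. The question is local on $X_T$. Since $\cM'$ is good and the first part of the theorem holds for it, we can choose, locally over $X$ (using properness of $\pi$ on $\supp\cM'$), a coherent $\cD_{X'_T/T}$-submodule $\cN'$ of $\cM'_{\varphi'}$ that generates it. We may take $\cN'$ with a good filtration defined on a neighbourhood of $\pi^{-1}$ of a compact set. Every irreducible component of $\Char\cN'$ is then of the form $T^*_{q'|_{\iota_{\varphi'}(Z')}}(X'_T/T)$. Write $\pi_T=\pi\times\id_T\colon X'_T\to X_T$, so that $\D\pi_*\cM'_{\varphi'}=(\D\pi_*\cM')_\varphi$ by compatibility of pushforward with graph embeddings.

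\textbf{Step 1: a relative generating submodule.} The relative direct image $\D\pi_{T*}\cN'$ is a complex of $\cD_{X_T/T}$-modules with coherent cohomology, because $\pi_T$ is proper on the support and relative coherence is preserved by proper direct images. Its characteristic variety satisfies the relative Kashiwara estimate
\[
\Char\,\D\pi^{(j)}_{T*}\cN'\subset \pi_{T\pi}\big((T^*\pi_T)^{-1}\Char\cN'\big).
\]
The natural map $\D\pi^{(j)}_{T*}\cN'\to\D\pi^{(j)}_{T*}\cM'_{\varphi'}$ has an image $\cN_j$ that generates $(\D\pi^{(j)}_*\cM')_\varphi$. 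This holds because $\cM'_{\varphi'}=\cD_{X'_T}\otimes_{\cD_{X'_T/T}}\cN'$ up to a map, and pushforward commutes with the base extension along the $\cD_T$-direction. By Lemma \ref{lem:independant}, $\Char_{X_T/T}(\D\pi_*^{(j)}\cM')=\Char\cN_j$, which is contained in the pushed-forward set above.

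\textbf{Step 2: the geometry of the pushforward.} We must show that each component of the image of a relative conormal space $T^*_{q'|_{\iota_{\varphi'}(Z')}}(X'_T/T)$ under the relative correspondence is a relative conormal space $T^*_{q|_{\iota_\varphi(Z)}}(X_T/T)$. Here $Z\subset\pi(Z')$ is irreducible. This is the relative version of the statement that the image of a conormal variety under a proper map is isotropic, together with the dimension characterisation \eqref{eq:relconormal}. The image is generically relatively isotropic, fibrewise over $T$, because on a $q$-fibre the correspondence is the usual Lagrangian correspondence. A component $C$ of $\Char\cN_j$ is generically relatively involutive. The fibres of $q\circ\varpi$ on $C$ are therefore both isotropic and involutive, hence Lagrangian, and so equality holds in \eqref{eq:dimestimate}. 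By \eqref{eq:relconormal}, $C=T^*_{q|_{\iota_\varphi(Z)}}(X_T/T)$ with $Z=\varpi(C)$. The hypothesis on $(\dim\supp,\dim\varphi(\supp))$ is only needed for the later induction, and it is preserved since $\pi(\supp\cM')\supset\supp\D\pi^{(j)}_*\cM'$.

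\textbf{Main obstacle.} I expect the main obstacle to be the fibrewise isotropy claim at points of $C$ where $q|_Z$ drops rank or the relevant strata degenerate. I would handle it by stratifying $\varpi(C)$ so that $q$ and $\pi$ have constant rank on the strata, and by working at a general point of $C$ only, where $\pi_T$ restricted to the relevant component of $Z'$ is a submersion onto its image. There the relative conormal bundles match by a direct computation in local coordinates; generic equality suffices because $C$ is the closure of its generic part.
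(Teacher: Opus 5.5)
There is a genuine gap in Step 2. Your isotropy input holds only \emph{generically}. The set $\pi[\operatorname{Char}\mathcal{N}'\cap\pi^*T^*(X_T/T)]$ is fibrewise isotropic over $T$ only at general points of its components, because the fibre of a relative conormal space $T^*_{q'|_{Z'}}(X'_T/T)$ over a special point of $q'(Z')$ can jump in dimension. For instance, for $Z'=\{t_1x_2=t_2x_1\}\subset\mathbb{C}^2_x\times\mathbb{C}^2_t$, the fibre over $t=0$ is the $3$-dimensional hypersurface $\{x_1\xi_1+x_2\xi_2=0\}$ of $T^*\mathbb{C}^2$, which is not isotropic.

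Kashiwara's estimate gives only an inclusion. So a component $C$ of $\operatorname{Char}\mathcal{N}_j$ of dimension $<\dim X+\delta$ need not be a component of that set. It can lie entirely over such a degenerate locus; then a general point of $C$ is a special point of the image, and you have no isotropy there. Working at a general point of $C$, or stratifying, does not repair this, since all of $C$ may sit in the bad part. This is exactly why Lemma \ref{lem:relconormal} asserts relative conormality only for components of the maximal dimension $\dim X+\dim q'(Z')$. Your opening claim in Step 2, that every component of the image is a relative conormal space, is more than is available.

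The missing idea is that the induction on $(d,\delta)$ is used inside this lemma itself. This contradicts your remark that the hypothesis on $(\dim\operatorname{supp},\dim\varphi(\operatorname{supp}))$ is only needed later. That hypothesis, together with Lemma \ref{lem:relconormal}, gives $\dim\operatorname{Char}\mathcal{N}_j\le\dim X+\delta$. Hence the components of $\operatorname{Char}\mathcal{N}_j$ of dimension exactly $\dim X+\delta$ are maximal-dimensional components of the Kashiwara set, and so are relative conormal spaces.

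For all other components, use the filtration $\mathcal{F}_\bullet\mathcal{N}_j$. Each component $C$ of $\operatorname{Char}\mathcal{F}_{\dim X+\delta-1}\mathcal{N}_j$ satisfies $\dim X+\delta-1\ge\dim C\ge\dim X+\dim q\circ\varpi(C)$ by \eqref{eq:dimestimate}. Therefore the $\mathcal{D}_{X_T}$-submodule generated by $\mathcal{F}_{\dim X+\delta-1}\mathcal{N}_j$ is the graph pushforward of a regular holonomic submodule of ${}^{\mathrm{D}}\pi^{(j)}_*\mathcal{M}'$ whose invariants are $\le(d,\delta-1)<(d,\delta)$. The induction hypothesis, combined with Lemma \ref{lem:independant}, then shows that these lower-dimensional components are relative conormal spaces as well.
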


\begin{proof}
The question is local on $X$. Let $x_o\in X$. In the neighborhood of the compact set $\pi^{-1}(x_o)$, by the goodness assumption, $\cM'$ is generated by a coherent $\cO_{X'}$\nobreakdash-submodule, hence $\cM'_{\varphi'}$ is generated by a coherent $\DXpT$-submodule $\cN'$. In the following, we restrict $X$ to a suitable neighborhood of $x_o$. We~can regard $\pi$ as the composition of a closed inclusion and a projection. The first case being easy, we assume that $\pi$ is a projection and we express the direct image both of $\cM'$ and $\cN'$ in terms of the relative de~Rham complex $\pDR_{\XpT/\XT}:=\DR_{\XpT/\XT}[\dim X'-\dim X]$. It~follows that we have a natural morphism
\[
\D\pi_*^{(j)}\cN'=\cH^j\bR\pi_*(\pDR_{\XpT/\XT}\cN')\to\cH^j\bR\pi_*(\pDR_{\XpT/\XT}\cM')=\D\pi_*^{(j)}\cM'
\]
whose image $\cN_j$ is a coherent $\DXT$-submodule that generates $\D\pi_*^{(j)}\cM'$. It~is thus enough to prove that the characteristic variety of $\cN_j$ is a union of relative conormal spaces. On the other hand, we have \emph{Kashiwara's estimate}:
\begin{equation}\label{eq:Kestimate}
\Char\cN_j\subset \Char(\D\pi_*^{(j)}\cN')\subset\pi\bigl(\Char(\cN')\cap\pi^*T^*(\XT/T)\bigr).
\end{equation}
The end of the proof makes use of the next geometric  lemma.

\begin{lemma}[{\cite[Lem.\,3.5]{Sab2}}]\label{lem:relconormal}
Let $Z'$ be an irreducible closed analytic subset of $\XpT$ which is proper over $\XT$. Then the irreducible components of
\[
\pi\bigl[T^*_{q'|_{Z'}}(\XpT/T)\cap\pi^*T^*(\XT/T)\bigr]
\]
have dimension $\leq \dim X+\dim q'(Z')$, and those of that dimension are generically relatively Lagrangian (\ie are relative conormal spaces).\qed
\end{lemma}

In order to end the proof of Lemma \ref{lem:push}, it suffices to show that the components of $\Char\cN_j$ of dimension $\dim X+\delta$ are relative conormal spaces. Indeed, by the inclusion \eqref{eq:Kestimate} and the above lemma, together with the assumption of Lemma \ref{lem:push}, we have $\dim X+\delta\geq\dim\Char\cN_j$. On the other hand, any irreducible component $C$ of $\cF_{\dim X+\delta-1}\cN_j$ satisfies, according to \eqref{eq:dimestimate},
\[
\dim X+\delta-1\geq\dim C\geq\dim X+\dim q\circ\varpi(C),
\]
so that $\dim q\bigl(\supp\cF_{\dim X+\delta-1}\cN_j\bigr)\leq\delta-1$ and, by induction applied to the $\cD_{\XT}$-submodule it generates in $\D\pi^{(j)}_*\cM'$, each such $C$ is a relative conormal space.

Lastly, the components of $\Char\cN_j$ of dimension $\dim X+\delta$ are among those of $\pi\bigl(\Char\cN'\cap\pi^*T^*(\XT/T)\bigr)$, and the assertion for them follows from Lemma \ref{lem:relconormal}. This ends the proof of Lemma \ref{lem:push}.
\end{proof}

We now prove the first part of Theorem \ref{th:bibi86II}.

\paragraphe{Reduction to the normal crossing situation}
\label{par:reductionnc}
We fix $x_o$ in $X$ and work in a sufficiently small neighborhood of $x_o$. We~fix coordinates $(t_1,\dots,t_\kappa)$ of $T$ in the neighborhood of $\varphi(x_o)$, so that $\varphi=(\varphi_1,\dots,\varphi_\kappa)$. Let $\pi:X'\to X$ be a proper morphism which is a proper modification of $\supp\cM$ satisfying the following property: there exists a divisor with normal crossing $D\subset X'$ such that
\begin{itemize}
\item
$\pi|_{X'\moins D}$ is an isomorphism onto its image;
\item
$\D\pi^*\cM$ is smooth (\ie a vector bundle with connection) on $X'\moins D$;
\item
each divisor $(\varphi_i\circ\pi)^{-1}(0)$ is a union of components of $D$.
\end{itemize}
The pullback $\cM':=\cH^0(\D\pi^*\cM)$ is known to be regular holonomic. Furthermore, it satisfies $(\dim\supp\cM',\dim\varphi'(\supp\cM'))=(\dim\supp\cM,\dim\varphi(\supp\cM))=(d,\delta)$ (with $\varphi'=\varphi\circ\pi$). Also, being the pullback of a good (because local) $\cD_X$-module, it is also good. Lastly, these properties also apply to the localized $\cD_{X'}$-module $\cM'(*D)$. Let us set $j=\dim X-\dim X'=\dim X-d$. Then there exists a natural adjunction morphism
\[
\cM\to\D\pi_*^{(j)}(\cM')\to\D\pi_*^{(j)}(\cM'(*D))
\]
whose kernel and cokernel are supported in dimension $<d$. By the induction hypothesis, it is thus enough to prove the theorem for the right-hand side and, according to Lemma \ref{lem:push}, to $\cM'(*D)$ with respect to $\varphi'$.

\paragraphe{The case with normal crossings}\label{par:endabsolute}
The question is now local on $X'$, and we can work with a local coordinate system $x=(x_1,\dots,x_d)$ of $X'$ which is adapted to~$D$. We~assume that $D=\{x_1\cdots x_\ell=0\}$, so that each $\varphi'_k(x_1,\dots,x_d)$ writes $u_k(x)x^{\bma_k}$ with $\bma_k=(a_{k,1},\dots,a_{k,\ell})\in\NN^\ell$ for each $k=1,\dots,\kappa$, and $u_k(x)$ is an invertible holomorphic function. Let us set $\psi(x)=(x^{\bma_1},\dots,x^{\bma_\kappa})$. The embedding~$\iota_{\varphi'}$ decomposes as $\eta\circ\iota_\psi$, where $\eta:\XpT\to \XpT$ is a local isomorphism defined by means of the units $u_k$. For our purpose, we can thus replace~$\varphi'$ with $\psi$ and assume that all components of~$\varphi'$ are monomials.

For $i\in I:=\{1,\dots,\ell\}$, let $A_i$ denote the linear form on $\CC^\kappa$ with coefficients $a_{k,i}$ (\hbox{$k=1,\dots,\kappa$}). The rank of $\varphi'$ on $X'\moins D$ is the rank of the matrix $A=(A_1,\dots,A_\ell)$. Therefore, we have $\delta=\rk(A)$.

From the classification of regular meromorphic connections with poles along a normal crossing divisor, we deduce that $\cM'(*D)$ is an extension of $\cD_{X'}$-modules $\cD_{X'}/\cI$, where the left ideal $\cI$ is generated by the operators
\begin{equation}\label{eq:presentationMprime}
\begin{cases}
x_i\partial_{x_i}-\alpha_i\;\;(\alpha_i\in\CC\moins\ZZ),&\text{if }i\in I,\\
\partial_{x_i}&\text{if }i\notin I.
\end{cases}
\end{equation}
Then $\cM'_{\varphi'}$ is $\cD_{\XpT}$-generated by one section written $e\otimes\delta(t-\varphi')$ satisfying the relations
\[
\begin{cases}
\begin{aligned}
t_k\cdot e\otimes\delta(t-\varphi')&=x^{\bma_k}\cdot e\otimes\delta(t-\varphi')&&\text{for }k=1,\dots,\kappa,\\
x_i\partial_{x_i}\cdot e\otimes\delta(t-\varphi')&=[-A_i(\partial_tt)+\alpha_i]\cdot e\otimes\delta(t-\varphi')&&\text{for }i\in I,\\
\partial_{x_i}\cdot e\otimes\delta(t-\varphi')&=0&&\text{for }i\notin I.
\end{aligned}
\end{cases}
\]
Let $I_\delta\subset I$ be a subset of cardinal $\delta$ such that $(A_i)_{i\in I_\delta}$ is a maximal subfamily of $(A_i)_{i\in I}$ consisting of independent vectors. From the above relations, we deduce that $e\otimes\delta(t-\varphi')$ is annihilated by operators in $\cD_{\XpT/T}$ of the form
\begin{equation}\label{eq:operators}
\begin{cases}
x_j\partial_{x_j}-\Bigl[\sum_{i\in I_\delta}b_{ij}x_i\partial_{x_i}\Bigr]+\beta_j&b_{ij}\in\QQ,\;\beta_j\in\CC,\;j\in I\moins I_\delta,\\
\partial_{x_i}&i\notin I.
\end{cases}
\end{equation}
It follows that $\Char_{X'_T/T}(\cM'(*D))$ is contained in the subvariety $C'$ of $(T^*X')\times T$, equipped with coordinates $\bigl((x_i,\xi_i)_{i=1,\dots,d},(t_1,\dots,t_\kappa)\bigr)$, defined by the equations
\[
\begin{cases}
t_k-x^{\bma_k}&\text{for }k=1,\dots,\kappa,\\
x_j\xi_j-\Bigl[\sum_{i\in I_\delta}b_{ij}x_i\xi_i\Bigr]&\text{for }j\in  I\moins I_\delta,\\
\xi_i&\text{for }i\notin.
\end{cases}
\]

\begin{lemma}\label{lem:irreducible}
The set $C'$ is irreducible of dimension $d+\delta$.\qed
\end{lemma}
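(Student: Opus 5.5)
The plan is to treat $C'$ as a family of linear spaces over the $x$-coordinates and compare fibre dimensions. The open part over $\{x_i\neq 0,\ i\in I\}$ will be irreducible of dimension $d+\delta$. Every other stratum will have dimension $<d+\delta$. Since $C'$ is cut out by the right number of equations, no component can live over those strata.

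\emph{Step 1: reduce to linear algebra in $\xi$.} The equations $t_k=x^{\bma_k}$ express $C'$ as a graph over its image in $T^*X'$, so I drop $t$. The equations $\xi_i=0$ for $i\notin I$ are linear coordinate conditions and I drop them too. What remains lives in $X'\times\CC^\ell_{(\xi_i)_{i\in I}}$ and is cut out by the $\ell-\delta$ equations
\[
x_j\xi_j-\sum_{i\in I_\delta}b_{ij}x_i\xi_i=0,\qquad j\in I\moins I_\delta .
\]
These are linear in $\xi$, with coefficient matrix $M\cdot\mathrm{diag}(x_i)_{i\in I}$. Here $M$ is the constant $(\ell-\delta)\times\ell$ matrix with rows $e_j-\sum_{i\in I_\delta}b_{ij}e_i$. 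It has rank $\ell-\delta$, and its kernel is
\[
\ker M=\{(A_i\cdot c)_{i\in I}\mid c\in\CC^\kappa\}.
\]
This holds because $A_j=\sum_{i\in I_\delta} b_{ij}A_i$ and $(A_i)_{i\in I_\delta}$ is a basis of the span of the $A_i$. Since $C'$ is defined by $\ell-\delta$ equations in a smooth space of dimension $d+\ell$, Krull's theorem gives that every irreducible component has dimension $\geq d+\delta$.

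\emph{Step 2: the generic stratum.} Over the open set $x_i\neq0$ for all $i\in I$, the matrix $M\,\mathrm{diag}(x)$ has rank $\ell-\delta$. Hence $C'$ is a vector bundle of rank $\delta$ over an irreducible base of dimension $d$. Its closure is therefore irreducible of dimension $d+\delta$.

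\emph{Step 3: the boundary strata, which is the only real point.} For a nonempty $S\subseteq I$, I consider the locally closed stratum $\{x_i=0 \iff i\in S\}$, which has dimension $d-|S|$. On it the columns of $M\,\mathrm{diag}(x)$ indexed by $S$ vanish. The rank of the matrix equals the rank of the columns of $M$ indexed by $I\moins S$, namely
\[
|I\moins S|-\dim\bigl(\ker M\cap\{y_S=0\}\bigr).
\]
So the fibre dimension is $|S|+\dim(\ker M\cap\{y_S=0\})$. I claim $\ker M\not\subset\{y_S=0\}$. Each $i\in I$ indexes a component of $D$ contained in some $(\varphi'_k)^{-1}(0)$, so $A_i\neq0$, and some $c$ gives $A_i\cdot c\neq0$ for a chosen $i\in S$. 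Hence the fibre dimension is at most $|S|+\delta-1$, and the stratum has dimension at most $d+\delta-1$. By Step 1, no irreducible component of $C'$ can be contained in the union of these strata. Therefore $C'$ is the closure of the generic stratum, which is irreducible of dimension $d+\delta$.

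I expect Step 3 to be the main obstacle. It is exactly where the nonvanishing of every $A_i$ for $i\in I$ is needed. If some $b_{ij}$ pattern allowed $\ker M\subset\{y_S=0\}$, then a reducible piece such as $x_j\xi_j=0$ would appear.
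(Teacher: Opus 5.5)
Your argument is complete once you know $A_i\neq 0$ for every $i\in I$. The pieces are sound: the identification $\ker M=\{(A_i\cdot c)_{i\in I}\}$, the Krull lower bound $d+\delta$ on every component, the rank-$\delta$ bundle over $\{x_i\neq0,\ i\in I\}$, and the fibre count $|S|+\dim(\ker M\cap\{y_S=0\})$ on the stratum $\{x_i=0\iff i\in S\}$. Taking $S=\{i\}$ shows that the condition $\ker M\not\subset\{y_S=0\}$ for all nonempty $S$ is equivalent to $A_i\neq0$ for all $i$. The paper itself gives no proof here and only refers to Sabbah's original, so the issue below concerns your argument and the statement, not a difference of route.

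\textbf{The gap is your justification of $A_i\neq 0$.} The modification $\pi$ only guarantees that each $(\varphi'_k)^{-1}(0)$ is a union of components of $D$. You used the converse inclusion, which is not part of the setup. $D$ must also contain the exceptional locus and the locus where $\mathcal{M}'$ fails to be smooth, and $\varphi'$ need not vanish there.

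The condition cannot be derived at all, because without it the statement is false. Suppose $A_j=0$. Then $j\notin I_\delta$, all $b_{ij}=0$, and the $j$-th equation is $x_j\xi_j=0$. Neither $x_j$ nor $\xi_j$ occurs in any other equation, since $a_{k,j}=0$ for all $k$. Hence
\[
C'=\bigl(C'\cap\{x_j=0\}\bigr)\cup\bigl(C'\cap\{\xi_j=0\}\bigr),
\]
and your own Krull count gives both pieces dimension $d+\delta$. A concrete instance: take $X=\mathbb{C}^2$, $\varphi=x_1$ and $\mathcal{M}=\mathcal{O}_X(*\{x_1x_2=0\})\,x_1^{\alpha_1}x_2^{\alpha_2}$ with $\alpha_1,\alpha_2\notin\mathbb{Z}$. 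Then $D=\{x_1x_2=0\}$, $\delta=1$ and $A_2=0$. So $C'=\{t_1=x_1,\ x_2\xi_2=0\}$, which has two $3$-dimensional components. This is exactly the reducible piece you anticipated in your last sentence, and it does occur.

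\textbf{How to fix it.} State $A_i\neq0$ for all $i\in I$ as the hypothesis under which the lemma holds, namely that every component of $D$ through the base point lies in some $(\varphi'_k)^{-1}(0)$. Do not claim it follows from the modification.

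Alternatively, split off $J_0=\{j\in I: A_j=0\}$. Then
\[
C'\simeq C''\times\prod_{j\in J_0}\{x_j\xi_j=0\},
\]
where $C''$ is irreducible of dimension $d-|J_0|+\delta$ by your argument. So $C'$ is equidimensional of dimension $d+\delta$ with $2^{|J_0|}$ components. The component with $x_j=0$ for $j\in J_1$ and $\xi_j=0$ for $j\in J_0\setminus J_1$ is the relative conormal space of $\iota_{\varphi'}(\{x_{J_1}=0\})$. That is enough for how the lemma is used in the normal-crossing step, but irreducibility as stated is lost.
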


The proof is indicated in \cite[p.\,233]{Sab2}. Let $\cN'$ be a generating $\DXpT$-submodule of~$\cM'_{\varphi'}$. We~conclude that $\cN'=\cF_{d+\delta}(\cN')$. On the other hand, we know by \eqref{eq:dimestimate} that the component $C$ of $\Char_{X'_T/T}(\cM'(*D))$ which projects onto $\iota_{\varphi'}(X')$ has dimension $\geq d+\delta$. This component is thus equal to $C'$. We~conclude that $C'=\Char\gr_{d+\delta}^\cF\cN'$, which is thus a relative conormal space according to \eqref{eq:relconormal}. By the induction hypothesis, the theorem holds for $\Char\cF_{d+\delta-1}\cN'$ since the support of the $\cD_{X'}$-submodule of $\cM'(*D)$ generated by $\cF_{d+\delta-1}\cN'$ has dimension $<d$. We~conclude that it holds for $\cM'(*D)$, as~desired.\qed

\section{Generalization to a relative setting}\label{sec:relative}

Let $S$ be another complex manifold. We~consider the category of complex manifolds $\XS:=X\times S$ with morphisms of the form $\varphi\times\id_S=:\varphi_S$, where $\varphi$ is a holomorphic map $X\to X'$. We~also consider the category $\Mod(\DXS)$ with pullback and pushforward morphisms along morphisms $\varphi_S$. The full subcategory of holonomic $\DXS$-module~$\cM$, as~considered in \cite{FFS23}, consists of those coherent $\DXS$-modules whose characteristic variety $\Char\cM$ is contained in $\Lambda\times S\subset (T^*X)\times S=T^*(\XS/S)$ for some conic Lagrangian analytic subset $\Lambda$ of $T^*X$. Each irreducible component of $\Char\cM$ writes $T^*_{Z_i}X\times \Sigma_{i\ell}$ for some irreducible closed analytic subsets $Z_i\subset X$ and $\Sigma_{i\ell}\subset S$ (\cf\cite[Lem.\,2.10]{FTM18}).\footnote{We can regard such characteristic varieties as relative conormal spaces to closed analytic subsets of the form $Z\times \Sigma$ of $X\times S$, while in Section \ref{sec:absolute} we would consider any closed analytic subset of $X\times S$.} The family $(Z_i)_{i\in I}$ is locally finite on $X$, and the union of the sets $Z_i$ ($i\in I$) is, by definition, the $X$-support of $\cM$, denoted by $\supp_X\cM$. For each fixed~$i$, the family $(\Sigma_{i\ell})_\ell$ is locally finite on $S$. The union of the family $(\Sigma_{i\ell})_{i\ell}$ is called the $S$-support of $\cM$. As we will work locally on $\XS$, we can assume that this family is finite.

Let $\varphi:X\!\to\! T$ be a holomorphic map, and let $\varphi_S=\varphi\times\id_S:X_S\!\to\! T_S$. To a coherent $\DXS$-module we attach its relative characteristic variety
\[
\Char_{X_{ST}/T_S}\cM\subset T^*(\XST/\TS),
\]
which is the relative (over~$\TS$) characteristic variety of any $\DXST$-module generating $\cM_{\varphi_S}$ as a $\DXS$-module.

We aim at proving the $S$-analogue of Theorem \ref{th:bibi86II}, with the terminology of \cite{FFS23}.

\begin{theorem}\label{th:bibi86IIrel}
Let $\cM$ be a regular holonomic $\DXS$-module and let \hbox{$\varphi:X\!\to\! T$} be a morphism. Then each irreducible component of the characteristic variety $\Char_{X_{ST}/T_S}\cM$ takes the~form~\hbox{$T^*_{q|_{\iota_\varphi(Z)}}\!(\XT/T)\!\times\! \Sigma$}, where $T^*_{Z}X\times \Sigma=T^*_{Z_i}X\times \Sigma_{i\ell}$ is some irreducible component of $\Char\cM$.
\end{theorem}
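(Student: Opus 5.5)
The proof will follow the scheme of Section \ref{sec:absolute} step by step, replacing $\cD_X$ by $\DXS$ throughout and carrying the parameter space $S$ along. The induction is on $(d,\delta)=(\dim\supp_X\cM,\dim\varphi(\supp_X\cM))$, ordered lexicographically. It combines three ingredients: a relative version of the determination of components (\S\ref{par:detabsolute}), a relative pushforward lemma (Lemma \ref{lem:push}), and a reduction to normal crossings using the structure theory of relative regular holonomic modules from \cite[\S\S2--4]{FFS23}.

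The dimension count \eqref{eq:dimestimate} and the criterion \eqref{eq:relconormal} will be redone for $\DXST$-modules. An irreducible component $C$ of $\Char$ lies over some $Z\times\Sigma$. Generic relative involutivity of $C$ with respect to $q\times\id_S$ gives
\[
\dim C\geq \dim X+\dim q(Z)+\dim\Sigma ,
\]
and equality holds if and only if $C=T^*_{q|_{\iota_\varphi(Z)}}(\XT/T)\times\Sigma$. Since $\Char\cM$ is contained in $\Lambda\times S$, every component of $\Char_{X_{ST}/T_S}\cM$ has $S$-projection equal to one of the $\Sigma_{i\ell}$. This lets me carry out the equidimensional step of \S\ref{par:detabsolute} over $S$.

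\emph{Base and non-characteristic cases.} The non-characteristic case (Lemma \ref{lem:nonchar}) transfers verbatim, since holonomic $\DXS$-modules have characteristic varieties of the form $\Lambda\times S$. When $\delta=0$, relative Kashiwara equivalence reduces $T$ to a point, and the statement is then just the description of $\Char\cM$.

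\emph{Pushforward step.} For Lemma \ref{lem:push} I will use that $\D\pi_*$ preserves relative regular holonomicity for proper $\pi$ \cite{FFS23}. Kashiwara's estimate \eqref{eq:Kestimate} holds with an extra factor $S$, and Lemma \ref{lem:relconormal} is applied to the $X$-part of $Z'\times\Sigma$. The goodness hypothesis can be checked locally, because the pullback of a locally given coherent module along the resolution is handled exactly as in the footnote.

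\emph{Normal crossing step.} I take the same resolution $\pi\colon X'\to X$ of $\supp_X\cM$, chosen so that $\varphi\circ\pi$ is monomial and $\D\pi^*\cM(*D)$ is smooth relative to $S$ off $D$. By \cite{FFS23}, such a relative regular meromorphic flat bundle along a normal crossing divisor is, locally on $S$, an extension of modules of the form $\cD_{X'_S}/\cI$. Here $\cI$ is generated by the operators $x_i\partial_{x_i}-\alpha_i(s)$ and $\partial_{x_i}$, with $\alpha_i$ holomorphic in $s$, after possibly passing to a successive extension by $\cO_S$-torsion-type pieces supported on the $\Sigma_{i\ell}$.

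The operators \eqref{eq:operators} then still annihilate the generator, with $\beta_j$ now depending on $s$. The resulting variety is $C'\times\Sigma$, and Lemma \ref{lem:irreducible} gives its irreducibility and dimension. The induction then closes as before.

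I expect the main obstacle to be the local normal form of relative regular holonomic modules near $D$. In the absolute case regularity yields this presentation directly. Relatively, I will try to use the relative Riemann--Hilbert correspondence \cite{FFS23}: the de Rham complex off $D$ is a locally constant sheaf of $p^{-1}\cO_S$-modules. Its monodromy gives $s$-dependent exponents $\alpha_i(s)$, well defined after choosing logarithms locally on $S$. The non-free parts should be handled by filtering by $\cO_S$-torsion and applying the induction on $\Sigma$ via the example on closed inclusions $T'\hto T$, used here with $S$ in place of $T$.
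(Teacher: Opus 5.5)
The gap is at the step you flag as the main obstacle, and your proposed way around it does not work. Locally the D-type module is $(\cO_{X_S}(*D)\otimes_{p^{-1}\cO_S}p^{-1}G,\nabla)$ with $\nabla=\mathrm{d}+\sum_i\frac{\mathrm{d}x_i}{x_i}\otimes A_i(s)$, where the $A_i(s)$ are commuting residue \emph{matrices} over $\cO_S$. Choosing logarithms of the monodromy locally on $S$ produces these matrices, not scalar exponents $\alpha_i(s)$, and their eigenvalues are in general multivalued in $s$.

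For example, take $S=X=\C$, $D=\{x=0\}$, $G=\cO_S^2$ and $A(s)=\begin{pmatrix}0&1\\ s&0\end{pmatrix}$. Suppose a section $m=\sum_n f_n(s)x^n$ satisfied $x\partial_x m=\alpha(s)m$ with $\alpha$ holomorphic. Then each nonzero $f_n(s)$ would be an eigenvector of $A(s)$ with eigenvalue $\alpha(s)-n$, forcing $(\alpha(s)-n)^2=s$, which is impossible. Since this $\cM$ is also $\cO_S$-torsion-free, it admits near $s=0$ no filtration by rank-one pieces $\mathcal{D}_{X_S}/\cI$ of your form, torsion pieces notwithstanding.

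The non-free part has a similar problem. $\supp G$ may be singular, and there is no Kashiwara equivalence in the parameter direction, since $\mathcal{D}_{X_S/S}$ contains no $\partial_s$. So the example on $T'\hookrightarrow T$ ``with $S$ in place of $T$'' does not apply. At best, a piece \emph{annihilated} by the ideal of a \emph{smooth} $\Sigma$ is a $\mathcal{D}_{X_\Sigma/\Sigma}$-module.

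The missing ingredient is base change in the parameter space, Lemma \ref{lem:basechange}. For $\pi\colon S'\to S$ one has $\operatorname{Char}\pi^*\cM=\pi^{-1}\operatorname{Char}\cM$ and $\operatorname{Char}_{X_{S'T}/T_{S'}}\pi^*\cM=\pi^{-1}\operatorname{Char}_{X_{ST}/T_S}\cM$, so the statement descends from $S'$ to $S$. The paper uses this twice:
\begin{enumerate}
\item After a d\'evissage by powers of the reduced ideal of $\supp G$, it pulls back to a modification $S'\to\supp G$ with $S'$ smooth and the pullback of $G$ locally free modulo torsion of smaller-dimensional support. This reduces to strict D-type.
\item In the strict case it passes to a base change on which the $A_i\circ\pi$ share an eigenvector, and inducts on the rank down to rank one. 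There \eqref{eq:presentationMprime} and \eqref{eq:operators} hold with $\alpha_i,\beta_j$ holomorphic in $s$, and Lemma \ref{lem:irreducible} forces $\Sigma=S$.
\end{enumerate}
For the rank reduction alone you could instead use Cayley--Hamilton. The operator $x_j\partial_{x_j}-\sum_{i\in I_\delta}b_{ij}x_i\partial_{x_i}$ acts on the vector of generators through the matrix $A_j(s)-\sum_{i\in I_\delta}b_{ij}A_i(s)$. Its characteristic polynomial, evaluated at this operator, therefore kills every generator and has symbol $(x_j\xi_j-\sum_i b_{ij}x_i\xi_i)^r$.

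For these reductions to close, your induction must also see the $S$-direction. Your invariant $(\dim\supp_X\cM,\dim\varphi(\supp_X\cM))$ does not drop for the lower-dimensional torsion pieces. The paper instead first makes $\operatorname{Char}\cM$ equidimensional (all $\Sigma_{i\ell}$ of dimension $\sigma(\cM)$), then inducts on $(\dim\supp\cM,\dim\varphi_S(\supp\cM))$ computed in $X_S$ and $T_S$.

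Finally, $\operatorname{Char}\cM\subset\Lambda\times S$ only shows that the $S$-projection of a component of $\operatorname{Char}_{X_{ST}/T_S}\cM$ is \emph{contained} in some $\Sigma_{i\ell}$. Equality, and the product shape of the component, are outputs of the rank-one computation, not inputs to your dimension count.
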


\begin{remark}[analogous to Remark \ref{rem:Teresa}]
The theorem asserts that $\Char_{X_{ST}/T_S}\cM$ is equal to the image of $\Char\cM$ by the projection
\[
T^*(\XS\times_S\TS)\to T^*\bigl[(\XS\times_S\TS)/\TS\bigr].
\]
\end{remark}

We first notice, as~for Theorem \ref{th:bibi86II}, that the question is local on $\XS$ and that Lemma \ref{lem:nonchar} holds as well in the $S$-setting, with the same proof. Next, by considering the filtration $\cF_\bbullet\cM$ recalled in Section \ref{subsec:absolute}, we can assume that $\Char\cM$ is pure dimensional. Since $\dim T^*_{Z_i}X=\dim X$ for any $i$, this assumption amounts to the property that the dimension of each $\Sigma_{i\ell}$ is independent of $i,\ell$. We~denote it by $\sigma(\cM)$. It~is the dimension of the $S$-support of $\cM$.

\begin{lemma}\label{lem:basechange}
Let $\pi:S'\to S$ be a morphism and let $\cM$ be a holonomic $\DXS$-module. If the statement of the theorem holds for $\pi^*\cM$, then it holds for $\cM$.
\end{lemma}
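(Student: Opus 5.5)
Write $\cM'=\pi^*\cM$ for the pullback along $\pi_{X}=\id_X\times\pi\colon X\times S'\to X\times S$; this is a $\cD_{X_{S'}/S'}$-module, and we take it to be the ordinary $\cO$-module pullback. The plan is to compare characteristic varieties before and after pulling back, and then descend the conclusion along $\pi$. Since everything is local on $X_S$, we may fix a coherent $\DXST$-submodule $\cN$ generating $\cM_{\varphi_S}$. Then $\pi^*\cN$, the pullback along $\id_{X_T}\times\pi$, generates $(\pi^*\cM)_{\varphi_{S'}}$, because graph embedding commutes with base change in $S$. So the first step is to compare $\Char(\pi^*\cN)$ with $\Char\cN$.

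For the comparison, choose a good filtration $F_\bullet\cN$. The pulled-back filtration $\pi^*F_\bullet\cN$ is a good filtration of $\pi^*\cN$, with $\gr(\pi^*\cN)$ a quotient of $\pi^*\gr\cN$. This already gives $\Char(\pi^*\cN)\subseteq (\id\times\pi)^{-1}\Char\cN$, and the same inclusion holds for $\Char(\pi^*\cM)$ against $\Char\cM$.

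The inclusion needs to become an equality at least over the image of $\pi$. We first reduce to the case where $\pi$ is a submersion or a closed embedding, by factoring through the graph. The submersion case is the easy one: $\pi$ is flat, so $\gr$ commutes with pullback and supports pull back exactly. For a closed embedding, flatness fails. Here I would use the structure of relative holonomic modules, namely that $\Char\cM=\bigcup T^*_{Z_i}X\times\Sigma_{i\ell}$, together with the relative Nakayama lemma and generic flatness over $S$ (as in FFS23/FTM18). These should show that each component $C\times\Sigma$ of $\Char\cN$ and of $\Char\cM$ meeting the image of $\pi$ still contributes $C\times\pi^{-1}(\Sigma)$ to the pullback, at least over a dense open subset of $\pi^{-1}(\Sigma)$. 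We only need this generically along each component, since components are determined by their generic points.

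With the comparison in hand, suppose the theorem holds for $\pi^*\cM$. Then each component of $\Char(\pi^*\cN)$ has the form $T^*_{q|_{\iota_\varphi(Z)}}(X_T/T)\times\Sigma'$, where $T^*_ZX\times\Sigma'$ is a component of $\Char(\pi^*\cM)$. Take a component $C\times\Sigma$ of $\Char\cN$, restricted over the image of $\pi$, which we may take dense by choosing $\pi$ dominant on the relevant supports; otherwise we apply the lemma locally. By the comparison, $C\times\pi^{-1}(\Sigma)$ appears in $\Char(\pi^*\cN)$. Hence $C=T^*_{q|_{\iota_\varphi(Z)}}(X_T/T)$ for some $Z$ such that $T^*_ZX$ occurs in $\Char(\pi^*\cM)$, hence in $\Char\cM$ over $\Sigma$, and the statement descends.

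The main obstacle is the exactness of $\gr$ under non-flat pullback, that is, the closed-embedding case. What I would try first is restricting to a generic point of $\Sigma$, where $\cN$ and $\cM$ become flat over $S$ (generic flatness for coherent $\pi^{-1}\cO_S$-structures, as in FS13). There the pullback is exact and the characteristic varieties restrict fiberwise.
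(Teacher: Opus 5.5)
Your architecture matches the paper's. Its proof consists of the two identities $\operatorname{Char}\pi^*\cM=\pi^{-1}\operatorname{Char}\cM$ and $\operatorname{Char}_{X_{S'T}/T_{S'}}\pi^*\cM=\pi^{-1}\operatorname{Char}_{X_{ST}/T_S}\cM$, after which ``the conclusion follows''.

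The descent is cleanest through the reformulation $\operatorname{Char}_{X_{ST}/T_S}\cM=\widehat q(\operatorname{Char}\cM_{\varphi_S})$ from the remark after the theorem, since $\widehat q$ commutes with $\pi^{-1}$. This needs $\pi$ surjective onto the $S$-support of $\cM$. That hypothesis is implicit in the statement, and ``apply the lemma locally'' cannot replace it. The reformulation also avoids two slips in your component-wise version:
\begin{enumerate}
\item you assume the components of $\operatorname{Char}\cN$ are products $C\times\Sigma$, which is part of what the theorem asserts;
\item you only reach $\Sigma\subseteq\Sigma_{i\ell}$ unless you also invoke the general inclusion $\widehat q(\operatorname{Char}\cM_{\varphi_S})\subseteq\operatorname{Char}_{X_{ST}/T_S}\cM$.
\end{enumerate}
Written as a chain, the descent uses exactly two inputs. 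One is the easy inclusion $\operatorname{Char}\pi^*\cM\subseteq\pi^{-1}\operatorname{Char}\cM$, which you prove. The other is the hard inclusion $\pi^{-1}\operatorname{Char}_{X_{ST}/T_S}\cM\subseteq\operatorname{Char}_{X_{S'T}/T_{S'}}\pi^*\cM$. It must be proved for the image of $\pi^*\cN$ in $(\pi^*\cM)_{\varphi_{S'}}$, not for $\pi^*\cN$ itself, since non-derived pullback is not left exact.

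The gap is this hard inclusion when $\pi$ is not flat. That is the case the paper actually needs: there $\pi$ is a proper modification of $\operatorname{supp}G$, or a base change producing a common eigenvector. You only say what ``should'' work there, and the mechanism you propose fails.

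Relative Nakayama and the product structure of characteristic varieties are properties of relative holonomic modules. But $\cN$ is merely $\mathcal{D}_{X_{ST}/T_S}$-coherent, and for such modules even flatness over the base does not make characteristic varieties restrict fibrewise. Take $X=S=\mathbb{C}$ and $\mathcal{P}=\mathcal{D}_{X\times S/S}/\mathcal{D}_{X\times S/S}(s\partial_x-1)$. Multiplication by $s$ is injective, yet $[1]=s\partial_x[1]$ gives $\mathcal{P}=s\mathcal{P}$. So $\operatorname{Char}(\mathcal{P}/s\mathcal{P})=\emptyset$, while $\operatorname{Char}\mathcal{P}\cap\{s=0\}=T^*X\times\{0\}$.

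Generic flatness is moreover aimed at the wrong points. After your graph factorization, the restriction is to the graph of $\pi$ in $S'\times S$, which contains no generic point of $S'\times\Sigma$ once $\dim\Sigma>0$. Discarding the non-flat locus of $\pi$ discards exactly the components of $\operatorname{Char}_{X_{ST}/T_S}\cM$ lying over it (e.g.\ over the centre of a blow-up), and these are the ones the lemma has to control. So this step needs an input specific to $\cN\subseteq\cM_{\varphi_S}$ with $\cM$ regular holonomic, and your sketch does not supply one.
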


\begin{proof}
One checks that
\[
\Char\pi^*\cM=\pi^{-1}\Char\cM\quad\text{and}\quad \Char_{X_{S'T}/T_{S'}}\pi^*\cM=\pi^{-1}\Char_{X_{ST}/T_S}\cM.
\]
Then the conclusion follows.
\end{proof}

\paragraphe{Determination of the components of $\Char_{X_{ST}/T_S}\cM$}\label{par:detabsoluterel}
We work in a neighborhood of $(x_o,s_o)\in\XS$. For irreducible closed analytic subsets $Z\subset X$ and $\Sigma\subset S$, we set $Z_\Sigma:=Z\times\Sigma$. We~denote by $q_S$ the projection $\XS\times T=\XST\to \TS$. Assume that we know that each irreducible component of $\Char_{X_{ST}/T_S}\cM$ takes the~form
\[
T^*_{q_S|_{\iota_{\varphi_S}(Z_\Sigma)}}\!(\XST/\TS)=T^*_{q|_{\iota_\varphi(Z)}}\!(\XT/T)\times\Sigma.
\]
We~will show that $(T^*_ZX)\times\Sigma$ is an irreducible component of $\Char\cM$, \ie of the form $(T^*_{Z_i}X)\times \Sigma_{i\ell}$. We~argue as in \S\ref{par:detabsolute}, which is possible due to the assumption of equidimensionality of $\Char\cM$. It~implies in particular that for each such $\Sigma$, we have $\dim\Sigma\leq\sigma(\cM)$ because in any case, $\Sigma$ is contained in the $S$-support of $\cM$.

We first assume that $\Char_{X_{ST}/T_S}\cM$ is also equidimensional. By the first part of the theorem, it can be written as $\bigcup_{j\in J}\bigcup_m[T^*_{q|\iota_\varphi(Z_j)}(\XT/T)\times\Sigma_{jm}]$ for some irreducible closed analytic subsets $Z_j\subset\supp_X\cM$ and irreducible closed analytic subsets \hbox{$\Sigma_{jm}\subset S$}. We~argue as in \S\ref{par:detabsolute}\eqref{par:detabsolute1} and consider a Whitney stratification $(X_\alpha)_{\alpha\in A}$ of $X$ compatible with the family $(Z_k)_{k\in I\cup J}$. We~also consider the set $A_r$, where $r$ it the maximal rank of $\varphi$ on $\supp_X\cM$. We~apply Lemma \ref{lem:nonchar} to each stratum $X_\alpha$ with $\alpha\in A_r$: if $Z_j$ cuts a stratum $X_\alpha$ with $\alpha\in A_r$, then $Z_j$ is equal to some $Z_i$ with $i\in I$ and any~$\Sigma_{jm}$ is equal to some $\Sigma_{i\ell}$. Moreover, as~$\Char_{X_{ST}/T_S}\cM$ is assumed to be equidimensional, this also shows that the dimension of each of its irreducible components is equal to $\dim X+r+\sigma(\cM)$.

If~some~$Z_j$ does not cut $\bigcup_{\alpha\in A_r}X_\alpha$, then $\dim T^*_{q|_{\iota_\varphi(Z_j)}}<\dim X+r$, and so, since $\dim\Sigma_{jm}\leq\sigma(\cM)$, we have $\dim T^*_{q|_{\iota_\varphi(Z_j)}}(\XT/T)\times\Sigma_{jm}<\dim\Char_{X_{ST}/T_S}\cM$, in contradiction with the equidimensionality assumption. Therefore, such a $Z_j$ does not exist

If $\Char_{X_{ST}/T_S}\cM$ is not equidimensional, we argue as in \S\ref{par:detabsolute}\eqref{par:detabsolute2} by considering locally a generating coherent $\DXST$-submodule $\cN$ of $\cM_{\varphi_S}$ and its filtration $\cF_\bbullet(\cN)$.\qed

\paragraphe{Induction hypothesis for the first part of Theorem \ref{th:bibi86IIrel}}\label{par:pffirstpartrel}

We argue as for Theorem \ref{th:bibi86II}, by~induction on $(d,\delta)=(\dim\supp\cM,\dim\varphi_S(\supp\cM))$ lexicographically ordered, and we can assume $\delta\geq1$. We~first check that Lemma \ref{lem:push} applies in the relative setting.

\begin{lemma}\label{lem:pushrel}
Let $\pi:X'\to X$ be a morphism from a complex manifold $X'$ and set $\varphi'=\varphi\circ\pi$. Let $\cM'$ be a regular holonomic $\cD_{X'_S/S}$-module such that
\[
(\dim\supp\cM',\dim\varphi_S(\supp\cM'))\leq(d,\delta).
\]
Assume that $\pi$ is proper on $\supp\cM'$, that $\cM'$ is good, and that the first part of the theorem (hence the second part, by \textup\S\ref{par:detabsoluterel}) holds for~$\cM'$. Then it holds for $\D\pi_*^{(j)}\cM'$ for each $j$. 
\end{lemma}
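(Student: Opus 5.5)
The plan is to follow the proof of Lemma \ref{lem:push} line by line, replacing $\cD$-modules by $\cD_{X_S}$-modules and $T$ by $T_S$. I will keep track of the $S$-support, which for a holonomic $\cD_{X_S}$-module appears as a product factor $\Sigma$ in every component of the characteristic variety.

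First, the question is local on $X_S$, so I fix $(x_o,s_o)$. Goodness of $\cM'$ means that, near the compact set $\pi^{-1}(x_o)\times\{s_o\}$, $\cM'$ is generated by a coherent $\cO_{X'_S}$-submodule. Hence $\cM'_{\varphi'_S}$ is generated by a coherent $\cD_{X'_{ST}/T_S}$-submodule $\cN'$. I then factor $\pi$ as a closed embedding followed by a projection. The closed embedding case is handled by relative Kashiwara equivalence together with the compatibility of relative conormal spaces with closed inclusions, namely the lemma preceding Lemma \ref{lem:independant}, now with a $\times\Sigma$ factor. For the projection I express both direct images through the relative de Rham complex $\pDR_{X'_{ST}/X_{ST}}$. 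This gives a morphism $\D\pi^{(j)}_*\cN'\to\D\pi^{(j)}_*\cM'$. Its image $\cN_j$ is a coherent $\cD_{X_{ST}/T_S}$-module, by properness and Grauert's theorem in the relative setting of \cite{FFS23}, and it generates $\D\pi^{(j)}_*\cM'$. So it suffices to show that every component of $\Char\cN_j$ is of the form $T^*_{q|_{\iota_\varphi(Z)}}(X_T/T)\times\Sigma$.

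Next I use Kashiwara's estimate \eqref{eq:Kestimate}, which holds unchanged for relative $\cD$-modules over $T_S$:
\[\Char\cN_j\subset\pi\bigl(\Char\cN'\cap\pi^*T^*(X_{ST}/T_S)\bigr).\]
By hypothesis, every component of $\Char\cN'$ is of the form $T^*_{q'|_{Z'}}(X'_T/T)\times\Sigma'$. Because everything is a product over $S$, applying Lemma \ref{lem:relconormal} to the first factor shows that the components of the right-hand side have dimension at most $\dim X+\delta$, counting the $S$-directions inside $\delta=\dim\varphi_S(\supp)$. It also shows that those of maximal dimension are of the form (relative conormal)$\,\times\,\Sigma'$. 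For the lower-dimensional part I take the filtration $\cF_\bbullet\cN_j$ by dimension of characteristic variety. By the relative version of \eqref{eq:dimestimate}, any component $C$ of $\Char\cF_{\dim X+\delta-1}\cN_j$ has $\dim q_S\circ\varpi(C)\le\delta-1$. The $\cD_{X_S}$-submodule of $\D\pi^{(j)}_*\cM'$ generated by $\cF_{\dim X+\delta-1}\cN_j$ is regular holonomic by \cite{FFS23}, and its invariant is strictly below $(d,\delta)$. The induction hypothesis of \S\ref{par:pffirstpartrel} therefore applies to it, and all its components are of the required form.

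The main obstacle is the relative analogue of \eqref{eq:dimestimate}, equivalently of \eqref{eq:relconormal}, needed to conclude that top-dimensional components are exactly relative conormal spaces times $\Sigma$. The point is that a component of $\Char\cN_j$ need not a priori split as a product over $S$. I would first try to get the product structure directly from Kashiwara's estimate, since the right-hand side is already a union of products. Failing that, I would use Lemma \ref{lem:basechange} to restrict to a general point $s\in\Sigma$ via a transversal base change $S'\hookrightarrow S$ of dimension $\dim S-\sigma$. This reduces the dimension count to the absolute Lemma \ref{lem:push}, and genericity then recovers the full component by closure.
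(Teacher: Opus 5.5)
Your proposal is correct and follows the paper's proof essentially step for step. The common steps are: Kashiwara's estimate over $T_S$; Lemma \ref{lem:relconormal} applied to the first factor of each product component, giving $\dim\Char\cN_j\le\dim X+\delta$; the relative version of \eqref{eq:dimestimate} together with the induction hypothesis for $\cF_{\dim X+\delta-1}\cN_j$; and the observation that the top-dimensional components of $\Char\cN_j$ are components of the product-shaped right-hand side.

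The obstacle you flag is resolved exactly by your first option, so the base-change fallback is unnecessary. The paper's preliminary reduction to equidimensional $\Char\cM'$ via $\cF_\bbullet(\cM')$ is only a convenience: your componentwise bound $\dim\varphi'(Z'_i)+\dim\Sigma_{i\ell}\le\delta$ already suffices.
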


\begin{proof}
By considering the filtration $\cF_\bbullet(\cM')$, we can assume that $\Char\cM'$ is equidimensional, that is, its components take the form $T^*_{Z'_i}X'\times\Sigma_{i\ell}$, with $\dim\Sigma_{i\ell}=\sigma(\cM')$ independent of $i,\ell$. We~obtain in particular the equality
\[
\delta:=\dim\varphi_S(\supp\cM')=\dim\varphi(\supp_X\cM')+\sigma(\cM').
\]

We~define $\cN_j$ in a way similar to that of Lemma \ref{lem:push}, by replacing $T$ with $T_S$, and we get the inclusion corresponding to \eqref{eq:Kestimate}:
\[
\Char\cN_j\subset\pi\bigl(\Char\cN'\cap\pi^*(T^*X\times \TS/\TS\bigr).
\]
Since the theorem holds for $\cM'$, the irreducible components of $\Char\cN'$ take the form $T^*_{q'|_{\iota_{\varphi'(Z'_i)}}}(\XpT/T)\times\Sigma_{i\ell}$ and the right-hand side above for such a component reads
\[
\Bigl(\pi\bigl[T^*_{q'|_{\iota_{\varphi'(Z'_i)}}}(\XpT/T)\cap\pi^*(T^*(\XT/T)\bigr]\Bigr)\times\Sigma_{i\ell}.
\]
By Lemma \ref{lem:relconormal}, each irreducible component of that set is a relative conormal space of dimension $\dim X+\dim\varphi'(Z'_i)+\sigma(\cM')\leq\dim X+\delta$, and therefore $\dim\Char\cN_j\leq \dim X+\delta$.

On the other hand, any irreducible component $C$ of $\cF_{\dim X+\delta-1}\cN_j$ satisfies, according to \eqref{eq:dimestimate},
\[
\dim X+\delta-1\geq\dim C\geq\dim X+\dim q_S\circ\varpi_S(C),
\]
where the latter inequality follows from \eqref{eq:dimestimate} applied to $q_S\circ\varpi_S$. We~deduce the inequality $\dim q_S\bigl(\supp\cF_{\dim X+\delta-1}\cN_j\bigr)\leq\delta-1$ and, by induction applied to the $\cD_{\XST}$-submodule it generates in $\D\pi^{(j)}_*\cM'$, each such $C$ is a relative conormal space.

Lastly, the components of $\Char\cN_j$ of dimension $\dim X+\delta$ are among those of $\pi\bigl(\Char\cN'\cap\pi^*T^*(X_{\TS}/\TS)\bigr)$, and the assertion for them follows from Lemma \ref{lem:relconormal}. This ends the proof of Lemma \ref{lem:pushrel}.
\end{proof}

Let $Y$ be a hypersurface of $X$ and set $X^*=X\moins Y$. Recall (\cf \cite[Def.\,4.23]{FFS23}) that a regular holonomic $\DXS$-module $\cM$ is said to be of D-type along $Y$ if it is also a $\DXS(*D)$-module and $\cM_{|\XsS}$ is $\DXsS$-holonomic with characteristic variety contained in the zero section, equivalently, there exists an $S$-locally constant sheaf~$L$ of coherent $\pOS$-modules on $\XsS$ such that
\[
\cM_{|\XsS}\simeq (\sho_{\XsS}\otimes_{\pOS}L,\rd_{\XsS/S}\otimes\id).
\]
There exists a coherent $\cO_S$\nobreakdash-module $G$ such that $L$ is locally isomorphic to $p^{-1}G$. Such a $G$ is unique up to non canonical isomorphisms. If $G$ is locally $\cO_S$-free, we say that $\cM_{|\XsS}$ is \emph{strict}. Note that the $S$-support of $\cM$ is the support of $G$, since for $s_o\in S$, we have
\[
\cM|_{X\times\nb(s_o)}=0\iff\cM|_{X^*\times\nb(s_o)}\iff G|_{\nb(s_o)}=0;
\]
the first equivalence is caused by $\cM$ being a coherent $\DXS(*D)$-module, while the second one is due to faithful flatness of $\cO_{\XS}$ over $\pOS$. Recall that, by our induction hypothesis, we have $\dim\Char\cM=d$, so that $d=\dim X+\dim\supp G$, and the theorem holds if $\dim\Char\cM<d$.

\begin{coro}
It is enough to prove the theorem when $\cM$ is of D-type along a normal crossing divisor $D$ in $X$.
\end{coro}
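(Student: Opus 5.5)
The corollary asserts that the theorem reduces to the case where $\cM$ is of D-type along a normal crossing divisor. I would mimic the reduction of \S\ref{par:reductionnc} in the relative setting. The argument runs by induction on $(d,\delta)$ via \S\ref{par:pffirstpartrel}, combined with Lemma \ref{lem:pushrel} for proper pushforwards.

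First, using the filtration $\cF_\bbullet\cM$, assume $\Char\cM$ is pure dimensional. Its components are then $T^*_{Z_i}X\times\Sigma_{i\ell}$, with $\dim\Sigma_{i\ell}=\sigma(\cM)$. Fix $(x_o,s_o)$ and work locally. Let $Z=\supp_X\cM$, and choose a hypersurface $Y\subset X$ containing the singular locus of $Z$ and the projections of the non-zero-section components of $\Char\cM$ over the smooth part of $Z$. Choose a projective modification $\pi\colon X'\to X$ that is an isomorphism over $Z\moins Y$, with $X'$ a resolution of the relevant component. Arrange that $D=\pi^{-1}(Y)$ and each $(\varphi_k\circ\pi)^{-1}(0)$ are unions of components of a normal crossing divisor.

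Set $\cM'=\cH^0(\D\pi^*\cM)(*D)$. By the stability results of \cite{FFS23} (pullback, localization, and regularity along hypersurfaces, \cite[\S4]{FFS23}), $\cM'$ is regular holonomic over $\cD_{X'_S/S}$. Its restriction to $X'_S\moins D_S$ has characteristic variety in the zero section, so $\cM'$ is of D-type along $D$. It is good, being locally a pullback, and $(\dim\supp\cM',\dim\varphi'_S(\supp\cM'))=(d,\delta)$.

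Next, consider the adjunction morphism $\cM\to\D\pi_*^{(j)}\cM'$ with $j=\dim X-d$. Over $X_S\moins Y_S$ it is an isomorphism, so its kernel and cokernel are regular holonomic with $X$-support in $Y\cap Z$. Hence $\dim\supp<d$, and the induction hypothesis applies to them. The set of $\cM$ for which the theorem holds is stable under extensions and subquotients: characteristic varieties of generating $\DXST$-submodules behave additively up to lower-dimensional pieces, as in \S\ref{par:detabsolute}\eqref{par:detabsolute2}. Therefore it suffices to know the theorem for $\D\pi_*^{(j)}\cM'$, and by Lemma \ref{lem:pushrel} it suffices to know it for $\cM'$ with respect to $\varphi'$.

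The main obstacle is the stability claim. The image of $\cM$ in $\D\pi^{(j)}_*\cM'$ must be controlled, not just the target. I would handle it by writing $\cM$ as an extension of that image by the kernel, and treating generating submodules intersected with the image, as in \S\ref{par:detabsolute}\eqref{par:detabsolute2}. Dimension counting via \eqref{eq:dimestimate} would then show that the components of dimension $\dim X+\delta$ come from $\cM'$, and the lower ones are handled by induction. A secondary check is that goodness and $D$-type survive in the relative setting; if strictness is needed, I would first pass to a base change by Lemma \ref{lem:basechange}.
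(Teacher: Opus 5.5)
Your proposal is correct and is essentially the paper's argument. The paper's proof just says that the reduction of \S\ref{par:reductionnc} goes through verbatim in the relative setting (adjunction morphism whose kernel and cokernel have smaller support, induction on $(d,\delta)$, then Lemma~\ref{lem:pushrel}), with \cite[Prop.\,4.27]{FFS23} supplying the existence of $\pi$; you instead build $\pi$ from a local resolution together with the stability results of \cite{FFS23}, and you spell out the extension/subquotient step that the paper leaves implicit.

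One point to tighten: the divisor you localize along should contain every $(\varphi_k\circ\pi)^{-1}(0)$, not just $\pi^{-1}(Y)$, as in the third condition of \S\ref{par:reductionnc}. The later normal-crossing computation needs each $\varphi_k\circ\pi$ to be a unit times a monomial in coordinates adapted to that divisor.
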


\begin{proof}
The proof is then identical to that of \S\ref{par:reductionnc}. Here, since the question is local on $\XS$, the existence of $\pi$ is given by \cite[Prop.\,4.27]{FFS23}.
\end{proof}

\paragraphe{Reduction to the case of strict D-type}

Assume that $\cM$ is regular holonomic of D-type on $(X,D)$ with $\Char\cM$ equidimensional of dimension $d$, so that $\supp G$ is equidimensional of dimension $d-\dim X$. Let $\cI$ be the reduced ideal of $\supp G$. The $\cI$-adic filtration of $\cM$ has graded quotients annihilated by $\cI$. As the statement of Theorem \ref{th:bibi86IIrel} is stable by extension in $\Mod(\DXS)$, we can assume that $\cM$ is annihilated by $\cI$, so that, denoting by $\iota$ the inclusion $\supp G\hto S$, we have $\cM=\iota_*\iota^*\cM$, $L=\iota_*\iota^*L$ and $G=\iota_*\iota^*G$. In particular, the theorem holds if $d=\dim X$, according to \S\ref{par:endabsolute}. We~will assume that $d>\dim X$, equivalently $\dim\supp G>0$.

Let $\pi:S'\to \supp G$ be a proper modification, which is an isomorphism of Zariski dense open subset $S^{\prime o}\isom (\supp G)^o$, such that
\begin{itemize}
\item
$S'$ is a complex manifold of dimension $d-\dim X$, and
\item
$\pi^*(\iota^*G)$ modulo its $\cO_{S'}$-torsion is a locally free $\cO_{S'}$-module of finite rank.
\end{itemize}
Such a proper (in fact projective) modification exists, according to \cite[Th.\,6]{A-H-V18} and \hbox{\cite[Th.\,3.5]{Rossi68}}. Since $\dim S'=\dim \supp G=d-\dim X$, the support of the torsion of $\pi^*(\iota^*G)$ has dimension $<d-\dim X$. We~conclude that $\pi^*(\iota^*\cM)$ is the extension of a strict regular holonomic $\DXSp$-module of D-type by a regular holonomic $\DXSp$-module of D-type for which the support of $G$ has dimension~$<d-\dim X$, hence which satisfies the theorem by the induction hypothesis.

Assume that we have proved the theorem for \emph{strict} regular holonomic relative $\cD$-modules of D-type. Then the theorem holds for $\pi^*(\iota^*\cM)$. We~conclude with Lemma~\ref{lem:basechange} that, since the theorem holds for $\pi^*(\iota^*\cM)$, it holds for $\cM$.\qed

\paragraphe{The case of strict D-type: reduction to rank one}
In this setting, we have $\dim\supp G=\dim S$, so that $d=\dim X+\dim S$ and $\Char\cM=\bigcup_i(T^*_{Z_i}X\times S)$. We~aim at proving that the highest dimensional components of $\Char_{X_{ST}/T_S}\cM$ take the form $T^*_{q|_{\iota_\varphi(Z)}}X\times S$. We~now argue by induction on the rank $r$ of $G$, assuming it is~\hbox{$\geq2$}. Furthermore, we know by \cite[Prop.\,4.25(i)]{FFS23} that $\cM$ is a Deligne extension~$\wt E_L$. We~also recall that the question is local, so that we can choose local coordinates adapted to $D$ as in \S\ref{par:endabsolute} and, according to \cite[Proof of Th.\,4.15(i)]{FFS23}, we can assume that
\[
\cM\simeq(\cO_{\XS}(*D)\otimes_{\pOS}p^{-1}G,\nabla),
\]
with $G=\cO_S^r$ and $\nabla=\rd\otimes\id+\sum_{i=1}^\ell\rd x_i/x_i\otimes A_i(s)$, where $A_i(s)$ is an $r\times r$ matrix with entries in $\cO_S$. We~now argue as in \cite[Step 4 of the proof of Th\,4.15(iii)]{FFS23}: there exists a base change $\pi:S'\to S$ such that the matrices $A_i\circ\pi$ have a common eigenvector, and we can apply the induction hypothesis, thanks to Lemma \ref{lem:basechange}. In such a way, we~can assume that $r=1$.\qed

\paragraphe{The case of strict D-type in rank one}
We can argue as in \cite[Step~5 of the proof of Th\,4.15(iii)]{FFS23} to obtain a presentation of $\cM$ similar to \eqref{eq:presentationMprime} in \S\ref{par:endabsolute}, with $\alpha_i\in\Gamma(\nb(s_o),\cO_S)$. We~similarly obtain the operators \eqref{eq:operators}, with $\beta_j$ being holomorphic functions of $s$. As the computation of the relative characteristic variety in \S\ref{par:endabsolute} does not rely on $\beta_j$, the argument given there applies to the present setting. This yields the conclusion, on noting that the set $\Sigma$ is thus equal to $S$, as desired in the setting where $G$ is locally free.\qed

   \part*{Appendix II. Proof of the relative log characteristic cycle formula}
\setcounter{section}{4}   
\setcounter{theorem}{0}   
In this appendix, we prove the log characteristic cycle formula in
Theorem \ref{thm:relGins-Sab} by using Theorem \ref{th:bibi86IIrel} from
Appendix I.

Let $X$ and $T$ be complex manifolds, and let $R$ be a regular commutative
finitely generated $\C$-algebra. We write
\[
X_R=X\times \Specan R
\quad \text{and} \quad
T_R=T\times \Specan R.
\]
Let
$q\colon X\times T\to T$
be the projection, and let
$T^*(X\times T/T)$
denote the relative cotangent bundle. If $Z$ is a closed analytic subvariety
of $X\times T$, we consider the \emph{relative conormal space}
$T^*_{q|_Z}(X\times T/T)
\subseteq
T^*(X\times T/T);$
see Definition \ref{def:relconormal}. Then
\[
\dim T^*_{q|_Z}(X\times T/T)\le \dim X+\dim T.
\]
Moreover, by Sard's theorem,
$\dim T^*_{q|_Z}(X\times T/T)=\dim X+\dim T$
if and only if $q(Z)$ contains an open subset of $T$; see \cite[\S 2]{BMM}.

Let $\sM$ be a coherent
$\shD_{X_R\times T/\Specan R}\text{-module}$,
and let $\sN$ be a coherent
$\shD_{X_R\times T/T_R}$-submodule
of $\sM$ which generates $\sM$ over
$\shD_{X_R\times T/\Specan R}$.
Similarly to Lemma \ref{lem:independant}, the relative characteristic variety
of $\sN$ inside
$T^*(X_R\times T/T_R)$
depends only on $\sM$. We denote it by
$\Ch_{q_R}(\sM)$,
where
$q_R\colon X_R\times T\to T_R$
is the natural projection.

\begin{theorem}[Sabbah]\label{thmApp:Sabrelch}
Let $\sM$ be a relative regular holonomic
$\shD_{X_R\times T/\Specan R}$-module. Then every irreducible component of
$\Ch_{q_R}(\sM)$ is of the form
\[
T^*_{q|_Z}(X\times T/T)\times \Sigma,
\]
where
$T^*_Z(X\times T)\times \Sigma$
is an irreducible component of the relative characteristic variety of $\sM$
inside
$T^*(X_R\times T/\Specan R).$
\end{theorem}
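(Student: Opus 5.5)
The plan is to deduce Theorem \ref{thmApp:Sabrelch} directly from Theorem \ref{th:bibi86IIrel} of Appendix I. Take the base $S=\Specan R$ and replace $X$ there by $X\times T$. Take the morphism $\varphi$ to be the projection $X\times T\to T$, so that $\varphi$ is smooth.

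First I translate the data. A relative regular holonomic $\shD_{X_R\times T/\Specan R}$-module $\sM$ is exactly a regular holonomic $\shD_{(X\times T)_S/S}$-module in the sense of \cite{FFS23}. Its characteristic variety has irreducible components $T^*_{Z}(X\times T)\times\Sigma$, by \cite[Lem.2.10]{FTM18}.

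Since $\varphi$ is smooth, I work in the smooth-morphism formulation of Appendix I. By the graph-embedding comparison $\iota[(T^*\iota)^{-1}\Char_{X/T}\cM]=\Char_{X_T/T}\cM$ (its $S$-relative version), the relative characteristic variety $\Char_{X_{ST}/T_S}\sM$ corresponds to the characteristic variety of any coherent $\shD_{X_R\times T/T_R}$-submodule $\sN$ generating $\sM$. That characteristic variety is $\Ch_{q_R}(\sM)$. Independence of the choice of $\sN$ follows as in Lemma \ref{lem:independant}, because locally two generating submodules sandwich each other up to order shifts.

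Next I apply Theorem \ref{th:bibi86IIrel}. Each irreducible component of $\Char_{X_{ST}/T_S}\sM$ is $T^*_{q|_{\iota_\varphi(Z)}}((X\times T)_T/T)\times\Sigma$, where $T^*_Z(X\times T)\times\Sigma$ is a component of $\Char\sM$. Undoing the graph embedding of the projection, the relative conormal space of $q|_{\iota_\varphi(Z)}$ corresponds to $T^*_{q|_Z}(X\times T/T)$. This is the smooth analogue of Definition \ref{def:relconormal}: the graph of a projection is a product and $\iota$ acts as $C\mapsto C\times T^*_{y_o}Y$. That is precisely the claimed form.

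The main obstacle is the bookkeeping of this identification. It requires checking that the relative conormal space defined through the graph embedding matches the one defined directly on $X\times T$ over $T$, uniformly in the $R$-direction. It also requires that Kashiwara's equivalence for pairs $(\sM,\sN)$ holds in the $S$-relative setting, so that the components are transported bijectively. I would verify both in local coordinates $X\times T\times T$, using that $\varphi$ is a projection so the graph is $\{t=t'\}$. The conormal data then split off as a product and commute with the factor $\Sigma$.
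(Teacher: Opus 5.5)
Your proposal is correct and follows the paper's route. The paper also obtains this as the special case $S=\Specan R$ of Theorem \ref{th:bibi86IIrel}, with $X$ replaced by $X\times T$ and $\varphi$ the smooth projection, transported to the smooth-morphism formulation exactly as in the equivalence between Theorems \ref{thm:equivformrelcc} and \ref{th:bibi86II}. Your extra bookkeeping (the graph of the projection, Kashiwara's equivalence for pairs, independence of the generating submodule) only spells out what the paper's one-line argument leaves implicit.
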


Similarly to the equivalence between Theorem \ref{thm:equivformrelcc} and Theorem \ref{th:bibi86II}, the above theorem is
equivalent to a special case of Theorem \ref{th:bibi86IIrel} by taking $S=\Specan R$. Motivated by the ideas of
\cite{BMM}, we now prove Theorem \ref{thm:relGins-Sab}, following the same
strategy as in \cite[\S 3.4]{Wuch}.

\begin{lemma}\label{lemApp:pureloc}
Let $\sM$ be a relative regular holonomic $\shD_{X\times T/T}$-module, and let
$D\subseteq X$ be an effective divisor. If
$\sM|_{(X\setminus D)\times T}$
is $j$-pure, then
$\sM(*(D\times T))$
is also $j$-pure.
\end{lemma}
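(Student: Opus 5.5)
We must show that if $\sM|_{(X\setminus D)\times T}$ is $j$-pure, then $\sM(*(D\times T))$ is $j$-pure. The approach is to use the characterization of purity through the grade of submodules, that is, Lemma \ref{lm:purfilcodim} in its analytic form (Remark \ref{rmk:analyticpurefil}), together with the fact that $\sM(*(D\times T))$ is again relative regular holonomic (by \cite{FFS23}). Write $\sM'=\sM(*(D\times T))$. By Lemma \ref{lm:dualalPF}, the pure filtration $T_\bullet\sM'$ is formed from the $\operatorname{Ext}$ modules of $\sM'$, and these are compatible with restriction to the open set $U=(X\setminus D)\times T$. Hence $T_i\sM'|_U=T_i(\sM|_U)$.

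Suppose $\sM'$ is not $j$-pure. Since $\sM'|_U$ is $j$-pure, either $T_{j+1}\sM'\neq 0$, or $\sM'\neq T_j\sM'$. In the first case $T_{j+1}\sM'$ is a nonzero submodule supported on $D\times T$.

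In the second case, $\sM'/T_j\sM'$ is nonzero and supported on $D\times T$. This case can be ruled out with a grade argument: the grade of $\sM'$ equals the minimum of the grades of the components of its characteristic variety. The key input is the claim that every component of $\Ch(\sM')$ is the closure of a component over $U$, plus possibly components lying over $D$ of dimension no larger. To justify this, I would reduce via the graph of a local equation $h$ of $D$ and use Theorem \ref{thmApp:Sabrelch}, together with the known description of $\Ch$ of a localization as $\overline{\Ch(\sM|_U)}$ plus conormal-type pieces. More simply, $j(\sM')\le j$, since $\sM'|_U\neq0$ has grade $j$ (if $\sM'|_U=0$ then $\sM'=0$). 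The quotient $\sM'/T_j\sM'$ then has components of codimension less than $j$, which would have to sit over $D$.

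The main obstacle is excluding nonzero submodules of $\sM'$ supported on $D\times T$, together with the analogous low-grade pieces. The localization $\sM(*D)$ has no nonzero sections supported on $D\times T$: any local section killed by a power of $h$ is zero, because $h$ acts invertibly. Therefore $T_{j+1}\sM'=0$, which disposes of the first case.

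For the second case, I would use relative regular holonomicity and the relative Riemann–Hilbert correspondence of \cite{FFS23}. Under $\DR$, localization corresponds to $Rj_*$, and $\DR(\sM')=Rj_*\DR(\sM|_U)$. The grade or dimension estimate for $\sM'$ then follows from the support dimensions of $\DR(\sM|_U)$ over $T$, since $Rj_*$ does not enlarge the $T$-support (i.e.\ the $\Specan R$-support). Alternatively, purity of $\sM|_U$ gives $\operatorname{Ext}^k(\sM|_U,\shD)$ of grade at least $k+1$ for $k>j$; localization along $D$ then preserves these $\operatorname{Ext}$ bounds, since $\operatorname{Ext}^k(\sM',\shD)$ is a dual localization (a $!$-extension) whose characteristic variety dimension is controlled by $\Ch$ of its restriction to $U$ via Theorem \ref{thmApp:Sabrelch}. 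Combining these with \cite[A.IV 2.6]{BJ} yields $j$-purity of $\sM'$.
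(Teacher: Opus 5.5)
Your argument is correct and has the same skeleton as the paper's proof. First, $T_{j+1}\sM'=0$, because such a submodule is supported on $D\times T$ and $\sM'=\sM(*(D\times T))$ has no $h$-torsion. Second, $T_j\sM'=\sM'$, because the grade cannot drop.

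The only real difference is how you justify the second step.

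\emph{The paper's route.} It reuses the torsion-freeness from the first step. If a component $T^*_{Z_i}X\times\Sigma_{i\lambda}$ of $\Ch(\sM')$ had dimension larger than expected, then $Z_i\subseteq D$ and $\Sigma_{i\lambda}$ could not lie in the $T$-support of $\sM|_U$, which has the expected dimension. So near a general point of $\Sigma_{i\lambda}$, $\sM'$ would be nonzero and supported on $D\times T$, which is impossible.

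\emph{Your route.} You get ``the $T$-support does not grow'' from the relative Riemann--Hilbert correspondence, via $\DR(\sM')\simeq Rj_*\DR(\sM|_U)$. This is valid given the compatibility of $\DR$ with localization for relative regular holonomic modules in \cite{FFS23}. You should state explicitly the step that converts the support bound into $j(\sM')\ge j$: every component of $\Ch(\sM')$ has the form $T^*_ZX\times\Sigma$ with $\Sigma$ contained in the $T$-support of $\sM'$, so its dimension is at most $\dim X$ plus the expected dimension of that support.

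Your route is heavier than necessary. The fact you already used in case (a) gives the support inclusion directly: if $b$ on $T$ vanishes on the $T$-support of $\sM|_U$, then locally $b^N m$ is killed by a power of $h$, hence is zero in $\sM'$.

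Drop three side remarks from the write-up:
\begin{enumerate}
\item The inequality $j(\sM')\le j$ is true but is not the direction you need.
\item The ``alternative'' via $\operatorname{Ext}^k(\sM',\shD)$ being a $!$-extension whose characteristic variety is controlled by its restriction to $U$ is unjustified. Such extensions can have components over $D$, and their grade is exactly what is at stake.
\item The suggested detour through a Ginsburg-type description of $\Ch$ of a localization would be nearly circular here. This lemma is one of the inputs to Theorem~\ref{thm:relGins-Sab}, which is the relative statement of that kind.
\end{enumerate}
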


\begin{proof}
By \cite[Cor.4.31]{FFS23},
$\sM(*(D\times T))$
is also relative regular holonomic. By purity, we may write
\[
\Ch(\sM)=\sum_i T^*_{Z_i}X\times \Sigma_i
\subseteq
T^*X\times T,
\]
where, for those $i$ with
$Z_i\cap (X\setminus D)\neq \emptyset$,
the corresponding components have the expected dimension. Suppose that an
irreducible component $\Sigma_{i\lambda}$ of some $\Sigma_i$ gives a component
of dimension larger than the expected one. Then necessarily
$Z_i\subseteq D.$
Since the relative characteristic variety is locally finite, we may take a
small neighborhood intersecting
$T^*_{Z_i}X\times \Sigma_{i\lambda}.$
Localizing $\sM(*(D\times T))$ at a general point of this component, the
relative characteristic variety of $\sM(*(D\times T))$ would contain this
component. This is impossible, since $\sM(*(D\times T))$ has no nonzero
submodules supported on $D\times T$. Therefore the grade of
$\sM(*(D\times T))$ is still $j$.

Since $\sM|_{(X\setminus D)\times T}$ is $j$-pure, any submodule of
$\sM(*(D\times T))$ of grade greater than $j$ must be supported on
$D\times T$. Such a submodule must be zero. Hence $\sM(*(D\times T))$ is
$j$-pure.
\end{proof}

\begin{proof}[Proof of Theorem \ref{thm:relGins-Sab}]
For simplicity, we also use $Y$ to denote its analytification $Y^\an$, and
similarly for $X$. Set
\[
\widetilde Y
\coloneqq
Y\times \C_y^l
=
X\times \C_t^l\times \C_y^l,
\]
where $y=(y_1,\dots,y_l)$ are the coordinates on $\C_y^l$. Consider the map
\[
\phi\colon \widetilde Y\to Y,
\quad
(x,t_1,\dots,t_l,y_1,\dots,y_l)
\mapsto
(x,e^{y_1}t_1,\dots,e^{y_l}t_l),
\]
and the induced map
\[
\widetilde\phi=(\phi,\id)\colon \widetilde Y_R\to Y_R.
\]
Both $\phi$ and $\widetilde\phi$ are smooth. Set
\[
\sM\coloneqq \widetilde\phi^*\big(\widetilde{\cN(*D)}\big)
\quad \text{and} \quad
\sN\coloneqq \widetilde\phi^*(\widetilde\cN),
\]
where $\widetilde\cN$ denotes the analytic sheaf associated with $\cN$. Let
$\widetilde D=\phi^*D$
and let
$\widetilde D_R\subseteq \widetilde Y_R$
be the associated relative divisor.

Since $\phi$ is smooth, $\sN$ is a coherent
$\widetilde{\sA^R_{\widetilde Y,\widetilde D}}\text{-module}$.
We have the correspondence
\[
T^*(Y,D)_R
\xleftarrow{\ \omega\ }
T^*(Y,D)\times_Y \widetilde Y_R
\xrightarrow{\ \rho\ }
T^*(\widetilde Y,\widetilde D)_R,
\]
where $\omega$ is smooth and $\rho$ is a closed embedding. By smoothness,
\[
\Ch_{\widetilde{\sA^R_{\widetilde Y,\widetilde D}}}(\sN)
=
\rho\big(\omega^{-1}(\Ch^\dagger(\cN))\big).
\]

On the other hand, one observes that
$t_i\partial_{t_i}-\partial_{y_i}$
annihilates
$1\otimes \widetilde\phi^{-1}m\in \sN$.
Thus $\sN$ is a coherent
$\shD_{\widetilde Y_R/\C^l_{t,R}}\text{-submodule}$
of $\sM$ which generates $\sM$. Let
\[
q\colon
\widetilde Y=X\times \C_t^l\times \C_y^l
\to
\C_t^l
\]
be the projection. Then
\[
\iota\big(\Ch_{q_R}(\sM)\big)
=
\iota\big(\Ch_{\shD_{\widetilde Y_R/\C^l_{t,R}}}(\sN)\big)
=
\Ch_{\widetilde{\sA^R_{\widetilde Y,\widetilde D}}}(\sN),
\]
where
$\iota\colon
T^*(\widetilde Y_R/\C^l_{t,R})
\hookrightarrow
T^*(\widetilde Y,\widetilde D)_R$
is the closed embedding corresponding to the relations
$t_i\partial_{t_i}-\partial_{y_i}=0.$
In coordinates, this embedding is given by
\[
(x,t,y,\alpha,\xi_x,\xi_y)
\mapsto
(x,t,y,\alpha,\xi_x,\xi_t^{\log}=\xi_y,\xi_y).
\]

Since $\iota$ and $\rho$ are closed embeddings and $\omega$ is smooth, it is
enough to prove that $\Ch_{q_R}(\sM)$ has no irreducible components supported
over $\widetilde D_R$.

Now consider the usual relative Lagrangian correspondence
\[
T^*Y_R
\xleftarrow{\ \omega'\ }
T^*Y\times_Y \widetilde Y_R
\xrightarrow{\ \rho'\ }
T^*\widetilde Y_R.
\]
We have
\[
\Ch(\sM)
=
\rho'\big((\omega')^{-1}(\Ch(\cN(*D)))\big).
\]
Since $\cN(*D)$ is relative holonomic, we may write
\[
\Ch(\cN(*D))
=
\sum_{i\in L} T^*_{Z_i}Y\times \Sigma_i,
\]
where the $\Sigma_i\subseteq \Spec R$ are algebraic subvarieties, possibly
reducible. Let
$L'\subseteq L$
be the subset consisting of those indices $i$ such that
\[
Z_i\cap U\neq \emptyset,
\qquad
U=Y\setminus D.
\]
We write
$\widetilde Z_i=Z_i\times_Y \widetilde Y.$
Then
\[
\Ch(\sM)
=
\sum_{i\in L}T^*_{\widetilde Z_i}\widetilde Y\times \Sigma_i.
\]

By \cite[Thm.1]{FFS23}, $\sM$ is a relative regular holonomic
$\shD_{\widetilde Y_R/\Specan R}\text{-module}.$
By Theorem \ref{thmApp:Sabrelch}, every irreducible component of
$\Ch_{q_R}(\sM)$ is of the form
\[
T^*_{q|_{\widetilde Z_i}}(\widetilde Y/\C_t^l)
\times
\Sigma_{i\lambda},
\]
for some $i\in L$, where $\Sigma_{i\lambda}$ is an irreducible component of
$\Sigma_i$.

The key observation, due to Brian\c{c}on--Maisonobe--Merle, is that
$q(\widetilde Z_i)$
contains an open subset of $\C_t^l$ if and only if $i\in L'$. We now take the
pure filtration $T_\bullet\sM$ of $\sM$. By Lemma \ref{lemApp:pureloc} and
the uniqueness of the pure filtration,
$T_j\sM$ and
$T_j\sM/T_{j+1}\sM$
are $\cO_{\widetilde Y_R}(*\widetilde D_R)$-modules for all $j$. By relative
holonomicity, for every $j$ we can write
\[
\Ch(T_j\sM/T_{j+1}\sM)
=
\sum_{i\in L_j\subseteq L}
T^*_{\widetilde Z_i}\widetilde Y\times \Sigma_i^j.
\]
Thus, by induction on the pure filtration, it is enough to assume that $\sM$
is pure. In particular, all irreducible components $\Sigma_{i\lambda}$ have
the same dimension; denote this dimension by $k$. By Theorem
\ref{thmApp:Sabrelch}, we have
\[
\dim \Ch_{q_R}(\sM)=\dim \widetilde Y+k.
\]
Since
$\sM|_{\widetilde Y_R\setminus \widetilde D_R}
=
\sN|_{\widetilde Y_R\setminus \widetilde D_R}$
and $\sM|_{\widetilde Y_R\setminus \widetilde D_R}$ is pure, $\sN|_{\widetilde Y_R\setminus \widetilde D_R}$
is pure and any nonzero
coherent
$\shD_{\widetilde Y_R/\C^l_{t,R}}
\big|_{\widetilde Y_R\setminus \widetilde D_R}$-submodule
of $\sN|_{\widetilde Y_R\setminus \widetilde D_R}$ has its characteristic variety
of dimension
$\dim \widetilde Y+k$,
again by Theorem \ref{thmApp:Sabrelch}. Since
\[
\sN\subseteq \sM=\sM(*\widetilde D_R),
\]
it follows that $\sN$ itself is pure. Consequently
$\Ch_{\widetilde{\sA^R_{\widetilde Y,\widetilde D}}}(\sN)$
is equidimensional.

By dimension counting, every irreducible component of $\Ch_{q_R}(\sM)$ must
therefore be of the form
\[
T^*_{q|_{\widetilde Z_i}}(\widetilde Y/\C_t^l)
\times
\Sigma_{i\lambda}
\]
for some $i\in L'$. Equivalently, $\Ch_{q_R}(\sM)$ has no irreducible
component supported over $\widetilde D_R$. This proves the desired
characteristic cycle formula and completes the proof of
Theorem \ref{thm:relGins-Sab}.
\end{proof}

\bibliographystyle{amsalpha} 
\bibliography{mybib,appendix_sabbah}
\end{document}